\documentclass{amsart}
\usepackage{float}
\usepackage{graphicx} 
\usepackage{comment}

\usepackage{amsmath, amssymb, amsthm}
\usepackage{xcolor}
\usepackage{tikz}
\usetikzlibrary{cd}
\usetikzlibrary{positioning, decorations.pathreplacing}
 \usetikzlibrary{calc}
 \usetikzlibrary{positioning, decorations.pathreplacing, calc}

\usepackage[margin=3.25cm]{geometry}

 \newtheorem{lemma}{Lemma}[section]
    \newtheorem{proposition}[lemma]{Proposition}
    \newtheorem{theorem}[lemma]{Theorem}          \newtheorem*{thma}{Theorem A} 
      \newtheorem*{thmb}{Theorem B}   
      \newtheorem*{thmc}{Theorem C} 
      \newtheorem*{thmd}{Theorem D} 
      \newtheorem*{thme}{Theorem E} 

    \newtheorem{corollary}[lemma]{Corollary}
    \theoremstyle{definition}
    \newtheorem{definition}[lemma]{Definition}
    \newtheorem{example}[lemma]{Example}
    \newtheorem{notation}[lemma]{Notation}
    \newtheorem{convention}[lemma]{Convention}
    \newtheorem{remark}[lemma]{Remark}

\usepackage{hyperref}

\DeclareMathOperator{\Br}{Br}

\DeclareMathOperator{\sing}{sing}
\DeclareMathOperator{\Proj}{\mathbb P}

\DeclareMathOperator{\sep}{sep}
\DeclareMathOperator{\sm}{sm}

\DeclareMathOperator{\Lo}{Loop}
\DeclareMathOperator{\E}{Eul}
\DeclareMathOperator{\qE}{qE}

\DeclareMathOperator{\lto}{\longrightarrow}

\DeclareMathOperator{\Ima}{Im}
\DeclareMathOperator{\U}{\mathcal U}
\DeclareMathOperator{\orient}{\mathfrak o}
\DeclareMathOperator{\Can}{Can}
\DeclareMathOperator{\id}{id}
\DeclareMathOperator{\CT}{CT}
\DeclareMathOperator{\Aut}{Aut}
\DeclareMathOperator{\Isom}{Isom}
\DeclareMathOperator{\SSAP}{SSAP}

\DeclareMathOperator{\action}{\curvearrowright}
\DeclareMathOperator{\codim}{codim}

\DeclareMathOperator{\Lin}{Lin}
\DeclareMathOperator{\Spec}{Spec}

\DeclareMathOperator{\Ram}{Ram}
\DeclareMathOperator{\spann}{span}

\DeclareMathOperator{\Irr}{Irr}

\DeclareMathOperator{\irr}{irr}
\DeclareMathOperator{\chara}{char}
\DeclareMathOperator{\len}{length}

\DeclareMathOperator{\h}{h}
\DeclareMathOperator{\Symm}{Sym}

\DeclareMathOperator{\supp}{supp}
\DeclareMathOperator{\st}{st}
\DeclareMathOperator{\iso}{iso}

\DeclareMathOperator{\pair}{pair}

\DeclareMathOperator{\Cone}{C_1}
\DeclareMathOperator{\TT}{T}

\newcommand{\V}{\mathcal V}

\newcommand{\G}{\mathcal G}

\newcommand{\F}{\mathcal F}

\newcommand{\SA}{\mathcal S_A}

\newcommand{\SC}{\mathcal S_{\widehat{C}}}

\newcommand{\mc}{\mathcal}

\usepackage[all]{xy}

\newcommand{\Pic}{\overline{P^{g-1}_C}}

\newcommand{\ud}{\underline{d}}

\newcommand{\p}{D}

\newcommand{\name}{nodal $1$-scheme}
\newcommand{\subname}{nodal $1$-subscheme}

\title{Extending Andreotti's proof of Torelli's theorem to nodal curves}

\author{Alex Abreu, Marco Pacini, Nicola Pagani}

\begin{document}

\begin{abstract}
Let \(C\) be a connected nodal curve of genus $g$ over an algebraically closed field $k$ of
characteristic zero. Let
\[
A(C)=\bigl(J(C)\curvearrowright \Pic ,\Theta(C)\bigr)
\]
be the stable semi-abelic pair (in the sense of Alexeev) associated with its canonical 
compactified Jacobian $\Pic$ in degree $g-1$. 
The
canonical triple of \(C\) consists of the closure of its canonical image
together with two sets of distinguished points encoding the information of the images of the nonseparating nodes and of the branch data of the canonical map.

 We prove a functorial reconstruction: given connected nodal curves $C$ and $C'$ over $k$,  
every isomorphism \(A(C)\cong A(C')\) of  stable semi-abelic pairs canonically induces a projectivity
between the corresponding canonical triples of $C$ and $C'$. Passing to isomorphism classes, this recovers the reconstruction statement in the compactified Torelli theorem of Caporaso--Viviani
\cite{Cap-Viv}.  

Our proof extends Andreotti's Gauss-map strategy. The branch locus of the
projection from the normalization of the Gauss graph recovers the nonlinear
canonical components and branch data by projective biduality; boundary strata
of the compactified Jacobian recover the node-images; and incidence arguments
recover the linear components. Along the way, we repair technical gaps in the
classical treatments of Andreotti's proof of Torelli's theorem for smooth curves
\cite{andreotti,acgh}.\end{abstract}
\maketitle


\tableofcontents

\section{Introduction}

\subsection{History, motivation and goal}

Throughout, we work over an algebraically closed field of characteristic zero.
The classical Torelli theorem states that a smooth projective curve $C$ is
determined by its principally polarized Jacobian $(J(C),\Theta(C))$. It was
first proved over the complex numbers by Torelli \cite{Tor}; algebraic proofs
include \cite{weil,matsusaka,andreotti,martens}.

Andreotti’s proof in \cite{andreotti} uses the Gauss map of the theta divisor. For a nonhyperelliptic curve, the branch locus of the projection
from the normalization of the Gauss graph is the projective dual of the
canonical curve, which can therefore be recovered by biduality. In the
hyperelliptic case, the corresponding branch data recover the branch
points of the canonical map and hence the curve. Jambois extended this
strategy to irreducible complex curves with planar singularities via
generalized Jacobians \cite{jambois}, but did not treat reducible curves
or establish the hyperelliptic case. Our aim is to extend Andreotti's
argument to all nodal curves.

The boundary of the Deligne--Mumford compactification
$\overline{\mathcal M}_g$ parametrizes singular nodal curves.
Caporaso constructed in \cite{caporaso} a compactification of the universal Picard
variety over $\overline{\mathcal M}_g$,  and later
studied the theta divisor on its degree $g-1$ fibers
\cite{captheta}. Alexeev proved in \cite{alexeev04} that the canonical compactified
Jacobian $\overline{P_C^{g-1}}$, endowed with its theta divisor and the action of
the generalized Jacobian, is a stable semi-abelic pair, a notion which he had earlier introduced in \cite{alexeev} to compactify the moduli space $\mathcal{A}_g$ of principally polarized abelian varieties. He also showed that this
construction compactifies the classical Torelli morphism $\mathcal{M}_g \to \mathcal{A}_g$.

Caporaso and Viviani characterized in \cite{Cap-Viv} the fibers of the compactified
Torelli morphism. For connected stable curves without separating nodes,
they proved that the stable semi-abelic pairs associated with the canonical
compactified Jacobians are isomorphic if and only if the curves are
\emph{Torelli equivalent}\footnote{In loc. cit. the authors call this equivalence relation ``$\Cone$-equivalence".} in the sense of  Definition~\ref{def:Torelli-eq}. The
direction relevant here is the reconstruction of the
Torelli-equivalence class from the stable semi-abelic pair.

The present paper offers an alternative description of the failure of injectivity via the canonical model, generalizing Andreotti's argument. This also provides a functorial refinement of Caporaso--Viviani's reconstruction: every isomorphism
between the stable semi-abelic pairs associated with connected nodal
curves canonically induces a projectivity between their canonical
triples. Introduced in Definition~\ref{def:canonical_model}, canonical triples consist of the closure of the canonical image of the curve, the marked
images of the nonseparating nodes, and the branch data of the
canonical map; more details on this are in the next subsection.

Along the way, we repair gaps in Andreotti's argument for smooth curves  \cite{andreotti} and in its modern treatment in \cite{acgh}.

\subsection{Overview of the results}

Let $C$ be a connected nodal curve of arithmetic genus $g$, and write
$C^{\mathrm{sep}}\subset C^{\mathrm{sing}}$ for the set of separating nodes.
Let $U_C\subset C$ be the complement of the base locus of $|\omega_C|$, and consider the canonical map
\[
\varphi_C\colon U_C\to
\Proj_C:=\mathbb P\bigl(H^0(C,\omega_C)^\vee\bigr).
\]
 Let $C^\dagger$ be obtained by normalizing $C$ at all
separating nodes and discarding the genus-zero connected components. The
canonical map induces a morphism $\varphi_{C^\dagger}\colon C^\dagger\to\Proj_C$.
We set
\[
\widehat C:=\overline{\varphi_C(U_C)},\qquad
\mathcal S_{\widehat C}:=
\varphi_C\bigl(C^{\mathrm{sing}}\setminus C^{\mathrm{sep}}\bigr),\qquad
\mathcal B_{\widehat C}:=
\Br(\varphi_{C^\dagger})\setminus\mathcal S_{\widehat C},
\]
where $\Br(\varphi_{C^\dagger})$ is the branch locus of $\varphi_{C^\dagger}$.
The \emph{canonical triple} of $C$ is
\[
\Can(C):=(\widehat C,\mathcal S_{\widehat C},\mathcal B_{\widehat C})
\]
(see Definition~\ref{def:canonical_model}). An isomorphism of canonical triples
is a projectivity of the ambient projective spaces carrying the three entries
of one triple to the corresponding entries of the other. We denote by $\Isom_{\CT}(\Can(C),\Can(C'))$ the set of
such isomorphisms. 

Let $\overline{P_C^{g-1}}$ be the canonical  compactified Jacobian in degree $g-1$, let $\Theta(C) \subseteq \overline{P_C^{g-1}}$ be its theta divisor, and let $J(C)$ be the generalized Jacobian, parameterizing line bundles of degree zero on every irreducible
component of $C$. By \cite{alexeev04}, the datum  $A(C) := (J(C) \action \overline{P_C^{g-1}}, \Theta(C))$ is a \emph{stable semi-abelic pair} ($\SSAP$) in the sense of Alexeev \cite{alexeev}. Our first result can be stated as follows.

\begin{thma}[Theorem \ref{thm:main1}]
Let $C$ and $C'$ be connected nodal curves. There is a natural function
\[
\rho_{C,C'}\colon\Isom_{\SSAP}(A(C),A(C')) \to \Isom_{\CT}(\Can(C), \Can(C')).
\]
\end{thma}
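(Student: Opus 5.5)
The natural function will be built from data that every isomorphism of stable semi-abelic pairs carries along: a tangent space, a linear map between tangent spaces, and projective objects cut out intrinsically by the theta divisor. An isomorphism in $\Isom_{\SSAP}(A(C),A(C'))$ consists of an isomorphism of algebraic groups $\alpha\colon J(C)\to J(C')$ and an $\alpha$-equivariant isomorphism $\beta\colon\overline{P_C^{g-1}}\to\overline{P_{C'}^{g-1}}$ with $\beta(\Theta(C))=\Theta(C')$.

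\textbf{The projectivity.} The differential of $\alpha$ at the origin is a linear isomorphism $d\alpha\colon T_0J(C)\to T_0J(C')$. We have $T_0J(C)=H^1(C,\mathcal O_C)\cong H^0(C,\omega_C)^\vee$ by Serre duality, and this identification is canonical. So $d\alpha$ induces a projectivity
\[
\mathbb P(d\alpha)\colon\Proj_C\to\Proj_{C'},
\]
and we set $\rho_{C,C'}(\alpha,\beta)=\mathbb P(d\alpha)$. Since $d\alpha$ is determined by $(\alpha,\beta)$, this assignment is a well-defined function. Naturality means compatibility with composition, which follows from the chain rule. What remains is to prove that $\mathbb P(d\alpha)$ carries $\Can(C)$ to $\Can(C')$.

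\textbf{Intrinsic description of the canonical triple.} For each entry of $\Can(C)$ I would give a description using only the pair $A(C)$ and the identification $\mathbb P(T_0J(C))=\Proj_C$. On the open orbit-stratum of smooth points of $\Theta(C)$, the $J(C)$-action trivializes tangent spaces to $\overline{P^{g-1}_C}$ by $T_0J(C)$. This gives a Gauss map $\mathcal G\colon\Theta(C)^{\sm}\dashrightarrow\Proj_C^\vee$, intrinsic and equivariant under $(\alpha,\beta)$.

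\begin{itemize}
\item \emph{Nonlinear components of $\widehat C$.} Following Andreotti and using the earlier results of the paper, the branch locus of the projection from the normalized Gauss graph to $\Proj_C^\vee$ is the union of the duals of the nonlinear components of $\widehat C$, together with hyperplanes dual to the branch points. Biduality, valid in characteristic zero, recovers those components and $\mathcal B_{\widehat C}$.
\item \emph{The node images $\mathcal S_{\widehat C}$.} The boundary strata of $\overline{P_C^{g-1}}$ are the non-free $J(C)$-orbits. Their stabilizers are subtori of $J(C)$. The tangent line of each one-dimensional stabilizer torus is a point of $\Proj_C$, namely the image of the corresponding nonseparating node under $\varphi_C$ (the residue functional at that node).
\item \emph{Linear components.} These are recovered by the incidence arguments stated earlier: as lines spanned by node images, or cut out by the Gauss-image data.
\end{itemize}

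Since $(\alpha,\beta)$ preserves the action, the theta divisor, the boundary stratification and the stabilizers, $\mathbb P(d\alpha)$ matches each of these constructions on $C$ with the corresponding one on $C'$. Hence it is an isomorphism of canonical triples.

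\textbf{Main obstacle.} I expect the hardest step to be the Gauss-map step for reducible curves and the hyperelliptic-type components. There one must show that the branch locus of the projection decomposes exactly into the duals of the nonlinear components, and that the branch data are detected intrinsically and are not confused with node images or separating-node contributions. The earlier results on the Gauss graph are invoked precisely here. The other steps reduce to equivariance bookkeeping.
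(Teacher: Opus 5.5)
Your definition of $\rho_{C,C'}$ as $\mathbb P(d_e\alpha_G)$, the equivariance of the Gauss map, and the use of biduality all match the paper. The gap is in recovering $\widehat C$ and $\mathcal B_{\widehat C}$: you never reduce to curves without separating nodes. Theorem~\ref{thm:branch-gamma}, which you invoke, is proved only for connected curves of genus $\ge 2$ with no separating nodes.

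Suppose $C^\dagger=\coprod_{i=1}^r C_i$ with $r\ge 2$. Then $A(C)\cong\prod_i A(C_i)$, and the Gauss image is $\coprod_i\mathbb P^\vee_{C_i}$, a union of disjoint proper linear subspaces of $\mathbb P_C^\vee$. Each Gauss-graph block is $\Upsilon_{A(C_i)}\times\prod_{j\ne i}P_j$, so it has positive-dimensional fibres over $\mathbb P^\vee_{C_i}$. Consequently there is no branch divisor of $\mathbb P_C^\vee$ to dualize. To repair this you must do four things.
\begin{itemize}
\item Split the Gauss graph into these blocks, and show that the finite part of the Stein factorization of each block is that of $\gamma_i$ for $A(C_i)$.
\item Dualize inside $\mathbb P_{C_i}$, which is cut out by the other blocks.
\item Match the factors of $C$ and $C'$ through the permutation that $\mathbb P(d_e\alpha_G)^\vee$ induces on the blocks.
\item Treat genus-one factors separately, since they give isolated points of $\widehat C$ that no branch locus detects.
\end{itemize}
This is Lemma~\ref{lem:gauss-blocks-separating} together with the case analysis in the paper's proof. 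Your ``main obstacle'' paragraph mentions separating-node contributions but gives no argument for them.

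Your treatment of $\mathcal S_{\widehat C}$ takes a genuinely different route from the paper, and it is sound. The paper characterizes $\mathcal S_{\widehat C}$ through the Gauss graph: $q\in\mathcal S_{\widehat C}$ iff $\Theta_H$ meets a boundary orbit for general $H\ni q$ (Proposition~\ref{prop:singularities-recovering}). Proving this requires a quasi-Eulerian orientation adapted to a $C_1$-set (Proposition~\ref{prop:q_orient}). You read $\mathcal S_{\widehat C}$ off the stabilizers instead, and the argument goes through as follows.
\begin{itemize}
\item The stabilizer of $(S,[I])$ is the torus $\ker(J(C)\to J(C_S))$, of dimension $b_1(\Gamma_C)-b_1(\Gamma_C\setminus S)$.
\item On a nonempty stratum this dimension is $1$ exactly when $S$ is a $C_1$-set, and every $C_1$-set indexes a nonempty stratum.
\item By the normalization sequence and Serre duality, the tangent line of this torus is spanned by the residue functionals at the nodes of $S$, i.e.\ it is the point $\varphi_C(S)$.
\end{itemize}
Since stabilizers transform under $\alpha_G$, this recovers $\mathcal S_{\widehat C}$ with no Gauss-map input. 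It also works factorwise when there are separating nodes. You should write these verifications out. Note in particular that a one-dimensional stabilizer corresponds to a $C_1$-set, not to a single node.

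Finally, the linear components come from Proposition~\ref{prop:reconstruct-linear}, not simply from lines spanned by node images. A line through three collinear points of $\mathcal S_{\widehat C}$ need not lie in $\widehat C$, which is why the invariant of Lemma~\ref{lem:linear-invariant} is needed.
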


Our proof of Theorem~A is independent of Caporaso and Viviani's reconstruction result in \cite{Cap-Viv}; we follow Andreotti's strategy. 

The main 
input of the proof is the rational Gauss map
\[
\mathcal G\colon\Theta(C)\dashrightarrow \mathbb{P}_C^\vee,
\]
which sends a smooth point of the theta divisor to its tangent
hyperplane (see Section~\ref{sec:gauss-graph}). Our argument reconstructs the canonical triple in four steps.

In the first step, we reduce to stable curves without separating
nodes. Write $C^\dagger=\coprod_i C_i$. Lemma~\ref{lem:gauss-blocks-separating} identifies the irreducible
components of the image of the Gauss map with the subspaces
$\mathbb{P}_{C_i}^{\vee}$ and shows that each such subspace, together with
$A(C)$, intrinsically determines $A(C_i)$. The reconstruction can
therefore be carried out separately on the curves $C_i$, each of
which has no separating nodes. For the factors of genus at least
$2$, Remarks~\ref{rem:canonical_model_sep_stab_isomorphic} and
\ref{rem:SSAPstab} then allow us to pass to the stabilization; the
genus-one case is treated directly. This intrinsic
recovery of the factors also fills a small gap in the reduction to
curves without separating nodes in the proof of
\cite[Theorem~2.1.7]{Cap-Viv} (see Remark~\ref{rem:CV-gap}).

In the second step, we recover the nonlinear components of
$\widehat C$ and the  branch data $\mathcal B_{\widehat C}$.
Let
\[
\Upsilon\subseteq\Theta(C)\times \mathbb{P}_C^\vee
\]
be the closure of the graph of $\mathcal G$, which we call the
\emph{Gauss graph}, and let $\widehat\Upsilon$ be its normalization.
Projection onto the second factor induces a generically finite
morphism
\[
\widehat\gamma\colon\widehat\Upsilon\longrightarrow \mathbb{P}_C^\vee.
\]
Notice that $\Proj_C^\vee :=\mathbb {P}(H^0(C,\omega_C))$ is dual to the
projective space $\Proj_C := \mathbb{P}(H^0(C,\omega_C)^\vee)$ containing the canonical curve.

The irreducible components of the Gauss graph are indexed by the stable
multidegrees $\ud\in\Sigma(C)$. We call such a multidegree \emph{quasi-Eulerian} if it is induced by
an orientation of the dual graph for which, at every vertex, the
difference between the numbers of incoming and outgoing edges has
absolute value at most one.

We compute the branch locus of $\widehat\gamma$ componentwise. For each
$\ud$, we compare the restriction $\widehat\gamma_{\ud}$ with an
auxiliary morphism $\rho_{\ud}$ obtained from a Gauss map on the
symmetric product of the normalization of $C$. Proposition
\ref{prop:rho-branch} computes the branch locus of $\rho_{\ud}$, and
Proposition~\ref{prop:equal} identifies it with that of
$\widehat\gamma_{\ud}$.

For an arbitrary stable multidegree, this branch locus may detect only
some of the nonlinear components of $\widehat C$. If $\ud$ is
quasi-Eulerian, however, all the nonlinear components are detected.
Proposition~\ref{prop:q_orient} establishes the existence of such a
multidegree. We obtain the following formula.

\begin{thmb}[Theorem~\ref{thm:branch-gamma}]
Let $C$ be a connected  nodal curve of genus $g\geq 2$ with no separating nodes.  
Let $\ud\in \Sigma(C)$ be a stable multidegree. Let $\widehat\Upsilon_{\ud}$ be the normalization of the component of the graph of the Gauss map $\mathcal G\colon \Theta(C)\dashrightarrow\mathbb{P}(T_0J(C))^\vee$ corresponding to $\ud$. The branch locus of the morphism $\widehat\gamma_{\ud}\colon \widehat\Upsilon_{\ud}\longrightarrow \Proj_C^\vee$ is
 \[
    \Br(\widehat{\gamma}_{\ud}) = \left(\bigcup_{W\in \mathcal B_{\ud}} W^\vee\right) 
    \cup\left(\bigcup_{p\in \mathcal B_{\widehat{C}}}\Lambda_p^\vee\right),
    \]
    where $\mathcal B_{\ud}$ is a subset of components of the canonical model $\widehat{C}$ of $C$ explicitly given in terms of $\ud$ (Equation~\eqref{eq:Bd}), and where $W^\vee\subset \Proj_C^\vee$ and $\Lambda_p^\vee\subset \Proj_C^\vee$ denote the duals of the curve $W$ and of the point $p$ respectively.

     Now assume $\ud$ is also quasi-Eulerian. If $\widehat\gamma\colon \widehat\Upsilon\longrightarrow \Proj_C^\vee$ denotes the morphism from the normalization of the entire graph of the Gauss map, then  
\[
\Br(\widehat\gamma_{\ud})=\Br(\widehat{\gamma})=
\left(\bigcup_{W\in \Irr_{\geq 2}(\widehat C)} W^\vee\right)
\cup
\left(\bigcup_{p\in\mathcal B_{\widehat C}}\Lambda_p^\vee\right),
\]
where $\Irr_{\geq 2}(\widehat C)$ denotes the set of components of $\widehat{C}$ of degree at least 2.
\end{thmb}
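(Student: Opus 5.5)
The proof will follow the outline sketched in the introduction, componentwise in the stable multidegree $\ud$. Fix $\ud\in\Sigma(C)$ and let $\Theta_{\ud}$ be the corresponding component of $\Theta(C)$. It is the closure of the image of the Abel-type map from a product of symmetric powers of the components of the normalization $C^\nu$: degree $d_v$ on the component $C_v$, twisted by the gluing data at the nodes. The Gauss map on this product sends a general effective divisor $D$ to the hyperplane of canonical sections vanishing on $D$, that is, the span of $\varphi_C(D)$ in $\Proj_C$.

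\textbf{Step 1: branch locus of $\rho_{\ud}$ (Proposition~\ref{prop:rho-branch}).} Call this Gauss map on the symmetric product $\rho_{\ud}$. Its fiber over a general hyperplane $H\in\Proj_C^\vee$ is the set of ways of splitting $H\cap\widehat C$, pulled back to $C^\nu$, into sub-divisors of multidegree $\ud$, with the constraint imposed by the node points. This fiber becomes nonreduced exactly when $H$ is not transverse to $\varphi(C_v)$ at some point. There are two sources of this. First, $H$ may be tangent to a component $W=\varphi(C_v)$ of degree $\ge 2$, so that $H\in W^\vee$. Second, $H$ may pass through a branch point $p$ of $\varphi_{C^\dagger}$, so that $H\in\Lambda_p^\vee$; this covers the hyperelliptic-type components, where $C_v\to W$ has degree $2$ and the two preimages of a point get exchanged.

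Tangency to $W$ creates branching only if the tangency point's double can actually be split between the two summands. That is possible only when the component $C_v$ is assigned a degree $d_v$ satisfying $0<d_v<\deg(\omega|_{C_v})$, or some similar combinatorial condition. This condition defines $\mathcal B_{\ud}$ in \eqref{eq:Bd}. Linear components never contribute, since their dual is not a hypersurface. Node images $\mathcal S_{\widehat C}$ do not contribute either, because of the normalization at those points. I would organize this as a local computation: $\rho_{\ud}$ is a product, over components, of maps that are locally $\mathrm{Sym}^{d_v}$ of the coordinate, and their branching is the discriminant.

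\textbf{Step 2: $\Br(\widehat\gamma_{\ud})=\Br(\rho_{\ud})$ (Proposition~\ref{prop:equal}).} The map $\rho_{\ud}$ factors birationally through $\widehat\Upsilon_{\ud}$. I would use purity of the branch locus on the normal variety $\widehat\Upsilon_{\ud}$ together with Zariski's main theorem. The point is to show that the exceptional loci, where the Abel map contracts positive-dimensional linear series $g^1_d$ and where the boundary strata of $\overline{P^{g-1}_C}$ lie, map to subsets of $\Proj_C^\vee$ of codimension at least $2$, or into loci already accounted for. Away from these sets the two maps agree over a big open subset of the base. Both source varieties are normal and finite over that open set, so the two branch divisors coincide. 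This combination of Steps 1 and 2 gives the first displayed formula.

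\textbf{Step 3: the quasi-Eulerian case.} Assume $\ud$ is quasi-Eulerian; such a $\ud$ exists by Proposition~\ref{prop:q_orient}. From the balancing inequality at each vertex, I would check that every component of degree $\ge 2$ satisfies the splitting condition, so that $\mathcal B_{\ud}=\Irr_{\ge 2}(\widehat C)$. Next, $\widehat\Upsilon=\coprod_{\ud}\widehat\Upsilon_{\ud}$, because normalization separates the components. Hence $\Br(\widehat\gamma)=\bigcup_{\ud}\Br(\widehat\gamma_{\ud})$. Each $\mathcal B_{\ud}$ is contained in $\Irr_{\ge 2}(\widehat C)$, so every term of this union is contained in the term for a quasi-Eulerian $\ud$. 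This gives the equality of all three sets.

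I expect the main obstacle to be Step 2. It requires controlling the Gauss graph over the non-finite locus of the Abel map and over the boundary of the compactified Jacobian, and showing that no extra branch divisors appear there. This is precisely where the classical arguments in \cite{andreotti,acgh} have gaps.
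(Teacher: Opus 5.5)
Your Step~3 is fine and matches the paper: Lemmas~\ref{lem:numerical} and~\ref{lem:linear-classification} give $\mathcal B_{\ud}=\Irr_{\ge2}(\widehat C)$ for quasi-Eulerian $\ud$, and $\widehat\Upsilon=\coprod_{\ud}\widehat\Upsilon_{\ud}$. The gap is in Step~2, and it also affects Step~1.

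\textbf{The exceptional loci do not lie over codimension $\ge2$.} In general they lie over divisors of $\Proj_C^\vee$ that are not in the formula. Take a smooth genus-$4$ curve on a smooth quadric $Q$. The points $(D,H)$ of the Gauss graph with $D$ moving in one of the two $g^1_3$'s lie over the dual quadric $Q^\vee$, since every plane through a ruling line is tangent to $Q$. Here $Q^\vee$ is a divisor, and it is not a component of the branch locus $\widehat C^\vee$. For singular $C$ and suitable $\ud$, boundary points of $\Theta_{\ud}$ occur in $\Upsilon_{\ud}$ over general points of $\Lambda_q^\vee$, $q\in\mathcal S_{\widehat C}$; this is exactly what Proposition~\ref{prop:singularities-recovering} exploits.

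So you must prove that neither map branches over such divisors, and Zariski's main theorem does not give this. The paper works with the explicit graph $\Omega_{\ud}\subseteq S_{\ud}(C^\nu)\times\Proj_C^\vee$ and proves it is Cohen--Macaulay: it is the zero locus of a section of a rank $g-1$ bundle and has the expected dimension (Proposition~\ref{prop:Omega}). Hence $\rho_{\ud}$ is flat over $\U_{\ud}$, and branching is detected by fiber cardinality (Proposition~\ref{prop:branch_fiber_number}). This makes $\rho_{\ud}$ \'etale, and $\Omega_{\ud}$ smooth, over general points of those divisors. That is what allows $\beta_{\ud}$ to become a finite birational morphism over a big open set, which yields $\Br(\widehat\gamma_{\ud})\subseteq\Br(\rho_{\ud})$. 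Your ``transverse implies unbranched'' in Step~1 also needs this flatness.

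\textbf{``Both source varieties are normal'' is unjustified.} The symmetric product carries only a rational map to $\Proj_C^\vee$, and branch loci are not birational invariants; this is exactly the failure of Andreotti's Lemma~3 (Remark~\ref{rem:contra-example-Andreotti}). The graph $\Omega_{\ud}$ is known only to be Cohen--Macaulay and generically smooth, not normal. Ramification of $\rho_{\ud}$ along a non-normal divisor of $\Omega_{\ud}$ could disappear on $\widehat\Upsilon_{\ud}$, so $\Br(\rho_{\ud})\subseteq\Br(\widehat\gamma_{\ud})$ does not follow.

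The missing idea is Proposition~\ref{prop:rho-ramif}. Over each $W^\vee$ with $W\in\mathcal B_{\ud}$, and each $\Lambda_p^\vee$ with $p\in\mathcal B_{\widehat C}$, it exhibits explicit ramification components $\Ram_{\ud,v}$ and $\Ram_{\ud,p}$ whose images in $S_{\ud}(C^\nu)$ are divisors. Over $\U_0$, the locus where $\kappa_{\ud}$ fails to be an isomorphism has codimension $\ge2$. So these components meet the smooth open set $\kappa_{\ud}^{-1}(\V_{\ud,0})\cong\V_{\ud,0}$, where normalization is an isomorphism, and the ramification survives on $\widehat\Upsilon_{\ud}$. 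Your proposal has no substitute for this step, which is precisely the one missing in \cite{acgh}.
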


The nonlinear components $W^\vee$ of the branch locus recover the nonlinear
components $W\subseteq\widehat C$ by projective biduality, while the
hyperplanes $\Lambda_p^\vee$ recover the points
$p\in\mathcal B_{\widehat C}$. Thus the second step reconstructs the
nonlinear part of $\widehat C$ together with the  branch data
$\mathcal B_{\widehat C}$.


The proof of this formula also repairs two genuine gaps in the
classical literature for smooth curves. Andreotti's computation relies on
\cite[Lemma~3]{andreotti}, whose asserted birational invariance of the
branch locus is incorrect; Remark
\ref{rem:contra-example-Andreotti} gives a counterexample. The
treatment in \cite[p.~245]{acgh} avoids this lemma, but its description
of the ramification locus on p.~247 is also incorrect, as already observed
by Debarre \cite[p.~691]{debarre-theta}. More concretely, Example
\ref{ex:acgh} exhibits ramification points corresponding to reduced
divisors and unramified points corresponding to nonreduced divisors. Consequently, the argument in \cite{acgh} does not justify its
description of the branch locus, although the resulting branch-locus
formula is correct. Proposition~\ref{prop:rho-ramif} identifies the
codimension-one ramification components needed for the computation,
 and
Proposition~\ref{prop:equal} identifies the resulting branch locus
with that of the intrinsic Gauss graph. These results provide the missing
argument; see Remark~\ref{rem:correction-ACGH}.

In the third step, we recover $\mathcal S_{\widehat C}$ intrinsically
from the stable semi-abelic pair. Let
$\zeta\colon\Upsilon\to\Theta(C)$ and
$\gamma\colon\Upsilon\to \mathbb{P}_C^\vee$ be the two projections, and set
\(
\Theta_H:=\zeta\bigl(\gamma^{-1}(H)\bigr).
\)
Proposition~\ref{prop:singularities-recovering} shows that a point
$q\in \mathbb{P}_C$ belongs to $\mathcal S_{\widehat C}$ if and only if, for a
general $H\in\Lambda_q^\vee$, the set $\Theta_H$ meets a
nonmaximal $J(C)$-orbit of $\overline{P_C^{g-1}}$. Thus the boundary
stratification recovers precisely the marked images of the
nonseparating nodes.

The fourth step reconstructs the linear components of the canonical curve $\widehat C$.
Their duals have codimension greater than one, so they do not produce
divisorial components of the Gauss branch locus and are not recovered
by the preceding biduality argument. Linear components of $\widehat{C}$ are classified into types $0,\ldots, 3$ in Definition~\ref{def: type}. Using the nonlinear components and the
sets $\mathcal S_{\widehat C}$ and $\mathcal B_{\widehat C}$ recovered
in the preceding two steps, Lemma~\ref{lem:reconstruct-type-one-two} reconstructs the components of
types $1$ and $2$ through intrinsic incidence conditions. The
components of type $0$ are  recovered as the lines containing at
least three of the remaining points of $\mathcal B_{\widehat C}$.

The reconstruction of the components of type $3$ is more delicate.
Their presence is detected by a collinearity--incidence criterion on
triples of points of $\mathcal S_{\widehat C}$, which is reduced by
projection to the corresponding problem for an auxiliary curve of
smaller complexity. \emph{Nodal $1$-schemes} (curves equipped with two
labeled sets of isolated points) provide the inductive framework for
this argument, allowing curves of smaller genus and isolated points to
be treated uniformly. This yields Proposition~\ref{prop:reconstruct-linear} and its specialization to curves,
Proposition~\ref{prop:curves-reconstruct-linear}, and completes the
proof outline for Theorem~A.

The compactified Jacobian $\Pic$ of a connected nodal curve $C$ carries a  canonical involution. We let $\overline{P^E_C}\subset \Pic$ be the closure of all fixed orbits of maximal dimension. This is a distinguished irreducible component of $\Pic$ if and only if the dual graph of $C$ is Eulerian, otherwise $\overline{P^E_C}$ is empty. The restriction of $\Theta(C)$  to $\overline{P^E_C}$ gives rise to a stable semi-abelic pair $A^E(C)$  (see Remark~\ref{rem:S-definition} for more details). We obtain the following improvement of Theorem A.

\begin{thmc}[Theorem \ref{thm:main-Euler}] Let $C$ and $C'$ be connected nodal curves. Assume that the dual graph of $C$  is Eulerian. Then there is a natural function
\[
\rho^E_{C,C'}\colon \Isom_{\SSAP}(A^E(C),A^E(C')) \to \Isom_{\CT}(\Can(C), \Can(C')).
\]
\end{thmc}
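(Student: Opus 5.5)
The plan is to rerun the four-step reconstruction behind Theorem~A, using only the distinguished component $\overline{P^E_C}$ and the restricted theta divisor $\Theta^E(C):=\Theta(C)\cap\overline{P^E_C}$ in place of the whole of $\Pic$ and $\Theta(C)$.

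First I reduce to the case without separating nodes. An Eulerian dual graph has no bridges, since every edge of an Eulerian graph lies on a cycle. So $C$ has no separating nodes, and $C^\dagger$ is $C$ with its genus-zero components discarded; this is harmless after stabilization, by Remarks~\ref{rem:canonical_model_sep_stab_isomorphic} and~\ref{rem:SSAPstab}. An isomorphism $A^E(C)\cong A^E(C')$ forces $\overline{P^E_{C'}}\neq\emptyset$, hence the dual graph of $C'$ is Eulerian as well. It also identifies the acting tori and abelian parts, giving an isomorphism $J(C)\cong J(C')$. Taking the differential at the origin then gives a linear isomorphism $T_0J(C)\cong T_0J(C')$, i.e.\ $H^0(C,\omega_C)^\vee\cong H^0(C',\omega_{C'})^\vee$. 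This induces a projectivity $\Proj_C\cong\Proj_{C'}$ and its dual, and it is the candidate output of $\rho^E_{C,C'}$.

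The key point is that $\overline{P^E_C}$ is the irreducible component of $\Pic$ attached to a multidegree $\ud_E$ coming from an Eulerian orientation. At every vertex the in/out difference is $0$, so $\ud_E$ is in particular quasi-Eulerian. The Gauss graph of $\Theta^E(C)$ is therefore exactly the component $\Upsilon_{\ud_E}$, and it is defined intrinsically from $A^E(C)$, because the Gauss map uses only the translation action of $J(C)$ on smooth points. By the second part of Theorem~B,
\[
\Br(\widehat\gamma_{\ud_E})=\Bigl(\bigcup_{W\in\Irr_{\geq2}(\widehat C)}W^\vee\Bigr)\cup\Bigl(\bigcup_{p\in\mathcal B_{\widehat C}}\Lambda_p^\vee\Bigr).
\]
Biduality then recovers the nonlinear components and $\mathcal B_{\widehat C}$, compatibly with the projectivity above.

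Next I recover $\mathcal S_{\widehat C}$. I apply Proposition~\ref{prop:singularities-recovering} with $\Theta_H$ replaced by $\zeta(\gamma^{-1}(H))\subset\Theta^E(C)$, i.e.\ using only the $\ud_E$-component, and use the nonmaximal $J(C)$-orbits of $\overline{P^E_C}$. I expect this to be the main obstacle. One must check that the proof of that proposition only uses the component of a quasi-Eulerian multidegree. The worry is that for a general $H\in\Lambda_q^\vee$ with $q$ the image of a nonseparating node, the locus $\Theta_H$ might meet the boundary of $\Pic$ only through other components. I would go back to the local description near a node: the divisors whose Gauss hyperplane contains $q$ are those containing both preimages of the node. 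Such a divisor degenerates to a sheaf failing to be locally free at that node, and this happens inside every component $\overline{P^{\ud}_C}$ whose multidegree allows the corresponding edge to be "removed". For an Eulerian $\ud_E$, removing the edge gives in/out differences $\pm1$ at its endpoints, which is still stable. So the boundary stratum lies in $\overline{P^E_C}$. Conversely, for $q\notin\mathcal S_{\widehat C}$ the original argument applies verbatim, since it shows that no component meets the boundary.

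Finally, the linear components follow from Lemma~\ref{lem:reconstruct-type-one-two} and Proposition~\ref{prop:curves-reconstruct-linear}. These use only the data already recovered, so they apply unchanged. Naturality follows because every step is functorial in the isomorphism, exactly as in the proof of Theorem~\ref{thm:main1}.
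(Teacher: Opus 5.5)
Your outline is the paper's proof. You reduce to curves without separating nodes, which is automatic for $C$ and for $C'$ once $\overline{P^E_{C'}}\neq\emptyset$. You replace $\Upsilon_{A(C)}$ by $\Upsilon_{\ud_E}$ and use the quasi-Eulerian case of Theorem~B to get the nonlinear components and $\mathcal B_{\widehat C}$. Then you recover $\mathcal S_{\widehat C}$ and finish with Proposition~\ref{prop:curves-reconstruct-linear}. You do not treat genus $\leq 1$. Genus one is not vacuous: cycles of rational curves are Eulerian, and there Theorem~B and Remark~\ref{rem:SSAPstab} do not apply, so this case must be handled through torus rank as in Theorem~\ref{thm:main1}.

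The step you single out as the main obstacle is already Proposition~\ref{prop:singularities-recovering}(ii), and its proof is exactly the check you describe. For Eulerian $\Gamma_C$, every quasi-Eulerian orientation is Eulerian. Hence the orientation that Proposition~\ref{prop:q_orient} supplies for the $\Cone$-set $E_q=\varphi^{-1}(q)$ always has multidegree $\ud_E$. The boundary point $(E_q,[\mathcal O_{C_q}(D^{\sm})])$ it produces therefore lies in $\overline{P^{\ud_E}_C}$.

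Your own local sketch of this step, however, is wrong in its details and would not work as written.
\begin{itemize}
\item All nodes of the $\Cone$-set containing your node map to $q$, so the stratum reached is indexed by the whole set $E_q$, not by one edge. Removing a single edge of a $\Cone$-set of size at least two turns the other edges into bridges. Then there is no stable multidegree, and that stratum is empty.
\item The divisor must lie in $\mathbb T_{\ud_E}$ (Definition~\ref{def:T}). So it contains exactly one branch over each node of $E_q$ (the incoming one), not both preimages.
\item Stability on $C_q$ is not the condition that in/out differences are $\pm1$ at the endpoints. It requires the restricted orientation to be strong on each component of $\Gamma_C\setminus E_q$. You also need the bounds $1\le d_v\le k_v-1$ at the endpoints in order to choose $D\le\nu^*\varphi^*(H)$.
\end{itemize}
A corrected version of your idea does work, because an Eulerian orientation satisfies all of these conditions. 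The edges of $E_q$ automatically form a directed cycle in the quotient. Each restriction has $\Delta=0$ except for $\pm1$ at its two attachment vertices, so a cut count on the $2$-edge connected component shows it is strong. Finally, $d_v=g_v-1+\delta_v/2$ satisfies the bounds by stability.
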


Next, we compare canonical-triple isomorphisms with $T$-isomorphisms. A node of a curve is \emph{separating} if its normalization disconnects the curve. Two
nonseparating nodes of a curve form a \emph{separating pair} if their simultaneous normalization disconnects the curve. Separating pairs generate an equivalence
relation on the set of nonseparating nodes.
 A
\emph{$T$-isomorphism} between connected stable curves without separating nodes $C$ and $C'$ is an isomorphism of their
normalizations carrying the collection of reduced divisors lying over these
equivalence classes bijectively onto the corresponding collection for $C'$. Two curves are Torelli equivalent if and only if there is a $T$-isomorphism between them (see Definition \ref{def:Torelli-eq}). 
 Write $\Isom_{\TT}(C,C')$ for the set of $T$-isomorphisms. 

\begin{thmd}[Theorem~\ref{thm:T-equivalence}]
Let $C$ and $C'$ be connected stable curves without separating nodes. There is
a natural surjection
\[
\pi_{C,C'}\colon\Isom_{\TT}(C,C')\longrightarrow
\Isom_{\CT}(\Can(C),\Can(C')).
\]
Its fibers are described explicitly in terms of hyperelliptic involutions on
subcurves of $C$. In particular, $C$ and $C'$ are Torelli equivalent if and
only if $\Can(C)\cong\Can(C')$.
\end{thmd}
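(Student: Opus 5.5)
The plan is to build $\pi_{C,C'}$ from the functoriality of the canonical map, and then to treat surjectivity and the fibers by analysing the canonical map on each component of the normalization.

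\textbf{Construction of $\pi_{C,C'}$.} Write $\nu\colon C^\nu\to C$ and $\nu'\colon C'^\nu\to C'$ for the normalizations.

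1. Identify $H^0(C,\omega_C)$ with the space of sections of $\omega_{C^\nu}(\sum_{\text{nodes}}(p_i+q_i))$ having opposite residues at each pair $p_i,q_i$ over a node.

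2. Use that $C$ has no separating nodes. Then, by the standard description of the canonical map via separating pairs, $\varphi_C$ factors through the partial normalization at each equivalence class of nodes. More precisely, the linear system on a component $C_v$ depends only on the reduced divisors $D_S$ lying over the equivalence classes $S$ of nodes: two nodes in the same class have proportional residue conditions.

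3. A $T$-isomorphism $\sigma\colon C^\nu\to C'^\nu$ carries each $D_S$ to some $D'_{S'}$. It therefore carries the residue-constrained space of differentials to the corresponding space for $C'$, up to choosing residue signs on each class.

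4. I expect the sign ambiguity to be harmless. The constraint depends on the class $S$ only through the linear span of its residue conditions. So $\sigma$ induces a well-defined linear isomorphism $H^0(C',\omega_{C'})\cong H^0(C,\omega_C)$, up to scalar, and hence a projectivity $\Proj_C\to\Proj_{C'}$.

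5. This projectivity intertwines $\varphi_C\circ\nu$ with $\varphi_{C'}\circ\nu'\circ\sigma$. It therefore sends $\widehat C$ to $\widehat C'$, and sends node images, which are images of the points of the $D_S$, to node images. Branch data are preserved because $\sigma$ is an isomorphism compatible with the maps from $C^\dagger=C^\nu$-level components. This defines $\pi_{C,C'}$, and naturality under composition is clear.

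\textbf{Surjectivity.} Start from a projectivity $\alpha\colon\Can(C)\to\Can(C')$.

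1. Lift $\alpha$ component by component. For each irreducible component $W$ of $\widehat C$, the restriction of $\varphi_C$ to the components of $C^\nu$ mapping to $W$ is either birational onto $W$ or a double cover onto a rational normal curve (the hyperelliptic case). The lines (degree-$1$ components) are handled like the rational case.

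2. For birational components, $\alpha|_W$ lifts uniquely to the normalizations.

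3. For double covers, the cover is determined by its branch divisor. That divisor is exactly recorded by $\mathcal B_{\widehat C}$ together with the points of $\mathcal S_{\widehat C}$ that are branch points. Since $\alpha$ preserves $\mathcal B$ and $\mathcal S$, a lift exists. It is unique up to the hyperelliptic involution of that component, or of the subcurve mapping 2:1.

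4. Check that the lifted isomorphism sends the fibers over points of $\mathcal S_{\widehat C}$ to those over $\mathcal S_{\widehat C'}$. Granting the paper's identification of $\varphi^{-1}$ of a node image with the divisor $D_S$ of an equivalence class, it then carries $\{D_S\}$ bijectively onto $\{D'_{S'}\}$, so it is a $T$-isomorphism mapping to $\alpha$.

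\textbf{Fibers and main obstacle.} Two $T$-isomorphisms have the same image exactly when they differ by an automorphism of $C^\nu$ that preserves the $D_S$ and induces the identity on $\Proj_C$. Such an automorphism must be trivial on birationally embedded components and equal to a hyperelliptic involution on each maximal subcurve mapped 2:1. This gives the claimed description of the fibers. The final equivalence then follows from Definition~\ref{def:Torelli-eq}: Torelli equivalence means $\Isom_{\TT}\neq\emptyset$, and $\pi_{C,C'}$ is surjective.

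The main obstacle is the precise control of the canonical map on a nodal curve without separating nodes. One must show that:
\begin{itemize}
\item the preimage of each point of $\mathcal S_{\widehat C}$ is exactly the divisor $D_S$ of one equivalence class;
\item on non-hyperelliptic-type components the map is birational.
\end{itemize}
This is the nodal analogue of Noether's theorem and is needed in both directions. I would first try to prove it by a Riemann--Roch count on the partial normalizations at one or two nodes, using that a separating pair is exactly what makes $h^0(\omega_C(-p-q))$ drop by less than expected.
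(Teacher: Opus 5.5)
Your proposal has two genuine gaps, one in each half of the argument.

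\textbf{Construction of $\beta_f$.} Step~4 is where the real work lies, and the reason you give there is false. A $\TT$-isomorphism $f$ respects the sets $D_S$ but not the pairing of branches into nodes. So $f^*$ of a section of $\omega_{C'}$ generally violates the residue conditions of $C$, and the span of the residue conditions of a class is \emph{not} preserved.

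Take the banana curves of Example~\ref{exa:banana-example} with $f=\mathrm{id}$ on $C_1\sqcup C_2$. The admissible residue vectors on $(p_1,q_1,p_2,q_2)$ are the multiples of $(1,-1,-1,1)$ for $C$, but the multiples of $(1,-1,1,-1)$ for $C'$.

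The correction is therefore not a sign per class but a sign $s_v$ on each component $C_v^\nu$ (here $-1$ on $C_2$). Such signs exist only under a condition. Let $\tau\colon H(C)\to H(C')$ be the bijection of branches induced by $f$, and define the edge discrepancies $\epsilon_e$ by $r_{\tau(\iota_C(h))}=\epsilon_e r_{\tau(h)}$ for all $r\in R(C')$ (Lemma~\ref{lem:RC-prop}(ii)). You need $\prod_{e\in E(Q)}(-\epsilon_e)=1$ for every cycle $Q\subseteq\Gamma_C$.

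This cocycle condition is the heart of Lemma~\ref{lem:beta_unique}. A $\Cone$-set meeting $Q$ is contained in $E(Q)$, so the branches of $Q$ are carried onto those of a $2$-regular subgraph $Q'\subseteq\Gamma_{C'}$. This $Q'$ supports a residue vector that is nonzero on all of those branches, and transporting it around $Q$ gives the product formula. Only then is $L_f\xi|_{C_v^\nu}:=s_v\,(f|_{C_v^\nu})^*(\nu_{C'}^*\xi)$ a section of $\omega_C$, and the signs disappear after projectivization.

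\textbf{Surjectivity.} Lifting $\alpha|_W$ component by component presupposes something you never prove. Namely, $\alpha$ must send a component $W$ over which $C^\nu\to\widehat C^\nu$ has degree $1$ to a component over which the corresponding map for $C'$ also has degree $1$.

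The branch data separate birational components from irreducible honest hyperelliptic ones, since only the latter carry points of $\mathcal B_{\widehat C}$. They do not separate birational components from honest hyperelliptic pairs. A rational component mapped birationally onto $W$ and a pair $Z_1\cup Z_2$ both give a rational $W$ with $W\cap\mathcal B_{\widehat C}=\emptyset$. Yet the cover of $W^\nu$ is $W^\nu$ in the first case and $W^\nu\sqcup W^\nu$ in the second. So ``a double cover is determined by its branch divisor'' cannot tell you what to build over $\alpha(W)$.

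The paper makes the degree $e_W$ an invariant of the triple, as follows.
\begin{enumerate}
\item[(a)] For $q\in\mathcal S_F\cap W$, it recovers the connected components of $\widehat C_q$ from $(F,\mathcal S_F)$ via Lemmas~\ref{lem:recover_YCq}(ii) and~\ref{lem:reconstruct-type-three}.
\item[(b)] It sets $x\sim_q y$ on $\nu_F^{-1}(q)$ when the projections from $q$ land in the same component.
\item[(c)] Each component of $C_q$ contains exactly two branches over $q$, because the quotient by the $\Cone$-set is a cycle. This gives $e_W=2/|[x]_{\sim_q}|$.
\item[(d)] If $W\cap\mathcal S_F=\emptyset$, then $C$ is smooth and $e_W=2$ exactly when $\mathcal B_F\neq\emptyset$.
\end{enumerate}
Only after this does $(\nu_F|_{W^\nu})^{-1}(\mathcal B_F\cap W)$ determine the cover.

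By contrast, the facts you single out as the main obstacle are already available. That fibres over $\mathcal S_{\widehat C}$ are the $D_S$ is Corollary~\ref{cor:C_1-set}, and birationality away from honest hyperelliptic subcurves follows from Proposition~\ref{prop:catanese_hh}. Finally, in your fibre description the kernel should be $N_C=\prod_Z\langle\iota_Z\rangle$: on each honest hyperelliptic subcurve the automorphism may be either the identity or the involution.
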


 Notice that Theorems A and D imply that if there is an isomorphism of SSAPs $A(C)\cong A(C')$ for connected stable curves $C$ and $C'$ without separating nodes, then $C$ and $C'$ are Torelli equivalent (see Corollary \ref{cor:CV}). This recovers  the reconstruction implication of Caporaso and Viviani \cite{Cap-Viv}. 

Finally, for a smooth curve, the strong form of the classical Torelli theorem
asserts that an isomorphism of principally polarized Jacobians is
induced by an isomorphism of curves, up to the sign involution
\cite[Theorem~12.1]{MilneJac}. Passing to the canonical triple of a connected nodal curve $C$ with no separating nodes, we show that the kernel of the action of $\operatorname{Aut}_{\mathrm{SSAP}}(A(C))$
on $\operatorname{Can}(C)$ is generated by the canonical involution
of $A(C)$, and that every automorphism of $\Can(C)$ lifts to an
automorphism of $A(C)$. As a consequence, we give a formula comparing the automorphism group  of the SSAP $A(C)$ with that of the curve $C$ assuming that the dual graph of $C$ is 3-edge connected. This generalizes the classical statement for smooth curves.

\begin{thme}[Theorem~\ref{thm:main-aut}]
 Let $C$ be a connected nodal curve of genus $g \geq 2$ with no separating nodes. Then there is a short exact sequence of groups
\[
\{\id\}\to \langle\iota_{A(C)}\rangle \to \Aut_{\SSAP}(A(C)) \to \Aut_{\CT}(\Can(C)) \to \{\id\},
\]
where
\(\langle\iota_{A(C)}\rangle\cong\mathbb Z/2\mathbb Z\) and $\iota_{A(C)}$ is the canonical involution of $A(C)$. In particular, if the dual graph of $C$ is also $3$-edge-connected, we have 
\[
\Aut_{\SSAP}(A(C))\cong
\begin{cases}
    \begin{array}{ll}
      \Aut(C)\times \mathbb Z/2\mathbb Z,   &  \text{ if $C$ is not hyperelliptic;}  \\
       \Aut(C),   &  \text{ if $C$ is hyperelliptic.} 
    \end{array}
\end{cases}
\]
\end{thme}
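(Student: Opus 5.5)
We prove Theorem E in two parts: first the short exact sequence, then the identification of $\Aut_{\SSAP}(A(C))$ in the $3$-edge-connected case.

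\textbf{The map and its kernel.} Specialize Theorem~A to $C'=C$. The map $\rho_{C,C}\colon \Aut_{\SSAP}(A(C))\to \Aut_{\CT}(\Can(C))$ is a group homomorphism. This follows from naturality of the construction: every ingredient is functorial in isomorphisms of SSAPs, namely the Gauss map $\mathcal G$, the Gauss graph $\Upsilon$ with its normalization, the branch locus of Theorem~B, and the boundary strata used to recover $\mathcal S_{\widehat C}$. An isomorphism of $A(C)$ acts on $T_0J(C)$, hence on $\Proj_C$, and $\rho_{C,C}$ is this induced projectivity. Composition is therefore respected.

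The canonical involution $\iota_{A(C)}$ (the involution $L\mapsto \omega_C\otimes L^{-1}$, combined with $-1$ on $J(C)$) acts on $T_0J(C)$ by $-1$. It therefore induces the identity on $\Proj_C$, so it lies in the kernel. For the reverse inclusion, let $\alpha$ induce the identity projectivity. Then its differential on $T_0J(C)$ is a scalar $\lambda$. An automorphism of $J(C)$, the semiabelian group, with scalar differential preserving the character and cocharacter lattices forces $\lambda=\pm1$. After composing with $\iota_{A(C)}$ if needed, we may assume the action on $J(C)$ is trivial. Then $\alpha$ is the translation by an element of $J(C)$ that preserves $\Theta(C)$. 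Since $\Theta(C)$ is an ample divisor of a principal polarization, its stabilizer in $J(C)$ is trivial, which is Alexeev's condition on stable pairs. Hence $\alpha=\id$, and $\iota_{A(C)}\neq\id$ gives $\langle\iota_{A(C)}\rangle\cong\mathbb Z/2\mathbb Z$.

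\textbf{Surjectivity.} Given $\sigma\in\Aut_{\CT}(\Can(C))$, Theorem~D applied with $C'=C$ (after passing to the stabilization via Remarks~\ref{rem:canonical_model_sep_stab_isomorphic} and \ref{rem:SSAPstab}) lifts $\sigma$ to a $T$-isomorphism $\tau$ of $C$. By Caporaso--Viviani, more precisely by the construction of the SSAP from the normalization together with the gluing data up to $T$-equivalence, $\tau$ induces an isomorphism $\tilde\tau\in\Aut_{\SSAP}(A(C))$. It remains to check that $\rho_{C,C}(\tilde\tau)=\sigma$. The differential of $\tilde\tau$ on $T_0J(C)=H^0(\omega_C)^\vee$ is induced by pullback of differentials along $\tau$ on the normalization, and these respect the residue conditions. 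So the induced projectivity is the one $\pi_{C,C}(\tau)=\sigma$ defined in Theorem~D.

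\textbf{The $3$-edge-connected case.} If the dual graph is $3$-edge-connected, there are no separating pairs. The equivalence classes of nonseparating nodes are then singletons, and $T$-isomorphisms are exactly automorphisms of $C$, so $\Isom_{\TT}(C,C)=\Aut(C)$. The fibers of $\pi_{C,C}$, described in Theorem~D through hyperelliptic involutions on subcurves, reduce to the kernel $\{\id\}$ if $C$ is not hyperelliptic and to $\langle h\rangle$ if $C$ is hyperelliptic, where $h$ is the hyperelliptic involution. Hence $\Aut_{\CT}(\Can(C))\cong\Aut(C)$ or $\Aut(C)/\langle h\rangle$ respectively.

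To split the sequence, use the natural map $\Aut(C)\to\Aut_{\SSAP}(A(C))$. It commutes with $\iota_{A(C)}$, which is central since it acts by $-1$.

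In the non-hyperelliptic case, the map $\Aut(C)\to\Aut_{\CT}(\Can(C))$ is an isomorphism. This gives $\Aut_{\SSAP}(A(C))\cong\Aut(C)\times\mathbb Z/2\mathbb Z$.

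In the hyperelliptic case, $h$ maps to $\iota_{A(C)}$, because $h^*L\cong\omega_C\otimes L^{-1}$ for hyperelliptic $C$. Hence $\Aut(C)\to\Aut_{\SSAP}(A(C))$ is injective and hits the kernel. Comparing orders in the exact sequence, together with injectivity, shows it is an isomorphism. The identity $h^*L\cong\omega_C\otimes L^{-1}$ for nodal hyperelliptic curves (with stable multidegrees) is the one point needing care.

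\textbf{Main obstacle.} The main obstacle is the kernel computation. One must show that an SSAP automorphism acting trivially on $\Proj_C$ acts on $J(C)$ by $\pm1$, which uses the torus part and the lattice structure. One must also check the triviality of the stabilizer of $\Theta(C)$ on the nonnormal, reducible $\Pic$.
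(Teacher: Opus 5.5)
Your overall architecture is the same as the paper's. That is: $\rho_{C,C}$ is a homomorphism by naturality; $\iota_{A(C)}$ lies in the kernel because it acts by $-1$ on $T_0J(C)$; surjectivity comes from Theorem~D plus the Caporaso--Viviani isomorphism $\alpha_f$ and a check of its differential; and the $3$-edge-connected case is a specialization.

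\textbf{Main gap: the kernel.} The genuine gap is in showing the kernel is no larger than $\langle\iota_{A(C)}\rangle$, and this is where most of the work lies. After reducing to $\alpha_G=\id$, you assert two things: that $\alpha$ is translation by an element of $J(C)$, and that this translation is trivial because the stabilizer of $\Theta(C)$ is trivial ``by Alexeev's condition''. Neither step is justified.

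First, equivariance only says that $\alpha_P$ permutes $J(C)$-orbits and acts by a translation on each maximal orbit it preserves. But $\overline{P^{g-1}_C}$ has one maximal orbit $P_C^{\underline d}$ for each $\underline d\in\Sigma(C)$, all with trivial stabilizer. So a priori $\alpha_P$ may permute them, and the translation elements $b_{\underline d}$ may differ from orbit to orbit.

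Second, the SSAP axioms only require that $\Theta$ contain no orbit. Triviality of its translation stabilizer is not an axiom and would itself need proof.

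What is needed here is exactly Lemma~\ref{lem:forgetful-aut} (injectivity of $u_C$), whose proof uses real input:
\begin{itemize}
\item The principal polarization on the unique closed orbit $O_C$, a $J(C^\nu)$-torsor, kills the abelian part, giving $b_{\underline d}\in T$.
\item The gluing determinant $F_I(z)=\sum_{\epsilon\in S_{\underline d}}c_\epsilon z^\epsilon$, with all $c_\epsilon\neq 0$, cuts out $\Theta$ on $P_C^{\underline d}$. Since its support determines $\underline d$, the maximal orbits are not permuted, and a lift $\lambda$ of $b_{\underline d}$ satisfies $\lambda^{\epsilon-\epsilon'}=1$.
\item The differences $\epsilon-\epsilon'$ generate the lattice of integral cycles, i.e.\ the character lattice of $T$, which forces $b_{\underline d}=0$.
\end{itemize}
Your ``main obstacle'' paragraph senses a difficulty, but the body dispatches it by citing an axiom that does not exist. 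It also overlooks that the reduction to a single global translation is itself unproved.

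\textbf{Second gap: the smooth case.} Your argument for $\lambda=\pm1$ uses the torus $T$, of dimension $b_1(\Gamma_C)$, so it is empty when $C$ is smooth. For principally polarized abelian varieties, a scalar differential does not force $\lambda=\pm1$. For example, take an order-$4$ automorphism of an elliptic curve $E$ with $j=1728$, acting diagonally on $E^g$ with the product polarization. So curve-specific input is needed there. The paper treats the smooth case separately, using the automorphism form of the classical Torelli theorem.

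\textbf{Smaller points.}
\begin{itemize}
\item In the $3$-edge-connected case you should justify that there are no proper honest hyperelliptic subcurves. By Proposition~\ref{prop:catanese_hh}(iv), each connected component of $Z^c$ would meet $Z$ in a separating pair of nodes.
\item The differential of $\alpha_f$ is $\pm L_f$, which includes the component signs $s_v$. Plain pullback along a $T$-isomorphism need not satisfy the residue conditions, since it need not send the two branches of a node to the two branches of a node.
\end{itemize}
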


 As auxiliary tools, we also establish a  general-position theorem for reducible canonical curves (Theorem~\ref{thm:general_position}), and a detailed analysis of different Gauss maps (Section~\ref{sec:gauss-graph}). 



\subsection{Acknowledgments} To be added after the refereeing process.

\subsection{AI disclosure} 
Generative AI suggested a strategy for proving Lemma~\ref{lem:forgetful-aut} and helped us refine our argument
for Theorem~\ref{thm:T-equivalence}. It also assisted with the
exposition in the introduction and with checking language and
grammar throughout the paper.

\section{Preliminaries}\label{sec:preliminaries}

\subsection{Basic notions on graphs.} \label{subsec:graphs}

A \emph{graph} means a finite weighted undirected multigraph, possibly disconnected. 

Let $\Gamma$ be a graph. We let $E(\Gamma)$ and $V(\Gamma)$ be the set of edges and vertices of $\Gamma$, respectively. We denote by $w_\Gamma\colon V(\Gamma)\to \mathbb Z_{\ge0}$  the weight function of $\Gamma$. We let $c_{\Gamma}$ be the number of connected components of $\Gamma$. 
The first Betti number of $\Gamma$ is $b_1(\Gamma):=|E(\Gamma)|-|V(\Gamma)|+c_\Gamma$. The \emph{(arithmetic) genus} of  $\Gamma$ is 
\[
g_{\Gamma}:=\sum_{v\in V(\Gamma)}w_{\Gamma}(v)+b_1(\Gamma)+1-c_\Gamma.
\]

Consider a vertex $v\in V(\Gamma)$. We let $\Lo_\Gamma(v)$ be the set of loops attached to $v$. We set $g_v:=w_\Gamma(v)+|\Lo_\Gamma(v)|$. 
We denote by $\deg(v)$ the valence of $v$ in $\Gamma$ (with loops counting twice). 

An edge $e \in E(\Gamma)$ is \emph{separating} if its removal causes the number of connected components of $\Gamma$ to increase by $1$. Two distinct edges form a   \emph{separating pair} if  each of them is not separating and their removal causes the number of connected components of $\Gamma$ to increase by $1$.   
If $\Gamma$ is connected and it has no separating edges, then we say that $\Gamma$ is \emph{$2$-edge connected}.

Given a subset $V\subseteq V(\Gamma)$, we set $V^c:=V(\Gamma)\setminus V$.
 Given subsets
$V_1,V_2\subseteq V(\Gamma)$, let $E(V_1,V_2)$ be the set of edges of $\Gamma$ that join a vertex of $V_1$ with a vertex of $V_2$.  A \emph{cut} of $\Gamma$ is a subset $K\subseteq E(\Gamma)$ such that $K=E(V,V^c)$ for some $\emptyset \neq V\subsetneq V(\Gamma)$.  
 Set $\delta_V:=|E(V,V^c)|$. If $V=\{v\}$, we write $\delta_v$ instead of $\delta_{\{v\}}$. Set $k_v:=2g_v-2+\delta_v$ for every $v\in V(\Gamma)$. If $g_\Gamma\ge2$, we say that $\Gamma$ is \emph{stable} if $k_v>0$ for every $v\in V(\Gamma)$ (note that $\Gamma$ need not be connected).

We say that the graph $\Gamma$ is a \emph{chain} if we can write $E(\Gamma)=\{e_1,\dots,e_{n}\}$ and $V(\Gamma)=\{v_0,\dots,v_{n}\}$ where $e_i$ joins $v_{i-1}$ and $v_i$ for $i=1,\dots,n$, and the vertices are distinct. The vertices $v_0$ and $v_n$ are the end-vertices. A chain has first Betti number equal to $0$, and it has exactly two vertices of valence $1$, which we call the \emph{end-vertices} of the chain; the other vertices of the chain are called \emph{internal}. 
We say that $\Gamma$ is \emph{Eulerian} if $\deg(v)$ is even for every $v\in V(\Gamma)$, and that it is a \emph{cycle} if it is connected, Eulerian, and $b_1(\Gamma)=1$.

A \emph{trail}  $P$ on $\Gamma$ is a sequence $v_0, e_1, v_1, e_2, v_2, \ldots, v_{n-1}, e_n, v_n$ for some $n \geq 0$, such that $e_i$ is an edge between the vertices $v_{i-1}$ and $v_i$ (note that $v_{i-1} = v_i$ if, and only if, $e_i$ is a loop), with $e_i\neq e_j$ for every $i\neq j$. We say that $P$ \emph{starts at} $v_0$ and \emph{ends} at $v_n$, or that $v_0$ is the \emph{initial vertex} of $P$ and $v_n$ is the \emph{end vertex} of $P$. The underlying subgraph of a trail $P$ is the subgraph of $\Gamma$, whose set of vertices is $\{v_0,\ldots, v_n\}$ and set of edges is $\{e_1,\ldots, e_n\}$. We will denote this graph by $\Gamma(P)$, but sometimes we will also abuse the notation and simply write $P$ for the subgraph. 

An \emph{orientation} on $\Gamma$ is a pair $\orient=(\sigma_{\orient},\tau_{\orient})$, where $\sigma_{\orient}$ and $\tau_{\orient}$ are functions  $\sigma_{\orient},\tau_{\orient}\colon E(\Gamma)\to V(\Gamma)$ such that, for every $e\in E(\Gamma)$, the set $\{\sigma_{\orient}(e),\tau_{\orient}(e)\}$ is the set of vertices of $\Gamma$ incident to $e$. 

Notice that a trail $P$ naturally induces an orientation $\orient$ on the subgraph $\Gamma(P)$, where $\orient$ is defined by the rule $\sigma_{\orient}(e_i)=v_{i-1}$. On the other hand, given an orientation $\orient$ on $\Gamma$, we say that a trail $P = (v_0, e_1, v_1, \ldots, e_n, v_n)$ is \emph{$\orient$-oriented} if $\sigma_{\orient}(e_i)=v_{i-1}$.

Let $\orient=(\sigma_{\orient},\tau_{\orient})$ be an orientation on $\Gamma$.
The \emph{dual orientation} $\orient^*$ of  $\orient$ is defined as $\orient^*=(\tau_{\orient},\sigma_{\orient})$. We set
\begin{align*}
    \sigma_{\orient,v}:=|\{e\in E(\Gamma) : \sigma_{{\orient}}(e)=v\}| \\
    \tau_{\orient,v}:=|\{e\in E(\Gamma) : \tau_{{\orient}}(e)=v\}|.
\end{align*}
We also set 
\begin{equation}\label{eq:prime}
\sigma'_{\orient,v}:=\sigma_{\orient,v}-|\Lo_\Gamma(v)|
\text{ \; and \; }
\tau'_{\orient,v}:=\tau_{\orient,v}-|\Lo_\Gamma(v)|.
\end{equation}

Given a cut $K=E(W,W^c)$ of $\Gamma$, for some $W\subseteq V(\Gamma)$, we say that $K$ is an \emph{${\orient}$-cut} of $\Gamma$ if either $\sigma_{\orient}(e)\in W$ for every $e\in K$, or $\tau_{\orient}(e)\in W$ for every $e\in K$. 
The orientation ${\orient}$ on $\Gamma$ is \emph{strong} if there are no ${\orient}$-cuts. Given $v_1,v_2\in V(\Gamma)$ we say that \emph{$v_1$ is ${\orient}$-connected to $v_2$} if there exists an $\orient$-oriented trail $P$ starting at $v_1$ and ending at $v_2$. 
We say that $v_1$ and $v_2$ are \emph{strongly ${\orient}$-connected} if $v_1$ is $\orient$-connected to $v_2$, and $v_2$ is $\orient$-connected with $v_1$. 
Recall that an orientation ${\orient}$ is strong if and only if  $v_1$ and $v_2$ are strongly ${\orient}$-connected for every $v_1,v_2\in V(\Gamma)$. Moreover, a graph admits a strong orientation if and only if it is $2$-edge connected (Robbins' theorem); in particular, in this case, it is connected. 

Consider a subset $E\subseteq E(\Gamma)$. We denote by $\Gamma/E$ the (unweighted) graph obtained by contracting the edges in $E$. Thus we can identify $E(\Gamma/E)$ with $E(\Gamma)\setminus E$. We let  $\Gamma \setminus E$ be the graph obtained by removing the edges in $E$ from $\Gamma$ and keeping the vertex set unchanged. Thus $E(\Gamma\setminus E)=E(\Gamma)\setminus E$ and $V(\Gamma\setminus E)=V(\Gamma)$.

Given a subset $V\subseteq V(\Gamma)$, we denote by $\Gamma[V]$ the induced subgraph on $V$ and $g_V:=g_{\Gamma[V]}$. We denote by $t_V$ the number of connected components of $\Gamma[V^c]$, i.e., $t_V:=c_{\Gamma[V^c]}$. If $V=\{v\}$ for $v\in V(\Gamma)$, we simply write $t_v$ instead of $t_{\{v\}}$. 

A \emph{multidegree} $\ud$ on $\Gamma$ is an element $\ud=(d_v)\in \mathbb Z^{V(\Gamma)}$. The \emph{degree} of $\ud$ is defined as $|\ud|=\sum_{v\in V(\Gamma)} d_v$. 
 The multidegree of an orientation $\orient$ on $\Gamma$ is defined as $\ud_{\orient} :=(d_{\orient,v})_{v\in V(\Gamma)}\in \mathbb Z^{V(\Gamma)}$, where
\begin{equation}\label{eq:d-orient}
d_{\orient,v}:=w_{\Gamma}(v)-1+\tau_{\orient,v}=g_v-1+\tau'_{\orient,v}.
\end{equation}
Notice that $\ud_{\orient}$ has degree $g_\Gamma-1$.

For a connected graph $\Gamma$, we let $\Sigma(\Gamma)$ be the set of  multidegrees induced by strong orientations, i.e., we let
\begin{equation}\label{eq:Sigma-strong}
\Sigma(\Gamma):=\{\ud_{\orient} \in \mathbb  Z^{V(\Gamma)} : \orient \text{ is a strong orientation on } \Gamma \}.
\end{equation}
Notice that $\Sigma(\Gamma)$ is empty if and only if $\Gamma$ is not $2$-edge connected.

Let $V \subseteq V(\Gamma)$.  We  define $d_{\orient, V} := \sum_{v\in V}d_{\orient,v}$, so we have 
\begin{equation}\label{eq:d-o-expression}
d_{\orient, V} = \sum_{v\in V}w_{\Gamma}(v) - |V| + \sum_{v\in V}\tau_{\orient, v}.
\end{equation}
We also set 
\[
\tau'_{\orient, V} := |\{e\in E(\Gamma); \tau_{\orient}(e)\in V, \sigma_{\orient}(e)\in V^c\}|,
\]
which yields the relation
\[
\sum_{v\in V}\tau_{\orient, v} = |E(V, V)| + \tau'_{\orient, V}.
\]

By combining with Equation \eqref{eq:d-o-expression}, we obtain
\begin{equation} \label{eq:another-strong}
d_{\orient, V} = \sum_{v\in V}w_{\Gamma}(v) - |V| + |E(V, V)| + \tau'_{\orient, V} = g_V - 1 + \tau'_{\orient, V}.
\end{equation}
If $\orient$ is strong, then we have $t_V\leq \tau'_{\orient, V}\leq\delta_V - t_V$, which yields the inequalities
\begin{equation}
\label{eq:strong}
g_V-1+t_V\leq d_{\orient,V}\leq g_V-1+\delta_V-t_V,
\end{equation}
for every subset $\emptyset \neq V \subsetneq V(\Gamma)$.

\begin{definition}  \label{d: C1set} Suppose that the graph $\Gamma$ is $2$-edge connected. A subset $S\subseteq E(\Gamma)$ is a \emph{{$\Cone$}-set} of $\Gamma$ if $\Gamma/(E(\Gamma) \setminus S)$ is a cycle and $\Gamma\setminus S$ contains no separating edges.
\end{definition} 
See Figure \ref{fig:C_1-set} for an example of a $\Cone$-set $S$ with 6 edges (the gray vertices denote the connected components of $\Gamma\setminus S$).

\begin{remark} \label{rem:propC1set} 
The following properties hold.
\begin{itemize}
    \item[(1)] The $\Cone$-sets of $\Gamma$ form a partition of $E(\Gamma)$ (see \cite[Remark 2.1.4]{Cap-Viv}).
    \item[(2)] Two distinct edges $e$ and $e'$ in $E(\Gamma)$ belong to the same $\Cone$-set if and only if $\{e,e'\}$ is a separating pair (see \cite[Lemma 2.3.2 (iv)]{cap-viv2}).
\end{itemize}
\end{remark}
\begin{figure}[ht]
\centering
\begin{tikzpicture}[scale=0.5]  
    \foreach \i in {0,1,...,5} {
        \pgfmathsetmacro{\angle}{\i * 60}
        \pgfmathsetmacro{\x}{4*cos(\angle)}
        \pgfmathsetmacro{\y}{4*sin(\angle)}
        
        \shade[ball color=blue!30!white, opacity=0.8] (\x,\y) circle (0.3cm);
        
        \draw[very thin] (\x,\y) circle (0.3cm);
    }
    
    \foreach \i in {0,1,...,5} {
        \pgfmathsetmacro{\angleA}{\i * 60}
        \pgfmathsetmacro{\angleB}{mod(\i+1,6) * 60}
        \pgfmathsetmacro{\xA}{4*cos(\angleA)}
        \pgfmathsetmacro{\yA}{4*sin(\angleA)}
        \pgfmathsetmacro{\xB}{4*cos(\angleB)}
        \pgfmathsetmacro{\yB}{4*sin(\angleB)}
        
        \pgfmathsetmacro{\dirAngle}{atan2(\yB-\yA, \xB-\xA)}
        \pgfmathsetmacro{\startX}{\xA + 0.3*cos(\dirAngle)}
        \pgfmathsetmacro{\startY}{\yA + 0.3*sin(\dirAngle)}
        \pgfmathsetmacro{\endX}{\xB - 0.3*cos(\dirAngle)}
        \pgfmathsetmacro{\endY}{\yB - 0.3*sin(\dirAngle)}
        
        \draw[thick, black!50] (\startX,\startY) -- (\endX,\endY);
    }
\end{tikzpicture}
\caption{A $\Cone$-set with $6$ edges.}
\label{fig:C_1-set}
\end{figure}

\subsection{Schemes, ramification and branch locus}\label{sec:schemes}

  Let $k$ be an algebraically closed field of characteristic $0$. 
  Our schemes will be assumed to be  separated and of finite type over $k$. A property of points on a scheme is said to hold \emph{generically} if it holds for every point on a Zariski-open dense subset. We will set $\mathbb P^N:=\mathbb P^N_k$. 

  Let $X$ be a reduced scheme. We denote by $X^{\sm}$ be the smooth locus of $X$, and by $X^{\sing}$ the singular locus of $X$. We let $\Irr(X)$ be the set of irreducible components of $X$. 
  If $X$ is embedded in $\mathbb P^N$, we set 
  \begin{equation} \label{eq:Lambda}
  \Lambda_X:=\spann(X) 
  \;\text{ and } \;
  N_X:=\dim(\Lambda_X).
  \end{equation}
   Moreover, for every positive integer $d$ we set
\begin{equation}\label{eq:Irr=d}
\Irr_d(X):=\{Z\in \Irr(X) : \deg(Z)=d\}; 
\end{equation}
\begin{equation}\label{eq:Irrged}
\Irr_{\ge d}(X):=\{Z\in \Irr(X) : \deg(Z)\ge d\}.
\end{equation} 
We also set
\begin{equation} \label{eq:lin}
\Lin(X):=\{Z \in \Irr_1(X): \dim(Z)=1\}.
\end{equation}

Given a morphism of schemes $f\colon X\to Y$ and an open subset $U\subseteq Y$, we set $X_U := f^{-1}(U)$ and we denote by $f_U\colon X_U\to U$ the restriction of $f$ to $X_U$. 
  
Let $f\colon X\to Y$ be a proper morphism of integral $k$-schemes. Given $y\in Y$, we let $X_y = X\times_Y y $ be the scheme-theoretic fiber of $f$ over $y$.
We define $\Br_f\subseteq Y$ as the closure in $Y$ of the locus $Z_f$ of the $k$-points $y\in Y$ such that the fiber $X_y$ is a zero-dimensional singular scheme. 
When $f$ is dominant, we define $\Br^1_f$ as the union of the components of $\Br_f$ of codimension 1 in $Y$.

 Let $U\subseteq Y$ be the maximal open subset of $f(X)$ over which $f$ is finite. In particular, $U$ is open (possibly empty) in the image of $f$, by the theorem of dimension of the fibers. 
 Notice that $Z_f$ is closed in $U$ (recall that $\chara(k)=0$). 
We define $\Ram_f\subseteq X$ as the closure of the locus $W_f\subseteq X$ consisting of the points $x\in f^{-1}(Z_f)$ such that $X_{f(x)}$ is singular at $x$. Notice that $W_f$ is closed in the open subset $X_U$ of $X$. Moreover, since $f$ is proper, we have that $f(\Ram_f)=\Br_f$.

We can extend these definitions to the case where $X$ is any reduced scheme by setting $\Br_f:=\bigcup_{X_i} \Br_{f|_{X_i}}$ and $\Ram_f:=\bigcup_{X_i}\Ram_{f|_{X_i}}$, where the union runs over all irreducible components $X_i$  of $X$.

\begin{proposition} \label{prop:stein-factor-branch}
Let $f\colon X \to Y$ be a proper dominant morphism between integral $k$-schemes. Let $X\to Z\xrightarrow{g} Y$ be the Stein factorization of $f$. Then $\Br_f\subseteq\Br_g$. If moreover $\dim(X)=\dim(Y)$, then $\Br^1_f = \Br^1_g$.
\end{proposition}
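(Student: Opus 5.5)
Write $h\colon X\to Z$ for the first map, so $f=g\circ h$, with $h_*\mathcal O_X=\mathcal O_Z$, $h$ proper with connected fibers, and $g$ finite. Since $X$ is integral, $Z$ is integral; since $f$ is dominant, so is $g$. The plan is to compare fibers of $f$ and $g$ over a dense open subset of $Y$, and then to control the codimension-one components separately.

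\textbf{Inclusion $\Br_f\subseteq\Br_g$.} Take $y\in Z_f$. Then $X_y$ is zero-dimensional, so $f$ is quasi-finite over a neighborhood of $y$, hence finite there since it is proper. Over the open set $U\subseteq Y$ where $f$ is finite, $h$ is finite with $h_*\mathcal O_X=\mathcal O_Z$, so $h$ is an isomorphism over $g^{-1}(U)$. Therefore $X_y\cong Z_y$ as schemes for $y\in U$, and $Z_f=Z_g\cap U\subseteq Z_g$. Taking closures gives $\Br_f\subseteq\Br_g$.

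\textbf{Equality in codimension one when $\dim X=\dim Y$.} Now $f$ is generically finite, so $h$ is birational. Let $E\subseteq Z$ be the closed locus where $h$ has positive-dimensional fibers, and let $U=Y\setminus g(E)$. By the argument above, $Z_f\cap U=Z_g\cap U$. It therefore suffices to show that no codimension-one component of $\Br_g$ is contained in $g(E)$. Since $g$ is finite, $\dim g(E)=\dim E$. By Zariski's Main Theorem, using $h_*\mathcal O_X=\mathcal O_Z$ and $h$ birational with connected fibers, $h$ is an isomorphism over the normal locus of $Z$ wherever $h$ is quasi-finite, and positive-dimensional fibers of a birational map do not occur over points where $Z$ is normal and $h$ is finite. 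The key claim is then $\codim_Z E\ge 2$. Indeed, $E$ is the locus over which the proper birational map $h$ with $h_*\mathcal O_X=\mathcal O_Z$ fails to be finite. If $E$ had a codimension-one component $D$, then at the generic point of $D$ the local ring $\mathcal O_{Z,D}$ would be one-dimensional, and the fiber of $h$ over that generic point would be positive-dimensional. But then $\dim h^{-1}(D)\ge \dim D+1=\dim X$, which is impossible for a proper closed subset of the integral scheme $X$. Hence $\codim_Y g(E)=\codim_Z E\ge 2$.

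With this in hand, every codimension-one component of $\Br_g$ meets $U$, where it agrees with $\overline{Z_f}$. Combined with the inclusion $\Br_f\subseteq\Br_g$, this gives $\Br^1_f=\Br^1_g$.

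The main obstacle is the identification of fibers over $U$. One needs $h$ finite and $h_*\mathcal O_X=\mathcal O_Z$ to give an isomorphism scheme-theoretically, and not merely on points, so that singularity of fibers is preserved exactly. This is needed because the branch locus is defined via singular (non-reduced) fibers. This follows because a finite morphism is affine, so it is determined by its pushforward of the structure sheaf, and $h_*\mathcal O_X=\mathcal O_Z$ forces $h$ to be an isomorphism there.
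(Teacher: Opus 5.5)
Your argument is correct and is essentially the paper's proof. Over the open set $U\subseteq Y$ where $f$ is finite, the Stein map $X_U\to Z_U$ is finite with $h_*\mathcal O_X=\mathcal O_Z$, hence an isomorphism, which gives $Z_f=Z_g\cap U$; in the equidimensional case, a fiber-dimension count shows that the non-finite locus has codimension at least $2$. The only difference is that you run this count on $E\subseteq Z$ and push it down by the finite map $g$, whereas the paper runs it directly on $Y\setminus U$. The sentence invoking Zariski's Main Theorem is superfluous and, as phrased, partly tautological, but nothing in your proof depends on it.
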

\begin{proof}
Let $U\subset Y$ be the maximal open subset over which $f$ is finite.  
    In particular, $X_U\to Z_U$ is an isomorphism and     $Z_f=Z_g\cap U$. We deduce $\Br_f\subseteq\Br_g$ and $\Br_{f_U}=\Br_{g_U}$. 

    Assume now that  $\dim(X)=\dim(Y)$. 
    Since $X$ and $Y$ are irreducible of the same dimension and $f$ is proper and dominant, we have that $\codim_Y(Y\setminus U)\geq 2$, hence
    \[
    \Br^1_f = \Br^1_{f_U} = \Br^1_{g_U} = \Br^1_g.
    \]
    This finishes the proof.
\end{proof}

\begin{theorem}[Purity of the branch locus]\label{thm:purity} 
    Let $f\colon X\to Y$ be a proper dominant morphism of integral $k$-schemes with $X$ normal and $Y$ smooth. Then $\Br_f = \Br^1_f$. 
\end{theorem}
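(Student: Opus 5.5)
This is Zariski--Nagata purity, so the plan is to reduce to the classical statement for finite morphisms rather than reprove purity. I need $\Br_f\subseteq \Br^1_f$; the reverse inclusion holds by definition. Write $X\to Z\xrightarrow{g} Y$ for the Stein factorization. Then $Z$ is normal, because $X$ is normal and $g_*\mathcal O_X$ is a finite $\mathcal O_Y$-algebra that is integrally closed in its fraction field. Moreover $g$ is finite and surjective. By Proposition~\ref{prop:stein-factor-branch} we have $\Br_f\subseteq \Br_g$, so it suffices to show that every component of $\Br_g$ has codimension one and lies in $\Br^1_f$.

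If $\dim X>\dim Y$, the generic fibre is positive-dimensional. Then the locus $Z_f$ of zero-dimensional singular fibres lies in the proper closed set where the fibre dimension drops. I expect to handle this case by noting that over the finite locus $U$ of $f$ the fibres are positive-dimensional when $\dim X>\dim Y$, so $U=\emptyset$ and $\Br_f=\emptyset=\Br^1_f$. So assume $\dim X=\dim Y$. Proposition~\ref{prop:stein-factor-branch} then gives $\Br^1_f=\Br^1_g$, and it remains to prove $\Br_g=\Br^1_g$ for the finite surjective morphism $g\colon Z\to Y$ with $Z$ normal and $Y$ smooth.

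The main step is the classical Zariski--Nagata purity theorem. Let $V\subseteq Y$ be the complement of $\Br^1_g$. Suppose some component $B$ of $\Br_g$ has codimension $\ge2$ and is not contained in $\Br^1_g$, and let $y$ be a general point of $B$. Over a neighbourhood of $y$ the morphism $g$ is étale away from a subset of codimension $\ge 2$; this uses that $\chara k=0$ and that the fibre is reduced exactly where $g$ is unramified, since $g$ is finite. Purity (SGA1 X.3.1, or Nagata) for the finite morphism $g$ between the normal scheme $Z$ and the regular scheme $Y$ then says that $g$ is étale over $y$. The argument behind it is that $g_*\mathcal O_Z$ is reflexive over the regular local ring $\mathcal O_{Y,y}$. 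After localizing, the discriminant of the trace form is a section whose zero locus is either empty or of pure codimension one. Here one uses that $g_*\mathcal O_Z$ is locally free in codimension $\le2$ via Auslander--Buchsbaum, or one reduces to the case of dimension $2$ by cutting with general hypersurfaces.

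Étaleness at $y$ gives a reduced fibre $Z_y$, so $y\notin \Br_g$ in a neighbourhood, contradicting $y\in B$ being general. Hence $\Br_g=\Br^1_g$, and therefore $\Br_f\subseteq\Br_g=\Br^1_g=\Br^1_f$.

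The only delicate point is the passage from reflexive to locally free needed for the discriminant argument. I would not reprove it but cite Zariski--Nagata purity directly.
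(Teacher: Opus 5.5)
Your proposal is correct and is essentially the paper's proof: if $\dim X>\dim Y$ then $\Br_f=\emptyset$; otherwise pass to the Stein factorization $X\to Z\xrightarrow{g}Y$ with $Z$ normal, apply Zariski--Nagata purity (SGA1, Exp.~X) to the finite morphism $g$, and conclude with Proposition~\ref{prop:stein-factor-branch}. One small slip: it is $f_*\mathcal O_X$ (whose relative spectrum is $Z$) that is integrally closed, not $g_*\mathcal O_X$.
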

\begin{proof}
    If $\dim(X)>\dim(Y)$, then $\Br_f=\emptyset$ and there is nothing to prove. Hence, we will assume $\dim(X)=\dim(Y)$. Using the notation of Proposition~\ref{prop:stein-factor-branch}, the normality of $X$ implies that $Z$ is also normal. The result follows from the usual purity theorem (\cite[Expos\'e X, Th\'eor\`eme 3.1]{SGA1}) applied to the finite dominant morphism $Z \to Y$.
\end{proof}

 Now assume that $f\colon X\to Y$ is a finite dominant morphism of integral $k$-schemes. Let $d_f=[K(X):f^* K(Y)]$ be the cardinality of a general geometric fiber of $f$. 

\begin{proposition}
\label{prop:branch_fiber_number}
    Let $f\colon X\to Y$ be a finite dominant morphism of integral $k$-schemes with $X$ Cohen-Macaulay and $Y$ smooth. Then $f$ is flat. Moreover a $k$-point $y\in Y$ belongs to $\Br_f$ if and only if
\(
|f^{-1}(y)|<d_f.
\)  
\end{proposition}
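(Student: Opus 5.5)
Flatness comes first, via miracle flatness: a finite morphism from a Cohen--Macaulay scheme to a regular scheme of the same dimension is flat. Since $f$ is finite and dominant between integral schemes, $\dim X=\dim Y$. For $x\in X$ with $y=f(x)$, the local ring $\mathcal O_{Y,y}$ is regular and $\mathcal O_{X,x}$ is Cohen--Macaulay. The fiber $X_y$ is zero-dimensional, so $\dim\mathcal O_{X,x}=\dim\mathcal O_{Y,y}+\dim\mathcal O_{X_y,x}$. The local criterion (\cite[Thm.~23.1]{matsumura}, or EGA IV 6.1.5) then gives that $\mathcal O_{X,x}$ is flat over $\mathcal O_{Y,y}$. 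Hence $f_*\mathcal O_X$ is a coherent flat, thus locally free, $\mathcal O_Y$-module. Its rank is the generic rank $[K(X):K(Y)]=d_f$.

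Next I compute fibers. For every $k$-point $y$, the fiber $X_y=\Spec(f_*\mathcal O_X\otimes k(y))$ is a finite $k$-scheme of length exactly $d_f$. It is the disjoint union of local Artinian $k$-algebras with residue field $k$, because $k$ is algebraically closed. Therefore $|f^{-1}(y)|\le d_f$, with equality if and only if every local factor has length one, that is, if and only if $X_y$ is reduced. Over a $k$-point a finite scheme is reduced iff it is smooth (étale over $k$). So $y\in Z_f$ exactly when $|f^{-1}(y)|<d_f$. Here $U=Y$, since $f$ is finite, and all fibers are zero-dimensional.

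It remains to check that $Z_f$ is already closed, so that $\Br_f=\overline{Z_f}=Z_f$ at $k$-points. The function $y\mapsto |f^{-1}(y)|$ is lower semicontinuous for a finite flat morphism. Concretely, the locus where $X_y$ is not étale is the support of $\Omega_{X/Y}$ pushed forward, equivalently the zero locus of the discriminant of the locally free algebra $f_*\mathcal O_X$, which is closed. The paper also notes that $Z_f$ is closed in $U$. Hence a $k$-point lies in $\Br_f$ iff $|f^{-1}(y)|<d_f$.

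The main obstacle is the flatness step, and in particular verifying the hypotheses of miracle flatness: equality of dimensions (from finiteness plus dominance) and regularity of $Y$ at closed points. The rest is bookkeeping with lengths of Artinian algebras over an algebraically closed field.
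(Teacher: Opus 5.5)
Your proposal is correct and follows the same route as the paper: miracle flatness gives flatness, and then the fiber over a $k$-point has total length $d_f$, so it is reduced (equivalently smooth) exactly when it has $d_f$ points. Your extra check that $Z_f$ is already closed, so that $\Br_f$ has no further $k$-points, is something the paper does not reprove here; it relies instead on its earlier remark that $Z_f$ is closed in $U$.
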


\begin{proof}
    The fibers of $f$ over $k$-points of $Y$ are all $0$-dimensional, $X$ is Cohen-Macaulay, and $Y$ is smooth. Thus $f$ is flat by miracle flatness. 
    
    We have $\sum_{x\in f^{-1}(y)}\len(\mathcal{O}_{X_y,x})=d_f$. In particular, $X_y$ is singular if and only if $f^{-1}(y)$ has fewer than $d_f$ points. 
\end{proof}

\subsection{Abstract curves}
\label{sec:abstract_curves}
 Recall that we work over an algebraically closed field $k$ of characteristic $0$. A \emph{curve} is a (not necessarily connected) reduced projective scheme of pure dimension $1$ over $k$. A curve is \emph{nodal} if its singularities are nodes, i.e., \'etale locally like the origin in the union of the coordinate axes of the plane $\mathbb A^2_k$. 
 
Let $C$ be a curve over $k$.  A \emph{subcurve} of $C$ is a union of irreducible components of $C$. Given a subcurve $Z$, its \emph{complementary curve} $Z^c$ is defined as the closure $Z^c:=\overline{C\setminus Z}$. We let $t_Z$ be the number of connected components of $Z^c$ and we denote by $F^Z_1,\dots F^Z_{t_Z}$ the connected components of $Z^c$. 

\begin{convention} \label{conv:curve-part}
Let $f\colon X\to Y$ be a finite morphism, where $X$ is a curve, and let $Z\subseteq Y$ be a  subcurve. We
write $f^{-1}(Z)$ for the reduced one-dimensional part of the
scheme-theoretic inverse image of $Z$.
\end{convention}

For a Weil divisor $D$ on $C$, we write $D=\sum_{p\in C}\mu_p(D)\cdot p$, for integers $\mu_p(D)$, and we call the \emph{support of} $D$ the subset of points $p\in C$ such that $\mu_p(D)\ne0$.

Assume that $C$ is a nodal curve. The \emph{(arithmetic) genus} of $C$ is defined as $g_C:=1-\chi(C,\mathcal O_C)$. We denote by $\omega_C$ the dualizing sheaf of $C$. 
For every subcurve $Z\subseteq C$ we set 
\begin{equation}\label{eq:kZ}
k_Z:=\deg(\omega_C|_Z)=2g_Z-2+|Z\cap Z^c|.
\end{equation}

 We let $\Gamma_C$ be the \emph{dual graph} of $C$, whose vertices correspond to the irreducible components of $C$ and whose edges correspond to the nodes of $C$, where the edge corresponding to a node connects the (possibly coinciding) vertices corresponding to the two (possibly equal) components intersecting at that node. Given a vertex $v\in V(\Gamma_C)$, we denote by $C_v$ the irreducible component of $C$ corresponding to $v$. Finally, the weight function $w_{\Gamma_C}$ takes $v\in V(\Gamma_C)$ to the  genus of the normalization of $C_v$. So, $g_C=g_{\Gamma_C}$. The (not necessarily connected) nodal curve $C$ is \emph{stable} if its dual graph $\Gamma_C$ is stable. 

A node of $C$ is \emph{separating} if its corresponding edge in $E(\Gamma_C)$ is  separating. We let $C^{\sep}\subseteq C^{\sing}$ be the subset of separating nodes of $C$. Two distinct nodes of $C$ form a \emph{separating pair} of nodes if their corresponding edges in $E(\Gamma_C)$ form a separating pair of edges. If $Z\subseteq C$ is a subcurve and $V_Z\subseteq V(\Gamma_C)$ is the subset corresponding to the irreducible components of $Z$, we set $\delta_Z := \delta_{V_Z}$. Hence $\delta_Z=|Z\cap Z^c|$.

For every multidegree $\ud=(d_v)\in \mathbb Z^{V(\Gamma_C)}$ and every subcurve $Z\subseteq C$, we set 
\begin{equation}\label{eq:dZ}
d_Z:=\underset{C_v\subseteq Z}{\underset{v\in V(\Gamma_C)}{\sum}}d_v.
\end{equation}

Assume now that $C$ is nodal and \emph{connected}. We set $\Sigma(C):=\Sigma(\Gamma_C)$ (recall Equation~\eqref{eq:Sigma-strong}). A multidegree $\ud\in \mathbb Z^{V(\Gamma_C)}$ is \emph{stable} if $\ud\in \Sigma(C)$. For every $\ud\in \Sigma(C)$ and every connected subcurve $Z\subsetneq C$, using Equation~\eqref{eq:strong}, we have
\begin{equation}\label{eq:dZ-stable}
g_Z-1<d_Z<g_Z-1+\delta_Z
\end{equation}
(and  $|\ud|=g_C-1$). By \cite[Section~1.3]{captheta}, a multidegree $\ud\in \mathbb Z^{V(\Gamma_C)}$ belongs to $\Sigma(C)$ if and only if $|\ud|=g_C-1$ and Equation~\eqref{eq:dZ-stable} holds for every  subcurve $Z\subsetneq C$.

\subsection{Projective duality}

Given a linear subspace $W\subseteq\mathbb P^N$, we let $\Lambda^\vee_W\subseteq (\mathbb P^N)^\vee$ be defined as
\[
\Lambda^\vee_W:=\{H\in (\mathbb P^N)^\vee : W\subseteq H\}.
\]
For a subset $S\subseteq \mathbb P^N$, we set $\Lambda^\vee_S:=\Lambda^\vee_{\spann(S)}$.

Let $V\subseteq \mathbb{P}^N$ be a (possibly reducible) \emph{subvariety} (i.e., a reduced closed subscheme in $\mathbb{P}^N$). Let $CV^{\sm}$ be the set of pairs $(p, H)\in \mathbb{P}^N\times (\mathbb{P}^N)^\vee$ such that $p\in V^{\sm}$ and $H$ contains the embedded tangent space $T_pV$. The \emph{conormal variety} $CV$ is the closure of $CV^{\sm}$ in $\mathbb{P}^N\times (\mathbb{P}^N)^\vee$. Notice that, since $V$ is reduced, $V^{\sm}$ is an open dense subset of $V$. 

\begin{definition}
    The \emph{dual variety} $V^\vee$ of $V$ is the image of $CV$ via the projection $\mathbb{P}^N\times (\mathbb{P}^N)^\vee\to (\mathbb{P}^N)^\vee$ onto the second factor.
\end{definition}

Notice that if $V=\bigcup_{1\le i\le k} V_i$ is the decomposition of $V$ into irreducible components, then $CV = \bigcup_{1\le i\le k} CV_i$, and  hence 
\begin{equation}\label{eq:dual-union}
   V^\vee=\bigcup_{1\le i\le k} V_i^\vee. 
\end{equation}

\begin{proposition}\label{prop:dual}
    Let $V\subseteq \mathbb{P}^N$ be a curve with nodal irreducible components. Let $\widetilde{V}\to V$ be its normalization and denote by $\psi\colon \widetilde{V}\to \mathbb{P}^N$ the induced map. We have that $H\notin V^\vee$ if and only if $\psi^{-1}(H)$ is reduced and has dimension $0$.
\end{proposition}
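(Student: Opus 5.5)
The plan is to reduce the statement to a local computation at each point of $\psi^{-1}(H)$, and then use the fact that the only singular points of $V$ that matter are nodes of individual components or intersection points of distinct components. By \eqref{eq:dual-union}, $V^\vee=\bigcup_i V_i^\vee$ over the irreducible components $V_i$. Also, $\psi^{-1}(H)=\coprod_i\psi_i^{-1}(H)$, where $\psi_i\colon\widetilde V_i\to\mathbb P^N$ is the normalization of $V_i$. So it suffices to treat the case where $V$ is irreducible with nodal singularities. We also note that $\psi^{-1}(H)$ is zero-dimensional if and only if $V\not\subseteq H$. If $V\subseteq H$, then $H$ contains every tangent line, so $H\in V^\vee$. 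Hence we may assume $V\not\subseteq H$. In that case $\psi^{-1}(H)=\psi^*H$ is an effective Cartier divisor on the smooth curve $\widetilde V$, and it is reduced if and only if every point $x$ of its support has multiplicity $\operatorname{ord}_x(\psi^*H)=1$.

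Next we give the local criterion. Let $x\in\widetilde V$. Since $V$ is nodal, $\psi$ is unramified: at a smooth point $\psi$ is a local isomorphism, and at a node each branch is smooth. Therefore $d\psi_x$ is injective, and the image of $x$ has a well-defined embedded tangent line $T_x$ to its branch, which equals the limit of the tangent lines $T_pV$ for $p\in V^{\sm}$ approaching along that branch. Choose a local linear form $h$ cutting out $H$. Then $\operatorname{ord}_x(\psi^*h)\ge2$ if and only if $h(\psi(x))=0$ and $d(h\circ\psi)_x=0$. Because $\psi$ is an immersion at $x$, this holds if and only if $T_x\subseteq H$.

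It remains to show that $H\in V^\vee$ if and only if some $x\in\widetilde V$ has $T_x\subseteq H$. For this, consider the closed subvariety
\[
\widetilde{CV}:=\{(x,H) : T_x\subseteq H\}\subseteq\widetilde V\times(\mathbb P^N)^\vee.
\]
It is a $\mathbb P^{N-2}$-bundle over the irreducible curve $\widetilde V$, given by the projectivization of the conormal bundle of the immersion $\psi$. Hence it is irreducible and closed, and its image under $\psi\times\id$ is closed. Over $V^{\sm}$ this image agrees with $CV^{\sm}$. By irreducibility, the image of $\widetilde{CV}$ is the closure of $CV^{\sm}$, namely $CV$. Projecting to $(\mathbb P^N)^\vee$, we get that $V^\vee$ is exactly the set of $H$ containing some branch tangent line $T_x$. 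Combining this with the previous paragraph gives the claimed equivalence.

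The main obstacle is the identification of $CV$ with the image of $\widetilde{CV}$. We must ensure that at a node no extra limiting hyperplanes arise beyond those containing one of the two branch tangents. The irreducibility-and-properness argument above handles this cleanly, and it uses crucially that $\psi$ is an immersion, which is exactly where nodality of the components enters. Cusps, by contrast, would break injectivity of $d\psi$.
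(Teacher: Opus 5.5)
Your proof is correct and follows essentially the same route as the paper: reduce to irreducible $V$ via \eqref{eq:dual-union}, then build a conormal-type variety on $\widetilde V$ and show it is a $\mathbb{P}^{N-2}$-bundle, hence irreducible. The paper defines this variety directly by non-reducedness of $\psi^{-1}(H)$ at $x$, which is equivalent to your condition $T_x\subseteq H$ because $\psi$ is an immersion. You then identify its image with $CV$ as the paper does. The differences are cosmetic: you state the local criterion explicitly, you conclude the equality by a density argument rather than the paper's dimension count, and you should set aside the trivial case $N=1$ (where your bundle is empty) separately, as the paper does.
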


\begin{proof}
    If $N=1$ the result is trivial. So assume $N\ge2$. By Equation \eqref{eq:dual-union}, we can reduce to the case where $V$ is irreducible. In this case, $CV\to V$ has relative dimension $N-2$. Indeed, given $p\in V^{\sm}$ and $(p,H)\in CV$, the hyperplane $H$ must contain the line $T_pV$ and taking the closure cannot make the dimension smaller (by the theorem of dimension of the fibers), nor greater (because then this fiber would have the same dimension as $CV^{\sm}$). 
    Define 
    \[
    C\widetilde{V} := \{(p, H) : \psi^{-1}(H) \text{ is not reduced at $p$, or } \widetilde{V}\subseteq \psi^{-1}(H))\} \subseteq \widetilde{V}\times (\mathbb{P}^{N})^\vee.
    \]
    
    Set $f=(\psi,\id)\colon \widetilde{V}\times (\mathbb{P}^N)^\vee\to V\times (\mathbb{P}^N)^\vee$. Notice that $CV^{\sm}\hookrightarrow C\widetilde{V}$, then $CV^{\sm}\subseteq f(C\widetilde{V})$, hence $CV\subseteq f(C\widetilde{V})$. On the other hand, since $V$ is nodal, the fibers of $C\widetilde{V}\to \widetilde{V}$ are isomorphic to $\mathbb{P}^{N-2}$, then $C\widetilde{V}$ is irreducible of the same dimension as $CV$, which implies that $CV=f(C\widetilde{V})$. This finishes the proof.
\end{proof}

\begin{proposition}\label{prop:Kleiman_dual}
    Let $V\subseteq \mathbb{P}^N$ be a curve of $\mathbb{P}^N$ containing no linear components. Then $(V^\vee)^\vee = V$.
\end{proposition}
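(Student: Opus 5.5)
We reduce to the case where $V$ is irreducible and then use the reflexivity theorem for conormal varieties in characteristic zero. Write $V=\bigcup_i V_i$ for the irreducible components. By Equation~\eqref{eq:dual-union}, $V^\vee=\bigcup_i V_i^\vee$. Since no $V_i$ is a line, each $V_i^\vee$ is an irreducible hypersurface of $(\mathbb P^N)^\vee$ (for $N\ge 2$). Applying Equation~\eqref{eq:dual-union} once more, we get $(V^\vee)^\vee=\bigcup_i (V_i^\vee)^\vee$. It therefore suffices to show that $(V_i^\vee)^\vee=V_i$ for each irreducible, nonlinear curve $V_i$.

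For the irreducible case, the key input is reflexivity: in characteristic zero, for an irreducible subvariety $X\subseteq\mathbb P^N$, the conormal variety $CX\subseteq \mathbb P^N\times(\mathbb P^N)^\vee$ coincides with $C(X^\vee)$ once the two factors are swapped. Granting this, $(X^\vee)^\vee$ is the projection of $C(X^\vee)=CX$ to the first factor. That projection is $X$, because $CX$ is the closure of a set lying over $X^{\sm}$, $X$ is closed, and the projection is surjective onto $X$.

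The main step, and the only real obstacle, is proving reflexivity, which I would do by the classical local computation. Take a general smooth point $p\in X$ and a general hyperplane $H$ tangent to $X$ at $p$, so that $(p,H)$ is a general point of $CX$. Work on the incidence variety $I=\{(p,H):p\in H\}$, a hypersurface in $\mathbb P^N\times(\mathbb P^N)^\vee$. Using the symplectic-form argument (the Lagrangian description of conormal varieties inside $T^*\mathbb P^N$ restricted to $I$), show that the tangent space of $CX$ at $(p,H)$ is contained in the hyperplane of $T_{(p,H)}I$ encoding the condition that the hyperplane in $\mathbb P^N$ corresponding to the point $H'\in(\mathbb P^N)^\vee$ passes through $p$.

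A cleaner route in our special setting of curves is to parametrize explicitly. Locally, write $X$ as $t\mapsto x(t)$, with tangent hyperplanes given by covectors $h$ satisfying $h\cdot x=0$ and $h\cdot x'=0$. Differentiating the first relation gives $h'\cdot x+h\cdot x'=0$, so $h'\cdot x=0$. This says exactly that the point $x(t)$ lies on every hyperplane, in the dual space, that is tangent to $X^\vee$ at $h$. Characteristic zero is used to guarantee that $CX\to X^\vee$ is generically smooth (separable), so that tangent spaces of $X^\vee$ at general points are the images of tangent spaces of $CX$.

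The dimension count closes the argument: $CX$ and $C(X^\vee)$ are both irreducible of dimension $N-1$. The inclusion $CX\subseteq C(X^\vee)$ obtained above therefore forces equality. Since this is a standard theorem (Kleiman, Wallace), in the paper I would cite it rather than reprove it, and would spell out only the reduction to irreducible components and the nonlinearity hypothesis. Nonlinearity is needed because the dual of a line $L$ is $\Lambda^\vee_L$, of codimension $2$, and its dual is again $L$. The statement excludes lines so that $V^\vee$ is a hypersurface, which is how the result is used later.
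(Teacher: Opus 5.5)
Your proposal is correct and matches the paper's proof. The paper also writes $V=\bigcup_i V_i$ and notes that each nonlinear $V_i^\vee$ is an irreducible hypersurface, so that Equation~\eqref{eq:dual-union} is the component decomposition of $V^\vee$ and $(V^\vee)^\vee=\bigcup_i (V_i^\vee)^\vee$; it then simply cites Kleiman's reflexivity theorem in characteristic zero, as you propose to do.

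One correction to your closing remark on why nonlinearity is needed. A line $L$ on its own is reflexive. The real issue is that its codimension-two dual $\Lambda^\vee_L$ can lie inside a hypersurface $V_j^\vee$, for instance when $L$ is tangent to $V_j$, and then $L$ is lost from $(V^\vee)^\vee$.
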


\begin{proof}
    Let $V = \bigcup_{1\le i\le k} V_i$ be the decomposition into irreducible components of $V$. Since $V_i$ is a non-linear curve in $\mathbb{P}^N$, we have that $V_i^\vee$ is an irreducible hypersurface of $(\mathbb{P}^N)^{\vee}$, so Equation \eqref{eq:dual-union} is the decomposition of $V^\vee$ into
    irreducible components. Hence $(V^\vee)^\vee = \bigcup_{1\le i\le k} (V_i^\vee)^\vee$ and the result follows from \cite[Theorem~4]{kleiman} (recall that we are assuming that $k$ has characteristic $0$).
\end{proof}

\section{Nodal 1-schemes and their canonical triples}\label{sec:proj}

\subsection{Nodal 1-schemes}

This section introduces \name{}s and their canonical triples (see Definitions \ref{def:nodal-1-schem} and \ref{def:canonical_model}). A nodal 1-scheme generalizes nodal curves by allowing two labeled sets of isolated points. This formulation is convenient because, unlike the class of nodal curves, the class of \name{s} is closed under the operation of taking stable models. Here we define the fundamental invariants and operations of \name{s}, with a particular focus on the canonical map to projective space. We analyze their base locus, their behavior under stabilization, and when their canonical map fails to be injective.

\begin{definition}
    \label{def:nodal-1-schem}
A \emph{\name{}} $\mathfrak{C}$ is a triple $(A, B, C)$, where $C$ is a (possibly disconnected, possibly empty) nodal curve, and $A$ and $B$ are reduced zero-dimensional schemes over $k$. 
The underlying scheme of a \name{} $\mathfrak{C}=(A, B, C)$ is $A\sqcup B\sqcup C$. Abusing notation, we denote by $\mathfrak{C}$ the underlying scheme.
\end{definition}

 Any algebro-geometric object on $\mathfrak{C}$ will be defined as the object on its underlying scheme (e.g., invertible sheaves on $\mathfrak{C}$ are invertible sheaves on its underlying scheme). As it is customary for curves, we denote by $\omega_{\mathfrak{C}}$ the unique line bundle on $\mathfrak{C}$ whose restriction to each connected component is its dualizing sheaf.   We set $h_{\mathfrak C}:=h^0(\mathfrak{C}, \omega_{\mathfrak{C}})$. 
\begin{remark}
\label{rem:h_sum_g}
    If $\mathfrak{C} = (A, B, C)$ is a \name{}, then 
    \[
    h_{\mathfrak{C}} = |A| + |B| + \sum_{C'}g_{C'},
    \]
    where the sum runs through all connected components $C'$ of $C$. 
    In particular, if $A=B=\emptyset$ and $C$ is connected, then \[h_\mathfrak{C} = h_C = g_C=h^1(C, \mathcal{O}_C)=h^0(C, \omega_C).\]
\end{remark}

Let $\mathfrak C=(A, B, C)$ be a \name{}. Notice that $\mathfrak C^{\sing}=C^{\sing}$. A \emph{node} (respectively, a \emph{separating node}) of $\mathfrak{C}$ is a node (respectively, a separating node) of  $C$. A \emph{\subname{}} of $\mathfrak C$ is a \name{} $\mathfrak Z=(A', B', Z)$, where $Z$ is a subcurve of $C$, and where $A'$ and $B'$ are subschemes of $A$ and $B$. 

\begin{definition}\label{def:stabilization}
 Let $\mathfrak C=(A,B,C)$ be a \name{}.
 The \emph{stabilization} $\mathfrak{C}^{\st}$ of $\mathfrak{C}$ is defined as follows. 
Assume first that $\mathfrak{C}$ is  connected. In this case, only one among $A$, $B$, $C$ is non empty. Then we have two cases.
\begin{enumerate}
    \item[(1)] If $\mathfrak{C}=C$, then $C$ is a connected nodal curve. We have four sub-cases.
        \begin{enumerate}
            \item If $h_{\mathfrak C}\geq 2$, we set $\mathfrak{C}^{\st} :=( \emptyset, \emptyset,  C^{\st})$, where $C^{\st}$ is the usual stabilization of the curve $C$.
            \item If $h_{\mathfrak C}=1$ and $C$ is smooth, we set $\mathfrak{C}^{\st}:= ( \Spec(k), \emptyset, \emptyset)$.
            \item If $h_{\mathfrak C}=1$ and $C$ is singular, we set $\mathfrak{C}^{\st}:= (\emptyset, \Spec(k), \emptyset)$.
            \item If $h_{\mathfrak C}=0$, we set $\mathfrak{C}^{\st}:=(\emptyset, \emptyset, \emptyset)$.
        \end{enumerate}
    \item[(2)] In the other cases, that is, $\mathfrak{C}=A$ or $\mathfrak{C}=B$, we set $\mathfrak{C}^{\st} := \mathfrak{C}$.
\end{enumerate}
Notice that, except when $h_{\mathfrak C}=0$, we have a stabilization morphism $\mathfrak{C}\to \mathfrak{C}^{\st}$.

More generally, if $\mathfrak{C}$ is a not necessarily connected \name{}, the \emph{stabilization} $\mathfrak{C}^{\st}$ of $\mathfrak{C}$ is the disjoint union of the stabilizations of its connected components. Notice that we have $h_{\mathfrak{C}^{\st}} = h_{\mathfrak{C}}$.  

We say that $\mathfrak{C}$ is \emph{stable} if it is equal to its stabilization. If $\mathfrak{C}$ is a curve,  this agrees with the  definition of stability that we recalled in Subsection~\ref{sec:abstract_curves}. 
\end{definition}

\begin{remark}
    \label{rem:proj_C_span}
Given a \name{} $\mathfrak{C}=(A,B,C)$,
we set 
\[
\Proj_{\mathfrak{C}}:=\mathbb P(H^0(\mathfrak C,\omega_{\mathfrak{C}})^\vee),
\]
a projective space of dimension $h_{\mathfrak{C}}-1$. We have
\[
H^0(\mathfrak C,\omega_{\mathfrak{C}})^{\vee} = \bigoplus_{a\in A} H^0(a,\omega_a)^{\vee} \oplus \bigoplus_{b\in B}H^0(b,\omega_b)^{\vee} \oplus H^0(C,\omega_C)^{\vee}.
\]
Hence $\Proj_{\mathfrak{C}} = \spann(\{q_a, a\in A\}, \{q_b, b\in B\}, \Proj_C)$, where $q_a$ and $q_b$ are the points corresponding to  $H^0(a,\omega_{a})^{\vee}\subseteq H^0(\mathfrak C,\omega_{\mathfrak{C}})^{\vee}$ and  $H^0(b,\omega_{b})^{\vee}\subseteq H^0(\mathfrak C,\omega_{\mathfrak{C}})^{\vee}$ (recall that the dualizing sheaf of a reduced zero-dimensional scheme $p$ is trivial, hence $H^0(p, \omega_p)$ is one-dimensional). 
\end{remark}

\begin{definition}\label{def:canonical-map}
 Let $\mathfrak{C}=(A,B,C)$ be a \name{}.
 We let $U_{\mathfrak{C}}$ be the (not necessarily dense) open subset of $\mathfrak{C}$ where $\omega_{\mathfrak{C}}$ is base-point free. Notice that $U_{\mathfrak{C}}$ contains $A\sqcup B$. The \emph{canonical map} of  $\mathfrak{C}=(A,B,C)$ is the morphism $\varphi_{\mathfrak{C}}\colon U_{\mathfrak{C}}\to \Proj_{\mathfrak{C}}$ given by the complete linear system $|\omega_{\mathfrak{C}}|$. 
When no confusion arises, we denote  $\varphi_{\mathfrak{C}}$ simply by $\varphi$.
\end{definition}

 By the observation above, the following properties hold
\begin{enumerate}
    \item[(1)] $\varphi_{\mathfrak{C}}(a) = q_a$, for every $a\in A$.
    \item[(2)] $\varphi_{\mathfrak{C}}(b) = q_b$, for every $b\in B$.
    \item[(3)] $\varphi_{\mathfrak{C}}|_{U_{\mathfrak{C}}\cap C} = \iota_C\circ \varphi_C|_{U_C}$, where $\iota_C \colon \Proj_C\to \Proj_{\mathfrak{C}}$ is the natural inclusion, and $\varphi_C\colon U_C\to \Proj_C$ is the canonical map of $C$ (clearly, $U_C= U_{\mathfrak{C}}\cap C$).
\end{enumerate}

Therefore,  using \cite[Theorem D]{catanese},  we conclude that 
\begin{equation}\label{eq:base-locus}
{\mathfrak{C}}\setminus U_{\mathfrak{C}} = D_C \cup C^{\sep},
\end{equation}
where $D_C\subseteq C$ is the subcurve  given by the union of all irreducible components $Z$ of $C$ such that $g_Z=0$ and $Z\cap Z^c \subseteq C^{\sep}$. 
Moreover the \subname{} $\mathfrak{C}' = (A, B, \overline{C\setminus D_C})$ of $\mathfrak C$ satisfies $U_{\mathfrak{C}} \subseteq U_{\mathfrak{C}'}$ and $U_{\mathfrak{C}}$ an open dense subset of $\mathfrak C'$, and we have $\varphi_{\mathfrak{C}'}|_{U_{\mathfrak C}} = \varphi_{\mathfrak{C}}|_{U_{\mathfrak C}}$.

Next, assume that $E$ is an irreducible component of the \name{} $\mathfrak{C} = (A, B, C)$ such that  $g_E=0$ and $|E\cap E^c|=2$ (then $E$ has dimension~$1$ and $E\subseteq C$). Consider the \name{} $\mathfrak{C}'=(A, B, C')$ where $C'$ is obtained from $C$ by contracting $E$, and let $\psi_E\colon \mathfrak{C}\to \mathfrak{C}'$ be the contraction morphism. Then $\psi_E(U_{\mathfrak{C}})\subseteq U_{\mathfrak{C}'}$ and $\varphi_{\mathfrak{C}} = \varphi_{\mathfrak{C}'}\circ \psi_E|_{U_{\mathfrak{C}}}$. In particular, we have the following remark.

\begin{remark}    \label{rem:canonical_map_factors_stab}
If $\mathfrak{C}$ is  connected and $h_{\mathfrak{C}}\geq 1$, and if
$\mathfrak{C}^{\st}$ is the stabilization of $\mathfrak{C}$ with stabilization morphism $\psi\colon \mathfrak{C}\to \mathfrak{C}^{\st}$, then $\psi(U_{\mathfrak{C}})\subseteq U_{\mathfrak{C}^{\st}}$ and $\varphi_{\mathfrak{C}}|_{U_{\mathfrak C}}=\varphi_{\mathfrak{C}^{\st}}\circ \psi|_{U_{\mathfrak{C}}}$. On the other hand, if $\mathfrak{C}$ is connected and $h_{\mathfrak C}=0$, then $U_{\mathfrak{C}}=\Proj_{\mathfrak{C}}=\emptyset$, and $\psi$ is just the identity on the empty set.
\end{remark}

We collect the above considerations in the following result.

\begin{proposition}\label{prop:model-stabilization}
    Let $\mathfrak{C}$ be a \name{} and $\mathfrak{C}^{\st}$ be its stabilization. Let $\psi\colon U_{\mathfrak C}\to \mathfrak C^{\st}$ be the restriction of the stabilization morphism. Then $\psi(U_{\mathfrak{C}}) \subseteq U_{\mathfrak{C}^{\st}}$, the subset $\psi(U_{\mathfrak{C}})$ of $\mathfrak{C}^{\st}$ is dense, and $\varphi_{\mathfrak{C}}|_{U_{\mathfrak C}}=\varphi_{\mathfrak C^{\st}}\circ \psi$.  
\end{proposition}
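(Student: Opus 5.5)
The proof reduces to connected components and then assembles the preceding observations.

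\emph{Reduction to the connected case.} Write $\mathfrak C=\coprod_j \mathfrak C_j$ as the disjoint union of its connected components. By definition, $\mathfrak C^{\st}=\coprod_j \mathfrak C_j^{\st}$, and $\omega_{\mathfrak C}$ restricts to $\omega_{\mathfrak C_j}$ on each piece. Hence $H^0(\mathfrak C,\omega_{\mathfrak C})^\vee=\bigoplus_j H^0(\mathfrak C_j,\omega_{\mathfrak C_j})^\vee$, and by the same argument as in Remark~\ref{rem:proj_C_span}, $\Proj_{\mathfrak C}$ is the span of the subspaces $\Proj_{\mathfrak C_j}$. Moreover, $U_{\mathfrak C}=\coprod_j U_{\mathfrak C_j}$, because a section of $\omega_{\mathfrak C_j}$ extended by zero is a section of $\omega_{\mathfrak C}$. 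The same identification holds for $\mathfrak C^{\st}$. Finally, $h_{\mathfrak C_j^{\st}}=h_{\mathfrak C_j}$ for every $j$. I expect one can choose the identification $H^0(\mathfrak C_j^{\st},\omega)\cong H^0(\mathfrak C_j,\omega)$ by pullback along $\psi$, so that the linear inclusions $\Proj_{\mathfrak C_j}\hookrightarrow\Proj_{\mathfrak C}$ and $\Proj_{\mathfrak C_j^{\st}}\hookrightarrow\Proj_{\mathfrak C^{\st}}$ match. With these identifications, the canonical map on each component is the restriction of the global one, and all three assertions reduce to connected $\mathfrak C$.

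\emph{Connected case.} There are several subcases.

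If $\mathfrak C=A$ or $\mathfrak C=B$, then $\psi=\id$, $U_{\mathfrak C}=\mathfrak C$, and the statement is trivial.

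If $\mathfrak C=C$ with $h_{\mathfrak C}=0$, then $U_{\mathfrak C}=\emptyset$ by Remark~\ref{rem:canonical_map_factors_stab}, and the statement is vacuous; the density claim holds because $\mathfrak C^{\st}$ is also empty.

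If $h_{\mathfrak C}\ge 1$, Remark~\ref{rem:canonical_map_factors_stab} gives $\psi(U_{\mathfrak C})\subseteq U_{\mathfrak C^{\st}}$ and the factorization $\varphi_{\mathfrak C}=\varphi_{\mathfrak C^{\st}}\circ\psi$. To justify that remark itself, I would do the following.
\begin{itemize}
\item When $h_{\mathfrak C}\ge 2$, factor $\psi$ as a sequence of two moves: removing the tails $D_C$, using \eqref{eq:base-locus} and the subscheme $\mathfrak C'$, and contracting bridges $E$ with $g_E=0$ and $|E\cap E^c|=2$, using $\psi_E$. Compatibility of canonical maps along each step was noted above, and composing gives the factorization.
\item When $h_{\mathfrak C}=1$, $\mathfrak C^{\st}$ is a single point and $\Proj_{\mathfrak C}$ is a point, so the factorization is automatic.
\end{itemize}

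\emph{Density.} It remains to show that $\psi(U_{\mathfrak C})$ is dense in $\mathfrak C^{\st}$.

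When $h_{\mathfrak C}=1$, the target is a point and $U_{\mathfrak C}\neq\emptyset$, because some component has positive genus or lies on a cycle and so is not in $D_C$.

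When $h_{\mathfrak C}\ge2$, every irreducible component of $C^{\st}$ is the image of a component $Z$ of $C$ that survives stabilization. Such a $Z$ is not contained in $D_C$: a rational component whose nodes are all separating becomes a tail or a rational bridge after iteration and is contracted. Hence $Z\cap U_{\mathfrak C}$ is a nonempty open subset of $Z$, namely $Z$ minus finitely many separating nodes. Its image is dense in $\psi(Z)$.

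\emph{Main obstacle.} The main point needing care is this last step. One must check that every component of $C$ not contracted by $\psi$ meets $U_{\mathfrak C}$, which amounts to showing that components of $D_C$ are exactly among those contracted by iterated stabilization. One must also check that contracted components do not destroy density, which holds since they map to points lying on surviving components. A secondary point is making the identifications of the ambient projective spaces canonical, via pullback of sections along $\psi$, which is an isomorphism on $H^0(\omega)$.
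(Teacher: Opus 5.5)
Your overall route is the paper's. The paper states the proposition as a summary of the preceding observations: Equation~\eqref{eq:base-locus}, the bridge contractions $\psi_E$, and Remark~\ref{rem:canonical_map_factors_stab}. Your reduction to connected components, the inclusion $\psi(U_{\mathfrak C})\subseteq U_{\mathfrak C^{\st}}$ and the factorization of canonical maps are fine in substance.

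The genuine gap is the density step, exactly where you placed the main obstacle. It is false that a smooth rational component all of whose nodes are separating ``becomes a tail or a rational bridge after iteration and is contracted''. Take $C=Z\cup E_1\cup E_2\cup E_3$, where $Z\cong\mathbb P^1$ meets each of three disjoint elliptic curves $E_i$ in a single point.
\begin{itemize}
\item $C$ is stable of genus $3$, so $C^{\st}=C$ and $\psi=\id$.
\item $Z\subseteq D_C$, so by \eqref{eq:base-locus} $U_C=\bigsqcup_i(E_i\setminus Z)$, whose closure misses $Z$.
\end{itemize}
So density of $\psi(U_{\mathfrak C})$ in $\mathfrak C^{\st}$ fails as soon as some component of $D_C$ is stable, and no argument can give it in this generality. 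The discussion before the proposition only yields density of $U_{\mathfrak C}$ in $\mathfrak C'=(A,B,\overline{C\setminus D_C})$.

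The same misconception enters your justification of the factorization. Deleting $D_C$ and then contracting bridges does not reproduce $\psi$: in the example, $\mathfrak C'=E_1\sqcup E_2\sqcup E_3$ stabilizes to three isolated points of the $A$-part, not to $C$. Instead, contract rational tails one at a time. Each tail lies in $D_C$, hence misses $U_{\mathfrak C}$, and sections of $\omega_C$ have zero residue at its separating attaching node, so they restrict isomorphically to the remaining curve.

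What your method does prove is density in $U_{\mathfrak C^{\st}}$. Contracting a rational tail or a rational bridge changes neither the geometric genus of a surviving component nor whether all nodes on it are separating. (A chain of bridges in series is replaced by one node, which is separating if and only if the nodes of the chain were.) Hence a surviving component lies in $D_{C^{\st}}$ if and only if it lies in $D_C$. So every component of $C^{\st}$ meeting $U_{C^{\st}}$ is the image of a component of $C$ not in $D_C$, on which $U_C$ is open and dense.

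In particular, suppose $\mathfrak C$ has no separating nodes, which is the case in which the paper uses the result (Remark~\ref{rem:canonical_model_sep_stab_isomorphic}). Then $D_C$ consists of isolated smooth rational components with $h=0$, and $\psi(U_{\mathfrak C})=\mathfrak C^{\st}$.
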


\begin{notation}\label{not:dagger}
    Let $\mathfrak C=(A,B,C)$ be a 
 \name{}. Let $\widetilde{\mathfrak C}$ be the normalization of $\mathfrak C$
at all separating nodes. Write the decomposition into connected components as
\[
\widetilde{\mathfrak C}=A\sqcup B\sqcup \coprod_{1\le i\le r} C_i \sqcup \coprod_{1\le j \le s}R_j,
\]
where $C_i$ and $R_j$ are curves with $g_{C_i}>0$ and $g_{R_j}=0$ respectively. We let $\mathfrak C^\dagger$ be obtained from $\mathfrak{\widetilde C}$ by  discarding all genus-zero connected components, i.e., we set 
\[
\mathfrak C^\dagger=A\sqcup B\sqcup \coprod_{1\le i\le r} C_i.
\]
\end{notation}

\begin{remark}\label{rem:1-scheme-property}
    Let $\mathfrak{C}=(A,B,C)$ be a \name{}. One can easily check the following properties. 
    \begin{enumerate}
     \item[(i)] A \name{} $\mathfrak C$ has no separating nodes and no connected components $\mathfrak C'$ with $h_{\mathfrak C'}=0$ if and only if $U_{\mathfrak{C}}=\mathfrak{C}$.
    \item[(ii)] We have that $\mathfrak C^\dagger$ is a \name{} such that $U_{\mathfrak C^\dagger}=\mathfrak C^\dagger$.
        \item[(iii)] Assume that $\mathfrak{C}$ is the disjoint union of nodal 1-schemes $\mathfrak{C}_1$ and $\mathfrak{C}_2$.  There are  natural embeddings $\mathbb{P}_{\mathfrak C_1}\hookrightarrow \mathbb{P}_{\mathfrak C}$ and $\mathbb{P}_{\mathfrak C_2}\hookrightarrow \mathbb{P}_{\mathfrak C}$, which identify $\mathbb{P}_{\mathfrak C_1}$ and $\mathbb{P}_{\mathfrak C_2}$ with disjoint linear subspaces in $\mathbb{P}_{\mathfrak C}$ spanning $\mathbb{P}_{\mathfrak C}$. The canonical map $\varphi_{\mathfrak C}$ of $\mathfrak C$ is the composition
        \[
        \varphi_{\mathfrak C}\colon U_{\mathfrak C}=U_{\mathfrak{C}_1}\sqcup U_{\mathfrak{C}_2}\stackrel{\varphi_{\mathfrak C_1}\sqcup \varphi_{\mathfrak C_2}}{\longrightarrow} \mathbb{P}_{\mathfrak C_1}\sqcup \mathbb{P}_{\mathfrak C_2} \hookrightarrow \mathbb{P}_{\mathfrak C}=\spann(\mathbb{P}_{\mathfrak C_1},\mathbb{P}_{\mathfrak C_2}).
        \]
        \item[(iv)] 
        We have that $H^0(\mathfrak C, \omega_{\mathfrak C})$ can be canonically identified with $H^0(\mathfrak{C}^\dagger, \omega_{\mathfrak C^\dagger})$, hence we can identify $\mathbb P_{\mathfrak C}$ with $\mathbb P_{\mathfrak C^\dagger}$. Moreover, since  $U_{\mathfrak C^\dagger}=\mathfrak C^\dagger$, we have a well-defined morphism $\varphi_{\mathfrak{C}^\dagger}\colon \mathfrak{C^\dagger}\to \mathbb P_{\mathfrak C^\dagger}$ extending $\varphi_{\mathfrak C}\colon U_{\mathfrak C}\to \mathbb P_{\mathfrak C}$. 
    \end{enumerate}
\end{remark}

We now analyze when the canonical map fails to separate points.

\begin{proposition}
\label{prop:varphi_same_image}
    Let $\mathfrak{C} = (A, B, C)$ be a stable \name{} such that $U_{\mathfrak{C}}=\mathfrak{C}$. Given distinct points $p_1$ and $p_2$ of $\mathfrak{C}$, we have $\varphi_{\mathfrak C}(p_1)=\varphi_{\mathfrak C}(p_2)$ if and only if one of the following holds:
    \begin{enumerate}
        \item[(i)] $p_1$ and $p_2$ lie in the same connected component $C'$ of $C$  and $\{p_1,p_2\}$ is a separating pair of nodes of $C'$.
        \item[(ii)] 
        $p_1$ and $p_2$ are smooth points of $C$ and there is a subcurve $Z\subseteq \mathfrak C$ such that $p_1, p_2\in Z$, the restriction $\varphi_{C|_Z}$ is $2:1$, $\varphi_{C|_Z}(p_1) = \varphi_{C|_Z}(p_2)$, and $\varphi_{ C|_Z}(Z)$ is irreducible.  
    \end{enumerate}
    In particular, $\varphi_{\mathfrak C}\colon \mathfrak C\to\mathbb P_{\mathfrak C}$ is finite over its image.
\end{proposition}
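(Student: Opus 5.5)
We first reduce to the case where both $p_1$ and $p_2$ lie on the curve part $C$. By Remark~\ref{rem:1-scheme-property}(iii), applied to the decomposition of $\mathfrak C$ into connected components, the linear spans $\mathbb P_{\mathfrak C'}$ of distinct connected components $\mathfrak C'$ are disjoint. Hence points on different connected components have distinct images. Also, each point $a\in A$ or $b\in B$ is itself a connected component, and its image $q_a$ or $q_b$ is a point not met by any other component. So we may assume $p_1,p_2$ lie in one connected component $C'$ of $C$. Since $\mathfrak C$ is stable and $U_{\mathfrak C}=\mathfrak C$, the curve $C'$ is a connected stable curve of genus $\ge 2$ without separating nodes (genus-one connected components have become points of $A$ or $B$). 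Moreover, $\varphi_{\mathfrak C}|_{C'}$ is $\varphi_{C'}$ followed by a linear inclusion. We are thus reduced to a statement about the canonical map of a connected stable curve $C'$ with no separating nodes, where $\omega_{C'}$ is base point free.

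For such $C'$, we use the standard criterion that $\varphi(p_1)=\varphi(p_2)$ if and only if $h^0(\omega_{C'}\otimes I_{p_1,p_2})=g-1$, equivalently, by Riemann--Roch and duality on the nodal curve, if and only if $h^0$ of the appropriate torsion-free sheaf $\mathcal O(p_1+p_2)$ (the dual of the ideal sheaf of $\{p_1,p_2\}$) is $2$. We split into cases according to whether $p_i$ is smooth or a node.

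\emph{Both nodes.} Normalizing $C'$ at $p_1$ and $p_2$, the dimension count says that the canonical map fails to separate them exactly when every section of $\omega$ vanishing at $p_1$ vanishes at $p_2$. Sections of $\omega_{C'}$ vanishing at a node $n$ are the pushforwards of sections of $\omega_{C'_n}$ from the partial normalization $C'_n$, with no residues at the branches. If $\{p_1,p_2\}$ is a separating pair, normalizing at $p_1$ makes $p_2$ a separating node of $C'_{p_1}$. The residue at a separating node is forced to vanish by the residue theorem on each side, so every section vanishing at $p_1$ vanishes at $p_2$. Conversely, if $p_2$ is not separating in $C'_{p_1}$, we produce a differential on $C'_{p_1}$ with nonzero residue at $p_2$, using a cycle through $p_2$. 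This recovers (i), consistent with Catanese's results cited in the paper.

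\emph{One node, one smooth point.} The analogous argument shows the pair is always separated. After normalizing at the node, the smooth point $p$ still lies on a genus $\ge 1$ piece without base points, since $C'_{p_1}$ has no component on which $p$ becomes a base point. We would verify this via \eqref{eq:base-locus} applied to $C'_{p_1}$.

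\emph{Both smooth.} Take $Z$ to be the union of the components containing $p_1,p_2$, enlarged appropriately. Equality of images gives a $g^1_2$-type pencil, namely the sheaf $\mathcal O(p_1+p_2)$ with two sections. Its associated map to $\mathbb P^1$ restricted to the subcurve $Z$ where it is nonconstant is $2\!:\!1$, and $\varphi$ on $Z$ factors through it. This yields (ii) with $\varphi(Z)$ irreducible, namely the rational normal curve image. The converse is immediate.

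Finiteness then follows: the fibers are finite by (i) and (ii), and $\varphi$ is proper.

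\textbf{Main obstacle.} The smooth-point case is the hard part. One must control the subcurve on which the degree-two sheaf moves and show that the canonical image of $Z$ is irreducible, using the stable-curve hyperelliptic theory (admissible covers) rather than the classical smooth argument.
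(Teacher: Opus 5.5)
\textbf{Gap: the two-smooth-point case.} Your reduction to a connected stable curve $C'$ of genus at least two without separating nodes matches the paper's, and your residue argument for two nodes is correct. From there the paper simply invokes \cite[Theorems~E and~F, Propositions~3.10 and~3.19]{catanese}, whereas you try to reprove these facts, and the decisive step is only asserted. For two smooth points, the claim that $\varphi$ on $Z$ ``factors through'' the map $s\colon C'\to\mathbb P^1$ given by $|p_1+p_2|$ is exactly the content of Catanese's Theorem~F (cf.\ Proposition~\ref{prop:catanese_hh}). Without it you obtain neither that $\varphi|_Z$ is $2:1$ nor that $\varphi(Z)$ is irreducible. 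Admissible covers are not needed. Drop ``enlarged appropriately'': $Z$ must be exactly the union of the components on which $s$ is nonconstant.

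One way to close the gap is as follows.
\begin{enumerate}
\item[(a)] Since $h^0(\mathcal O(p_i))=1$ in the absence of separating nodes, $s$ has a genuine pole at each $p_i$. Counting poles, $Z$ is either the component containing $p_1,p_2$, or two smooth rational curves $Z_1\ni p_1$, $Z_2\ni p_2$ on which $s$ has degree one.
\item[(b)] $s$ is constant on each connected component $F$ of $Z^c$, and $|Z\cap F|\geq 2$ because there are no separating nodes. Hence $Z\cap F$ is a reduced fibre $\{x,\iota(x)\}$ of $s|_Z$, where $\iota$ is the covering involution; in particular $\delta_Z=2t_Z$.
\item[(c)] For $\eta\in H^0(C',\omega_{C'})$, the residue theorem on $F$ (cf.\ Lemma~\ref{lem:RC-prop}(i)) gives $\operatorname{res}_x\eta+\operatorname{res}_{\iota(x)}\eta=0$. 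So $\eta|_Z+\iota^*(\eta|_Z)$ is an $\iota$-invariant section of $\omega_Z$, hence pulled back from a regular differential on $\mathbb P^1$, hence zero.
\item[(d)] Thus all restrictions are anti-invariant and their ratios are pulled back from $\mathbb P^1$. So $\varphi|_Z$ is $s|_Z$ followed by a linear system of degree $m=g_Z-1+t_Z\geq 1$ on $\mathbb P^1$. This system is complete by the dimension count of Lemma~\ref{lem:dimdeg}(i), which yields (ii).
\end{enumerate}

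\textbf{Smaller gap: one node and one smooth point.} Your sentence ``$C'_{p_1}$ has no component on which $p$ becomes a base point'' restates what must be proved rather than proving it. By \eqref{eq:base-locus}, it amounts to showing that no genus-zero component $Y$ of $C'_{p_1}$ has all its attaching nodes separating in $C'_{p_1}$. If $p_1$ is not a self-node of $Y$, every edge at $Y$ would lie in the $\Cone$-set of $p_1$ (Remark~\ref{rem:propC1set}(2)). Then $Y$ would be a vertex of the quotient cycle, so $\delta_Y=2$ in $C'$, contradicting stability of the smooth rational component $Y$. If $p_1$ is a self-node of $Y$, then $\{p_1\}$ is a $\Cone$-set by itself and $C'_{p_1}$ has no separating nodes. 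Hence $Y=C'_{p_1}$, which forces $g(C')=1$.

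With these two additions, your argument becomes a self-contained proof of the facts the paper takes from Catanese.
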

\begin{proof}
    By Remark \ref{rem:1-scheme-property} (iii), 
    we can assume that $\mathfrak{C}$ is a connected stable curve. The results follow from \cite[Theorems~E and~F, and Propositions~3.10 and ~3.19]{catanese}.
\end{proof}

\begin{corollary}\label{cor:C_1-set}
    Let $\mathfrak{C} = (A, B, C)$ be a stable \name{} with no separating nodes. For $q\in\Proj_{\mathfrak{C}}$, exactly one of the following properties holds
    \begin{enumerate}
        \item[(i)] $\varphi_{\mathfrak{C}}^{-1}(q)=\emptyset$.
        \item[(ii)] $\varphi_{\mathfrak{C}}^{-1}(q) = \{p\}$, with $p\in C^{\sm} \cup A \cup B$.
        \item[(iii)] $\varphi_{\mathfrak{C}}^{-1}(q) = \{p_1, p_2\}$ and $p_1, p_2$ are as in Item~(ii) of Proposition \ref{prop:varphi_same_image}.
        \item[(iv)] $\varphi_{\mathfrak{C}}^{-1}(q)$ is a set of nodes of $C$ corresponding to a $\Cone$-set of $\Gamma_C$.
    \end{enumerate}
\end{corollary}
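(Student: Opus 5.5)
The corollary is a case analysis of the fiber $\varphi_{\mathfrak C}^{-1}(q)$, built on Proposition~\ref{prop:varphi_same_image}. Since $\mathfrak C$ has no separating nodes, $U_{\mathfrak C}$ still contains $A\sqcup B$ and every node. A component with $h=0$ would be a genus-zero connected curve, which is not stable; so there are none, and Remark~\ref{rem:1-scheme-property}(i) gives $U_{\mathfrak C}=\mathfrak C$. Hence Proposition~\ref{prop:varphi_same_image} applies, and $\varphi_{\mathfrak C}$ is defined everywhere.

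Fix $q$ and let $F=\varphi_{\mathfrak C}^{-1}(q)$. The finiteness in Proposition~\ref{prop:varphi_same_image} makes $F$ a finite set. If $F$ is empty we are in (i), and if $F=\{p\}$ with $p$ not a node we are in (ii). Suppose next that $F$ has at least two points. Any two distinct points of $F$ have the same image, so each pair falls under (i) or (ii) of the proposition. In (i) both points are nodes; in (ii) both are smooth points of $C$. So $F$ cannot contain a node together with a non-node, and it contains no point of $A\sqcup B$.

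If $F$ consists of smooth points, take $p_1\in F$ and let $Z$ be the subcurve from (ii), which is attached to $p_1$ and any second point. Because $\varphi|_Z$ is $2{:}1$ onto an irreducible image, the fiber over $q$ inside $Z$ has at most two points. The obstacle is a third point $p_3$ that lies in a different subcurve $Z'$. To exclude it, I would use that in Catanese's description these $2{:}1$ subcurves are determined intrinsically and have images in distinct components of $\widehat C$. Failing that, I would pass to $p_1,p_3$ with the subcurve $Z'$ and argue that $Z\cap Z'$ contains a component containing $p_1$, whose image forces $Z=Z'$. This gives $|F|=2$, which is (iii).

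If $F$ consists of nodes, every pair in $F$ is a separating pair, so by Remark~\ref{rem:propC1set}(2) all of $F$ lies in a single $\Cone$-set $S$. The converse inclusion also holds: any two nodes of $S$ form a separating pair, hence have equal images by Proposition~\ref{prop:varphi_same_image}(i), so all of $S$ maps to $q$ and $F=S$. The case where $F=\{p\}$ is a single node is covered the same way: $p$ forms a separating pair with every other node of its $\Cone$-set, so that set lies in $F$, and therefore $F$ is exactly the $\Cone$-set, possibly a singleton. This is (iv).

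Exclusivity is immediate from the cardinalities and point types in each case. Note that a one-element $\Cone$-set consists of a node, so it is distinguished from (ii), where $p$ is not a node. The main step I expect to be delicate is ruling out three or more smooth points in the same fiber.
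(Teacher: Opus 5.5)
Your route is the paper's: its proof is just an appeal to Proposition~\ref{prop:varphi_same_image} and Remark~\ref{rem:propC1set}. Your treatment of fibres containing a node, of mixed fibres, and of exclusivity is correct. The gap is where you suspected: you never prove that a fibre $F$ of smooth points has at most two elements, and both ingredients you offer fail as stated.

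First, ``$\varphi|_Z$ is $2{:}1$ onto an irreducible image'' does not bound $|F\cap Z|$. A degree-two map onto a curve with a node can have four points over the node, and applying Proposition~\ref{prop:varphi_same_image}(ii) pairwise gives no contradiction. What you need is the following:
\begin{itemize}
\item $Z$ is honest hyperelliptic. Every component of $Z$ maps onto $W=\varphi(Z)$ and the total degree is $2$, so $Z$ is minimal.
\item Proposition~\ref{prop:catanese_hh} then gives $\varphi|_Z=\nu\circ\mu$ with $\nu$ an embedding, and hence $|F\cap Z|\le 2$.
\end{itemize}

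Second, your fallback (``the common component through $p_1$ forces $Z=Z'$'') breaks exactly in the pair case. If the component $X_1\ni p_1$ maps birationally onto $W$, then $Z=X_1\cup X_2$ and $Z'=X_1\cup X_3$ both map onto $W$, and nothing about $X_1$ distinguishes them. Your first suggestion merely asserts that distinct such subcurves have distinct images, which is the claim to be proved.

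Here is a way to close the gap. Suppose $p_3\in F\setminus Z$ lies on $X_3$. Since $\varphi(Z')$ is irreducible and $\varphi$ is finite, $\varphi(X_3)=W$.
\begin{itemize}
\item If $\varphi|_{X_1}$ has degree $2$, then $Z'\supseteq X_1\cup X_3$ would have degree at least $3$ over $W$ unless $X_3=X_1\subseteq Z$. This is a contradiction.
\item If $\varphi|_{X_1}$ is birational, then $X_1,X_2,X_3$ are distinct, and $X_1\cup X_2$ and $X_1\cup X_3$ are honest hyperelliptic pairs. In particular $X_1\cong\mathbb P^1$.
\end{itemize}
In the second case, Proposition~\ref{prop:catanese_hh}(iv) says each connected component $Y_1,\dots,Y_s$ of $(X_1\cup X_2)^c$ meets each of $X_1$ and $X_2$ in exactly one point; say $X_3\subseteq Y_1$. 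Then $X_2\cup Y_2\cup\dots\cup Y_s$ is connected and lies in a single connected component of $(X_1\cup X_3)^c$. By (iv) for that pair, this component meets $X_1$ in exactly one point. Hence $|X_1\cap X_2|+s-1\le 1$, so $\delta_{X_1}=|X_1\cap X_2|+s\le 2$. This contradicts stability of the smooth rational component $X_1$.

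Therefore $F\subseteq Z$, and the bound from the factorization through $\mu$ gives $|F|=2$, which is case (iii).
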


\begin{proof}
 The result is a consequence of   Remark~\ref{rem:propC1set} and Proposition \ref{prop:varphi_same_image}.
\end{proof}

\begin{definition} \label{def: hh}
    Let $C$ be a stable curve. The \emph{honest hyperelliptic} subcurves of $C$ are the  minimal subcurves $Z\subseteq C$ such that $\varphi_{C|_Z}$ is generically a $2:1$ morphism.
\end{definition}

\begin{proposition}\label{prop:catanese_hh}
Let $C$ be a stable curve with $U_C=C$, and let $Z\subseteq C$ be a
subcurve. Then $Z$ is honestly hyperelliptic if and only if it satisfies
\emph{(iii)}, \emph{(iv)}, and either \emph{(i)} or \emph{(ii)}:
\begin{enumerate}
    \item[(i)] $Z$ is irreducible;
    \item[(ii)] $Z=Z_1\cup Z_2$, with $Z_1\cong Z_2\cong\Proj^1$;
    \item[(iii)] there is a finite degree-$2$ morphism $\mu\colon Z\to\Proj^1$;
    \item[(iv)] $Z\cap Z'$ is a reduced fibre of $\mu$ for every connected
    component $Z'$ of $Z^c$ meeting $Z$.
\end{enumerate}
Moreover, $\varphi_C|_Z=\nu\circ\mu$ for an embedding
$\nu\colon\Proj^1\to\Proj_C$ onto a (usually degenerate) rational normal curve.
\end{proposition}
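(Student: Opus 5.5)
This characterization of honestly hyperelliptic subcurves will be deduced from Catanese's analysis of the canonical map, as in Proposition~\ref{prop:varphi_same_image}. No part of it will be built from scratch.

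\emph{Forward direction.} Let $Z$ be honestly hyperelliptic, i.e.\ minimal such that $\varphi_C|_Z$ is generically $2:1$. Set $W:=\varphi_C(Z)$.

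First, $W$ is irreducible. Each component of $Z$ maps either birationally or $2:1$ onto its image. If $W$ had several components, the preimage of one of them would be a smaller subcurve still mapping generically $2:1$, contradicting minimality.

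Since the map is $2:1$, every $q\in W$ has two preimages. By Proposition~\ref{prop:varphi_same_image}(ii) and Corollary~\ref{cor:C_1-set}, two distinct components of $Z$ can only have the same image when each maps birationally onto $W$. Minimality then yields (i) (one component mapping $2:1$) or (ii) (two components swapped). In case (ii) the components must be rational, which I would read off from Catanese's Theorem~F: the two components are identified by an involution and meet in a reduced fibre.

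\emph{Degree and rationality of $W$.} Factor $\varphi_C|_Z=\nu\circ\mu$ with $\mu$ of degree $2$. Then $\nu$ is the normalization of $W$ composed with the inclusion. The restriction of $|\omega_C|$ to $Z$ has degree $k_Z$ and pulls back from $\nu$. The subspace $\Lambda_W$ receives the restricted sections, which exhibit $W$ as a linearly normal image. I would then invoke the standard Clifford-type bound for a $2:1$ canonical restriction (Catanese, Propositions~3.10 and~3.19), giving $\deg W=\dim\Lambda_W$. Hence $W$ is a rational normal curve in its span and $\nu$ is an embedding of $\Proj^1$. This gives (iii) and the final assertion.

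\emph{Condition (iv).} Let $Z'$ be a connected component of $Z^c$ meeting $Z$. Since there are no separating nodes, $|Z\cap Z'|\ge 2$. The nodes of $Z\cap Z'$ together form part of a $\Cone$-set structure, and by Corollary~\ref{cor:C_1-set} they map to the same point exactly when they form separating pairs. I would use that sections of $\omega_C$ vanishing on $Z'$ restrict to sections of $\omega_Z(Z\cap Z')$. The requirement that the restriction factors through $\mu$ forces the divisor $Z\cap Z'$ to be a pullback under $\mu$, and the only option compatible with the degree count is a single reduced fibre.

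\emph{Converse.} Given (iii), (iv) and either (i) or (ii), compute $\omega_C|_Z=\omega_Z(\sum_{Z'}Z\cap Z')$. Using $\omega_Z=\mu^*\mathcal O(g_Z-1)$ and (iv), this equals $\mu^*\mathcal O_{\Proj^1}(g_Z-1+t)$, where $t$ is the number of components $Z'$ meeting $Z$. The restriction of $H^0(C,\omega_C)$ lands in $\mu^*H^0(\mathcal O(g_Z-1+t))$. This shows $\varphi_C|_Z$ factors through $\mu$, hence is generically $2:1$ provided it is nonconstant, which holds by stability.

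Minimality follows because, in case (ii), each component alone maps birationally.

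\emph{Main obstacle.} The step I expect to be hardest is (iv) in the forward direction: showing the intersection divisor is exactly one reduced fibre and not several. Here I would lean directly on Catanese's Theorem~F rather than reprove it.
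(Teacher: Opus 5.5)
Your forward direction is in substance the paper's argument. The paper cites Catanese's Theorem~F for necessity, and you also defer the decisive points to that theorem: rationality in case (ii), and that each $Z\cap Z'$ is a single reduced fibre. Your surrounding heuristics therefore do no harm.

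The converse has a genuine gap at the sentence ``the restriction of $H^0(C,\omega_C)$ lands in $\mu^*H^0(\mathcal O(g_Z-1+t))$''. Put $a=g_Z-1+t$. The claim does not follow from the isomorphism $\omega_C|_Z\cong\mu^*\mathcal O_{\Proj^1}(a)$. Since $\mu_*\mathcal O_Z\cong\mathcal O\oplus\mathcal O(-g_Z-1)$, one has $H^0(Z,\mu^*\mathcal O(a))\cong\mu^*H^0(\mathcal O(a))\oplus H^0(\mathcal O(t-2))$. The second summand is nonzero as soon as $t\ge 2$, which is the typical case.

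For example, take $Z\cong\Proj^1$ with $t=2$, as in case (iii) of Lemma~\ref{lem:linear-classification}.
\begin{itemize}
\item $\omega_C|_Z\cong\mathcal O_{\Proj^1}(2)$ has three sections, and the $\mu$-pullbacks form only a plane.
\item Any other base-point-free pencil of sections gives a degree-$2$ map that does not factor through $\mu$.
\item With $g_Z=1$ and $t=2$, a wrong $3$-dimensional subspace would even give a birational map.
\end{itemize}
So the factorization is exactly the content of Theorem~F and needs proof.

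The missing ingredient is the residue condition. Take $\xi\in H^0(C,\omega_C)$ and write $Z\cap Z'=\{p,\iota(p)\}$. The residue theorem on $Z'$, together with the gluing condition at the nodes, gives $\operatorname{res}_p(\xi|_Z)+\operatorname{res}_{\iota(p)}(\xi|_Z)=0$. Now split $\xi|_Z$ into its $\iota$-eigencomponents.
\begin{itemize}
\item The invariant component therefore has no poles. It is an $\iota$-invariant section of $\omega_Z$, hence zero, because the invariant part of $\mu_*\omega_Z$ is $\omega_{\Proj^1}$.
\item The anti-invariant differentials are exactly those corresponding to $\mu^*H^0(\mathcal O(a))$ under $\omega_Z\cong\mu^*\mathcal O(g_Z-1)$.
\end{itemize}

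Even granting the containment, ``nonconstant'' does not give a generically $2:1$ map, nor the rational normal curve of the last assertion. You also need the image of the restriction map $\rho_Z\colon H^0(C,\omega_C)\to H^0(Z,\omega_C|_Z)$ to be \emph{all} of $\mu^*H^0(\mathcal O(a))$. This follows from $\dim\operatorname{Im}(\rho_Z)=g_Z+\delta_Z-t_Z=g_Z+t=a+1$. Here you use Lemma~\ref{lem:dimdeg}(i), whose proof does not depend on the present proposition, and $\delta_Z=2t$ from (iv).

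The paper avoids both issues by applying Catanese's Theorem~F directly to $L=\mu^*\mathcal O_{\Proj^1}(1)$, after noting from (iv) that every $Z\cap Z'$ lies in $|L|$.
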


\begin{proof}
The necessity of the conditions follows from
\cite[Theorem~F]{catanese}. Conversely, assume that they hold and set
$L=\mu^*\mathcal O_{\Proj^1}(1)$. By {(iv)}, the divisor
$Z\cap Z'$ belongs to $|L|$ for every connected component $Z'$ of
$Z^c$ meeting $Z$. Hence \cite[Theorem~F]{catanese} gives the asserted
factorization of $\varphi_C|_Z$, which is therefore generically $2:1$.
Minimality is automatic in Case {(i)}; in Case {(ii)},
$\mu|_{Z_i}$ has degree~$1$ for $i=1,2$.
\end{proof}

We call an honest hyperelliptic subcurve as in Proposition \ref{prop:catanese_hh} (ii) an \emph{honest hyperelliptic pair of $C$}. Notice that, in this case, a $2:1$ map $Z=Z_1\cup Z_2\to \mathbb{P}^1$ corresponds to an isomorphism $\alpha\colon Z_1\to Z_2$, such that $\alpha(p) = p$ for every node $p\in Z_1\cap Z_2$ and $\alpha(p_1)=p_2$ for every $\{p_1\} = Z_1\cap Z'$ and $\{p_2\}= Z_2\cap Z'$ and every connected component $Z'$ of $Z^c$.

Motivated by Proposition \ref{prop:catanese_hh}, we now define honest hyperelliptic subcurves of \emph{any} \name{} (not necessarily connected or stable).

\begin{definition}\label{def:non-hyp} 
    Let $\mathfrak{C} = (A, B, C)$ be a \name{}. Let $Z\subseteq C$ be a subcurve such that $Z\cap U_{\mathfrak C}$ is dense in $Z$. We say that $Z$ is an \emph{honest hyperelliptic} subcurve of $\mathfrak{C}$ if either (1) and (3) or (2) and (3) below hold.
    \begin{enumerate}
        \item[(1)] $Z$ is irreducible.
        \item[(2)] $Z$ is the union of two smooth connected rational curves
$Z_1$ and $Z_2$; we call $Z$ an honest hyperelliptic pair.
        \item[(3)] There exists a $2:1$ morphism $\mu\colon Z\to \mathbb{P}^1$ such that
    \begin{itemize}
        \item[(a)] for $p_1,p_2\in Z\cap U_{\mathfrak C}$, we have $\varphi_C(p_1)=\varphi_C(p_2)$ if and only if $\mu(p_1)=\mu(p_2)$.
        \item[(b)] for each connected component $Z'$ of $Z^c$,  either $|Z\cap Z'|\leq 1$, or $Z\cap Z' = \{p_1, p_2\}$, with $\mu(p_1)=\mu(p_2)$ and, in Case (2), $p_1\in Z_1$ and $p_2\in Z_2$.
    \end{itemize}    
    \end{enumerate}  
        We denote by $\iota$ the involution associated to the morphism $\mu$ in (3) and we call it \emph{the involution} of $Z$.\end{definition}
When $\mathfrak C=C$ is a stable curve, this definition agrees with
Definition~\ref{def: hh}; this follows from
\cite[Theorem~F]{catanese}. When $U_C=C$, this is precisely
Proposition~\ref{prop:catanese_hh}.

\begin{corollary}\label{cor:honest-ramif}
    Let $\mathfrak{C}=(A,B,C)$ be a stable \name{} such that $U_{\mathfrak{C}}=\mathfrak{C}$. Let $p$ be a smooth point of $C$. If $p$ is a ramification point of $\varphi_{\mathfrak{C}}$, then $p$ belongs to an irreducible honest hyperelliptic subcurve of $\mathfrak{C}$.
\end{corollary}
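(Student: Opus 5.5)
We first reduce to a connected stable curve. By Remark~\ref{rem:1-scheme-property}~(iii), $\varphi_{\mathfrak C}$ is the disjoint union of the canonical maps of the connected components, and the components $A$ and $B$ consist of isolated points that are never smooth points of $C$. So we may take $\mathfrak C=C$ to be a connected stable curve with $U_C=C$, that is, with no separating nodes, and $p\in C^{\sm}$ a ramification point of $\varphi_C$.

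Ramification at a smooth point $p$ means one of two things: the fibre $\varphi_C^{-1}(\varphi_C(p))$ is nonreduced at $p$, or the map is not a local immersion at $p$. By Corollary~\ref{cor:C_1-set}, the fibre over $q=\varphi_C(p)$ is either $\{p\}$ or $\{p,p'\}$ as in Item~(ii) of Proposition~\ref{prop:varphi_same_image}, since case~(iv) involves only nodes. We treat the two cases separately.

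\textbf{Case 1: $\varphi_C^{-1}(q)=\{p\}$.} The local ring of the scheme-theoretic fibre is then nonreduced, so $d\varphi_C$ vanishes at $p$. For a stable curve without separating nodes, Catanese's results \cite[Theorems~D,~E,~F]{catanese} on very ampleness of $\omega_C$ say the following: $\omega_C$ separates tangent vectors at a smooth point unless $p$ lies on a subcurve $Z$ on which $\varphi_C|_Z$ has degree $2$ onto its image. The standard criterion is $h^0(\omega_C(-2p))=h^0(\omega_C)-2$, which by Riemann--Roch is equivalent to $h^0(\mathcal O_C(2p))=1$. Failure of this gives a degree-$2$ map to $\mathbb P^1$ on the component $C_v$ containing $p$, compatible with the gluing. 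So $p$ lies on an honest hyperelliptic subcurve $Z$. It cannot be a pair of smooth rational curves $Z_1\cup Z_2$: there each $\mu|_{Z_i}$ has degree $1$, so $\varphi_C|_{Z_i}$ is an embedding onto a rational normal curve and is unramified at smooth points of $Z_i$. Hence the minimal $Z$ containing $p$ is irreducible.

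\textbf{Case 2: the fibre is $\{p,p'\}$.} By Proposition~\ref{prop:varphi_same_image}~(ii), $p$ and $p'$ lie on a subcurve $Z$ with $\varphi_C|_Z$ of degree $2$ and irreducible image; passing to a minimal such $Z$, it is honestly hyperelliptic. By Proposition~\ref{prop:catanese_hh}, $\varphi_C|_Z=\nu\circ\mu$ with $\nu$ an embedding. Ramification of $\varphi_C$ at $p$ therefore means $\mu$ is ramified at $p$. Again the pair case is excluded, because there $\mu$ restricts to isomorphisms $Z_i\to\mathbb P^1$ and $p,p'$ lie on different $Z_i$. So $Z$ is irreducible.

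Indeed, Proposition~\ref{prop:catanese_hh} gives the factorization $\varphi_C|_Z=\nu\circ\mu$ for every honest hyperelliptic subcurve, so Case~1 also reduces to ramification of $\mu$.

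The main obstacle is Case~1: turning the vanishing of the differential into the existence of a hyperelliptic subcurve. I would extract this directly from Catanese's classification \cite[Theorem~F and Proposition~3.19]{catanese}, which says the canonical map of such a curve is an embedding away from honest hyperelliptic subcurves and separating pairs. Separating pairs are nodes, so they do not affect the smooth point $p$.
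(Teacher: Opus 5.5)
Your proposal is correct, but the key step goes by a different route from the paper. The paper does not split by the shape of the fibre. It quotes \cite[Proposition~3.10 and its proof]{catanese} to get that $\varphi_{\mathfrak C}$ has degree at least two on the component through $p$. It then takes the resulting $2:1$ subcurve $Z$ from Proposition~\ref{prop:varphi_same_image}(ii) and applies Proposition~\ref{prop:catanese_hh}: $Z$ is either irreducible or a pair, and a pair is excluded because $\varphi$ embeds each of its rational components.

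You replace the citation by a direct Riemann--Roch argument. Since $p$ is not a base point and $d\varphi_p=0$, you get $h^0(\omega_C(-2p))=g-1$, hence $h^0(\mathcal O_C(2p))=2$ and a nonconstant $f$ with poles bounded by $2p$. This makes the argument self-contained, and it shows that the component $C_v$ of $p$ is itself the required subcurve, so the pair exclusion becomes superfluous. You do need to write out the two points hidden behind ``compatible with the gluing''.
\begin{itemize}
\item Degree one is impossible. The function $f$ is constant on each connected component of $C_v^c$ and nonconstant on $C_v$. If $f|_{C_v}$ had degree one, it would be injective on $C_v^\nu$. Then $C_v$ would be a smooth rational curve and every connected component of $C_v^c$ would meet it in a single node. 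This cannot happen, since $g\ge 2$ and there are no separating nodes.
\item The gluing conditions hold in degree two. Put $\mu:=f|_{C_v}$. The two branches of each self-node of $C_v$ have the same $\mu$-value. Each connected component $Z'$ of $C_v^c$ meets $C_v$ in points sharing a $\mu$-value. Since there are no separating nodes, $Z'$ meets $C_v$ in exactly two such points, and they form a reduced fibre of $\mu$. These are conditions (iii) and (iv) of Proposition~\ref{prop:catanese_hh}.
\end{itemize}

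Your split by the shape of the fibre is unnecessary. The Riemann--Roch argument never uses that the fibre is $\{p\}$. Moreover, your Case~2 is actually empty: if $p\neq p'$ lie in one fibre of the degree-$2$ map $\mu$, or on different components of a pair, that fibre is reduced at $p$, so $\varphi$ is unramified there. Likewise, at a smooth point ``nonreduced fibre'' and ``not a local immersion'' are a single condition, not two alternatives.
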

\begin{proof}
    If $p$ is a smooth point of $\mathfrak C$ which is a ramification point of $\varphi_{\mathfrak{C}}$, then, by \cite[Proposition~3.10 and its proof]{catanese},
$\varphi_{\mathcal C}$ has degree at least two on the
irreducible component containing $p$, which means that there exist $p_1$ and $p_2$ in the condition of Proposition~\ref{prop:varphi_same_image} (ii). From Proposition~\ref{prop:catanese_hh}, either $Z$ is irreducible or the union of two smooth irreducible rational curves. However, in the latter case, there is no ramification over the rational curves.
\end{proof}

We end this section with the important definitions of canonical model and canonical triple of a \name{}.

\begin{definition}    \label{def:canonical_model}
   Let $\mathfrak{C} = (A, B, C)$ be a \name{} and $\varphi_{\mathfrak C}\colon U_{\mathfrak C}\to \mathbb P_{\mathfrak C}$ be its canonical map (see Definition \ref{def:canonical-map}). 
   
The \emph{canonical model} of $\mathfrak{C}$ is  defined as the closure $\widehat{\mathfrak C}:=\overline{\varphi_{\mathfrak{C}}(U_{\mathfrak{C}})}\subseteq \Proj_{\mathfrak C}$. 
   The \emph{canonical triple} of $\mathfrak{C}$ is defined as
   \[
   \Can(\mathfrak{C}) := (\widehat{\mathfrak{C}}, \mathcal{S}_{\widehat{\mathfrak{C}}}, \mathcal{B}_{\widehat{\mathfrak{C}}}),
   \]
   where
\begin{enumerate}
    \item[(1)]  $\mathcal{S}_{\widehat{\mathfrak{C}}} := \varphi_{\mathfrak{C}}(C^{\sing}\setminus C^{\sep}) \cup \varphi_{\mathfrak{C}}(B)\subseteq \widehat{\mathfrak C}$.
    \item[(2)]  $\mathcal{B}_{\widehat{\mathfrak{C}}} := \Br_{\varphi_{\mathfrak{C}^\dagger}}\setminus \mathcal{S}_{\widehat{\mathfrak{C}}}$ (see Section \ref{sec:schemes},  Notation \ref{not:dagger} and Remark \ref{rem:1-scheme-property}(iv)).
\end{enumerate}

We let $\widehat{\mathfrak{C}}^{\iso}$ be the set of isolated points of $\widehat{\mathfrak{C}}$. When  $\mathfrak{C}$ is stable and its normalization at all separating nodes has no connected component of genus $1$, we have 
\begin{equation}\label{eq:iso}
\widehat{\mathfrak{C}}^{\iso} = \varphi_{\mathfrak C}(A)\cup \varphi_{\mathfrak C}(B).
\end{equation}

Finally, an \emph{isomorphism} of  canonical triples $\Can(\mathfrak C)=(\widehat{\mathfrak{C}},\mathcal{S}_{\widehat{\mathfrak{C}}}, \mathcal{B}_{\widehat{\mathfrak{C}}})$ and $\Can(\mathfrak C')=(\widehat{\mathfrak{C}'},\mathcal{S}_{\widehat{\mathfrak{C}}'}, \mathcal{B}_{\widehat{\mathfrak{C}}'})$ is a linear isomorphism $\zeta\colon \mathbb P_{\mathfrak C}\to \mathbb {P}_{\mathfrak C'}$ such that 
\[
(\zeta(\widehat{\mathfrak{C}}), \zeta(\mathcal{S}_{\widehat{\mathfrak{C}}}), \zeta( \mathcal{B}_{\widehat{\mathfrak{C}}}))=(\widehat{\mathfrak{C}}',\mathcal{S}_{\widehat{\mathfrak{C}}'}, \mathcal{B}_{\widehat{\mathfrak{C}}'}).
\]
We denote by $\Isom_{\CT}(\Can(\mathfrak C),\Can(\mathfrak C'))$ the set of isomorphisms of canonical triples.
\end{definition}

Some remarks on our definitions are in order.

\begin{remark} \label{rem:branchminusS}
    We have defined $\mathcal{B}_{\widehat{\mathfrak{C}}} := \Br_{\varphi_{\mathfrak{C}^\dagger}}\setminus \mathcal{S}_{\widehat{\mathfrak{C}}}$ instead of $\mathcal{B}_{\widehat{\mathfrak{C}}} := \Br_{\varphi_{\mathfrak{C}^\dagger}}$ to remove the case of an internal node in an honest hyperelliptic component (which is the only case where $\Br_{\varphi_{\mathfrak{C}^\dagger}}$ could possibly intersect $\mathcal{S}_{\widehat{\mathfrak{C}}}$). 
\end{remark}

\begin{remark}
\label{rem:canonical_model_sep_stab_isomorphic}
    Let $\mathfrak{C}$ be a \name{} with no separating nodes. Then  $\Can(\mathfrak{C})$ and $ \Can(\mathfrak{C}^{\st})$ are canonically isomorphic (see Proposition~\ref{prop:model-stabilization}).
\end{remark}

We now  illustrate the  definition of canonical triple with an example of a \name{} consisting of a nodal curve only.

\begin{example}\label{exa:banana-example}
Let $C_1$ and $C_2$ be smooth non-hyperelliptic curves and consider distinct points $p_i,q_i\in C_i$ for $i=1,2$. 
Consider the \emph{banana curve} $C=C_1\cup C_2$ as in Figure~\ref{fig:banana}, obtained from $C_1$ and $C_2$ by identifying $p_1$ with $p_2$ and $q_1$ with $q_2$. The two nodes in $C_1\cap C_2$ are identified by the canonical map $\varphi_C$ (see Proposition \ref{prop:varphi_same_image}). Fix $i\in\{1,2\}$. 
The exact sequence in cohomology associated to the exact sequence
\[
0\to \omega_{C_{3-i}}\to \omega_C \to \omega_C|_{C_i}\cong\omega_{C_i}(p_i+q_i)\to 0
\]
shows that the restriction map $\rho_{C_i}\colon H^0(C,\omega_C)\to H^0(C_i,\omega_C|_{C_i})$ is surjective. Thus, $\Lambda_{\varphi_C(C_i)}=\mathbb P(\Ima(\rho_{C_i})^\vee)=\mathbb P(H^0(C_i,\omega_C|_{C_i})^\vee)$. Let $C'_i$ be the nodal curve obtained from $C_i$ by identifying $p_i$ with $q_i$.  Let $\nu_i\colon C_i\to C'_i$ be the normalization map. Notice that $H^0(C_i,\omega_C|_{C_i})=H^0(C_i,\nu_i^*(\omega_{C'_i}))$. Then $\widehat{C} = \widehat{C}'_1\cup \widehat{C}'_2$. Moreover, $\mathbb P_{C'_1}$ and $\mathbb P_{C'_2}$ intersect at exactly one point (equal to $\varphi_C(C_1\cap C_2)$) as subspaces of $\mathbb P_C$. This means that $\mathcal{S}_{\mathfrak{C}} = \mathbb P_{C'_1} \cap \mathbb P_{C'_2}$. Moreover $\mathcal{B}_{\mathfrak{C}} = \emptyset$.

Notice that the canonical triple of $C$ only depends on the nodal curves $C'_1$ and $C'_2$. Thus if $C'$ is the banana curve obtained from $C_1$ and $C_2$ by identifying $p_1$ with $q_2$ and $p_2$ with $q_1$, there is an isomorphism $\mathbb P_C\to \mathbb P_{C'}$ taking $\widehat{C}$ to $\widehat{C}'$.  
\begin{figure}[ht]
\centering
\begin{tikzpicture}[scale=1]
\draw[->] (-3,-1) to (-2, -1);
\draw [blue] plot [smooth]  coordinates {(-5,0) (-4,-1) (-5,-2)};
\draw [red] plot [smooth]  coordinates {(-4,0) (-5,-1) (-4,-2)};
\draw [red] (-1,-1) .. controls (-1,0) .. (0.3,-1);
\draw [red] (-1,-1) .. controls (-1,-2) .. (0.3,-1);
\draw[-] [red] (0.3,-1) to (0.8,-0.4);
\draw[-] [red] (0.3,-1) to (0.8,-1.6);
\draw  [blue] (1.6,-1) .. controls (1.6,0) .. (0.3,-1);
\draw  [blue] (1.6,-1) .. controls (1.6,-2) .. (0.3,-1);
\draw[-] [blue] (0.3,-1) to (-0.2,-0.4);
\draw[-] [blue] (0.3,-1) to (-0.2,-1.6); 
\node[below] at (-5.4,-0.5) [red] {$C_1$};
\node[below] at (-3.6,-0.5) [blue]{$C_2$};
\node[below] at (-4.5,0.8) {$C$};
\node[below] at (0.1,0.8) {$\widehat{C}$};
\end{tikzpicture}
\caption{The canonical map of a banana curve.}
\label{fig:banana}
\end{figure}
\end{example}

\subsection{Reconstruction of linear components of the canonical model}\label{sub:linear}

The main goal here is to prove that the complement of the union of all linear components of the canonical triple of a \name{} with no separating nodes, together with the data of the branch points and of the canonical image of the singular points, uniquely determines the canonical triple of the \name{} (Proposition~\ref{prop:reconstruct-linear}). To reconstruct the linear components, we first classify them in different types (Definition~\ref{def: type}). The main steps are the reconstruction of the linear components of types $1$ and $2$ (via Lemma~\ref{lem:reconstruct-type-one-two}) and then the reconstruction of the linear components of type $3$ (via Lemmas \ref{lem:linear-invariant} and \ref{lem:reconstruct-type-three}).  

Proposition~\ref{prop:reconstruct-linear} stated for nodal curves reads as follows:

\begin{proposition}
    \label{prop:curves-reconstruct-linear}
    Let $C$ and $C'$ be two nodal curves with no separating nodes. Assume that $\zeta\colon \Proj_{C}\to \Proj_{C'}$ is a linear isomorphism. The following are equivalent.
    \begin{itemize}
        \item[(i)] 
        $\zeta$ is an isomorphism of the canonical triples $\Can(C)$ and $\Can(C')$.
        \item[(ii)] If $X_C$ and $X_{C'}$ are the (closures of the) complement in $\widehat C$ and $\widehat{C}'$ of the union of the linear components, then $(\zeta(X_{\widehat{C}}),\zeta(\mathcal S_{\widehat{C}}), \zeta(\mathcal{B}_{\widehat{C}}))=(X_{\widehat{C}'},\mathcal S_{\widehat{C}'}, \mathcal{B}_{\widehat{C}'})$.
    \end{itemize} 
\end{proposition}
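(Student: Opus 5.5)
The implication (i)$\Rightarrow$(ii) is immediate. A linear isomorphism $\zeta$ with $\zeta(\widehat C)=\widehat C'$ carries irreducible components to irreducible components and preserves their degrees. It therefore maps $\Lin(\widehat C)$ bijectively onto $\Lin(\widehat C')$, so $\zeta(X_{\widehat C})=X_{\widehat C'}$. The other two entries are preserved by hypothesis.

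For (ii)$\Rightarrow$(i), I will deduce the statement from the general version for \name{}s, Proposition~\ref{prop:reconstruct-linear}, applied with $\mathfrak C=(\emptyset,\emptyset,C)$ and $\mathfrak C'=(\emptyset,\emptyset,C')$. Since $C$ has no separating nodes, Remark~\ref{rem:canonical_model_sep_stab_isomorphic} and Proposition~\ref{prop:model-stabilization} let me replace $C$ and $C'$ by their stabilizations $\mathfrak C^{\st}$ and $\mathfrak C'^{\st}$. These are stable \name{}s with $U=\mathfrak C$, and they may acquire isolated points in $A$ or $B$ coming from genus-one components. The triples are canonically identified, and so are the sets $X$, $\mathcal S$ and $\mathcal B$. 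The task then reduces to showing that the data $(X_{\widehat{\mathfrak C}},\mathcal S,\mathcal B)$ determine $\Lin(\widehat{\mathfrak C})$ intrinsically, by a recipe stated only in projective-geometric terms, so that it is automatically transported by $\zeta$.

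The recipe follows the type classification of Definition~\ref{def: type}.

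First, lines of types $1$ and $2$ are characterized by incidence conditions: a line spanned by points of $\mathcal S\cup\mathcal B$ and/or meeting $X$ in prescribed ways. Lemma~\ref{lem:reconstruct-type-one-two} shows that these conditions cut out exactly these components, and since the conditions are linear-invariant, $\zeta$ respects them.

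Second, type $0$ components are the lines containing at least three of the remaining points of $\mathcal B$. Here I need a general-position statement (Theorem~\ref{thm:general_position}) to guarantee that no three such points are collinear unless they lie on a type $0$ component.

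Third, and hardest, are the type $3$ lines, which meet the rest only in points of $\mathcal S$. My plan is to detect them by the collinearity--incidence criterion on triples of points of $\mathcal S$, as in Lemma~\ref{lem:linear-invariant} and Lemma~\ref{lem:reconstruct-type-three}. I would argue by induction on $h_{\mathfrak C}$ plus the number of components. Take a point $q\in\mathcal S$ that is the image of a $\Cone$-set (Corollary~\ref{cor:C_1-set}). Projecting from $q$ corresponds to normalizing the nodes of that $\Cone$-set, which yields a \name{} of smaller complexity; its new connected components may become isolated points recorded in $A$ or $B$. Its triple is the projection of the old one, so $\zeta$ induces a linear isomorphism of the projected data. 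The induction hypothesis then recovers the linear components there, and they are lifted back as cones over $q$ intersected with the incidence data.

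The main obstacle is this last lifting step. I must check that a line in the projection lifts uniquely, and that spurious collinear triples in $\mathcal S$ cannot fake a type $3$ component. I would handle this with the general-position theorem together with the description of fibers of $\varphi$ over points of $\mathcal S$ in Proposition~\ref{prop:varphi_same_image}.

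Once all linear components are recovered intrinsically, $\zeta(\widehat C)=\widehat C'$, and (i) follows.
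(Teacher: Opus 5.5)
Your proof of (i)$\Rightarrow$(ii) and the reduction to stable \name{}s are fine. Invoking Proposition~\ref{prop:reconstruct-linear} for $(\emptyset,\emptyset,C)$ and $(\emptyset,\emptyset,C')$ would already finish the proof, and that is how the paper obtains this statement. You instead re-prove that proposition, and two of your steps do not work as written.

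For type $0$, your justification is wrong. Theorem~\ref{thm:general_position} concerns subdivisors of a general hyperplane section, and branch points are special points. The right reason is Lemma~\ref{lem:linear-classification}. After removing the points of $\mathcal B$ lying on $X$ and on the lines of types $1,2$, what remains is exactly the six branch points on each type-$0$ line, since type-$3$ lines carry none. Distinct type-$0$ lines are distinct connected components, hence have independent spans. So any three collinear remaining points lie on a single one of them.

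The genuine gap is type $3$. First, your induction feeds projected data to the induction hypothesis, but $\Can(\mathfrak C_q)$ is not the projection of $\Can(\mathfrak C)$. Normalizing the nodes over $q$ can create honest hyperelliptic components. For example, take $C=C_1\cup C_2$ with $C_1$ smooth of genus $2$ and $C_2$ smooth of positive genus, glued at $a_1,a_2,a_3\in C_1$, where $a_2,a_3$ are conjugate under the hyperelliptic involution of $C_1$. Then $\varphi_C$ embeds both components and $\mathcal B_{\widehat C}=\emptyset$. For $q=\varphi(a_1)$, however, $C_1$ is honest hyperelliptic in $C_q$, so $\mathcal B_{\widehat{C_q}}$ has six points. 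Nothing you wrote guarantees that $\zeta$ matches the data of $\mathfrak C_q$ and $\mathfrak C'_{\zeta(q)}$.

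Second, the lifting step fails. A type-$3$ line through $q$ collapses to a point, so it can be seen as a line only from a centre in $\mathcal S$ off it, and such a centre need not exist. For $C=Z\cup Y$, with $Z\cong\Proj^1$ meeting a smooth $Y$ of positive genus in three points, $\mathcal S$ consists exactly of the three points on $L=\varphi(Z)$. Even when $q\notin L$, deducing $L\subseteq\widehat{\mathfrak C}$ from $\pi_q(L)\subseteq\widehat{\mathfrak C}_q$ requires excluding other components of $\widehat{\mathfrak C}$ in the plane $\spann(q,L)$. General position cannot do this, because node images are special points.

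The missing idea is the paper's. First assemble $Y_{\widehat{\mathfrak C}}=X_{\widehat{\mathfrak C}}\cup(\text{lines of types }0,1,2)$. Then discard $\mathcal B$ and apply Lemma~\ref{lem:reconstruct-type-three} to the pair $(Y_{\widehat{\mathfrak C}},\mathcal S_{\widehat{\mathfrak C}})$. That lemma lifts no lines. For a collinear triple $q_1,q_2,q_3$ in $\mathcal S_{\widehat{\mathfrak C}}\setminus\widehat{\mathfrak C}^{\iso}$, it uses the integer $\lambda(q_1;q_2,q_3)$. This is a count of connected components after normalizations, and it depends only on $(Y,\mathcal S)$ by Lemmas~\ref{lem:recover_YCq} and~\ref{lem:sq_depends_Y}. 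By Lemma~\ref{lem:linear-invariant}, it is positive exactly when the line is a component, because the curves over it are then rational and get contracted by stabilization.
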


 Throughout this section we will consider \name{}s that have no separating nodes. For one such \name{} $\mathfrak{C} = (A, B, C)$, the open subset $U_{\mathfrak C}$ where the canonical map is a morphism is also closed, and it is obtained by removing all genus $0$ connected components from $\mathfrak{C}$. We start with some preliminary results.

Let $\mathfrak{C}$ be a \name{} with no separating nodes. Let $(\widehat{\mathfrak C}, \mathcal S_{\widehat{\mathfrak{C}}}, \mathcal{B}_{\widehat{\mathfrak{C}}})$ be the canonical triple of $\mathfrak{C}$ defined in \ref{def:canonical_model}. For a subscheme $W\subseteq \widehat{\mathfrak{C}}$ that is a union of irreducible components of $\widehat{\mathfrak{C}}$, we set 
\begin{equation}\label{eq:tau}
\tau_W:=|W\cap \mathcal S_{\widehat{\mathfrak C}}|.
\end{equation}

\begin{definition} \label{def: type}
     Let $W\in \Lin(\widehat{\mathfrak{C}})$. Given a non-negative integer $k$, we say that $W$ is \emph{of type $k$} if $\tau_W=k$. We let \[\Lin_{k}(\widehat{\mathfrak{C}}):= \{W \in \Lin(\widehat{\mathfrak{C}}): \tau_W=k\}\] be the set of all type-$k$ lines contained in $\widehat{\mathfrak{C}}$. 
\end{definition}

\begin{lemma}\label{lem:linear-classification}
Let $\mathfrak{C}=(A,B,C)$ be a stable \name{} with no separating nodes. Consider an irreducible component $W\subseteq \widehat{\mathfrak C}$ and let $Z=\varphi_{\mathfrak C}^{-1}(W)$ in the sense of
Convention~\ref{conv:curve-part}. 
We have that $W\in \Lin(\widehat{\mathfrak C})$ if and only if one of the following cases holds 
\begin{enumerate}
    \item[(i)] $g_Z=0$, $Z$ is irreducible, and $\delta_Z=3$. In this case, $W\in \Lin_{3}(\widehat{\mathfrak C})$ and $\mathcal B_{\widehat{\mathfrak C}}\cap W=\emptyset$.
    \item[(ii)] 
    $Z=Z_1\cup Z_2$ is an honest hyperelliptic pair of $C$ and $\delta_{Z_1}=\delta_{Z_2}=3$. In this case, $W\in \Lin_{3}(\widehat{\mathfrak C})$ and $\mathcal B_{\widehat{\mathfrak C}}\cap W=\emptyset$.
        \item[(iii)] 
    $g_Z = 0$, $Z$ is irreducible and the decomposition into connected components of $Z^c$ is given by $Z^c=F^Z_1\cup F^Z_2$, with $|Z\cap F^Z_1|=|Z\cap F^Z_2|=2$. In this case, $W\in \Lin_{2}(\widehat{\mathfrak C})$ and $|\mathcal B_{\widehat{\mathfrak C}}\cap W|=2$.
    \item[(iv)] $g_Z=1$, $Z$ is irreducible with $j$ nodes for $j\in \{0,1\}$, and $|Z\cap Z^c|=2$. In this case, $W\in \Lin_{1+j}(\widehat{\mathfrak C})$ and $|\mathcal B_{\widehat{\mathfrak C}}\cap W|=4-2j$.
    \item[(v)] $g_Z=2$, $Z$ is irreducible with $j$ nodes for  $j\in \{0,1,2\}$, and $Z$ is a connected component of $C$. In this case $W\in \Lin_{j}(\widehat{\mathfrak C})$  and $|\mathcal{B}_{\widehat{\mathfrak{C}}}\cap W|=6-2j$. 
 \end{enumerate}
In particular, $W$ is a linear component of type $0$ of $\widehat{\mathfrak{C}}$ if and only if $Z$ is a smooth connected component of $\mathfrak{C}$ of genus $2$. 
 \end{lemma}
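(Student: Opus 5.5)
We first reduce to a connected stable curve. By Remark~\ref{rem:1-scheme-property}(iii) the canonical map of $\mathfrak C$ is the disjoint union of the canonical maps of its connected components, placed in linearly disjoint spans. The points of $A\sqcup B$ map to isolated points, so they never lie on a line $W$. Hence we may assume $\mathfrak C=C$ is a connected stable curve with no separating nodes, so that $U_C=C$. Set $Z=\varphi^{-1}(W)$ and $L=\omega_C|_Z$. Since $W$ is a line, $\varphi|_Z$ factors through $\mathbb P^1$. Its degree $e\in\{1,2\}$ is bounded by Proposition~\ref{prop:varphi_same_image}, and we get $k_Z=\deg L = e\cdot\deg W=e$, up to contributions of nodes of $Z$ that are identified (these are handled by Corollary~\ref{cor:C_1-set}).

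\textbf{Case $e=1$.} Then $Z$ is irreducible and $\varphi|_Z$ is birational onto $W$, hence an isomorphism of the normalization $\widetilde Z$ with $W$. The restricted linear system is therefore complete of degree $1$ on $\widetilde Z\cong\mathbb P^1$. So $Z$ has geometric genus $0$, and $\deg(\omega_C|_Z)=2g_Z-2+\delta_Z=1$. Here $g_Z$ counts the self-nodes of $Z$; such nodes map to points of $W$, but since $\varphi$ is an isomorphism from $\widetilde Z$, there are none. Thus $g_Z=0$ and $\delta_Z=3$, and the three nodes map to three distinct points of $\mathcal S_{\widehat{\mathfrak C}}$. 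This gives (i), with $\tau_W=3$. There is no branching, so $\mathcal B\cap W=\emptyset$.

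\textbf{Case $e=2$.} Then $Z$ is honest hyperelliptic by Proposition~\ref{prop:catanese_hh}, and $\varphi|_Z=\nu\circ\mu$ with $\nu$ of degree $1$. This forces $\deg L=2$, that is, $2g_Z-2+\delta_Z=2$. The possibilities are as follows.
\begin{itemize}
\item The honest pair $Z_1\cup Z_2$: here $k_{Z_i}=1$, which gives $\delta_{Z_i}=3$, case (ii).
\item $Z$ irreducible with $(g_Z,\delta_Z)\in\{(0,4),(1,2),(2,0)\}$.
\end{itemize}
For $(0,4)$, condition (iv) of Proposition~\ref{prop:catanese_hh} forces $Z\cap Z'$ to be fibres of $\mu$. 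The absence of separating nodes excludes single points, so $Z^c$ has exactly two components, each meeting $Z$ in two points; this is (iii). In case (iii) there are two branch points of $\mu$ on the rational curve, giving $|\mathcal B\cap W|=2$. The two fibres map to two points of $\mathcal S$.

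For $(1,2)$ and $(2,0)$, the curve $Z$ has $j$ nodes. Each node is a fibre point of $\mu$ mapping to a point of $\mathcal S$; Remark~\ref{rem:branchminusS} explains that these internal nodes are removed from $\mathcal B$. The normalization of $Z$ has genus $g_Z-j$, and Riemann--Hurwitz for the double cover gives $2(g_Z-j)+2$ branch points. Each node absorbs two of these branch points, leaving $2g_Z+2-2j$ points in $\mathcal B$. This gives $4-2j$ and $6-2j$ respectively. The two attaching points form one fibre, contributing one further point of $\mathcal S$ in the $g_Z=1$ case, so $\tau_W=1+j$. In the $g_Z=2$ case, $\delta_Z=0$ makes $Z$ a connected component, with $\tau_W=j$.

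For the converse, in each of the listed configurations we check that $\deg(\omega_C|_Z)\le 2$ and that $\varphi|_Z$ has degree matching this. In the hyperelliptic cases this uses Proposition~\ref{prop:catanese_hh}, which shows the image is $\nu(\mathbb P^1)$ with $\nu$ of degree $\deg L/2=1$, hence a line. The final assertion follows by reading off $\tau_W=0$, which occurs only in (v) with $j=0$.

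The main obstacle is the careful bookkeeping of nodes of $Z$ in the second case. We must justify that the identified nodes are exactly fibres of $\mu$ over branch points, which is where Remark~\ref{rem:branchminusS} enters. We must also justify that distinct node fibres give distinct points of $\mathcal S$; here I would rely on Corollary~\ref{cor:C_1-set}.
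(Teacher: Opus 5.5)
Your proposal follows the paper's own proof. You split according to whether $\varphi|_Z$ is generically $1:1$ or $2:1$, use $k_Z=\deg(\varphi|_Z)\cdot\deg W$, enumerate the solutions of $2g_Z-2+\delta_Z\in\{1,2\}$ (the honest pair and the irreducible cases $(0,4),(1,2),(2,0)$), and treat the converse as routine. Your hedge about ``contributions of nodes'' is unnecessary: $k_Z=e\cdot\deg W$ holds exactly, because $\omega_C=\varphi^*\mathcal O(1)$ and $\varphi$ is finite.

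You go further than the paper by actually computing $\tau_W$ and $|\mathcal B_{\widehat{\mathfrak C}}\cap W|$, and one sentence there is wrong as written. Riemann--Hurwitz on the normalization $\widetilde Z$, of genus $g_Z-j$, already gives $2(g_Z-j)+2=2g_Z+2-2j$ branch points. Letting each node then ``absorb two of these'' would leave $2g_Z+2-4j$, not the (correct) $2g_Z+2-2j$ you state.

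The right bookkeeping is as follows.
\begin{itemize}
\item At an internal node $p$, the fibre of $\mu$ over $\mu(p)$ has length $2$, so each branch maps with local degree one. The fibre of $\widetilde Z\to\mathbb P^1$ over $\mu(p)$ is therefore the two distinct branches, and $\mu(p)$ is not a branch point of the normalized double cover.
\item $\mu(p)$ lies in $\Br(\varphi|_Z)$ only because the scheme-theoretic fibre at the node is nonreduced. It is then discarded from $\mathcal B_{\widehat{\mathfrak C}}$ because it lies in $\mathcal S_{\widehat{\mathfrak C}}$ (Remark~\ref{rem:branchminusS}).
\item Hence $\mathcal B_{\widehat{\mathfrak C}}\cap W$ consists exactly of the $2(g_Z-j)+2$ branch points of $\widetilde Z\to\mathbb P^1$, giving $4-2j$ and $6-2j$.
\end{itemize}

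The same length-$2$ count is what your final paragraph actually needs, rather than nodes being ``fibres over branch points.'' It shows that distinct internal nodes have distinct images, since a common image would give a fibre of length at least $4$. It also shows these images differ from the branch points of $\widetilde Z\to\mathbb P^1$ and from the reduced attaching fibre. This yields $\tau_W=j$, respectively $1+j$.
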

 
The dual graphs of all possible curves $C$ and their component(s) $Z$ mapping to a linear component of the canonical curve are depicted in Figure \ref{fig:linear} below, with the gray vertices denoting the connected components of the curve $Z^c$.

 \begin{figure}[ht]
\centering
\begin{tikzpicture}[
    node distance=0.5cm,
    every node/.style={circle, draw, inner sep=2pt}
]
    
    \begin{scope}[local bounding box=row1, yshift=2cm]
        \begin{scope}[local bounding box=g1, xshift=-4.5cm]
            \node[label=left:$Z$, fill=black, text=white] (A1) at (0,0) {};
            \node[label=right:$Z^c$, ball color=blue!30!white, opacity=0.8] (B1) at (1.5,0) {}; 
            \draw (A1) to[bend left=45] (B1);
            \draw (A1) -- (B1);
            \draw (A1) to[bend right=45] (B1);
        \end{scope}
        \node[below, yshift=-0.4cm, rectangle, draw=none, fill=none] at (g1.south) {Case (i)};
        
        \begin{scope}[local bounding box=g2, xshift=-1cm]
            \node[label=left:$Z_1$, fill=black, text=white] (A2) at (0,0) {};
            \node[label=right:$Z_2$, fill=black, text=white] (B2) at (1.5,0) {}; 
            \draw (A2) to[bend left=45] (B2);
            \draw (A2) -- (B2);
            \draw (A2) to[bend right=45] (B2);
            
            \path (A2) to[bend left=45] node[midway, ball color=blue!30!white, opacity=0.8, inner sep=2pt] (C1) {} (B2);
            \path (A2) -- node[midway, ball color=blue!30!white, opacity=0.8, inner sep=2pt] (C2) {} (B2);
            \path (A2) to[bend right=45] node[midway, ball color=blue!30!white, opacity=0.8, inner sep=2pt] (C3) {} (B2);
            
            \foreach \vertex in {C1, C2, C3} {
                \foreach \angle in {30, 90, 150} {
                     (\vertex) -- ++(\angle:0.4);
                }
            }
        \end{scope}
        \node[below, yshift=-0.4cm, rectangle, draw=none, fill=none] at (g2.south) {Case (ii)};
        
        \begin{scope}[local bounding box=g3, xshift=3.5cm]
            \node[label=above:$Z$, fill=black, text=white] (A3) at (0,0.5) {};
            \node[label=right:$F^Z_1$, ball color=blue!30!white, opacity=0.8] (B3) at (1, -0.5) {};
            \node[label=left:$F^Z_2$, ball color=blue!30!white, opacity=0.8] (C3) at (-1, -0.5) {};
            \draw (A3) to[bend left=15] (B3);
            \draw (A3) to[bend right=15] (B3);
            \draw (A3) to[bend left=15] (C3);
            \draw (A3) to[bend right=15] (C3);
        \end{scope}
        \node[below, yshift=-0.4cm, rectangle, draw=none, fill=none] at (g3.south) {Case (iii)};
    \end{scope}
    
    \begin{scope}[local bounding box=row2, yshift=-1.5cm]
        \coordinate (lowest-point) at (0,-0.8);
        
        \begin{scope}[local bounding box=r1, xshift=-4.2cm]
            \node[label=left:$Z$, fill=black, text=white] (R1A) at (-0.5,0) {};
            \node[label=right:$Z^c$, ball color=blue!30!white, opacity=0.8] (R1B) at (0.5,0) {}; 
            \draw (R1A) to[bend left=30] (R1B);
            \draw (R1A) to[bend right=30] (R1B);
        \end{scope}
        \node[below, yshift=0.1cm, rectangle, draw=none, fill=none] at (r1.south |- lowest-point) {Case (iv)};
        
        \begin{scope}[local bounding box=r2, xshift=-1.8cm]
            \node[label=left:$Z$, fill=black, text=white] (R2A) at (-0.5,0) {};
            \node[label=right:$Z^c$, ball color=blue!30!white, opacity=0.8] (R2B) at (0.5,0) {}; 
            \draw (R2A) to[bend left=30] (R2B);
            \draw (R2A) to[bend right=30] (R2B);
            \draw (R2A) to[out=45, in=135, looseness=15] (R2A);
        \end{scope}
        \node[below, yshift=0.1cm, rectangle, draw=none, fill=none] at (r2.south |- lowest-point) {Case (iv)};
        
        \begin{scope}[local bounding box=r3, xshift=0.6cm]
            \node[label=left:$Z$, fill=black, text=white] (R3) at (0,0) {};
        \end{scope}
        \node[below, yshift=0.1cm, rectangle, draw=none, fill=none] at (r3.south |- lowest-point) {Case (v)};
        
        \begin{scope}[local bounding box=r4, xshift=2.8cm]
            \node[label=left:$Z$, fill=black, text=white] (R4) at (0,0) {};
            \draw (R4) to[out=45, in=135, looseness=15] (R4);
        \end{scope}
        \node[below, yshift=0.1cm, rectangle, draw=none, fill=none] at (r4.south |- lowest-point) {Case (v)};
        
        \begin{scope}[local bounding box=r5, xshift=5.0cm]
            \node[label=above:$Z$, fill=black, text=white] (R5) at (0,0) {};
            \draw (R5) to[out=135, in=225, looseness=15] (R5);
            \draw (R5) to[out=45, in=-45, looseness=15] (R5);
        \end{scope}
        \node[below, yshift=0.1cm, rectangle, draw=none, fill=none] at (r5.south |- lowest-point) {Case (v)};
    \end{scope}
\end{tikzpicture}
\caption{A scheme of a classification of all linear components of canonical models.}
\label{fig:linear}
\end{figure}

\begin{proof}
    For the first implication, assume $W\in \Lin(\mathfrak C)$. The restriction of $\varphi_{\mathfrak{C}}$ to $Z$ is generically $1:1$ or $2:1$.
    
    If it is $1:1$, that means that $Z$ is irreducible and the degree of $W$ is the degree of $\omega_{\mathfrak{C}}|_Z$ that, in our notation, is equal to $k_Z = 1$. Hence $k_Z=2g_Z-2+\delta_Z =1$, which can only happen if $g_Z=0$ and $\delta_Z=3$ (recall that $\mathfrak{C}$ is stable and  $\varphi_{\mathfrak{C}}|_Z$ is generically $1:1$). Also, the map $\varphi_{\mathfrak{C}}|_Z$ has no ramification. This yields Case (i).

    Now we consider the case where  the map $\varphi_{\mathfrak{C}}$ restricted to $Z$ is $2:1$. If $Z$ is irreducible, then $k_Z=2g_Z - 2 + \delta_Z = 2$. This can happen in the following cases
    \begin{enumerate}
        \item[(a)] $g_Z = 0$ and $\delta_Z=4$. Since $Z$ is honest hyperelliptic, we are in Case (iii).
        \item[(b)] $g_Z=1$ and $\delta_Z = 2$. We are in Case (iv).
        \item[(c)] $g_Z=2$ and  $\delta_Z=0$. We are in Case (v).
    \end{enumerate}

    Finally, we have to consider the case where $Z$ is reducible. This means that $Z$ is an honest hyperelliptic pair $Z=Z_1\cup Z_2$ and $k_{Z_1}=k_{Z_2}=1$. So, $g_{Z_1}=g_{Z_2}=0$ and $\delta_{Z_1}=\delta_{Z_2}=3$. Hence, we are in Case (ii). 

    The other implication of the lemma is straightforward. 
\end{proof}

Let $\mathfrak{C} = (A, B, C)$ be a stable \name{} with no separating nodes. For the next definition, recall Definitions~\ref{d: C1set} and \ref{def:canonical_model}. Moreover, observe that, given $q\in \mathcal{S}_{\widehat{\mathfrak{C}}}$, by Corollary \ref{cor:C_1-set}, $\varphi_{\mathfrak{C}}^{-1}(q)$ is either a singleton contained in $B$, or it is a set of nodes of $C$ whose corresponding set in $E(\Gamma_C)$ is a $\Cone$-set.

\begin{definition}\label{def:Cq}
   Let $\mathfrak{C} = (A, B, C)$ be a stable \name{} with no separating nodes. Let $q\in \mathcal{S}_{\widehat{\mathfrak{C}}}$. We define the \name{} $\mathfrak{C}_q$ as follows.
\begin{enumerate} 
    \item[(1)] If $\varphi_{\mathfrak{C}}^{-1}(q)\subseteq B$, then $\mathfrak{C}_q := (A, B\setminus \varphi_{\mathfrak{C}}^{-1}(q), C)$.
    \item[(2)] If $\varphi_{\mathfrak{C}}^{-1}(q)$ is a set of nodes of $C$, then $\mathfrak{C}_q := (A, B, C_q)$, where $\nu_q\colon C_q\to C$ is the partial normalization of $C$ at the nodes in $\varphi_{\mathfrak{C}}^{-1}(q)$.
\end{enumerate}
\end{definition}
In either case, let
\[
    \nu_q\colon\mathfrak C_q\to\mathfrak C
\]
be the natural morphism: the inclusion in Case {(1)}, and the
identity on $A\sqcup B$ together with the partial normalization
$C_q\to C$ in Case {(2)}.
There is a natural codimension $1$ injection $H^0(\mathfrak{C}_q, \omega_{\mathfrak{C}_q}) \to H^0(\mathfrak{C}, \omega_{\mathfrak{C}})$, whose image consists of the elements of $H^0(\mathfrak{C}, \omega_{\mathfrak{C}})$ that vanish on $\varphi_{\mathfrak{C}}^{-1}(q)$. In particular, the unique (up to scaling) non-zero linear functional $s\in H^0(\mathfrak{C}, \omega_{\mathfrak{C}})^\vee$ that vanishes on $H^0(\mathfrak{C}_q, \omega_{\mathfrak{C}_q})$ is precisely the evaluation at any of the points $p\in \varphi_{\mathfrak{C}}^{-1}(q)$. 

By the discussion above, given $q\in \mathcal S_{\widehat{\mathfrak C}}\subseteq \mathbb P_{\mathfrak C}$, the projection from $q$ can be written as $\pi_q\colon \Proj_{\mathfrak C}\dashrightarrow \mathbb{P}_{\mathfrak C_q}$ and we have a natural commutative diagram (recall the definition of $U_{\mathfrak{C}}$ from Definition~\ref{def:canonical-map} and see also \cite[Remark 3.8]{catanese})
\begin{equation}\label{eq:nat-diag}
\begin{tikzcd}
    U_{\mathfrak{C}_q} \ar[d, "\nu_q"] \ar[r,  "{\varphi_{\mathfrak{C}_q}}"] & \widehat{\mathfrak{C}}_q\\
    U_{\mathfrak{C}} \ar[r, "{\varphi_{\mathfrak{C}}}"] & \widehat{\mathfrak{C}}.\ar[u,dashed,  "\pi_q"]
\end{tikzcd}
\end{equation}

\begin{lemma}\label{lem:reconstruct-type-one-two}
Let $\mathfrak{C}$ be a \name{}  
with no separating nodes. For a projective line $L\subseteq \Proj_{\mathfrak C}$, the following conditions are equivalent
\begin{itemize}
\item[(i)]
$L\subseteq \widehat{\mathfrak C}$ is a linear component of type $1$ or $2$.
\item[(ii)]
$L=\spann(q,q_1,q_2)$ for some $q\in\mathcal S_{\widehat{\mathfrak{C}}}$ and distinct points $q_1,q_2\in \mathcal B_{\widehat{\mathfrak{C}}}$.
\end{itemize}
\end{lemma}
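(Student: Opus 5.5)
By Remark~\ref{rem:canonical_model_sep_stab_isomorphic}, $\Can(\mathfrak C)$ and $\Can(\mathfrak C^{\st})$ are canonically isomorphic, so I may assume throughout that $\mathfrak C$ is stable; then $U_{\mathfrak C}=\mathfrak C$ and the classification of Lemma~\ref{lem:linear-classification} applies.

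\emph{(i)$\Rightarrow$(ii).} Let $W\in\Lin_1(\widehat{\mathfrak C})\cup\Lin_2(\widehat{\mathfrak C})$ and $Z=\varphi_{\mathfrak C}^{-1}(W)$. Types $1$ and $2$ arise only in Cases (iii), (iv) and (v) of Lemma~\ref{lem:linear-classification}:
\begin{itemize}
\item Case (iii) gives type $2$ and $|\mathcal B_{\widehat{\mathfrak C}}\cap W|=2$;
\item Case (iv) gives type $1+j$ and $4-2j\ge 2$ branch points;
\item Case (v) with $j\in\{1,2\}$ gives type $j$ and $6-2j\ge 2$ branch points.
\end{itemize}
In every case $W$ contains some $q\in\mathcal S_{\widehat{\mathfrak C}}$ (since $\tau_W\ge 1$) and two distinct points $q_1,q_2\in\mathcal B_{\widehat{\mathfrak C}}\cap W$. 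These are distinct from $q$, because $\mathcal B_{\widehat{\mathfrak C}}$ is disjoint from $\mathcal S_{\widehat{\mathfrak C}}$ by definition. Three distinct points of a line span it, so $W=\spann(q,q_1,q_2)$.

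\emph{(ii)$\Rightarrow$(i).} Assume $q,q_1,q_2$ are distinct and collinear on $L$.

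First I locate the points. By Corollary~\ref{cor:honest-ramif}, each $q_i$ is the image of a ramification point $r_i$. This point is a smooth point of $C$ lying on an irreducible honest hyperelliptic component $Z_i$. Points of $A$ are never ramification points, and node images are excluded because $q_i\notin\mathcal S_{\widehat{\mathfrak C}}$. Since $q_i$ is a ramification image, $\varphi_{\mathfrak C}^{-1}(q_i)=\{r_i\}$. By Corollary~\ref{cor:C_1-set}, $\varphi_{\mathfrak C}^{-1}(q)$ is either a point of $B$ or the set $N_q$ of nodes of a $\Cone$-set. Note that every hyperplane through $q_i$ tangent to $\widehat{\mathfrak C}$ there pulls back to a divisor containing $2r_i$.

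Next, collinearity is a failure of independence. It means that the three points impose only two conditions on $H^0(\mathfrak C,\omega_{\mathfrak C})$. Equivalently, every section vanishing on $\varphi_{\mathfrak C}^{-1}(q)$ and at $r_1$ also vanishes at $r_2$. I pass to $\mathfrak C_q$ via diagram~\eqref{eq:nat-diag}: sections vanishing at $\varphi_{\mathfrak C}^{-1}(q)$ are exactly $H^0(\mathfrak C_q,\omega_{\mathfrak C_q})$. So the condition becomes that $\pi_q(q_1)=\pi_q(q_2)$ in $\widehat{\mathfrak C}_q$, i.e.\ $\varphi_{\mathfrak C_q}(r_1)=\varphi_{\mathfrak C_q}(r_2)$.

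I then apply Proposition~\ref{prop:varphi_same_image} to (the stabilization of) $\mathfrak C_q$, using Proposition~\ref{prop:model-stabilization}. Since $r_1,r_2$ are smooth points, they must lie on a subcurve $Z'$ of $C_q$ on which $\varphi_{\mathfrak C_q}$ is $2:1$ with irreducible image, and they are conjugate under its involution. Alternatively, one of them lies on a component contracted by stabilization; this means the component becomes a genus-$1$ piece or an isolated point of type $B$ or $A$. This is where the classification must be redone on $\mathfrak C_q$.

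I expect the main obstacle to be this last step: converting the conjugacy of $r_1,r_2$ in $\mathfrak C_q$ back into the statement that $Z_1=Z_2$ is one of the configurations (iii), (iv) or (v) of Lemma~\ref{lem:linear-classification}, with $q\in W$. I would argue as follows.
\begin{itemize}
\item $r_i$ is a ramification point of $\varphi_{\mathfrak C}$ on $Z_i$. Normalizing at $N_q$ does not change the hyperelliptic involution on the part of $Z_i$ away from $N_q$. Hence $r_1$ and $r_2$ would be conjugate in $\mathfrak C_q$ only if $\varphi_{\mathfrak C_q}$ ramifies at both, i.e.\ $\pi_q(q_1)=\pi_q(q_2)$ is a branch point.
\item Two distinct ramification points with the same image can only occur when stabilization of $\mathfrak C_q$ collapses their component to a point. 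That happens exactly when $h$ of that component of $\mathfrak C_q$ is $1$, i.e.\ when $Z_1=Z_2$ becomes a genus-$1$ connected piece after normalizing $N_q$.
\item Tracking genus and $\delta_Z$ through this normalization then forces $k_Z=2$, so that $Z$ maps $2:1$ onto a line containing $q$. This is exactly one of Cases (iii)--(v) with $\tau_W\in\{1,2\}$, giving $L=W$.
\end{itemize}
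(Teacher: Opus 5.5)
Your (i)$\Rightarrow$(ii) is correct and matches the paper. The start of (ii)$\Rightarrow$(i) also follows the paper's route: you project from $q$ to get $\varphi_{\mathfrak C_q}(r_1)=\varphi_{\mathfrak C_q}(r_2)$. You then observe that a non-contracted strict transform $Z_i'$ is still honest hyperelliptic with $r_i$ a ramification point, so its fibre through $r_i$ is just $\{r_i\}$.

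The gap is in the contracted case. You assert that a component of $\mathfrak C_q$ containing $r_i$ that is contracted by stabilization ``becomes a genus-$1$ piece or an isolated point'', and later that this ``happens exactly when $h$ of that component is $1$''. This is false. The strict transform $Z_i'$ can instead become a smooth rational curve meeting the rest of its connected component in exactly two points. Stabilization contracts such a bridge to a \emph{node}, so $\pi_q(q_1)=\pi_q(q_2)$ is then a point of $\mathcal S_{\widehat{\mathfrak C}_q}$, lying in a connected component of $\mathfrak C_q$ that is in general not of genus one. This happens in two configurations:
\begin{itemize}
\item $g_{Z_i}=0$, $\delta_{Z_i}=4$, and the two nodes joining $Z_i$ to one component of $Z_i^c$ lie in $\varphi_{\mathfrak C}^{-1}(q)$ (Case~(iii) of Lemma~\ref{lem:linear-classification});
\item $g_{Z_i}=1$, $\delta_{Z_i}=2$, and $\varphi_{\mathfrak C}^{-1}(q)$ is the internal node of $Z_i$ (Case~(iv) with $j=1$).
\end{itemize}
These are genuine instances of (ii): every type-$2$ line of Case~(iii), with $q$ either of its two $\mathcal S$-points, arises this way. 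Your characterization would wrongly exclude them, so your argument never reaches the conclusion in these cases.

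To close the gap, list all contraction configurations, as the paper does in (a)--(d). There are two rational-bridge cases and two cases where $Z_i'$ becomes a genus-one connected component. In each, Lemma~\ref{lem:linear-classification} shows that $\varphi_{\mathfrak C}(Z_i)$ is a line of type $1$ or $2$ containing $q,q_1,q_2$.

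You also assume $Z_1=Z_2$ without justification, so two further cases must be excluded:
\begin{itemize}
\item Only $Z_1'$ is contracted. Then $\varphi_{\mathfrak C_q}(r_1)$ is a singular or isolated point of $\widehat{\mathfrak C}_q$, while $\varphi_{\mathfrak C_q}(r_2)$ is a smooth non-isolated point, a contradiction.
\item Both are contracted but $Z_1'\neq Z_2'$. Stability of $\mathfrak C$ forces them to be contracted to distinct points, contradicting $\varphi_{\mathfrak C_q}(r_1)=\varphi_{\mathfrak C_q}(r_2)$.
\end{itemize}
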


\begin{proof}
The statement depends only on the canonical triple $\Can(\mathfrak C)=(\widehat{\mathfrak{C}}, \mathcal{S}_{\widehat{\mathfrak{C}}}, \mathcal{B}_{\widehat{\mathfrak{C}}})$ of $\mathfrak C$. So, by Remark \ref{rem:canonical_model_sep_stab_isomorphic}, we can assume that $\mathfrak{C}$ is stable. 

Assume that $L$ is a linear component of type $1$ or $2$ of $\widehat{\mathfrak C}$. By Lemma \ref{lem:linear-classification}, we are in one of the cases (iii), (iv), or (v). 
In all cases, $|\mathcal B_{\widehat{\mathfrak C}}\cap L|\geq 2$, hence (ii) holds.

Now assume that $L=\spann(q, q_1, q_2)$ for some $q\in\mathcal S_{\widehat{\mathfrak{C}}}$ and distinct $q_1,q_2\in \mathcal B_{\widehat{\mathfrak{C}}}$. Consider the diagram in Equation \eqref{eq:nat-diag}. Notice that $U_{\mathfrak{C}} = \mathfrak{C}$ (because $\mathfrak{C}$ is stable with no separating nodes), hence the diagram becomes
\[
\begin{tikzcd}
    U_{\mathfrak{C}_q} \ar[d, "\nu_q"] \ar[r, "{\varphi_{\mathfrak C_q}}"] & {\widehat{\mathfrak{C}}_q}\\
    \mathfrak{C} \ar[r, "{\varphi_{\mathfrak C}}"] & \widehat{\mathfrak{C}}\ar[u, dashed, "\pi_q"]
\end{tikzcd}
\]

 Let $p_1, p_2$ be the points of $C^{\sm}\subseteq \mathfrak{C}$ such that  $\varphi_{\mathfrak{C}}(p_i)=q_i$. Since $q,q_1,q_2$ are collinear, we have $\pi_{q}(q_1)=\pi_{q}(q_2)$. Thus we must have that $\varphi_{\mathfrak{C}_q}(p_1) = \varphi_{\mathfrak{C}_q}(p_2)$ by the above diagram (we are thinking of $C^{\rm {sm}}$ as an open subset of $U_{\mathfrak{C}_q}$).
 
 Let $Z_i$ be the (one-dimensional) irreducible component of $\mathfrak C$ containing $p_i$ for $i=1,2$. Thus, by Corollary \ref{cor:honest-ramif}, each $Z_i$ is an honest hyperelliptic subcurve of $C$.
Let $Z_i'$ be the (one-dimensional) irreducible component of $\mathfrak{C}_q$ such that $\nu_q(Z'_i)=Z_i$. Fix $i\in\{1,2\}$. Then one of the following holds. Either $Z_i'$ is an honest hyperelliptic subcurve of $\mathfrak{C}_q$ having $p_i$ as a ramification point, or $Z_i'$ is contracted by $\varphi_{\mathfrak C_q}$. Since $\mathfrak{C}$ is stable, the latter case can only occur if
\begin{enumerate}
    \item[(a)] $g_{Z_i} = 0$, $|Z_i\cap Z_i^c| = 4$, $|Z_i\cap Z_i^c\cap \varphi_{\mathfrak{C}}^{-1}(q)| = 2$;  or
    \item[(b)]  $g_{Z_i}=1$, $|Z_i\cap Z_i^c|=2$, $Z_i\cap Z_i^c\subseteq \varphi_{\mathfrak{C}}^{-1}(q)$.
    \item[(c)] $g_{Z_i} = 1$, $|Z_i\cap Z_i^c|=2$, and $Z_i$ has an internal node contained in $\varphi_{\mathfrak{C}}^{-1}(q)$.
    \item[(d)] $g_{Z_i} = 2$, $Z_i$ is a connected irreducible component of $\mathfrak{C}$  with at least one node contained in $\varphi_{\mathfrak{C}}^{-1}(q)$.
\end{enumerate}

If both $Z_1'$ and $Z_2'$ are not contracted by $\varphi_{\mathfrak C_q}$, then they are honest hyperelliptic curves of $\mathfrak{C}_q$. 

In this case, $p_1$ and $p_2$ remain ramification points for the map $\varphi_{\mathfrak{C}_q}$, which implies that $\varphi_{\mathfrak C_q}(p_1)\neq \varphi_{\mathfrak C_q}(p_2)$, a contradiction. 

If $Z_1'$ is contracted and $Z_2'$ is not contracted  by  $\varphi_{\mathfrak C_q}$, then $\varphi_{\mathfrak C_q}(p_1)\in \widehat{\mathfrak C}^{\sing}\cup \widehat{\mathfrak C}^{\iso}$ (recall Equation~\eqref{eq:iso}) while $\varphi_{\mathfrak C_q}(p_2)\in \widehat{\mathfrak C}^{\sm}\setminus \widehat{\mathfrak C}^{\iso}$, a contradiction.

If both $Z'_1$ and $Z'_2$ are contracted by $\varphi_{\mathfrak C_q}$ then, since $\mathfrak{C}$ is stable, we must have that $Z'_1=Z'_2$, otherwise they would be contracted to distinct points contradicting that $\varphi_{\mathfrak{C}_q}(p_1) = \varphi_{\mathfrak{C}_q}(p_2)$. By Lemma \ref{lem:linear-classification} we have that, in all cases above where $Z'_1$ is contracted, the line $L$ is a linear component of $\widehat{\mathfrak{C}}$ of type 1 or type 2 (they correspond to Items (iii), (iv), (v) of Lemma \ref{lem:linear-classification}), hence we are done.
\end{proof}

Next, we discuss how to reconstruct the linear components of type 3. Let $\mathfrak{C}$ be a \name{} with no separating nodes and $\widehat{\mathfrak{C}}$ be its canonical model. Consider the subscheme  
$Y_{\widehat{\mathfrak{C}}}\subseteq\widehat{\mathfrak{C}}$ defined as the closure
\[
Y_{\widehat{\mathfrak{C}}}:=\overline{\widehat{\mathfrak{C}}\setminus\bigcup_{W\in \Lin_{3}(\widehat{\mathfrak{C}})} W}.
\]
Notice that $Y_{\mathfrak{C}}$ contains the set of isolated points $\widehat{\mathfrak C}^{\iso}$. 

We first detect isolated points of $\widehat{\mathfrak C}$, then show how projection from $q \in \mathcal{S}_{\widehat{C}}$ recovers the projected pair $(Y_{\widehat{\mathfrak{C}}_q}, \mathcal S_{\widehat{\mathfrak{C}}_q})$, and finally we define an invariant $\lambda$ (based on an intrinsic count $\zeta$) that allows to detect the linear components of type $3$. 

\begin{lemma}
\label{lem:recover_isolated}
    Let $\mathfrak{C}$ be a \name{} with no separating nodes. Let $\widehat{{\mathfrak{C}}}$ be the canonical model of $\mathfrak{C}$.  A point $q\in \widehat{\mathfrak{C}}$ belongs to $\widehat{\mathfrak{C}}^{\iso}$ if and only if
    \[
    q\notin  \spann((Y_{\widehat{\mathfrak{C}}}\cup \mathcal S_{\widehat{\mathfrak{C}}})\setminus \{q\}).
    \]
\end{lemma}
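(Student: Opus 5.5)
We first reduce to the stable case: the statement depends only on $\Can(\mathfrak C)$ (the set $Y_{\widehat{\mathfrak C}}$ is defined from $\widehat{\mathfrak C}$ and $\mathcal S_{\widehat{\mathfrak C}}$), so by Remark~\ref{rem:canonical_model_sep_stab_isomorphic} we may assume $\mathfrak C$ is stable with $U_{\mathfrak C}=\mathfrak C$. We then decompose into connected components using Remark~\ref{rem:1-scheme-property}(iii). Write $\mathfrak C=\coprod_j\mathfrak C_j$. The spaces $\Proj_{\mathfrak C_j}$ are linearly independent and span $\Proj_{\mathfrak C}$, and each piece of the canonical triple, including $Y$ (the type-$3$ lines lie inside a single $\Proj_{\mathfrak C_j}$), is the union of the corresponding pieces for the $\mathfrak C_j$.

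For the forward direction, let $q\in\widehat{\mathfrak C}^{\iso}$. Then $q=\varphi_{\mathfrak C}(\mathfrak C_j)$ for a connected component $\mathfrak C_j$ with $h_{\mathfrak C_j}=1$, namely a point of $A$ or $B$ (stable genus-one curve components do not occur after stabilization). In particular $\Proj_{\mathfrak C_j}=\{q\}$. Every point of $(Y_{\widehat{\mathfrak C}}\cup\mathcal S_{\widehat{\mathfrak C}})\setminus\{q\}$ lies in $\spann\bigl(\Proj_{\mathfrak C_i}: i\neq j\bigr)$, which is a complementary linear subspace not containing $q$. This gives the non-membership.

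For the converse, let $q\in\widehat{\mathfrak C}$ be non-isolated. Then $q$ lies in $\Proj_{\mathfrak C_j}$ for some connected component $\mathfrak C_j=C_j$ that is a stable curve with $g\ge 2$ and no separating nodes. The task is to show
\[
q\in\spann\bigl((Y_{\widehat{C}_j}\cup\mathcal S_{\widehat C_j})\setminus\{q\}\bigr).
\]
The natural strategy is to show that $(Y_{\widehat C_j}\cup\mathcal S_{\widehat C_j})\setminus\{q\}$ already spans all of $\Proj_{C_j}$. Since $\widehat C_j$ is nondegenerate, it suffices that this set spans every irreducible component $W$ of $\widehat C_j$.
\begin{itemize}
\item If $W$ is not of type $3$, then $W\subseteq Y$. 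Removing one point from the curve $W$ does not change its span, because $W$ is infinite and irreducible, so its span is the span of any dense subset.
\item If $W\in\Lin_3(\widehat C_j)$, then $W$ contains three distinct points of $\mathcal S_{\widehat C_j}$. After deleting $q$, at least two of them remain, and these span the line $W$.
\end{itemize}
Hence the span of the deleted set equals the span of $\widehat C_j$, which is $\Proj_{C_j}$ and contains $q$.

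The main obstacle is making sure the three points of $\mathcal S$ on a type-$3$ line are distinct, i.e.\ that $\tau_W=3$ counts distinct images. This holds by definition, since $\tau_W$ is the cardinality of the set $W\cap\mathcal S$. A second point to check is that the decomposition into components correctly identifies isolated points with components of $h=1$. This follows from stability, Definition~\ref{def:stabilization}, and Equation~\eqref{eq:iso}, once genus-one components are handled: they become points of $A$ or $B$ after stabilization.
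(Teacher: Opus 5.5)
Your proof is correct and follows the paper's argument. After passing to the stable model, the isolated points are the images of $A\sqcup B$, and the direct-sum decomposition of Remark~\ref{rem:proj_C_span} keeps each of them outside the span of everything else. A non-isolated point is recovered in one of two ways: from a non-type-$3$ component, which lies in $Y_{\widehat{\mathfrak C}}$ and is still spanned after deleting one point, or from the two remaining points of $\mathcal S_{\widehat{\mathfrak C}}$ on a type-$3$ line. The only difference is cosmetic: you show that the punctured set spans all of $\Proj_{C_j}$, whereas the paper only checks the component through $q$.
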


\begin{proof}
    By Remark \ref{rem:canonical_model_sep_stab_isomorphic}, we can assume that $\mathfrak{C}$ is stable. If $q\in \widehat{\mathfrak{C}}^{\iso}$, then $q\in \varphi_{\mathfrak{C}}(A\cup B)$. By Remark \ref{rem:proj_C_span}, we have that $q\notin \spann(\widehat{\mathfrak{C}}\setminus \{q\})$ and we are done.

    Let us prove the other inclusion. Assume that $q \in \widehat{\mathfrak{C}}\setminus \widehat{\mathfrak{C}}^{\iso}$. If $q\in Y_{\widehat{\mathfrak{C}}}$, then it is clear that $q\in \spann(Y_{\widehat{\mathfrak C}}\setminus \{q\})$ (recall that $q$ is not an isolated point). On the other hand, if $q$ lies in a type $3$ linear component $W$ of $\mathfrak C$, then $q\in \spann(S_{\widehat{\mathfrak{C}}}\setminus \{q\})$, because $\tau_W=3$ (recall Equation \eqref{eq:tau}), and we are done.
    \end{proof}

\begin{lemma}
\label{lem:recover_YCq}
    Let $\mathfrak{C}$ be a stable \name{} with no separating nodes. Let $\widehat{\mathfrak C}$ be the canonical model of $\mathfrak{C}$. The following properties hold.
    \begin{itemize}
        \item[(i)] The set of linear subspaces $\{\Lambda_W\}$, where $W$ runs through all connected components of $\widehat{\mathfrak C}$, can be recovered from the pair $(Y_{\widehat{\mathfrak{C}}}, \mathcal S_{\widehat{\mathfrak{C}}})$. 
        \item[(ii)] For a point $q\in \mathcal S_{\widehat{\mathfrak{C}}}$, the pair $(Y_{\widehat{\mathfrak{C}}_q}, \mathcal S_{\widehat{\mathfrak{C}}_q})$ can be recovered from the pair $(Y_{\widehat{\mathfrak{C}}}, \mathcal S_{\widehat{\mathfrak{C}}})$. Also, $\mathcal S_{\widehat{\mathfrak{C}}_q}=\pi_q(\mathcal S_{\widehat{\mathfrak{C}}}\setminus\{q\})$ where $\pi_q\colon \mathbb P_{\mathfrak C}\dashrightarrow \mathbb P_{\mathfrak C_q}$ is the projection from $q$.
    \end{itemize}
\end{lemma}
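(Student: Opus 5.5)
The plan is to handle (i) by a linear-independence decomposition of $Y_{\widehat{\mathfrak C}}\cup\mathcal S_{\widehat{\mathfrak C}}$, and (ii) by projecting from $q$ through the diagram \eqref{eq:nat-diag} and then discarding the type-$3$ lines of $\widehat{\mathfrak C}_q$, which can be read off intrinsically.

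\emph{Part (i).} By Remark~\ref{rem:1-scheme-property}(iii), the spans $\Lambda_W$ of the connected components $W$ of $\widehat{\mathfrak C}$ are linearly independent subspaces spanning $\Proj_{\mathfrak C}$. I would characterize them as follows.
\begin{enumerate}
\item[(a)] Every type-$3$ line $W'$ is spanned by $W'\cap\mathcal S_{\widehat{\mathfrak C}}$, since $\tau_{W'}=3$. Hence for each connected component $W$ we have $\Lambda_W=\spann\bigl((Y_{\widehat{\mathfrak C}}\cup\mathcal S_{\widehat{\mathfrak C}})\cap W\bigr)$.
\item[(b)] Consider decompositions of $Y_{\widehat{\mathfrak C}}\cup\mathcal S_{\widehat{\mathfrak C}}$ into pieces, each a union of irreducible components of $Y_{\widehat{\mathfrak C}}$ and points of $\mathcal S_{\widehat{\mathfrak C}}$, whose spans are linearly independent. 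Among these, take the finest one. I claim its pieces are exactly the traces of the connected components of $\widehat{\mathfrak C}$.
\end{enumerate}
The decomposition into connected components is clearly admissible. The substantive step is that it cannot be refined, i.e.\ the canonical image of a connected stable \name{} without separating nodes is not contained in a union of two independent proper subspaces that each carry part of it.

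For a connected curve component I would argue cohomologically. Suppose the splitting puts the subcurves $Z$ and $Z^c$ into independent subspaces. Then the restriction $H^0(\omega)\to H^0(\omega|_Z)\oplus H^0(\omega|_{Z^c})$ has image $V_1\oplus V_2$ in which the two images are independent, so
\[
g\;\ge\; \dim\Lambda_{\varphi(Z)}+\dim\Lambda_{\varphi(Z^c)}+2 .
\]
On the other hand, $\dim\Lambda_{\varphi(Z)}+1=h^0(\omega|_Z)-(\text{base conditions})\ge g_Z+\delta_Z-1$, and similarly for $Z^c$. Since $g=g_Z+g_{Z^c}+\delta_Z-1$ and $\delta_Z\ge 2$, these two estimates contradict each other.

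The remaining cases are a point of $\mathcal S_{\widehat{\mathfrak C}}$ or a type-$3$ line bridging two pieces. Both are handled by (a): such an $\mathcal S$-point lies on both sides of the splitting, which violates independence.

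\emph{Part (ii).} By \eqref{eq:nat-diag}, $\varphi_{\mathfrak C_q}=\pi_q\circ\varphi_{\mathfrak C}\circ\nu_q$ on a dense open set. Hence
\[
\widehat{\mathfrak C}_q=\overline{\pi_q(\widehat{\mathfrak C}\setminus\{q\})},
\]
with components contracted by $\pi_q$ (lines through $q$) giving points or nothing. The identity $\mathcal S_{\widehat{\mathfrak C}_q}=\pi_q(\mathcal S_{\widehat{\mathfrak C}}\setminus\{q\})$ should follow from the fact that the non-separating nodes of $C_q$, together with $B$ (minus $\varphi^{-1}(q)$ in Case~(1) of Definition~\ref{def:Cq}), are exactly the preimages of $\mathcal S_{\widehat{\mathfrak C}}\setminus\{q\}$. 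For this I would use Remark~\ref{rem:propC1set}: normalizing a $\Cone$-set creates no separating nodes, and the remaining $\Cone$-sets are unchanged. I would also check via Remark~\ref{rem:canonical_model_sep_stab_isomorphic} that stabilization does not alter this.

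Next I would treat the contracted lines through $q$. A type-$3$ line through $q$ comes from $Z$ as in Lemma~\ref{lem:linear-classification}(i) or (ii). After normalizing the node(s) over $q$, $Z$ becomes rational with $\delta=2$, is contracted by stabilization, and maps to a point already in $\mathcal S_{\widehat{\mathfrak C}_q}$. So omitting these lines from $Y_{\widehat{\mathfrak C}}$ loses nothing, apart from points already recorded. Lines of type $1$ or $2$ through $q$ lie in $Y_{\widehat{\mathfrak C}}$, and their images are recovered as $\pi_q(L\setminus\{q\})$. Together these give
\[
\widehat{\mathfrak C}_q=\overline{\pi_q(Y_{\widehat{\mathfrak C}}\setminus\{q\})}\;\cup\;\pi_q\Bigl(\bigcup_{W\in\Lin_3(\widehat{\mathfrak C}),\,q\notin W}W\Bigr)\;\cup\;\mathcal S_{\widehat{\mathfrak C}_q}.
\]

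This expression involves the type-$3$ lines of $\widehat{\mathfrak C}$ not through $q$, which are not part of the data; this is the main obstacle. To remove the dependence I would use two facts.
\begin{enumerate}
\item[(1)] A type-$3$ line of $\widehat{\mathfrak C}$ not through $q$ stays a type-$3$ line of $\widehat{\mathfrak C}_q$, since its preimage $Z$ is unchanged.
\item[(2)] New type-$3$ lines can appear, for instance the image of a birationally embedded conic through $q$ whose preimage has $g_Z=0$ and $\delta_Z=4$ with one node over $q$.
\end{enumerate}
Both kinds are characterized intrinsically inside $\widehat{\mathfrak C}_q$: they are exactly the line components $L$ with $|L\cap\mathcal S_{\widehat{\mathfrak C}_q}|=3$, by Definition~\ref{def: type}. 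I would therefore set
\[
Y_{\widehat{\mathfrak C}_q}=\overline{\overline{\pi_q(Y_{\widehat{\mathfrak C}}\setminus\{q\})}\setminus\bigcup L},
\]
where $L$ runs over lines contained in $\overline{\pi_q(Y_{\widehat{\mathfrak C}}\setminus\{q\})}$ meeting $\mathcal S_{\widehat{\mathfrak C}_q}$ in exactly three points, and then add back the isolated points, which all lie in $\pi_q(Y_{\widehat{\mathfrak C}}\setminus\{q\})$. The point to verify carefully is that no type-$\le2$ line of $\widehat{\mathfrak C}_q$ acquires three $\mathcal S_{\widehat{\mathfrak C}_q}$-points through collisions under $\pi_q$. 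This I would check with the classification in Lemma~\ref{lem:linear-classification} applied to $\mathfrak C_q^{\st}$.
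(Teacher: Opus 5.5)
\textbf{Part (i).} Your part (i) works, but by a different route from the paper. The paper adjoins to $Y_{\widehat{\mathfrak C}}$ every line through three collinear points of $\mathcal S_{\widehat{\mathfrak C}}$ and takes connected components; you take the finest decomposition with independent spans.

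Your cohomological step is unnecessary, and as written it is wrong. By Lemma~\ref{lem:dimdeg}(i), $N_{\varphi(Z)}+1=g_Z+\delta_Z-t_Z$, which is smaller than $g_Z+\delta_Z-1$ when $Z^c$ is disconnected, and your pieces need not be connected. A simpler argument suffices. Attach each type-$3$ line to the piece containing its three $\mathcal S$-points. The two pieces are then closed sets covering a connected component of $\widehat{\mathfrak C}$, so they meet, and their spans cannot be disjoint.

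A smaller inaccuracy in (ii): normalizing at $\varphi^{-1}(q)$ can change the other $\Cone$-sets, because two edges in one component of $\Gamma\setminus S$ can become a separating pair. The identity $\mathcal S_{\widehat{\mathfrak C}_q}=\pi_q(\mathcal S_{\widehat{\mathfrak C}}\setminus\{q\})$ does not need that claim. It only needs that $C_q$ has no separating nodes and that its nodes are the nodes of $C$ not in $\varphi^{-1}(q)$.

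\textbf{The gap in (ii).} Your last step claims that all isolated points of $\widehat{\mathfrak C}_q$ lie in $\pi_q(Y_{\widehat{\mathfrak C}}\setminus\{q\})$. This is false. Here is a counterexample.
\begin{enumerate}
\item[(1)] Let $R_1,R_2\subseteq C$ be smooth rational components with $|R_1\cap R_2|=2$. Suppose each meets the rest of $C$ in a single node, and these two nodes lie over $q$.
\item[(2)] Then $R_1\cup R_2$ is an honest hyperelliptic pair mapping onto a type-$3$ line $L$ through $q$; this is Case~(d) in the proof of Lemma~\ref{lem:linear-invariant}.
\item[(3)] In $\mathfrak C_q$, $R_1\cup R_2$ becomes a singular connected component of genus $1$, which stabilization contracts to a point of $B$.
\item[(4)] Hence $\pi_q(L)$ is an isolated point of $\widehat{\mathfrak C}_q$ lying in $\mathcal S_{\widehat{\mathfrak C}_q}$, so it belongs to $Y_{\widehat{\mathfrak C}_q}$.
\item[(5)] But $L\cap Y_{\widehat{\mathfrak C}}\subseteq\{q\}$, and $\pi_q(L)$ is independent of the span of the other connected components of $\widehat{\mathfrak C}_q$. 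So $\pi_q(L)\notin\overline{\pi_q(Y_{\widehat{\mathfrak C}}\setminus\{q\})}$, and your recipe omits it.
\end{enumerate}

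You cannot repair this by adjoining all of $\mathcal S_{\widehat{\mathfrak C}_q}$. Points of $\mathcal S_{\widehat{\mathfrak C}_q}$ that lie only on type-$3$ lines of $\widehat{\mathfrak C}_q$ are not in $Y_{\widehat{\mathfrak C}_q}$. An example is the projection of the image of a node joining two type-$3$ lines that avoid $q$, when no other component passes through it.

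The missing idea is the one the paper uses. Write $Y'$ for your set. Then
\[
Y'\cup\mathcal S_{\widehat{\mathfrak C}_q}=Y_{\widehat{\mathfrak C}_q}\cup\mathcal S_{\widehat{\mathfrak C}_q},
\]
and the left-hand side is known. Lemma~\ref{lem:recover_isolated}, applied to $\mathfrak C_q$, then identifies the isolated points of $\mathcal S_{\widehat{\mathfrak C}_q}$. Adding these to $Y'$ gives $Y_{\widehat{\mathfrak C}_q}$.

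Finally, your worry about collisions under $\pi_q$ is unfounded. Type $3$ in $\widehat{\mathfrak C}_q$ is defined by the count $|L\cap\mathcal S_{\widehat{\mathfrak C}_q}|=3$, so nothing needs to be checked there.
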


\begin{proof}
  We prove (i). By Remark~\ref{rem:1-scheme-property}(iii), the connected components of $\mathfrak{C}$ are in bijection with the connected components of $\widehat{\mathfrak{C}}$. Notice that, by Remark~\ref{rem:proj_C_span} (cf.\ Remark~\ref{rem:1-scheme-property}(iii)), we have
$H^0(\mathfrak{C},\omega_{\mathfrak{C}})=\bigoplus_i H^0(Z_i,\omega_{Z_i})$  where $Z_i$ corresponds to $W_i$ in the decomposition $\widehat{\mathfrak{C}}=\bigcup_i W_i$ into connected components. Dually,
$H^0(\mathfrak{C},\omega_{\mathfrak{C}})^\vee=\bigoplus_i H^0(Z_i,\omega_{Z_i})^\vee$, so the subspaces
$\mathbb{P}\bigl(H^0(Z_i,\omega_{Z_i})^\vee\bigr)\subseteq \mathbb{P}\bigl(H^0(\mathfrak{C},\omega_{\mathfrak{C}})^\vee\bigr)$
are pairwise disjoint, and in particular the spans $\Lambda_{W_i}$ are pairwise disjoint.

  
  Let then $F$ be the scheme given by the union of $Y_{\widehat{\mathfrak{C}}}$ together with all lines through any triple of distinct collinear points in $\mathcal S_{\widehat{\mathfrak{C}}}$. Since the $\Lambda_{W_i}$'s are projectivizations of independent direct summands, any line through three distinct points of $S_{\widehat{C}}$ lies in a unique $\Lambda_{W_i}$,
hence adding such lines cannot connect different components. We also have an inclusion $\widehat{\mathfrak{C}}\subseteq F$ because $Y_{\widehat{\mathfrak{C}}}$ is obtained from $\widehat{\mathfrak{C}}$ by removing precisely the type-3 linear components, and every such component is a line $L$ with $\tau_L=|L\cap \mathcal{S}_{\widehat{\mathfrak{C}}}|=3$, hence it is spanned by three collinear points of $\mathcal{S}_{\widehat{\mathfrak{C}}}$ and is therefore contained in the union of lines added in the definition of $F$. 

Thus the inclusion $\widehat{\mathfrak C}\subseteq F$ induces a bijection, say $T \mapsto T_F$, from the connected components of $\widehat{\mathfrak C}$ to those of  $F$. The statement follows from the equality $\Lambda_T=\Lambda_{T_F}$ of the corresponding spans.
 
We prove (ii). Recall Diagram \eqref{eq:nat-diag}. Set $\mathfrak{C} = (A, B, C)$ and $\mathfrak{C}_q = (A_q, B_q, C_q)$ (recall Definition~\ref{def:Cq}). Note that $A=A_q$ and either $B=B_q$ or $C=C_q$. 

We claim that $\mathcal S_{\widehat{\mathfrak{C}}_q}=\pi_q(\mathcal S_{\widehat{\mathfrak{C}}}\setminus\{q\})$. Indeed, notice that there is a bijection between $(\mathfrak{C}^{\sing}\cup B)\setminus\varphi_{\mathfrak{C}}^{-1}(q)$ and $\mathfrak{C}_q^{\sing}\cup B_q$. If $C_q=C$, meaning that $\varphi_{\mathfrak{C}}^{-1}(q)\subseteq B$, then the claim is clear. On the other hand, if $B=B_q$, then $\varphi_{\mathfrak{C}}^{-1}(q)$ is a set of nodes of $C$ corresponding to a $\Cone$-set (see Corollary \ref{cor:C_1-set}). Since $C$ has no separating nodes, this means that  $C_q$ also has no separating nodes. The claim  then follows from the definition of $\mathcal{S}_{\widehat{\mathfrak{C}}_q}$ in Definition~\ref{def:canonical_model}.

Next, we prove that $(Y_{\widehat{\mathfrak{C}}}, \mathcal S_{\widehat{\mathfrak{C}}})$ recovers $Y_{\widehat{\mathfrak{C}}_q}$.
Assume that we are given the point $q$ and the pair $(Y_{\widehat{\mathfrak{C}}}, \mathcal S_{\widehat{\mathfrak{C}}})$. If $\varphi_{\mathfrak{C}}^{-1}(q)\subseteq B$, then the result is clear because $Y_{\widehat{\mathfrak{C}}_q} = \overline{\pi_q(Y_{\widehat{\mathfrak{C}}})}$. So we assume that $\varphi_{\mathfrak{C}}^{-1}(q)\subseteq C$.

  Let $W\subseteq \widehat{\mathfrak{C}}$ be a type-$3$ linear component. If $q\not\in W$, we have that $\pi_q(W)$ is a type-$3$ linear component of $\widehat{\mathfrak{C}}_q$. If $q\in W$, then $\pi_q(W)$ is a point. So every one-dimensional component of $Y_{\widehat{\mathfrak{C}}_q}$ is contained in $\overline{\pi_q(Y_{\widehat{\mathfrak{C}}})}$. Thus
\[
Y_{\widehat{\mathfrak{C}}_q}\setminus \widehat{\mathfrak{C}}_q^{\iso} \subseteq \overline{\pi_q(Y_{\widehat{\mathfrak{C}}})}.
\]

More than that, given a point $q'\in \widehat{\mathfrak{C}}_q^{\iso}\setminus \mathcal S_{\widehat{\mathfrak{C}}_q}$, we have the following two cases:
\begin{enumerate}
    \item[(a)] $q'=\varphi_{\mathfrak C_q}(Z)$, where $Z$ is a smooth genus~$1$ connected component $Z\subseteq \mathfrak{C}_q$. Since $\mathfrak{C}_q$ is the normalization of $\mathfrak{C}$ at the nodes in the set $\varphi_{\mathfrak{C}}^{-1}(q)$, the curve $Z$ is attached at two nodes to the rest of $\mathfrak{C}$, hence $W = \varphi_{\mathfrak{C}}(Z)$ is a  linear component of $\widehat{\mathfrak C}$ of type $1$, so $W\subseteq Y_{\mathfrak{C}}$, and $q'=\pi_q(W)$;
    \item[(b)] $q'=\varphi_{\mathfrak C_q}(p')$, where $p'\in A_q$. Since $A_q=A$, we have that 
    \[
    q'\in \pi_q(\varphi_{\mathfrak{C}}(A))\subseteq \overline{\pi_q(Y_{\widehat{\mathfrak C}})}.
    \]
\end{enumerate}
Hence we deduce that 
\begin{equation}\label{eq:inclusion}
Y_{\widehat{\mathfrak{C}}_q}\setminus (\widehat{\mathfrak{C}}_q^{\iso}\cap \mathcal S_{\widehat{\mathfrak{C}}_q}) \subseteq \overline{\pi_q(Y_{\widehat{\mathfrak{C}}})}.
\end{equation}

By the claim, we have already recovered $\mathcal S_{\widehat{\mathfrak{C}}_q}$, hence for every linear component $W$ in $\overline{\pi_q(Y_{\mathfrak{\widehat C}})}$ we can compute the value $\tau_W=|W \cap \mathcal{S}_{\widehat{\mathfrak{C}}_q}|$. So, by the definition of type-3 components as those with $\tau_W=3$, this allows us 
to reconstruct all type-$3$ linear components of $\widehat{\mathfrak{C}}_q$ that are in $\overline{\pi_q(Y_{\mathfrak{\widehat C}})}$. After removing these, we recover $Y_{\widehat{\mathfrak{C}}_q}$, possibly with the exception of some points in $\widehat{\mathfrak{C}}_q^{\iso}
\cap \mathcal S_{\widehat {\mathfrak{C}}_q}$. So, we are left to recover the isolated points of the set $\mathcal S_{\widehat {\mathfrak{C}}_q}$. However, using Equation~\eqref{eq:inclusion}, we have
\[
Y_{\widehat{\mathfrak{C}}_q}\cup \mathcal S_{\widehat{\mathfrak{C}}_q} = \overline{(\overline{\pi_{q}(Y_\mathfrak{C})}\setminus \bigcup_{W \in \Lin_{3}(\widehat{\mathfrak{C}}_q)} W)} \cup \mathcal S_{\widehat{\mathfrak{C}}_q}.
\]
The right-hand side is known, so we recover
$Y_{\widehat{\mathfrak C}_q}\cup
\mathcal S_{\widehat{\mathfrak C}_q}$. By
Lemma~\ref{lem:recover_isolated}, we can then recover
$\widehat{\mathfrak C}_q^{\iso}\cap
\mathcal S_{\widehat{\mathfrak C}_q}$, and hence
$Y_{\widehat{\mathfrak C}_q}$. This completes the reconstruction.
\end{proof}

\begin{definition} \label{def:zeta}
    Let $\mathfrak{C}=(A,B,C)$ be a \name{} with no separating nodes (hence $\widehat{\mathfrak{C}}=\widehat{\mathfrak{C}}^{\st}$ by Remark \ref{rem:canonical_model_sep_stab_isomorphic}), and write $\mathfrak{C}^{\st}=(A',B',C')$. For $q\in \mathcal{S}_{\widehat{\mathfrak{C}}}=\varphi_{\mathfrak{C}}(C^{\sing}) \cup \varphi_{\mathfrak{C}}(B)$, we set 
    \[
    \zeta_{\widehat{\mathfrak{C}}}(q) = |\varphi_{\mathfrak{C}^{\st}}^{-1}(q)\cap C'|.
    \]

\end{definition}

 \begin{remark} \label{rem:computezeta}  The  number $\zeta_{\widehat{\mathfrak{C}}}(q)$ depends only on $\widehat{\mathfrak{C}}$ and not on $\mathfrak{C}$ or $\mathfrak{C}^{\st}$. 
Also, 
 \begin{enumerate}
     \item if $\varphi_{\mathfrak{C}^{\st}}^{-1}(q) \subseteq B'$, we have $\varphi_{\mathfrak{C}^{\st}}^{-1}(q) \cap C'= \emptyset$ and therefore $\zeta_{\widehat{\mathfrak{C}}}(q)=0$;
     \item if $\varphi_{\mathfrak{C}^{\st}}^{-1}(q) \subseteq C'$, we have that $\varphi_{\mathfrak{C}^{\st}}^{-1}(q)$ corresponds to a $\Cone$-set of $\Gamma_{C'}$. Thus, if $Z \subseteq \mathfrak{C}^{\st}$ is the connected component containing $\varphi_{\mathfrak{C}^{\st}}^{-1}(q)$, by Corollary~\ref{cor:C_1-set} we have that $\zeta_{\widehat{\mathfrak{C}}}(q)$ equals the number of connected components of 
     $Z_q$, the normalization of $Z$ at $\varphi_{\mathfrak{C}^{\st}}^{-1}(q)$. 
 \end{enumerate}
 \end{remark}
\begin{lemma}
\label{lem:sq_depends_Y}
    For a point $q\in \mathcal{S}_{\widehat{\mathfrak{C}}}$, the number $\zeta_{\widehat{\mathfrak{C}}}(q)$  depends only on $(Y_{\widehat{\mathfrak{C}}}, \mathcal S_{\widehat{\mathfrak{C}}})$. 
\end{lemma}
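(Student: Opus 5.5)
The plan is to express $\zeta_{\widehat{\mathfrak C}}(q)$ as a difference of two counts of connected components of canonical models. Each count will be recoverable from the pair $(Y,\mathcal S)$ by Lemma~\ref{lem:recover_YCq}.

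First, by Remark~\ref{rem:canonical_model_sep_stab_isomorphic} and Remark~\ref{rem:computezeta}, we may replace $\mathfrak C$ by $\mathfrak C^{\st}$ and so assume $\mathfrak C$ is stable with no separating nodes; then $U_{\mathfrak C}=\mathfrak C$. By Remark~\ref{rem:1-scheme-property}(iii), the connected components of $\widehat{\mathfrak C}$ are in bijection with the connected components $\mathfrak C'$ of $\mathfrak C$ with $h_{\mathfrak C'}\ge 1$. By Lemma~\ref{lem:recover_YCq}(i), the pairwise disjoint spans $\Lambda_W$ of these components are recovered from $(Y_{\widehat{\mathfrak C}},\mathcal S_{\widehat{\mathfrak C}})$. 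In particular, their number $c(\mathfrak C)$ is determined by the pair. Next, by Lemma~\ref{lem:recover_YCq}(ii), the pair $(Y_{\widehat{\mathfrak C}_q},\mathcal S_{\widehat{\mathfrak C}_q})$ is determined by $(Y_{\widehat{\mathfrak C}},\mathcal S_{\widehat{\mathfrak C}})$ and $q$. Applying part (i) of the same lemma to $\mathfrak C_q$ then determines $c(\mathfrak C_q)$, the number of connected components of $\widehat{\mathfrak C}_q$. If $\mathfrak C_q$ is not stable, I first pass to its stabilization, which changes neither the canonical triple nor the component count. I will then show
\[
\zeta_{\widehat{\mathfrak C}}(q)=c(\mathfrak C_q)-c(\mathfrak C)+1,
\]
which proves the lemma.

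To verify the formula I split into the two cases of Remark~\ref{rem:computezeta}.

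In the first case, $\varphi_{\mathfrak C}^{-1}(q)\subseteq B$. Then $\mathfrak C_q$ is obtained by deleting one point of $B$, which removes exactly one connected component with $h=1$. Hence $c(\mathfrak C_q)=c(\mathfrak C)-1$, and the right-hand side is $0=\zeta_{\widehat{\mathfrak C}}(q)$.

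In the second case, $\varphi_{\mathfrak C}^{-1}(q)$ is a $\Cone$-set of nodes in a connected component $Z$. Then $\mathfrak C_q$ differs from $\mathfrak C$ only in that $Z$ is replaced by $Z_q$, which has $\zeta_{\widehat{\mathfrak C}}(q)$ connected components. The formula follows once every connected component $F$ of $Z_q$ is shown to satisfy $h_F=g_F\ge1$, so that none is discarded in the canonical model.

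This genus claim is the only point needing care, and I would prove it from stability. Since $\Gamma_Z/(E\setminus S)$ is a cycle, each $F$ is attached to the rest of $Z$ at exactly two of the normalized nodes. Suppose $g_F=0$. Then $F$ is a tree of smooth rational curves carrying two external branches. A one-vertex tree would have $\delta=2$, contradicting stability. A tree with at least two vertices has at least two leaves, and each leaf needs at least two external branches to have valence at least $3$, so at least four are needed; again a contradiction. Therefore $g_F\ge1$.

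Genus-one components of $Z_q$ may be contracted to isolated points, but they still count as connected components of $\widehat{\mathfrak C}_q$, so this causes no problem. Finally, as noted in the proof of Lemma~\ref{lem:recover_YCq}, $\mathfrak C_q$ has no separating nodes, so Lemma~\ref{lem:recover_YCq}(i) indeed applies to (the stabilization of) $\mathfrak C_q$.
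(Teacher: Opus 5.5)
Your argument is correct, and it differs from the paper's in how the two cases are organized.

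\textbf{The paper's route.} It first uses Lemma~\ref{lem:recover_isolated} to decide whether $q$ is isolated, in which case $\zeta_{\widehat{\mathfrak C}}(q)=0$. Otherwise it localizes:
\begin{itemize}
\item it uses Lemma~\ref{lem:recover_YCq}(i) to find the span $\Lambda_W$ of the component $W\ni q$;
\item it recovers the spans of the components of $\widehat{\mathfrak C}_q$ via Lemma~\ref{lem:recover_YCq}(ii), and then (i) applied to $\mathfrak C_q^{\st}$;
\item it counts those spans lying in $\pi_q(\Lambda_W)$.
\end{itemize}

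\textbf{Your route.} Your global identity $\zeta_{\widehat{\mathfrak C}}(q)=c(\mathfrak C_q)-c(\mathfrak C)+1$ uses only the \emph{numbers} of these spans, not their positions. It treats the $B$-case and the nodal case uniformly. It needs no separate isolated-point test and no localization to $\pi_q(\Lambda_W)$.

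\textbf{The price, and what it buys.} You must check that no connected component of $Z_q$ has genus $0$, so that none is discarded from the canonical model. The paper's identification of $\zeta_{\widehat{\mathfrak C}}(q)$ with the number of connected components of $\overline{\pi_q(W)}$ tacitly relies on exactly this fact. Your tree/leaf stability argument therefore makes explicit a step the paper leaves implicit.

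\textbf{A small wording point.} When the $\Cone$-set $\varphi_{\mathfrak C}^{-1}(q)$ is a single node, both external branches of $Z_q$ lie over that one node. So the correct count is ``two external branches'' rather than ``two normalized nodes''; this is in fact what your leaf argument uses.
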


\begin{proof}
    We begin with some general considerations. Assume that we are given $(Y_{\widehat{\mathfrak{C}}}, \mathcal S_{\widehat{\mathfrak{C}}})$. We can assume that $\mathfrak{C} = (A, B, C)$ is stable (taking the stabilization does not change $\zeta_{\widehat{\mathfrak{C}}}(q)$ and $(Y_{\widehat{\mathfrak{C}}}, \mathcal{S}_{\widehat{\mathfrak{C}}})$). 
     By Remark~\ref{rem:1-scheme-property}, the connected components of $\mathfrak{C}$ are in bijection with the connected components of $\widehat{\mathfrak{C}}$. Fix $q \in \mathcal{S}_{\widehat{\mathfrak{C}}}$ and let $W\subseteq \widehat{\mathfrak{C}}$ be the connected component containing $q$ and  $Z=\varphi_{\mathfrak{C}}^{-1}(W)$ be the  connected component of $\mathfrak{C}$ containing $\varphi_{\mathfrak{C}}^{-1}(q)$. 
    By Lemma~\ref{lem:recover_YCq}~(i), we can recover the linear space $\Lambda_W$ from $(Y_{\widehat{\mathfrak{C}}}, \mathcal{S}_{\widehat{\mathfrak{C}}})$. Moreover, by Corollary~\ref{cor:C_1-set}, since $q\in \mathcal{S}_{\widehat{\mathfrak{C}}}$, either $Z\subseteq B$ or $Z\subseteq C$. By Lemma~\ref{lem:recover_isolated}, the pair $(Y_{\widehat{\mathfrak C}},S_{\widehat{\mathfrak C}})$ determines
whether $q$ is isolated in $\widehat{\mathfrak C}$, which is equivalent to $Z\subseteq B$. In this case, $\zeta_{\widehat{\mathfrak{C}}}(q) = 0$ by Remark \ref{rem:computezeta}.
    
    We may therefore assume $Z\subseteq C$. Then, by Remark~\ref{rem:computezeta},   $\zeta_{\widehat{\mathfrak{C}}}(q)$ is the number of connected components of $Z_q$, which by Remark \ref{rem:1-scheme-property} and Diagram \eqref{eq:nat-diag}, also equals the number of connected components of $\overline{\pi_q(W)}$, where $\pi_q\colon \mathbb P_{\mathfrak C}\dashrightarrow \mathbb P_{\mathfrak C_q}$ is the projection from $q$.
    We find that $\zeta_{\widehat{\mathfrak{C}}}(q)$ is the number of connected components of $\widehat{\mathfrak{C}}_q = \pi_q(\widehat{\mathfrak{C}})$ inside $\pi_q(\Lambda_W)$.
    
    By Lemma~\ref{lem:recover_YCq}(ii), we can recover
$(Y_{\widehat{\mathfrak C}_q},\mathcal S_{\widehat{\mathfrak C}_q})$
from $(Y_{\widehat{\mathfrak C}},\mathcal S_{\widehat{\mathfrak C}})$.
Applying Lemma~\ref{lem:recover_YCq}(i) to $\mathfrak C_q^{\st}$ and
denoting by $K$ the set of connected components of
$\widehat{\mathfrak C}_q$, we can then recover the spaces
$\Lambda_{W'}$, for $W'\in K$.
 
 We now prove that $\zeta_{\widehat{C}}(q)$ depends only on $(Y_{\widehat{\mathfrak{C}}}, \mathcal{S}_{\widehat{\mathfrak{C}}})$. Starting from $q$ and $(Y_{\widehat{\mathfrak{C}}}, \mathcal{S}_{\widehat{\mathfrak{C}}})$ we first identify the set of linear subspaces $\{\Lambda_W\}$ where $W$ runs through all connected components of $\widehat{\mathfrak{C}}$ (again by Lemma \ref{lem:recover_YCq} (i)). From this, we identify the space $\Lambda_W$ that contains $q$. We already recovered the set $\{\Lambda_{W'}\}$ for $W'$ that runs through the set $\mathcal K$ of all connected components of $\widehat{\mathfrak{C}}_q$. Hence
    \[
\zeta_{\widehat{\mathfrak{C}}}(q) = |\{\Lambda_{W'};\  W'\in\mathcal K \text{ and } \Lambda_{W'}\subseteq \pi_q(\Lambda_W)\}|.
    \]
    This concludes the proof.
\end{proof}

The following lemma and definition are needed in the statement and in the proof of Lemma~\ref{lem:linear-invariant}.

\begin{lemma}\label{lem:4-collinear}
Let $\mathfrak{C}$ be a stable \name{} with no separating nodes. There are no $4$ pairwise distinct singular points of $\mathfrak C$ such that their images via the canonical map are pairwise distinct and collinear.
\end{lemma}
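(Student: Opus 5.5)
The plan is to translate collinearity into a statement about the dual graph $\Gamma_C$ via projections from points of $\mathcal S_{\widehat{\mathfrak C}}$, and then derive a contradiction with the $\Cone$-set structure (Remark~\ref{rem:propC1set}).

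\emph{Reduction to a connected curve.} Suppose $n_1,\dots,n_4$ are nodes of $C$ with pairwise distinct, collinear images $q_i=\varphi_{\mathfrak C}(n_i)$. By Remark~\ref{rem:1-scheme-property}(iii), the spans of the canonical images of the connected components of $\mathfrak C$ are projectivizations of independent direct summands of $H^0(\mathfrak C,\omega_{\mathfrak C})^\vee$. A line meeting two such disjoint summands meets each of them in at most one point, and at no further point of any summand. Hence all four $q_i$ lie in the span of a single connected component, and we may assume $\mathfrak C=C$ is a connected stable curve with no separating nodes. By Corollary~\ref{cor:C_1-set}, the fibre $\varphi_C^{-1}(q_i)$ is the set of nodes of a $\Cone$-set $S_i$. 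The sets $S_1,\dots,S_4$ are pairwise distinct because the $q_i$ are distinct.

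\emph{Translating collinearity.} As in the discussion after Definition~\ref{def:Cq}, $q_i$ is the evaluation functional at $n_i$. Collinearity says that $q_3$ and $q_4$ lie in $\spann(q_1,q_2)$. Equivalently, every section of $\omega_C$ vanishing at $n_1$ and $n_2$ also vanishes at $n_3$ and $n_4$. Passing to the partial normalization $C_{12}$ of $C$ at the nodes of $S_1\cup S_2$ (iterating Definition~\ref{def:Cq} and diagram~\eqref{eq:nat-diag}), the sections of $\omega_{C_{12}}$ are exactly the sections of $\omega_C$ vanishing on $S_1\cup S_2$. Hence $n_3$ and $n_4$ lie in the base locus of $\omega_{C_{12}}$. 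By Equation~\eqref{eq:base-locus}, each of $n_3,n_4$ is then either a separating node of $C_{12}$ or lies on a rational tail of $C_{12}$.
\begin{itemize}
\item The rational-tail alternative should be excluded using stability of $C$ together with the fact that only the nodes of two $\Cone$-sets are normalized. I expect a short degree count $k_Z>0$ to do this.
\item In graph terms, we then get that $e_3$ and $e_4$ are bridges of $\Gamma\setminus(S_1\cup S_2)$, where $e_i$ is the edge of $n_i$.
\end{itemize}
One should also check the intermediate step: projecting from $q_1$ maps $q_2$ to a genuine point, because $n_2$ is not in the base locus of $C_{1}$. Otherwise $q_2=q_1$, contradicting $S_2\neq S_1$.

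\emph{The graph contradiction (main obstacle).} We must show that if $e_3$ and $e_4$ are both bridges of $\Gamma\setminus(S_1\cup S_2)$, with $e_3,e_4$ not in $S_1\cup S_2$, then $\{e_3,e_4\}$ is a separating pair. By Remark~\ref{rem:propC1set}(2) this forces $S_3=S_4$, a contradiction.

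I would argue via the cut space of $\Gamma$.
\begin{itemize}
\item Since $e_3$ is a bridge after deleting $S_1\cup S_2$, there is a bond $B_3\subseteq S_1\cup S_2\cup\{e_3\}$ containing $e_3$, and similarly a bond $B_4$ for $e_4$.
\item Each $\Gamma/(E\setminus S_j)$ is a cycle, so every cut meets $S_j$ in an even number of edges. Because $\{e_3\}$ is not itself a cut (there are no separating edges) and $e_3$ lies in no separating pair with an edge of $S_1$ or $S_2$, the bond $B_3$ must meet both $S_1$ and $S_2$ nontrivially, in even numbers.
\item I would then use the cyclic structure of $\Gamma\setminus S_1$: it is a cyclic chain of $2$-edge-connected blocks joined by the edges of $S_1$. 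From this, $S_2$ lies inside one block $G$. Moreover $e_3$ and $e_4$ are bridges of $G\setminus S_2$, and they sit on the single ``cycle'' formed by $S_2$ together with the two block endpoints of $S_1$.
\item A $\Cone$-set type argument inside $G$ should show that the bridges of $G\setminus S_2$ lying on that path form one separating class.
\item Alternatively, one can take the symmetric difference $B_3\triangle B_4$, which is a cut. After reducing, using evenness on $S_1$ and $S_2$ and choosing the bonds compatibly, it should become $\{e_3,e_4\}$, exhibiting the separating pair.
\end{itemize}
Making this choice of compatible bonds rigorous is the step I expect to be the most delicate.
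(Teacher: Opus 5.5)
Your reduction to a connected stable curve and your translation of collinearity are correct. The sections of $\omega_{C_{12}}$ are exactly the sections of $\omega_C$ vanishing at $n_1$ and $n_2$. Hence $n_3,n_4$ are base points of $\omega_{C_{12}}$, and by Equation~\eqref{eq:base-locus}, $e_3,e_4$ are bridges of $\Gamma\setminus(S_1\cup S_2)$. No degree count is needed for the rational-tail case: a node on a component of $D_{C_{12}}$ is automatically separating, since such components are smooth rational and meet the rest only at separating nodes.

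\textbf{The gap.} The graph-theoretic contradiction carries the whole content of the lemma, and you leave it unproved. Worse, your sketch rests on a false claim: it is not true that every cut meets a $\Cone$-set in an even number of edges. Take two elliptic curves meeting in three points. Then $\Gamma$ is the theta graph with vertices $u,w$ and edges $f_1,f_2,f_3$, and each $f_i$ is its own $\Cone$-set. The edge $f_3$ is a bridge of $\Gamma\setminus\{f_1,f_2\}$, and the only cut through $f_3$ is $\delta(u)=\{f_1,f_2,f_3\}$. This cut meets $S_1=\{f_1\}$ and $S_2=\{f_2\}$ once each. This is exactly your configuration (the three node images are collinear here). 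In fact, in your situation the cut $B_3$ always meets each $S_j$ in an \emph{odd} number of edges. Your remaining suggestions (the argument ``inside $G$'', the compatible choice of bonds) are not yet arguments.

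\textbf{How to close it.} The step goes through once the parity is corrected.
\begin{enumerate}
\item Let $\mathcal Q$ be the cut space of $\Gamma$ over $\mathbb F_2$: the edge sets $\delta(V)$, which are closed under symmetric difference $\triangle$.
\item Since $\Gamma$ is bridgeless, no one-element set lies in $\mathcal Q$, and a two-element set in $\mathcal Q$ is a separating pair.
\item Conversely, by Remark~\ref{rem:propC1set}(2), any two edges of the same $\Cone$-set form a set in $\mathcal Q$.
\item Fix $s_j\in S_j$. Taking $\triangle$ of $B_3$ with pairs of edges of $S_1$ and of $S_2$ gives $\{e_3\}\cup X\in\mathcal Q$ for some $X\subseteq\{s_1,s_2\}$.
\item $X=\emptyset$ would make $e_3$ a bridge, and $X=\{s_j\}$ would put $e_3$ in $S_j$. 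Hence $\{e_3,s_1,s_2\}\in\mathcal Q$, and likewise $\{e_4,s_1,s_2\}\in\mathcal Q$.
\item Their symmetric difference $\{e_3,e_4\}$ then lies in $\mathcal Q$, so it is a separating pair. This forces $S_3=S_4$, a contradiction.
\end{enumerate}
Equivalently, on the $\mathbb F_2$-cycle space the edge coordinate functionals satisfy $x_{e_3}=x_{s_1}+x_{s_2}=x_{e_4}$.

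\textbf{Comparison with the paper.} The paper argues more directly and avoids $\Cone$-sets altogether. It normalizes only at the four nodes $N$ and reads collinearity as $h^0(\omega_{C_N})=g-2$, i.e.\ $C_N$ has exactly three connected components. Pairwise distinctness of the images says no $\{p_i,p_j\}$ is a separating pair. One component of $C_N$ then carries only one or two of the eight branch points, which yields a separating node, a separating pair, or a disconnection of $C$.
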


\begin{proof}
    Write $\mathfrak{C}=(A,B,C)$. By Remark \ref{rem:1-scheme-property} (iii), we can assume that $\mathfrak{C}=C$, with $C$   connected. Let $g$ be the genus of $C$. Consider a set $N=\{p_1,p_2,p_3,p_4\}$ of $4$ pairwise distinct nodes of $C$. Assume that their images via the canonical map $\varphi_C$ are pairwise distinct and, by contradiction, assume that these images are collinear. 
    
    Given a subset $M\subseteq N$, we let $\nu_M\colon C_M\to C$ be the partial normalization of $C$ at the nodes in $M$. Since the sections in $H^0(C, \omega_C)$ that vanish at all the points in $M$ are precisely the sections in $H^0(C, \nu_{M*}(\omega_{C_M}))\subseteq H^0(C, \omega_C)$, in order for the images of the $4$ points to be collinear, we must have  $h^0(C_{N}, \omega_{C_{N}})=g-2$. 

    The points $\varphi_C(p_i)$ are pairwise distinct, hence $h^0(C_{\{p_i,p_j\}}, \omega_{C_{\{p_i,p_j\}}})=g-2$, for every  $1\leq i<j\leq 4$ (that means that $\{p_i,p_j\}$ is not a separating pair of nodes). In particular, for every permutation  $(i_1,i_2,i_3,i_4)$ of $(1, 2, 3,4)$, we must have that $p_{i_3}$ and $p_{i_4}$ are  separating nodes of $C_{\{p_{i_1}, p_{i_2}\}}$.
    

    Next, notice that $C_N$ has exactly $3$ connected components. Indeed, $C_{\{p_1,p_2\}}$ has exactly one connected component, while $p_3$ and $p_4$ are separating nodes of $C_{\{p_1,p_2\}}$ so, after normalizing each one, we get one extra connected component.
    
    Since the inverse image via $\nu_N$ of the set $N$ is a set of eight points in $C_N$, one of the connected components of $C_N$ contains $\alpha$ elements of the set of eight points, with $1\leq \alpha\leq 2$. 
    
    If $\alpha=1$, let $p_i$ be the image of this point via $\nu_N$. Then $p_i$ is a separating node, a contradiction. 
    If $\alpha=2$, let $p_i$ and $p_j$ be the images of these $\alpha$ points via $\nu_N$, for $1\le i\le j\le 4$.  If $i\ne j$, then $\{p_i,p_j\}$ is a separating pair of nodes, a contradiction. If $i=j$, the two points of $\nu_N^{-1}(N)$ on this connected component
of $C_N$ are the two branches over $p_i$. Since the component contains
no other point of $\nu_N^{-1}(N)$, it remains disconnected from the
others after regluing $N$, contradicting the connectedness of $C$.
\end{proof}

 \begin{definition} \label{def:lambda}
         Let $\mathfrak{C}$ be a stable \name{} with no separating nodes. Consider $3$ collinear and pairwise distinct points $q_1, q_2, q_3$ in $\mathcal S_{\widehat{\mathfrak{C}}}\setminus \widehat{\mathfrak C}^{\iso}$. Set $q:=q_1$ and $q' = \pi_q(q_2) = \pi_q(q_3)$, where $\pi_q\colon \mathbb P_{\mathfrak C}\dashrightarrow \mathbb P_{\mathfrak C_q}$ is the projection from $q$. By Lemma \ref{lem:recover_YCq} (ii), we have that $q'\in \mathcal S_{\widehat{\mathfrak{C}}_{q}}$. We define  (recall Definition~\ref{def:zeta})
  \[
  \lambda(q_1; q_2, q_3) := \zeta_{\widehat{\mathfrak{C}}}(q_2) + \zeta_{\widehat{\mathfrak{C}}}(q_3) - \zeta_{\widehat{\mathfrak{C}}_q}(q').
  \]
    \end{definition}
  
   The presence of linear components of type~$3$ is detected by this invariant $\lambda$, as we prove in the following result.
   
  \begin{lemma}\label{lem:linear-invariant}
    Let $\mathfrak{C}$ be a stable \name{} with no separating nodes. 
    Let $q_1, q_2, q_3$ be pairwise distinct collinear points in $\mathcal S_{\widehat{\mathfrak{C}}}\setminus \widehat{\mathfrak C}^{\iso}$. Let $L$ be the line through $q_1,q_2,q_3$. Then $\lambda(q_1; q_2, q_3)\in \{0,1,2\}$ and 
    \[
    \lambda(q_1; q_2, q_3)=\begin{cases}          0, & \text{ if } L\not\subseteq \widehat{\mathfrak C}.  \\
        1,  & \text{ if } L\subseteq \widehat{\mathfrak C} \text{ and } \varphi_{\mathfrak C}^{-1}(L) \text{ is irreducible.}  \\
        2,  & \text{ if } L\subseteq \widehat{\mathfrak C}  \text{ and } \varphi_{\mathfrak C}^{-1}(L) \text{ is not irreducible.} 
    \end{cases}
\]
  \end{lemma}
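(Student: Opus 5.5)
The plan is to make every term of $\lambda(q_1;q_2,q_3)$ combinatorial, so that the lemma becomes a count of rational components that are contracted when $\mathfrak C_{q}$ is stabilized.

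\textbf{Reduction and interpretation of $\zeta$.} By Remark~\ref{rem:1-scheme-property}(iii) the spans of distinct connected components of $\widehat{\mathfrak C}$ are independent. Three collinear points of $\mathcal S_{\widehat{\mathfrak C}}$ therefore lie in the span of a single component, and we may assume $\mathfrak C=C$ is a connected stable curve with no separating nodes. The $q_i$ are not isolated, so by Corollary~\ref{cor:C_1-set} each fibre $N_i:=\varphi_C^{-1}(q_i)$ is the set of nodes of a $\Cone$-set $S_i$ of $\Gamma_C$, and these sets are pairwise disjoint.

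Since $\Gamma_C/(E\setminus S_i)$ is a cycle of length $|S_i|$, Remark~\ref{rem:computezeta} gives $\zeta_{\widehat C}(q_i)=|N_i|$. Set $q=q_1$. By Diagram~\eqref{eq:nat-diag} the nodes $N_2\cup N_3$ of $C_q$ all map to $q'$. Then $\varphi^{-1}_{\mathfrak C_q^{\st}}(q')\cap C'$ is the image in the stable model of those nodes of $N_2\cup N_3$ that survive stabilization. No other nodes can appear there, since preimages of $q'$ in $C_q$ lie over $q$, $q_2$ or $q_3$, and $q_1$ has been normalized away.

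\textbf{Bookkeeping of stabilization.} The next step is to analyse how $\psi\colon C_q\to C_q^{\st}$ (Proposition~\ref{prop:model-stabilization}) affects the nodes of $N_2\cup N_3$. The claim is that $\lambda$ equals the number of irreducible components $Z\subseteq C$ that become contracted in $C_q$ and have their two remaining special points in $N_2\cup N_3$. Two ingredients enter:
\begin{itemize}
\item[(a)] Stability of $C$ forces each unstable component of $C_q$ to be a genus-$0$ component $Z$ of $C$ with $\delta_Z=3$, exactly one node in $N_1$ and the other two nodes being non-loop nodes.
\item[(b)] Contracting such a $Z$ merges its two nodes into one, decreasing the count by one. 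Chains of such components would have to be ruled out or counted.
\end{itemize}
I will also check that no component of $C_q$ becomes a genus-$\le1$ connected piece that stabilizes to a point of $A$ or $B$ and swallows nodes of $N_2\cup N_3$. Such a piece would produce a separating pair or separating node in $C$, contradicting $N_2\neq N_3$ and the absence of separating nodes. Lemma~\ref{lem:4-collinear}, together with the $\Cone$-set structure, should make the two merged nodes lie one in $N_2$ and one in $N_3$, and rule out long chains. I expect this bookkeeping to be the main obstacle: showing that each merge comes from a single such $Z$, and that the merged nodes are exactly one from $N_2$ and one from $N_3$.

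\textbf{Identification with $L$.} A component $Z$ as above has $k_Z=1$, so $\varphi_C(Z)$ is a line. That line contains $q_1,q_2,q_3$, hence equals $L$, and $L\subseteq\widehat C$ is of type $3$. Conversely, if $L\subseteq\widehat C$, then $L$ passes through three points of $\mathcal S_{\widehat C}$, so it has type $3$. By Lemma~\ref{lem:linear-classification}(i),(ii), $\varphi_C^{-1}(L)$ is then either one such $Z$ or an honest hyperelliptic pair $Z_1\cup Z_2$. In the pair case each $Z_j$ has $\delta_{Z_j}=3$, with one node over each $q_i$.

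Combining the three steps, $\lambda$ is $0$ when $L\not\subseteq\widehat C$, $1$ when $\varphi_C^{-1}(L)$ is irreducible, and $2$ when $\varphi_C^{-1}(L)$ is a pair. In particular $\lambda\in\{0,1,2\}$.
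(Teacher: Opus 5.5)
\textbf{There is a genuine gap: the stabilization bookkeeping you defer is the whole content of the lemma, and one of your sub-claims is false.}

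The following parts are fine: your reduction to a connected curve, the identity $\zeta_{\widehat C}(q_i)=|N_i|$, and the final matching with $\varphi_C^{-1}(L)$ via Lemma~\ref{lem:linear-classification}.

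\emph{The missing structural input.} What is missing is the global structure forced by collinearity and $q_2\neq q_3$. After normalizing $C$ at $N_1\cup N_2\cup N_3$, exactly two connected pieces $Z_1,Z_2$ meet all three $N_j$. Each of them meets its complement in exactly three nodes, one in each $N_j$. Every other piece meets only two nodes of a single $N_j$, and so has genus $\ge 1$ by stability. In other words, the dual graph is a theta graph: three chains joining $Z_1$ and $Z_2$.

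Only with this can you say which components of $C_q$ are unstable \emph{and} carry nodes over $q'$. Without it your ingredient (a) is false. For example, a rational component with $\delta=4$ and two nodes in $N_1$ also becomes a rational bridge in $C_q$. So does a rational component with $\delta=2$ whose self-node forms $N_1$. Showing that such components cannot carry nodes of $N_2\cup N_3$ requires the hypotheses, and is exactly the structural statement above. Similarly, preimages of $q'$ lie over $L\cap\widehat C$, not just over $q_1,q_2,q_3$; restricting which nodes map to $q'$ needs Lemma~\ref{lem:4-collinear}.

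\emph{A false sub-claim.} You assert that no genus-$\le1$ connected piece of $C_q$ stabilizes to a point of $A$ or $B$ while absorbing nodes of $N_2\cup N_3$. This is false. Take $Z_1\cong Z_2\cong\mathbb P^1$ meeting in two nodes $n_2,n_3$, and $Y$ smooth of positive genus meeting $Z_1$ in $e_1$ and $Z_2$ in $e_2$. Then:
\begin{itemize}
\item $N_1=\{e_1,e_2\}$, $N_2=\{n_2\}$, $N_3=\{n_3\}$, and all three images lie on $L=\varphi(Z_1)=\varphi(Z_2)$;
\item $C_q=(Z_1\cup Z_2)\sqcup Y$, where $Z_1\cup Z_2$ has genus $1$ and stabilizes to a point of $B$;
\item hence $\zeta_{\widehat{\mathfrak C}_q}(q')=0$ and $\lambda=2$.
\end{itemize}
The only separating pair here is $N_1$ itself, so your argument yields no contradiction. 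This is the paper's Case~(d). Your merging mechanism (b) does not describe it, even though the final count happens to agree.

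\emph{What a complete proof needs.} First establish the theta-graph structure. Then split into cases on the genera of $Z_1,Z_2$ and on $g_Z$, where $Z$ is the component of $C_q$ containing $N_2$ and $N_3$:
\begin{itemize}
\item both genera positive: no node over $q'$ disappears, so $\lambda=0$;
\item exactly one genus is $0$: one bridge is contracted, so $\lambda=1$;
\item both genera $0$ and $g_Z\ge2$: two bridges are contracted, so $\lambda=2$;
\item both genera $0$ and $g_Z=1$: $Z^{\st}$ lies in $B$, so $\lambda=2$.
\end{itemize}
Only after this should you match the cases with $\varphi_C^{-1}(L)$.
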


  \begin{proof}

Write $\mathfrak C=(A,B,C)$ and, for $i=1,2,3$, set
\[
N_i:=\varphi_{\mathfrak C}^{-1}(q_i)\subseteq C^{\sing}.
\]
By Corollary~\ref{cor:C_1-set}, the edges corresponding to each $N_i$
form a $\Cone$-set of $\Gamma_C$. Set $q:=q_1$ and
$q':=\pi_q(q_2)=\pi_q(q_3)$, and recall Diagram~\eqref{eq:nat-diag}.

Since $q,q_2,q_3$ are collinear, $\nu_q^{-1}(N_2)$ and
$\nu_q^{-1}(N_3)$ lie in the same one-dimensional connected component
$Z$ of $\mathfrak C_q$ (otherwise, $\pi_q(q_2)$ and $\pi_q(q_3)$ would
lie in disjoint linear subspaces of $\mathbb P_{\mathfrak C_q}$).
By Lemma~\ref{lem:4-collinear} and Diagram~\eqref{eq:nat-diag}, 
\[
N_{q'}:=\varphi_{\mathfrak C_q}^{-1}(q')\cap C_q^{\sing}
       =\nu_q^{-1}(N_2\sqcup N_3),
\]
and
\begin{equation}\label{eq:phiCqst}
\varphi_{(\mathfrak C_q)^{\st}}^{-1}(q')=\st(N_{q'}).
\end{equation}
where $\st\colon Z\to Z^{\st}$ denotes the stabilization morphism. (Here $N_{q'}$ denotes only the nodal part of the fibre, which may also
contain one-dimensional components). Let
$\nu_{q'}\colon\mathfrak C_{q,q'}\to\mathfrak C_q$ be the partial
normalization at $N_{q'}$, and set $Z_{q'}:=\nu_{q'}^{-1}(Z)$, where the inverse image is understood
as in Convention~\ref{conv:curve-part}, see
Figure~\ref{fig:lambda}.

Since $\mathfrak C_q$ is obtained from the stable \name{} $\mathfrak C$
by normalizing $N_1$, the $\Cone$-set property of the $N_j$ shows that
there are exactly two connected components $Z_1,Z_2$ of $Z_{q'}$ such
that
\[
\nu_q(\nu_{q'}(Z_i))\cap N_j\neq\emptyset
\qquad (i=1,2,\ j=1,2,3);
\]
see Figure~\ref{fig:lambda}. Viewed as subcurves of $C$, each $Z_i$
meets its complement in exactly three nodes, one in each $N_j$.
Consequently, if $g_{Z_i}=0$, stability forces $Z_i$ to be a smooth
rational curve. Every other connected component of $Z_{q'}$ has genus
at least $1$. After relabelling, assume that
$g_{Z_1}\leq g_{Z_2}$. We distinguish four cases, using
\[
\zeta_{\widehat{\mathfrak C_q}}(q')
=\zeta_{\widehat{Z^{\st}}}(q'),
\]
Remark~\ref{rem:computezeta}, and Equation~\eqref{eq:phiCqst}. For the computations in Cases~(a), (b), (c), we may assume that every
positive-genus connected component of $Z_{q'}$, with its two points over
$N_{q'}$ marked, is stable, since replacing it by its stable model does
not change $Z^{\st}$ or the nodes over $q'$.

\begin{itemize}
\item[(a)]
Assume that $g_{Z_1}\geq1$, and hence $g_{Z_2}\geq1$. Then $Z$ is
stable, so
\[
\zeta_{\widehat{Z^{\st}}}(q')
=\zeta_{\widehat{\mathfrak C}}(q_2)
+\zeta_{\widehat{\mathfrak C}}(q_3),
\qquad
\lambda(q_1;q_2,q_3)=0.
\]
Moreover, $L\not\subseteq\widehat{\mathfrak C}$. Otherwise $L$ would
be a linear component of type~$3$. By
Lemma~\ref{lem:linear-classification}, every irreducible component $T$
of $C$ mapping onto $L$ is rational and has exactly one boundary node
over each $q_j$. Its strict transform in $Z_{q'}$ is therefore a
connected component whose image meets every $N_j$, hence is $Z_1$ or
$Z_2$, contradicting $g_{Z_1},g_{Z_2}\geq1$.

\item[(b)]
If $g_{Z_1}=0$ and $g_{Z_2}\geq1$, then $Z^{\st}$ is obtained by
contracting the smooth rational curve $Z_1$. Hence
\[
\zeta_{\widehat{Z^{\st}}}(q')
=\zeta_{\widehat{\mathfrak C}}(q_2)
+\zeta_{\widehat{\mathfrak C}}(q_3)-1,
\qquad
\lambda(q_1;q_2,q_3)=1.
\]
By Lemma~\ref{lem:linear-classification},
$L=\varphi_{\mathfrak C}(Z_1)$, and the argument in Case~(a), together
with $g_{Z_2}\geq1$, gives
$\varphi_{\mathfrak C}^{-1}(L)=Z_1$.

\item[(c)]
If $g_{Z_1}=g_{Z_2}=0$ and $g_Z\geq2$, then $Z^{\st}$ is obtained by
contracting the smooth rational curves $Z_1$ and $Z_2$. Hence
\[
\zeta_{\widehat{Z^{\st}}}(q')
=\zeta_{\widehat{\mathfrak C}}(q_2)
+\zeta_{\widehat{\mathfrak C}}(q_3)-2,
\qquad
\lambda(q_1;q_2,q_3)=2.
\]
Moreover,
\[
L=\varphi_{\mathfrak C}(Z_1)=\varphi_{\mathfrak C}(Z_2).
\]
By the argument in Case~(a), these are the only irreducible components
mapping onto $L$, so
$\varphi_{\mathfrak C}^{-1}(L)=Z_1\cup Z_2$
(an honest hyperelliptic pair).

\item[(d)]
Finally, if $g_{Z_1}=g_{Z_2}=0$ and $g_Z=1$, then $Z_1,Z_2$ are the
irreducible components of $Z$ and $|Z_1\cap Z_2|=2$. Writing
$(\mathfrak C_q)^{\st}=(A_q',B_q',C_q')$, we have
$Z^{\st}\subseteq B_q'$. Hence
\[
\zeta_{\widehat{Z^{\st}}}(q')=0,
\qquad
\zeta_{\widehat{\mathfrak C}}(q_2)
=\zeta_{\widehat{\mathfrak C}}(q_3)=1,
\]
and so $\lambda(q_1;q_2,q_3)=2$. We obtain the same conclusion
as in Case~(c).
\end{itemize}
The proof is complete.
\end{proof}

  \begin{figure}[ht]
\centering

\begin{tikzpicture}[
    every node/.style={font=\small},
    v/.style={circle, fill=black, inner sep=1.5pt}
]

\node[v,label=left:$Z_1$] (z1) at (0,0) {};
\node[v,label=right:$Z_2$] (z2) at (8,0) {};

\node[v] (t1) at (1.5,1.4) {};
\node[v] (t2) at (3.0,1.6) {};
\node[v] (t3) at (5.0,1.6) {};
\node[v] (t4) at (6.5,1.4) {};

\node[v] (m1) at (2.5,0) {};
\node[v] (m2) at (5.5,0) {};

\node[v] (b1) at (1.5,-1.4) {};
\node[v] (b2) at (3.0,-1.6) {};
\node[v] (b3) at (5.0,-1.6) {};
\node[v] (b4) at (6.5,-1.4) {};

\draw (z1) to[bend left=18] node[above left] {$e_1$} (t1);
\draw (t1) -- node[above] {$e_2$} (t2);
\draw (t2) -- node[above] {$e_3$} (t3);
\draw (t3) -- node[above] {$\ldots$} (t4);
\draw (t4) to[bend left=18] node[above right] {$e_{k_1}$} (z2);

\draw (z1) -- node[above] {$f_1$} (m1);
\draw (m1) -- node[above] {$\ldots$} (m2);
\draw (m2) -- node[above] {$f_{k_2}$} (z2);

\draw (z1) to[bend right=18] node[below left] {$g_1$} (b1);
\draw (b1) -- node[below] {$g_2$} (b2);
\draw (b2) -- node[below] {$g_3$} (b3);
\draw (b3) -- node[below] {$\ldots$} (b4);
\draw (b4) to[bend right=18] node[below right] {$g_{k_3}$} (z2);
\end{tikzpicture}

\caption{The picture shows a quotient of the dual graph of the relevant connected component of the curve $C$ (vertices correspond to connected but not necessarily irreducible components), highlighting the two components $Z_1$ and $Z_2$ and the $\Cone$-sets of $q_1$, $q_2$, $q_3$ consisting of edges labelled $e$, $f$, $g$ respectively. The dual graph of $Z$ consists of the $f$ and the $g$ edges and their endpoints. All vertices other than $Z_1$ and $Z_2$ have genus $\geq 1$. We have $k_i=\zeta_{\mathfrak{\widehat{C}}}(q_i)$ for $i=2,3$. In Case~(a) $Z_1$ and $Z_2$ also have positive genus hence $\zeta_{\widehat{Z^{\st}}}(q')=k_2+k_3$. In Case~(b) $Z_1$ has genus $0$, thus $Z$ is unstable, and $Z_2$ has positive genus, hence $\zeta_{\widehat{Z^{\st}}}(q')=k_2+k_3-1$. In Cases~(c) and (d) $Z_1$ and $Z_2$ have genus $0$, hence $\zeta_{\widehat{Z^{st}}}(q')=k_2+k_3-2$. Case~(d) is the special case when $k_2=k_3=1$ and $\zeta_{\widehat{Z^{\st}}}(q') = 0$.}
\label{fig:lambda}
\end{figure}

  \begin{remark}
In the proof of Lemma \ref{lem:linear-invariant}, the argument for Case (d) proceeds similarly to that of Case (c), the key distinction being the stabilization of a curve of genus $1$. This technical requirement motivated our definition of \name{}s, a class of objects that always admits stable models.
  \end{remark}

\begin{remark} For every triple $q_1,q_2,q_3$ as in Definition~\ref{def:lambda}, we have the equality $\lambda(q_1; q_2,q_3)=\lambda(q_1; q_3, q_2)$. From Lemma~\ref{lem:linear-invariant} we also have $\lambda(q_1; q_2,q_3)=\lambda(q_2; q_1,q_3)$. Thus $\lambda$ does not depend on the order of its $3$ arguments.
\end{remark}

With all this, we are now able to give our reconstruction result of lines of type~$3$.

\begin{lemma}\label{lem:reconstruct-type-three}
    Let $\mathfrak C$ and $\mathfrak{C}'$ be \name{}s without separating nodes. Assume that $\zeta\colon \Proj_{\mathfrak{C}}\to \Proj_{\mathfrak{C}'}$ is a linear isomorphism. The following are equivalent
    \begin{itemize}
        \item[(i)] $(\zeta(\widehat{\mathfrak{C}}),\zeta(\mathcal S_{\widehat{\mathfrak{C}}}))=(\widehat{\mathfrak{C}}', \mathcal S_{\widehat{\mathfrak{C}}'})$.
        \item[(ii)] $(\zeta(Y_{\widehat{\mathfrak{C}}}),\zeta(\mathcal S_{\widehat{\mathfrak{C}}}))=(Y_{\widehat{\mathfrak{C}}'},\mathcal S_{\widehat{\mathfrak{C}}'})$.
    \end{itemize}  
\end{lemma}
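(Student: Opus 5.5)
The implication (i)$\Rightarrow$(ii) is formal. The set $\Lin_3(\widehat{\mathfrak C})$ consists of the one-dimensional linear components $W$ of $\widehat{\mathfrak C}$ with $|W\cap\mathcal S_{\widehat{\mathfrak C}}|=3$. This description uses only the pair $(\widehat{\mathfrak C},\mathcal S_{\widehat{\mathfrak C}})$ and is preserved by any linear isomorphism. Hence $\zeta$ carries $\Lin_3(\widehat{\mathfrak C})$ bijectively onto $\Lin_3(\widehat{\mathfrak C}')$, and therefore carries $Y_{\widehat{\mathfrak C}}$ onto $Y_{\widehat{\mathfrak C}'}$.

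For (ii)$\Rightarrow$(i), I would show that the type-$3$ lines of $\widehat{\mathfrak C}$ are determined by $(Y_{\widehat{\mathfrak C}},\mathcal S_{\widehat{\mathfrak C}})$ through a recipe that is invariant under linear isomorphisms. Once this is done, $\widehat{\mathfrak C}=Y_{\widehat{\mathfrak C}}\cup\bigcup_{W\in\Lin_3}W$ is also determined, which gives (i). By Remark~\ref{rem:canonical_model_sep_stab_isomorphic} we may replace $\mathfrak C$ and $\mathfrak C'$ by their stabilizations, so both are stable. The candidate recipe is the following. A line $L\subseteq\Proj_{\mathfrak C}$ belongs to $\Lin_3(\widehat{\mathfrak C})$ if and only if two conditions hold. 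First, $L$ contains three pairwise distinct points $q_1,q_2,q_3\in\mathcal S_{\widehat{\mathfrak C}}\setminus\widehat{\mathfrak C}^{\iso}$. Second, for such a triple, $\lambda(q_1;q_2,q_3)\neq0$.

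The forward direction of the recipe: if $L\in\Lin_3$, then by Lemma~\ref{lem:linear-classification}(i),(ii) the line $L$ contains exactly three points of $\mathcal S_{\widehat{\mathfrak C}}$. None of them is isolated, since they lie on $L$. Lemma~\ref{lem:linear-invariant} then gives $\lambda\in\{1,2\}$. Conversely, if $\lambda\neq0$, Lemma~\ref{lem:linear-invariant} gives $L\subseteq\widehat{\mathfrak C}$. This makes $L$ a linear component containing at least three points of $\mathcal S$. By Lemma~\ref{lem:4-collinear} it contains exactly three, so $L\in\Lin_3$.

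It remains to check that every ingredient of the recipe is computable from $(Y_{\widehat{\mathfrak C}},\mathcal S_{\widehat{\mathfrak C}})$ alone, so that $\zeta$ transports it.
\begin{itemize}
\item The isolated points of $\widehat{\mathfrak C}$ are recovered by Lemma~\ref{lem:recover_isolated}, whose criterion involves only $Y$ and $\mathcal S$.
\item The values $\zeta_{\widehat{\mathfrak C}}(q_2)$ and $\zeta_{\widehat{\mathfrak C}}(q_3)$ are determined by Lemma~\ref{lem:sq_depends_Y}.
\item For $\zeta_{\widehat{\mathfrak C}_{q}}(q')$, Lemma~\ref{lem:recover_YCq}(ii) recovers the pair $(Y_{\widehat{\mathfrak C}_q},\mathcal S_{\widehat{\mathfrak C}_q})$ and the point $q'=\pi_q(q_2)$. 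Applying Lemma~\ref{lem:sq_depends_Y} to $\mathfrak C_q$, which has no separating nodes, then determines $\zeta_{\widehat{\mathfrak C}_q}(q')$.
\end{itemize}
To transport these constructions under $\zeta$, I would note that $\zeta$ induces compatible linear isomorphisms of the projections: $\pi_{\zeta(q)}\circ\zeta=\bar\zeta\circ\pi_q$ for the induced map $\bar\zeta\colon\Proj_{\mathfrak C_q}\to\Proj_{\mathfrak C'_{\zeta(q)}}$. Combined with (ii), this shows that $\lambda$ is preserved. It follows that $\zeta(\Lin_3(\widehat{\mathfrak C}))=\Lin_3(\widehat{\mathfrak C}')$, and hence $\zeta(\widehat{\mathfrak C})=\widehat{\mathfrak C}'$.

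The main obstacle is this naturality. The cited lemmas say only that certain data \emph{depend only on} the pair, not that they are equivariant under a given projectivity. I would handle this by reading each of those proofs as an explicit construction, built from spans, projections, collinearity and counting of connected components, all of which commute with linear isomorphisms. One point needs care: identifying $\Proj_{\mathfrak C_q}$ with the target of $\pi_q$ intrinsically, so that $\bar\zeta$ is well defined as a map $\Proj_{\mathfrak C_q}\to\Proj_{\mathfrak C'_{\zeta(q)}}$.
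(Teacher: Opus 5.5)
Your proposal is correct and follows essentially the same route as the paper: determine $\widehat{\mathfrak C}^{\iso}$ by Lemma~\ref{lem:recover_isolated}; among lines through collinear triples in $\mathcal S_{\widehat{\mathfrak C}}\setminus\widehat{\mathfrak C}^{\iso}$, detect those in $\widehat{\mathfrak C}$ by the criterion $\lambda>0$ of Lemma~\ref{lem:linear-invariant}; and compute $\lambda$ from $(Y_{\widehat{\mathfrak C}},\mathcal S_{\widehat{\mathfrak C}})$ via Lemmas~\ref{lem:recover_YCq}(ii) and~\ref{lem:sq_depends_Y}. The paper handles your naturality worry simply by using $\zeta$ to identify the two ambient spaces; your $\bar\zeta$ is automatic, because the target of $\pi_q$ is canonically the projectivization of $H^0(\mathfrak C,\omega_{\mathfrak C})^\vee$ modulo the line representing $q$. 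Your use of Lemma~\ref{lem:4-collinear} to pin down $\Lin_3$ exactly is a harmless sharpening, since adding to $Y_{\widehat{\mathfrak C}}$ every such line with $\lambda>0$ already yields $\widehat{\mathfrak C}$.
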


    \begin{proof}
The implication (i) $\Rightarrow$ (ii) is immediate.  

Conversely, assume that
$(Y_{\widehat{\mathfrak C}},\mathcal S_{\widehat{\mathfrak C}})$ is given. For convenience, we will identify $\Proj_{\mathfrak{C}}$ and $\Proj_{\mathfrak{C}'}$, and assume that the isomorphism $\zeta$ is the identity.
By Remark~\ref{rem:canonical_model_sep_stab_isomorphic}, we may assume that
$\mathfrak C$ and $\mathfrak{C}'$ are stable. It remains to reconstruct the linear components of type $3$. By
Lemma~\ref{lem:recover_isolated}, the isolated points of $\widehat{\mathfrak{C}}$ are determined. Let
$q_1,q_2,q_3\in
\mathcal S_{\widehat{\mathfrak C}}
\setminus\widehat{\mathfrak C}^{\mathrm{iso}}$
be pairwise distinct and collinear, and let $L$ be the line through them. By
Lemma~\ref{lem:linear-invariant},
\[
L\subseteq\widehat{\mathfrak C}
\quad\Longleftrightarrow\quad
\lambda(q_1;q_2,q_3)>0.
\]
By Lemmas~\ref{lem:recover_YCq}(ii) and~\ref{lem:sq_depends_Y}, the value $\lambda(q_1;q_2,q_3)$ is
determined by
$(Y_{\widehat{\mathfrak C}},\mathcal S_{\widehat{\mathfrak C}})$.
Hence all type-$3$ components, and therefore $\widehat{\mathfrak C}$, are
determined by
$(Y_{\widehat{\mathfrak C}},\mathcal S_{\widehat{\mathfrak C}})$. 
\end{proof}

    We are now able to complete the reconstruction of all linear components. To this end, let $\mathfrak{C}$ be a \name{} with no separating nodes and $\widehat{\mathfrak{C}}$ be its canonical model. Consider the subscheme $X_{\widehat{\mathfrak{C}}}\subseteq\widehat{\mathfrak{C}}$ defined as the closure
    \begin{equation}\label{eq:X-def}
X_{\widehat{\mathfrak{C}}}=\overline{\widehat{\mathfrak{C}}\setminus\bigcup_{W\in \Lin(\widehat{\mathfrak{C}})} W}.
    \end{equation}

\begin{proposition}\label{prop:reconstruct-linear}
    Let $\mathfrak{C}$ and $\mathfrak{C}'$ be \name{}s without separating nodes. Assume that $\zeta\colon \Proj_{\mathfrak{C}}\to \Proj_{\mathfrak{C}'}$ is a linear isomorphism. The following are equivalent
    \begin{itemize}
        \item[(i)] 
        the canonical triples $\Can(\mathfrak C)$ and $\Can(\mathfrak C')$  are isomorphic  via $\zeta$.
        \item[(ii)] $(\zeta(X_{\widehat{\mathfrak{C}}}),\zeta(\mathcal S_{\widehat{\mathfrak{C}}}), \zeta(\mathcal{B}_{\widehat{\mathfrak{C}}}))=(X_{\widehat{\mathfrak{C}}'},\mathcal S_{\mathfrak{\widehat C}'}, \mathcal{B}_{\mathfrak{\widehat C}'})$.
    \end{itemize} 
\end{proposition}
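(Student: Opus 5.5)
The implication (i)$\Rightarrow$(ii) is immediate: a projectivity carrying $\widehat{\mathfrak C}$ onto $\widehat{\mathfrak C}'$ carries irreducible components to irreducible components, preserves degree and dimension, and hence sends linear components to linear components. It therefore maps $X_{\widehat{\mathfrak C}}$ onto $X_{\widehat{\mathfrak C}'}$, and it carries $\mathcal S$ and $\mathcal B$ to their counterparts by hypothesis.

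For (ii)$\Rightarrow$(i) we may, as in the previous lemmas, identify the two projective spaces via $\zeta$ and take $\zeta$ to be the identity. By Remark~\ref{rem:canonical_model_sep_stab_isomorphic}, we may also assume that $\mathfrak C$ and $\mathfrak C'$ are stable. The task is then to show that $\widehat{\mathfrak C}$ is determined by the triple $(X_{\widehat{\mathfrak C}},\mathcal S_{\widehat{\mathfrak C}},\mathcal B_{\widehat{\mathfrak C}})$. We reconstruct the linear components type by type.

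\emph{Types $1$ and $2$.} By Lemma~\ref{lem:reconstruct-type-one-two}, these are exactly the lines $\spann(q,q_1,q_2)$ with $q\in\mathcal S_{\widehat{\mathfrak C}}$ and $q_1\neq q_2\in\mathcal B_{\widehat{\mathfrak C}}$. They are therefore determined by $(\mathcal S,\mathcal B)$ alone.

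\emph{Type $0$.} By Lemma~\ref{lem:linear-classification}(v), these are the images of smooth genus-$2$ connected components, and each contains six points of $\mathcal B_{\widehat{\mathfrak C}}$. We characterise them as the lines containing at least three points of $\mathcal B_{\widehat{\mathfrak C}}$ that do not lie on the nonlinear part $X_{\widehat{\mathfrak C}}$ or on the type-$1$ and type-$2$ lines already recovered. This requires showing that no three such remaining branch points are collinear unless they lie on a type-$0$ line. The argument runs as follows. By Lemma~\ref{lem:linear-classification}, every point of $\mathcal B$ lies either on a linear component of type $0$, $1$ or $2$, or on a nonlinear honest hyperelliptic component contained in $X$. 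So once the points on $X$ and on the type-$1$ and type-$2$ lines are removed, the remaining points of $\mathcal B$ lie on type-$0$ lines. Each type-$0$ line spans the space $\Lambda_W$ of its own connected component, and these spaces are pairwise disjoint by Remark~\ref{rem:1-scheme-property}(iii). Hence three collinear remaining points must lie on a single type-$0$ line.

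At this stage we know $Y_{\widehat{\mathfrak C}}$, which is $X_{\widehat{\mathfrak C}}$ together with the lines of types $0,1,2$, and we also know $\mathcal S_{\widehat{\mathfrak C}}$. One subtlety is that isolated points belong to $Y$ but may fail to lie in $X$. We handle this as follows. By equation~\eqref{eq:iso}, in the stable case the isolated points are the images of $A\sqcup B$, except for genus-$1$ components. Points of $\varphi(B)$ lie in $\mathcal S$, and points of $\varphi(A)$ are isolated smooth points of $\widehat{\mathfrak C}$. Since $X$ is defined as a closure of $\widehat{\mathfrak C}$ with the lines removed, it contains all isolated points. We will check this convention explicitly.

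\emph{Type $3$.} Lemma~\ref{lem:reconstruct-type-three} now recovers $\widehat{\mathfrak C}$ from $(Y_{\widehat{\mathfrak C}},\mathcal S_{\widehat{\mathfrak C}})$. The same reconstruction applies verbatim to $\mathfrak C'$, so $\widehat{\mathfrak C}=\widehat{\mathfrak C}'$. Together with the assumed equalities of $\mathcal S$ and $\mathcal B$, this gives (i).

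The main obstacle is the type-$0$ step: we must rule out spurious collinear triples of branch points. The key inputs are the disjointness of the spans $\Lambda_W$ of distinct connected components, and the fact that a nonlinear component, which spans at least a plane, cannot have three of its branch points lying on a type-$0$ line from another component.
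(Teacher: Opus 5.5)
Your proposal is correct and essentially reproduces the paper's proof. Types $1$ and $2$ come from Lemma~\ref{lem:reconstruct-type-one-two}. Type $0$ is recovered as the lines through at least three of the branch points lying neither on $X_{\widehat{\mathfrak C}}$ nor on the type-$1$ and type-$2$ lines. Then $Y_{\widehat{\mathfrak C}}=X_{\widehat{\mathfrak C}}\cup(\text{lines of types }0,1,2)$, and Lemma~\ref{lem:reconstruct-type-three} concludes.

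Two small remarks. First, your isolated-point ``subtlety'' is vacuous: isolated points lie on no line, so they belong to $X_{\widehat{\mathfrak C}}$ by definition, as you yourself conclude. Second, to exclude three collinear points on three different type-$0$ lines, you need the spans of the connected components to be independent (direct summands, as in Remark~\ref{rem:1-scheme-property}(iii)), not merely pairwise disjoint.
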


\begin{proof}
The fact that (i) implies (ii) is straightforward. Let us prove that (ii) implies (i).  For convenience, we identify $\Proj_{\mathfrak{C}}$ and
$\Proj_{\mathfrak{C}'}$, and assume that $\zeta$ is the identity. By Remark~\ref{rem:canonical_model_sep_stab_isomorphic}, we may replace
$\mathfrak{C}$ and $\mathfrak{C}'$ by their stabilizations, and hence assume
that both are stable. Assume that
\(
(X_{\widehat{\mathfrak{C}}},\mathcal S_{\widehat{\mathfrak{C}}},
\mathcal{B}_{\widehat{\mathfrak{C}}})
=
(X_{\widehat{\mathfrak{C}}'},\mathcal S_{\mathfrak{\widehat C}'},
\mathcal{B}_{\mathfrak{\widehat C}'}).
\)

By Lemma~\ref{lem:reconstruct-type-one-two}, the linear components
of types $1$ and $2$ are determined by this common data; let $\mathcal L_{1,2}:= \Lin_1(\widehat{\mathfrak{C}}) \sqcup \Lin_2(\widehat{\mathfrak{C}})$. Set
\[
\mathcal B_0:=
\mathcal B_{\widehat{\mathfrak C}}
\setminus
\left(
X_{\widehat{\mathfrak C}}
\cup\bigcup_{L\in\mathcal L_{1,2}}L
\right).
\]
By Lemma~\ref{lem:linear-classification}, $\mathcal B_0$ consists of
the six branch points on each component of type $0$, and no others.
Since distinct such components have disjoint spans, they are precisely
the lines containing at least three points of $\mathcal B_0$. Hence the
common data determine all linear components of types $0$, $1$, and $2$.

Adding these components to $X_{\widehat{\mathfrak C}}$ recovers
$Y_{\widehat{\mathfrak C}}$. Hence
$Y_{\widehat{\mathfrak C}}=Y_{\widehat{\mathfrak C}'}$. Therefore $(Y_{\widehat{\mathfrak{C}}},\mathcal S_{\widehat{\mathfrak{C}}})
=
(Y_{\widehat{\mathfrak{C}}'},\mathcal S_{\mathfrak{\widehat C}'})$,
and Lemma~\ref{lem:reconstruct-type-three} gives
$\widehat{\mathfrak C}=\widehat{\mathfrak C}'$. The result follows.
\end{proof}

\section{General position and quasi-Eulerian orientations}\label{sec:qauxiliary}

\subsection{Curves in projective space}

This section establishes some results on the geometry of curves embedded in projective space. First, we generalize to the case of reducible curves the classical general-position theorem, which guarantees that the points in a general hyperplane section of an irreducible curve are in linearly general position (see \cite[Section~1, Chapter III]{acgh}). Our Theorem \ref{thm:general_position} proves this for curves in $\mathbb P^N$, provided the degree of the divisor restricted to each subcurve is appropriately bounded. Second, we prove a series of auxiliary results, collected in Lemma \ref{lem:dimdeg}, which provide some essential numerical control for the canonical model of a stable nodal curve. These will be instrumental in the remainder of this paper.

\begin{lemma}\label{lem:pq}
    Let $C$ be a connected curve in $\mathbb P^N$. Let $Z_1,Z_2\in \Irr(C)$ (possibly $Z_1=Z_2$) such that $Z_1\cup Z_2$ is not contained in a $2$-dimensional linear subspace of $\mathbb P^N$. If $(p_1,p_2)$ is a general pair in $Z_1\times Z_2$, then $\spann(p_1,p_2)\cap C=\{p_1,p_2\}$.  
\end{lemma}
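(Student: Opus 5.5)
The plan is an incidence-variety and dimension-count argument, modelled on the classical trisecant lemma (cf.\ \cite[Chapter~III, Section~1]{acgh}), carried out one irreducible component at a time. Since $C$ has finitely many irreducible components, it suffices to fix $Z_3\in\Irr(C)$ (possibly equal to $Z_1$ or $Z_2$) and show the following: for a general pair $(p_1,p_2)\in Z_1\times Z_2$ we have $p_1\neq p_2$, and the line $\spann(p_1,p_2)$ contains no point of $Z_3$ other than $p_1,p_2$. The case $p_1=p_2$ is a proper closed condition, so it can be discarded. Consider
\[
T:=\overline{\{(p_1,p_2,p_3)\in Z_1\times Z_2\times Z_3 : p_1,p_2,p_3 \text{ pairwise distinct and collinear}\}}.
\]
Suppose, for contradiction, that the projection $T\to Z_1\times Z_2$ is dominant. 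Then some irreducible component $T_0\subseteq T$ has dimension $2$ and dominates $Z_1\times Z_2$.

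The first step is to show that $T_0$ also dominates $Z_1\times Z_3$ and $Z_2\times Z_3$. By symmetry it suffices to treat $Z_1\times Z_3$. The fibre of $T_0\to Z_1\times Z_3$ over a pair of distinct points is finite: a point of $Z_2$ on a fixed line lies in the finite set $Z_2\cap\spann(p_1,p_3)$, unless $Z_2$ is that line. That exceptional case can occur for only finitely many lines, so it cannot happen on a $2$-dimensional family, and dominance follows.

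The second step uses the cone structure. Fix a general $p_1\in Z_1$ and let $\pi_{p_1}$ be the projection from $p_1$. The first step gives that for a general $p_2\in Z_2$ the point $\pi_{p_1}(p_2)$ lies in $\pi_{p_1}(Z_3)$, and symmetrically. Hence $\overline{\pi_{p_1}(Z_2)}=\overline{\pi_{p_1}(Z_3)}$; equivalently, $Z_2$ and $Z_3$ lie on the same cone $K_{p_1}$ with vertex $p_1$. The same holds with the roles of $Z_1,Z_2,Z_3$ permuted. If $Z_2$ is not a line through $p_1$, the cone $K_{p_1}$ is a surface.

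The third step, which I expect to be the main obstacle, is to turn this into planarity of $Z_1\cup Z_2$. I would try first the classical tangent-line argument, which is where characteristic $0$ enters through generic smoothness. Take a general point $p_2\in Z_2$ and let $p_1$ move along $Z_1$. Every line $\spann(p_1,p_2)$ meets $Z_3$ at a further point $p_3(p_1)$, and $p_3(p_1)$ varies nontrivially with $p_1$; otherwise $Z_1$ would be contained in a single line through $p_2$, which fails for general $p_2$. Differentiating, the tangent line $T_{p_1}Z_1$, the point $p_2$, and the tangent line $T_{p_3}Z_3$ span at most a plane. Running the same argument for the other index permutations shows that the tangent lines at general points of $Z_1$ and $Z_2$ all meet a fixed plane configuration. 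Concretely, I would show that $\spann(p_2,T_{p_1}Z_1)$ is independent of $p_1$. That forces $Z_1$ to lie in a plane $\Pi$ through $p_2$ for general $p_2$, hence $Z_2\subseteq\Pi$ as well, contradicting the hypothesis that $Z_1\cup Z_2$ is not contained in a $2$-dimensional linear subspace. The degenerate subcases, where one of $Z_1,Z_2,Z_3$ is a line, are handled directly. For example, if $Z_2$ is a line, the lines $\spann(p_1,p_2)$ sweep out the plane $\spann(p_1,Z_2)$, and a third intersection with $Z_3$ for all $p_1$ forces $Z_1\subseteq \spann(p_1,Z_2)$. Connectedness of $C$ should only be needed to rule out trivial degenerate configurations, such as points that are isolated in the ambient sense.
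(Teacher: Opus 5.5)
Your Steps 1 and 2 are fine, but Step 3 does not contain an argument, and that is where all the content of the lemma lies.

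\textbf{The gap.} The claim you say you would prove, that $\spann(p_2,T_{p_1}Z_1)$ is independent of $p_1$, is a restatement of the goal. This plane contains $p_1$, so it is constant exactly when $Z_1$ lies in a plane through $p_2$, i.e.\ when the cone $K_{p_2}$ is a plane. The coplanarity of $T_{p_1}Z_1$, $p_2$, $T_{p_3}Z_3$ that you obtain by differentiating holds for \emph{any} cone with vertex $p_2$ containing $Z_1$ and $Z_3$, since the tangent plane of a cone is constant along each ruling. So it cannot by itself force $K_{p_2}$ to be planar. The sentence about the other permutations giving ``a fixed plane configuration'' is never turned into an actual statement. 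Note also that with vertex $p_2$ you relate tangent lines of $Z_1$ and $Z_3$, whereas the hypothesis concerns $Z_1\cup Z_2$. In your line subcase, what is directly forced is $Z_3\subseteq\spann(p_1,Z_2)$ for general $p_1$, not $Z_1\subseteq\spann(p_1,Z_2)$; you then need $Z_3\neq Z_2$ to see that this plane does not depend on $p_1$.

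\textbf{How to close it along your lines.} The missing ingredient is a mechanism that turns tangent-line incidences into planarity.
\begin{enumerate}
\item Put the vertex at a general $p_3\in Z_3$. By your Step 2, $\overline{\pi_{p_3}(Z_1)}=\overline{\pi_{p_3}(Z_2)}=:Y$.
\item At a general point of $T_0$, generic smoothness (characteristic $0$) makes both projections unramified at $p_1$ and $p_2$ over the smooth point $y=\pi_{p_3}(p_1)=\pi_{p_3}(p_2)$ of $Y$. Hence $T_{p_1}Z_1$ and $T_{p_2}Z_2$ both lie in the plane $\overline{\pi_{p_3}^{-1}(T_yY)}$, so they meet.
\item Meeting is a closed condition. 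So for a fixed general $p_1$, every tangent line of $Z_2$ meets $\ell:=T_{p_1}Z_1$.
\item The projection from $\ell$, restricted to $Z_2$, then has zero differential, hence is constant in characteristic $0$. Thus $Z_2$ lies in a plane $\Pi\supseteq\ell\ni p_1$.
\item If $Z_2$ is not a line, $\Pi=\spann(Z_2)$ does not depend on $p_1$, so $Z_1\subseteq\Pi$. If $Z_2$ is a line, the same argument with the roles of $Z_1,Z_2$ exchanged puts $Z_1$ in a plane containing $Z_2$.
\end{enumerate}
Either way $Z_1\cup Z_2$ is planar, a contradiction; the case $Z_1=Z_2$ is included.

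\textbf{Comparison with the paper.} The paper avoids this analysis. It applies Ran's $(\dim+2)$-secant lemma \cite[Theorem~1]{zivran} to conclude that the lines joining $Z_1$ and $Z_2$ would sweep out at most a surface. But their union is the join of $Z_1$ and $Z_2$, which is $3$-dimensional.
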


   \begin{proof}
Suppose the conclusion fails. Then for a general pair $(p_1,p_2)\in Z_1\times Z_2$, the line $L_{p_1p_2}$ through $p_1,p_2$ meets $C$ in a subscheme of length at least $3$ supported at smooth points of $C$. Thus \cite[Theorem~1]{zivran} applies with $X=C$ and $\ell=1$, so
\[
\dim \Big(\bigcup_{(p_1,p_2)\in Z_1\times Z_2}L_{p_1p_2}\Big)\le 2.
\]
But the union on the left is the join of $Z_1$ and $Z_2$, which has dimension at least $3$ because $Z_1\cup Z_2$ is not contained in a $2$-dimensional linear subspace of $\mathbb P^N$ (see \cite[Corollary~1.5]{joins}). This contradiction proves the lemma.
\end{proof}

We  state the general-position theorem as follows.

\begin{theorem}
\label{thm:general_position}
    Let $C$ be a connected curve in $\mathbb{P}^N$ and $H$ be a general hyperplane in $\mathbb P^N$. Let $D$ be an effective divisor on $C$ such that $D\leq H\cdot C$ and $\deg(D|_Z)\leq N_Z$ (recall Equation~\eqref{eq:Lambda}) for every subcurve $Z\subseteq C$. Then $\dim (\spann(D)) = \deg(D) - 1$.
\end{theorem}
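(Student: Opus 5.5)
The plan is to prove linear independence of the points of $D$ by contradiction, using a minimal dependent subset together with the monodromy of general hyperplane sections. First, since $H$ is general, $H$ meets $C$ transversally at smooth points of $C$, none of which is a singular point or lies on two components. Hence $D\le H\cdot C$ is a reduced set of points, and we may write $D=\sum_{Z\in\Irr(C)}D_Z$ with $D_Z\subseteq Z\cap H$ and $m_Z:=\deg D_Z$. The claim $\dim\spann(D)=\deg(D)-1$ says these points are linearly independent. Suppose not. Choose a dependent subset $D'\le D$ of minimal degree, so every proper subset of $D'$ is independent and $\Lambda:=\spann(D')$ has dimension $\deg(D')-2$. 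Let $Z'$ be the union of the components $Z$ with $D'_Z\neq\emptyset$. The degree hypothesis still applies to $D'|_{Z'}$, since it holds for every subcurve.

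The second step is to transport the dependence within each component by monodromy. For an irreducible $Z$ spanning $\Lambda_Z$, the monodromy of $Z\cap H$ as $H$ varies in the open set of transversal hyperplanes is the full symmetric group on $Z\cap H$. This is the classical uniform position input in characteristic $0$ (\cite[Chapter~III]{acgh}); Lemma~\ref{lem:pq} plays the role of the trisecant lemma when $N_Z\ge 3$, and the cases $N_Z\le 2$ are handled directly. Since $H$ is general, the property ``$D'$ is dependent'' holds for every subset of $C\cap H$ with the same component counts $(m_Z)$. Now fix $p\in D'_Z$ and set $R:=D'-p$, which is independent. For each $p'\in (Z\cap H)\setminus R$, the set $R+p'$ is dependent, so $p'\in\spann(R)\subseteq \Lambda$. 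Therefore $Z\cap H\subseteq \Lambda$ for every component $Z\subseteq Z'$. Since $H$ is general, $Z\cap H$ spans the hyperplane $H\cap\Lambda_Z$ of $\Lambda_Z$. Hence $\spann(H\cap Z')\subseteq\Lambda$, which gives
\[
\dim\spann(H\cap Z')\le \deg(D'|_{Z'})-2\le N_{Z'}-2 .
\]

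The final step, which I expect to be the main obstacle, is to contradict this by a lower bound on $\dim\spann(H\cap Z')$. When $Z'$ is connected I would show $\dim\spann(H\cap Z')=N_{Z'}-1$. I would argue inductively, adding components one at a time along a connected ordering. If $Z_1\cup\dots\cup Z_j$ is connected and $Z_{j+1}$ meets it at a point $x$, then $\Lambda_{Z_{j+1}}$ and $\Lambda_{Z_1\cup\dots\cup Z_j}$ share $x$. For general $H$ (with $x\notin H$), the spans of the two hyperplane sections then together span the hyperplane section of the joint span, by a dimension count of subspaces meeting in $\spann(\Lambda_1\cap\Lambda_2)$, which is not contained in $H$. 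This contradicts the display above. Disconnected $Z'$ is genuinely different, as the example of two skew lines in $\mathbb P^3$ shows: the span of $H\cap Z'$ loses one dimension per connected component. There I would use minimality of $D'$ to force $Z'$ to be connected. If $Z'=Z'_1\sqcup Z'_2$ as a union of subcurves with $\Lambda\supseteq \spann(H\cap Z'_1)+\spann(H\cap Z'_2)$, then a dependence among points of $D'$ splits, via the previous paragraph applied to each part, into a dependence supported on a proper connected piece, contradicting minimality. If this splitting argument fails, the fallback is to run the whole induction over connected subcurves of $C$, using the connectedness of $C$ and the degree bound for each connected piece of $Z'$ separately.
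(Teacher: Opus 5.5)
The gap is in the final step, when the support $Z'$ of the minimal dependent set $D'$ is disconnected. Everything before that works. Since $D'\subseteq H\cap Z'\subseteq\spann(D')$, you in fact get $\dim\spann(H\cap Z')=\deg(D')-2$. When $Z'$ is connected, your joint-span computation gives $\dim\spann(H\cap Z')=N_{Z'}-1$ and hence a contradiction; note that this computation only needs the spans to meet, not the curves.

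A smaller, fixable point: moving the dependence to every subset with the same counts $(m_Z)$ needs the monodromy of $C\cap H$ to be the full product of the symmetric groups of the sets $Z\cap H$. Being full on each factor separately is not enough. The product statement does hold, because a general tangent hyperplane to a nonlinear component is transverse to all the others, which produces transpositions supported on a single factor.

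Now the disconnected case. Write $Y_1,\dots,Y_c$ for the connected components of $Z'$. A circuit relation splits along a bipartition $I\sqcup I^c$ only if $\spann(H\cap Y_I)\cap\spann(H\cap Y_{I^c})=\emptyset$. For general $H$ this means $\Lambda_{Y_I}\cap\Lambda_{Y_{I^c}}$ is at most a point. Your connected-case computation, on the other hand, needs an ordering of the $Y_i$ in which each span meets the span of the previous ones. For $c=2$ these two alternatives are exhaustive, so that case closes. For $c\ge 3$ they are not.

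Here is a configuration where neither applies. Take four general lines $L_1,\dots,L_4$ spanning $\mathbb P^5$, join them by three transversal lines so that $C$ is connected, and set $D=\sum_i H\cap L_i$.
\begin{itemize}
\item All hypotheses $\deg(D|_Y)\le N_Y$ hold, and every three of the points are independent by your splitting argument, so a hypothetical circuit is all of $D$ and $Z'=L_1\cup\dots\cup L_4$.
\item No line meets the $\mathbb P^3$ spanned by two others, so no ordering satisfies the hypothesis of your joint-span computation.
\item Every bipartition of the $L_i$ has spans meeting in at least a line, so no splitting applies.
\item $H\cap Z'$ is just four points, so $\dim\spann(H\cap Z')\le 3=N_{Z'}-2$ contradicts nothing.
\end{itemize}
Showing that these four points are independent is exactly the theorem in this instance, so the minimal-circuit framework gives no leverage. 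The connecting components carry no points of $D'$, so the connectedness of $C$ does not help either. The fallback you mention is therefore not an argument.

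What you would need is the lower bound $\dim\spann(H\cap Z')\ge\deg(D')-1$ for disconnected $Z'$. Using $\spann(H\cap Y_i)=H\cap\Lambda_{Y_i}$ from your connected case, this would follow from Lovász's description of a generic hyperplane section as a Dilworth truncation:
\[
\dim\spann\bigl(H\cap\Lambda_{Y_1},\dots,H\cap\Lambda_{Y_c}\bigr)+1=\min_{\mathcal P}\sum_{P\in\mathcal P}N_{Y_P},
\]
where the minimum runs over partitions $\mathcal P$ of $\{1,\dots,c\}$ and $Y_P$ denotes the union of the $Y_i$ with $i\in P$. Combined with $\deg(D'|_{Y_P})\le N_{Y_P}$, this gives the contradiction. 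However, the inequality $\ge$ in that formula is a genuine lemma you would have to prove.

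The paper avoids this issue entirely. It passes to the equivalent formulation with a general pair $(p,H)$, $p\in C\cap H$ (Proposition~\ref{prop: equivalent}), projects from $p$, and inducts on $N$. The projected curve is again connected, the degree hypotheses transfer through the comparison of $N_{\overline Z}$ with $N_Z$, and Lemma~\ref{lem:pq} keeps the projected divisor reduced. As a result, spans of hyperplane sections of disconnected subcurves never have to be controlled.
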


In order to prove Theorem \ref{thm:general_position}, we will use an equivalent formulation. Let $C$ be a curve in $\mathbb{P}^N$. 
Consider the incidence variety
\[
\mathcal V_C:=\{(p,H): p\in C\cap H\} \subseteq C\times (\mathbb P^N)^\vee.
\]

\begin{proposition} \label{prop: equivalent}
Let $C$ be a connected curve in $\mathbb{P}^N$. The following are equivalent.
\begin{itemize}
    \item[(i)] Let $(p, H)$ be a general pair in the incidence variety $\mathcal V_C$. Let $D$ be an effective divisor on $C$ such that $p\leq D\leq H\cdot C$ and $\deg(D|_Z)\leq N_Z$, for every subcurve $Z\subseteq C$. Then $\dim (\spann(D)) = \deg(D)-1$.
    \item[(ii)] Let $H$ be a general hyperplane in $\mathbb P^N$. Let $D$ be an effective divisor on $C$ such that $D\leq H\cdot C$ and $\deg(D|_Z)\leq N_Z$ for every subcurve $Z\subseteq C$. Then $\dim (\spann(D)) = \deg(D) - 1$.
\end{itemize}
\end{proposition}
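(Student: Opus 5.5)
The two statements differ only in where the generality sits: in (i) a general pair $(p,H)\in\mathcal V_C$ and a divisor $D$ that must contain $p$; in (ii) a general $H$ and any $D\le H\cdot C$. I will compare them through the projections
\[
\mathrm{pr}_1\colon\mathcal V_C\to C,\qquad \mathrm{pr}_2\colon\mathcal V_C\to(\mathbb P^N)^\vee.
\]
Over every $p\in C$ the fibre of $\mathrm{pr}_1$ is the hyperplane $\Lambda^\vee_{\{p\}}\cong\mathbb P^{N-1}$. Hence $\mathcal V_C$ is equidimensional of dimension $N$, and its irreducible components are the $\mathcal V_{C_i}$ for $C_i\in\Irr(C)$.

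The map $\mathrm{pr}_2$ restricted to each $\mathcal V_{C_i}$ is surjective and generically finite, since a general hyperplane meets $C_i$ in finitely many points. So the following two constructions carry dense opens to dense opens. First, for a dense open $\mathcal U\subseteq(\mathbb P^N)^\vee$, the set $\mathrm{pr}_2^{-1}(\mathcal U)$ is dense open in every component of $\mathcal V_C$. Second, for a dense open $\mathcal W\subseteq\mathcal V_C$, the complement $\mathcal V_C\setminus\mathcal W$ has dimension at most $N-1$, so $\mathrm{pr}_2(\mathcal V_C\setminus\mathcal W)$ is a proper closed subset. Its complement $\mathcal U$ consists of hyperplanes $H$ such that every pair $(p,H)$ with $p\in C\cap H$ lies in $\mathcal W$.

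\emph{(ii) $\Rightarrow$ (i).} Take the dense open $\mathcal U$ given by (ii) and set $\mathcal W=\mathrm{pr}_2^{-1}(\mathcal U)$. For $(p,H)\in\mathcal W$ the hyperplane $H$ is general, so any $D$ with $p\le D\le H\cdot C$ satisfying the degree bounds is covered by (ii). The extra condition $p\le D$ is simply discarded.

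\emph{(i) $\Rightarrow$ (ii).} Take the dense open $\mathcal W\subseteq\mathcal V_C$ given by (i) and form $\mathcal U$ as above. Shrink $\mathcal U$ so that $H$ contains no component of $C$, which makes $H\cdot C$ a well-defined effective divisor. Let $H\in\mathcal U$ and let $D\le H\cdot C$ satisfy the hypotheses of (ii).
\begin{itemize}
\item If $D=0$, then $\dim\spann(\emptyset)=-1=\deg D-1$, so there is nothing to prove.
\item Otherwise choose any $p$ in the support of $D$. Then $p\in C\cap H$, so $(p,H)\in\mathcal W$ by construction of $\mathcal U$, and $p\le D$. Now (i) gives $\dim\spann(D)=\deg D-1$.
\end{itemize}

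The one point needing care is the dimension count. It must be done componentwise, because $C$ may be reducible and "general pair" has to mean general in every component of $\mathcal V_C$. What makes $\mathrm{pr}_2(\mathcal V_C\setminus\mathcal W)$ a proper subset is that every fibre of $\mathrm{pr}_1$ has the same dimension $N-1$, so no component of $\mathcal V_C$ has dimension larger than $N$. Connectedness of $C$ is not used in this equivalence.
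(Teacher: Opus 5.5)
Your proof is correct and is essentially the paper's own: the paper just cites the incidence-correspondence argument of \cite[Step~(ii), p.~110]{acgh}, and you carry out that argument. Your componentwise bookkeeping, that each $\mathcal V_{C_i}$ is an $N$-dimensional $\mathbb P^{N-1}$-bundle over $C_i$ surjecting onto $(\mathbb P^N)^\vee$, is exactly what is needed to extend it to reducible $C$.
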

\begin{proof}
The argument is identical to \cite[Step (ii), p.110]{acgh}. 
\end{proof}

\begin{proof}[Proof of Theorem \ref{thm:general_position}]
 The statement is the same as Item~(ii) in Proposition~\ref{prop: equivalent}. First, we can assume that $N_C=N$, otherwise just change the ambient space to $\spann(C)$.  We will proceed by induction on $N$. If $N=1$, the result is trivial. If $N=2$, then $\deg(D) = \deg(D|_C)\leq N_C \leq 2$, and $H$ intersects $C$ transversally, so $D$ is either empty, or one point, or two distinct points. In all cases the result follows.   

Now let $N\ge 3$. It is enough to prove Item~(i) in Proposition~\ref{prop: equivalent}. Consider a general pair $(p, H)$ in the incidence variety $\mathcal V_C$. Let $Z_0\in \Irr(C)$ be such that $p\in  Z_0$. Let $\pi_p\colon \mathbb{P}^N\dashrightarrow \mathbb{P}^{N-1}$ be the projection with center $p$.  
Given a subcurve $Z\subseteq C$, by the generality of $p$ we have that $p\in \Lambda_{Z}$ if and only if $Z_0\subseteq\Lambda_{Z}$. 

Let $Z\subseteq C$ be a subcurve. If $Z_0$ is a line and $Z_0\subseteq Z$,  we set $\overline{Z}$ to be the closure of $\pi_p(Z\setminus Z_0)$. In the remaining cases, we let  $\overline{Z}$ be the closure of the image of $Z$ via $\pi_p$. Then  $\overline{Z}$ is a curve, except when $Z=Z_0$ and $Z_0$ is a line in which case $\overline{Z}$ is empty. Notice that it may happen that $\overline{Z_1}=\overline{Z_2}$ for $Z_1 \neq Z_2$ (e.g., if $Z_1$ and $Z_2$ lie in a plane that also contains the point $p$). 
For $\overline{Z}\ne\emptyset$, we have 
\begin{equation}\label{eq:relation}
N_{\overline{Z}} = \begin{cases}
    N_{Z} & \text{ if }\Lambda_{Z_0}\not\subseteq \Lambda_{Z};\\
    N_{Z} - 1& \text{ if }\Lambda_{Z_0}\subseteq \Lambda_{Z}.
\end{cases}
\end{equation}

Now consider a divisor $D$ on $C$ such that $p\leq D\leq H\cdot C$ and $\deg(D|_Z)\leq N_Z$ for every subcurve $Z\subseteq C$. We define the divisor $\overline{D}$ on the curve $\overline{C}$ as 
\[
\overline{D}:=\underset{q\ne p}{\underset{q\le D}{\sum}} \pi_p(q).
\]
We have that $D$ is reduced, because $(p, H)$ is general. Hence $\deg(\overline{D})=\deg(D)-1$.

We claim that $\overline{D}$ is reduced. Indeed, let $q_1, q_2$ be two distinct points in $C$ such that $q_1+q_2+p\leq D$. Let $Z_1,Z_2\in \Irr(C)$ such that $q_1\in Z_1$ and $q_2\in Z_2$. Since $N\geq 3$, for each pair $(q_1, q_2)\in Z_1\times Z_2$, there is a hyperplane through $q_1, q_2, p$. In particular, since $H$ is a general hyperplane containing $p$, we have that $(q_1,q_2)$ is a general pair of $Z_1\times Z_2$. Hence we have two cases. If $N_{Z_1\cup Z_2} \geq 3$, Lemma~\ref{lem:pq} says that $p, q_1, q_2$ are not collinear, hence $\pi_p(q_1) \ne \pi_p(q_2)$. So $\overline{D}$ is reduced.

On the other hand, if $N_{Z_1\cup Z_2}\leq 2$ and if, by contradiction, $p, q_1, q_2$ are collinear, then $p\in \Lambda_{Z_1\cup Z_2}$. Moreover, since $p$ is general in $Z_0$, we have $Z_0\subseteq \Lambda_{Z_1\cup Z_2}$, so $N_{Z_0\cup Z_1\cup Z_2}\leq 2$. Now, $\deg(D|_{Z_0\cup Z_1\cup Z_2})\geq \deg(p+q_1+q_2) = 3$ while, by the assumption, we have $\deg(D|_{Z_0\cup Z_1\cup Z_2})\leq N_{Z_0\cup Z_1\cup Z_2}\le 2$, which is a contradiction. So $\overline{D}$ is reduced. This ends the proof of the claim.

 We also have that $\overline{D}$ is supported on $\overline{C}^{\sm}$. Indeed, by generality, $H$  can be chosen to avoid the finite number of points in $\pi_p^{-1}(\overline{C}^{\sing})$ or $\pi_p^{-1}(\overline{C}^{\sing})\setminus Z_0$ (if $Z_0$ is a line). In this last case, notice that $D|_{Z_0} = p$, but $\pi_p(Z_0)$ is a point not contained in the support of $\overline{D}$ (by the latter's definition). 

We will now prove that, for every subcurve $W\subseteq \overline{C}$, we have
\begin{equation}\label{eq:Dbar}
\deg(\overline{D}|_{W})\leq N_{W}.
\end{equation}
To prove Equation \eqref{eq:Dbar}, we will use Equation \eqref{eq:relation}. Let $Z\subseteq C$ be the maximal subcurve of $C$ such that $\overline{Z}=W$. Since $\overline{D}$ is supported on $\overline{C}^{\sm}$ we have the following comparisons for $\deg(\overline{D}|_{\overline{Z}})$ and $\deg(D|_Z)$.

If $Z_0\subseteq Z$, then 
\[
\deg(\overline{D}|_{\overline{Z}})=\deg(D|_Z)-1\text{ and } N_{\overline{Z}}=N_Z-1,
\]
hence Equation \eqref{eq:Dbar} is true. If $Z_0\not\subseteq Z$ and $\Lambda_{Z_0}\not\subseteq \Lambda_Z$, then 
\[
\deg(\overline{D}|_{\overline{Z}})=\deg(D|_Z)\text{ and } N_{\overline{Z}}=N_Z,
\]
which, again, implies Equation \eqref{eq:Dbar}.
Finally, if $Z_0\not\subseteq Z$ and $\Lambda_{Z_0}\subseteq \Lambda_Z$, then 
   \[
 \deg(\overline{D}|_{\overline{Z}})  = \deg(D|_{Z\cup Z_0} ) - \deg(D|_{Z_0}) 
     \leq N_{Z\cup Z_0} - 1 
     = N_Z-1 \\
     = N_{\overline{Z}},
   \] 
    concluding the proof that Equation \eqref{eq:Dbar} always holds.

Since $(p,H)$ is general, the hyperplane $\pi_p(H)$ is general in $\mathbb{P}^{N-1}$ and, since $\overline{D}$ is reduced, we have $\overline{D}\leq \pi_p(H)\cdot \overline{C}$. By Equation \eqref{eq:Dbar}, we can use the induction hypothesis on $\overline C$ (note that $\overline C$ is a connected curve), with the formulation of Proposition~\ref{prop: equivalent}~(ii), and deduce $\dim (\spann(\overline{D})) = \deg(\overline{D}) - 1= \deg(D)-2$, so $\dim(\spann(D))=\dim(\spann(\overline{D}))+1=\deg(D) - 1$. 
\end{proof}

In the remainder of this section, we let $C$ be a connected and stable nodal curve without separating nodes of genus $g\ge2$. We let  $\varphi\colon C\to \mathbb P_C$ be the canonical map of $C$. 
 Recall that, for a subcurve $Z\subseteq C$, we let $F^Z_1,\dots F^Z_{t_Z}$ be the connected components of $Z^c$. Recall the definition of $N_W$ for a subscheme $W$ of the canonical model $\widehat{C}$ of $C$ in Equation \eqref{eq:Lambda} and the definition of $k_Z$ in Equation \eqref{eq:kZ}.

\begin{lemma}\label{lem:dimdeg}
    Let $C$ be a connected and stable nodal curve without separating nodes of genus $g\ge 2$.  The following properties hold.
    \begin{itemize}
        \item[(i)] For a subcurve $Z\subseteq C$, we have  $N_{\varphi(Z)}=g_Z+\delta_Z-t_Z-1$. 
        \item[(ii)] 
        For  $Z\in \Irr(C)$, we have
        \[
        \deg(\varphi(Z))=
        \begin{cases}
        k_Z/2, & \text{ if $Z$ is honest hyperelliptic};\\
        k_Z, & \text{otherwise}.
        \end{cases}
        \] 
        Furthermore, if $Z$ is an honest hyperelliptic pair of $C$ (hence, $Z$ has two irreducible components), then $\deg(\varphi(Z))=\frac{k_Z}{2}$.
    \item[(iii)] Let $\ud\in \Sigma(C)$. For every subcurve $Z$ of $C$, we have $d_Z\leq N_{\varphi(Z)}$. 
    \item[(iv)] 
    Let $\ud\in \Sigma(C)$. For every honest hyperelliptic subcurve $Z$ of $C$, we have
    \[
    1\le d_Z=\frac{k_Z}{2}=g_Z-1+\frac{\delta_Z}{2}=N_{\varphi(Z)}< k_Z.
    \]
    \end{itemize}  
\end{lemma}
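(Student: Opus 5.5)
The plan is to prove the four items in order, with (i) as the basic computation on which (iii) and (iv) rest.

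For (i), the linear span $\Lambda_{\varphi(Z)}$ is the projectivization of the dual of the image of the restriction map $\rho_Z\colon H^0(C,\omega_C)\to H^0(Z,\omega_C|_Z)$. Hence $N_{\varphi(Z)}=\operatorname{rk}(\rho_Z)-1=g-h^0(Z^c,\omega_{Z^c})-1$. The kernel consists of sections vanishing on $Z$, which are the sections of $\omega_{Z^c}$, via the exact sequence $0\to\omega_{Z^c}\to\omega_C\to\omega_C|_Z\to0$ used in Example~\ref{exa:banana-example}. Next, $h^0(Z^c,\omega_{Z^c})=\sum_i g_{F^Z_i}$, since $F^Z_i$ are the connected components of $Z^c$. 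The genus formula for the decomposition $C=Z\cup Z^c$ gives $g=g_Z+\sum_i g_{F_i^Z}+\delta_Z-t_Z$, because the connected components of $Z$ and the $t_Z$ components $F_i^Z$ are glued along $\delta_Z$ nodes and $C$ is connected. Combining these yields $N_{\varphi(Z)}=g_Z+\delta_Z-t_Z-1$. (Here $g_Z$ is the arithmetic genus $1-\chi(\mathcal O_Z)$, which may be negative if $Z$ is disconnected; the formula is consistent with that convention.)

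For (ii), I use that $\varphi|_Z$ is birational onto its image unless $Z$ is honest hyperelliptic, where it is $2:1$. This follows from Proposition~\ref{prop:varphi_same_image}, Definition~\ref{def: hh}, and Catanese's results, since $U_C=C$. Then $\deg\varphi(Z)\cdot\deg(\varphi|_Z)=\deg(\omega_C|_Z)=k_Z$. In the pair case, Proposition~\ref{prop:catanese_hh} gives $\varphi(Z)$ as a single rational normal curve image of $\mathbb P^1$, and the degree-2 cover $Z_1\sqcup Z_2\to\mathbb P^1$ gives $\deg\varphi(Z)=k_Z/2$.

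For (iii), by (i) it suffices to show $d_Z\le g_Z-1+\delta_Z-t_Z$ when $Z$ is connected, and the general case should follow by summing over connected components. For connected $Z\neq C$ this is exactly the strong-orientation upper bound \eqref{eq:strong}, applied with $V$ the vertices of $Z$. The delicate point is disconnected $Z$. If $Z=\bigcup_j Z_j$ with connected components $Z_j$, then $N_{\varphi(Z)}\ge\sum_j(N_{\varphi(Z_j)}+1)-1$ is false in general, so summing is not immediate. I would instead apply \eqref{eq:strong} directly to $V=V_Z$; it holds for any nonempty proper vertex set with $g_V$ and $t_V$ as defined, and gives $d_Z\le g_Z-1+\delta_Z-t_Z=N_{\varphi(Z)}$. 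For $Z=C$ both sides equal $g-1$.

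For (iv), let $Z$ be honest hyperelliptic. By Proposition~\ref{prop:catanese_hh}(iv), each component $F$ of $Z^c$ meets $Z$ in a reduced fibre of $\mu$, which has two points since there are no separating nodes. Hence $\delta_Z=2t_Z$, and (i) gives $N_{\varphi(Z)}=g_Z-1+\delta_Z/2=k_Z/2$. The upper bound from \eqref{eq:strong} is $d_Z\le N_{\varphi(Z)}$. The lower bound $g_Z-1+t_Z$ equals the same number, so $d_Z=k_Z/2$. Finally $k_Z>0$ by stability, giving $1\le k_Z/2<k_Z$. The main obstacle I expect is the careful genus bookkeeping in (i), and verifying in (iv) that \eqref{eq:strong} is sharp on both sides. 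The pair case of (iv) requires checking $Z$ connected, which holds since $Z_1\cap Z_2\neq\emptyset$ or both meet the same components; if not connected, I would handle the two components separately.
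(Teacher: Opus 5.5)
Your proposal is correct and follows the paper's proof essentially step by step: the same restriction sequence and genus count for (i), the birational/$2:1$ dichotomy for (ii), the strong-orientation bound for (iii) (you apply \eqref{eq:strong} to $V_Z$ directly, while the paper equivalently uses $d_{F^Z_i}\ge g_{F^Z_i}$ on the connected components of $Z^c$), and $\delta_Z=2t_Z$ forcing both bounds of \eqref{eq:strong} to coincide in (iv). Your closing worry about connectedness is unnecessary, and its justification is off. An honest hyperelliptic pair can be disconnected, since its two rational components may meet only through components of $Z^c$, and treating $Z_1,Z_2$ separately would not give the identity. However, nothing in your argument for (iv) uses connectedness: (i) holds for arbitrary subcurves, and \eqref{eq:strong} holds for arbitrary proper nonempty vertex sets.
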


\begin{proof} 
    Consider the natural exact sequence
    \[
    0\to \omega_C|_{Z^c}(-Z\cap Z^c)\to \omega_C\to\omega_C|_{Z}\to 0.
    \]
    Consider the associated exact sequence in cohomology
    \[
    0\to H^0(Z^c, \omega_C|_{Z^c}(-Z\cap Z^c))\to H^0(C, \omega_C)\overset{\rho_Z}{\to}H^0(Z, \omega_C|_Z).
    \]
    We have $N_{\varphi(Z)}=\dim \Lambda_{\varphi(Z)}$ and $\Lambda_{\varphi(Z)}=\Proj(\Ima(\rho_Z)^\vee)$. Moreover,
    \[
    h^0(Z^c,\omega_C|_{Z^c}(-Z\cap Z^c))=h^0(Z^c,\omega_{Z^c})=\sum_{1\le i\le t_Z} g_{F^Z_i}.
    \]
    On the other hand, the genus $g$ of $C$ is given by 
\begin{equation}\label{eq:gZ}
g=g_Z+\delta_Z-t_Z+ \underset{1\le i\leq t_Z} {\sum}g_{F^Z_i}.
\end{equation}
  This concludes the proof of (i) because, by Equation \eqref{eq:gZ}, we have
   \[
 N_{\varphi(Z)}+1  = \dim(\Ima(\rho_Z))  =g-\sum_{1\le i\le t_Z} g_{F^Z_i}  = g_Z+\delta_Z-t_Z.
    \]
    
        Let $Z\in \Irr(C)$. Then $\varphi|_{Z}\colon Z\to \varphi(Z)$ is a $2$ to $1$ map if $Z$ is honest hyperelliptic, and it is birational otherwise, which proves the first assertion in  (ii). The second part of (ii) follows from the first part and the fact that, by Proposition~\ref{prop:catanese_hh}, we have $\varphi(Z) = \varphi(Z_1)=\varphi(Z_2)$  and $k_Z=2k_{Z_1}=2k_{Z_2}$, for every honest hyperelliptic pair $Z=Z_1\cup Z_2$ of $C$.

        Next, let $\ud\in \Sigma(C)$. Let $Z\subseteq C$ be a subcurve. 
         By Equation \eqref{eq:dZ-stable} we have $d_{F^Z_i} > g_{F^Z_i} - 1$ for every $i=1,\dots,t_Z$,  hence $d_{Z^c}\geq \sum_{1\le i\le t_Z}  g_{F^Z_i}$. Thus
        \[
        d_Z\leq g - 1 - \sum_{1\le i\le t_Z} g_{F^Z_i} = g_Z + \delta_Z - t_Z -1 = N_{\varphi(Z)},
        \]
        where we use Equation \eqref{eq:gZ} in the first equality and (i) in the second, proving (iii). 
        
        Finally, let $\orient$ be a strong orientation of $\Gamma_C$ such that $\ud_{\orient}=\ud\in \Sigma(C)$.  Let $Z$ be an honest hyperelliptic subcurve  of $C$.  Let $V_Z \subseteq V(\Gamma_C)$ be the set of vertices corresponding to the irreducible components of $Z$. Thus $|V_Z| \leq 2$ and $|V_Z|=2$ if and only if $Z$ is an honest hyperelliptic pair.  By Proposition~\ref{prop:catanese_hh} (iv) every connected component $Z'$ of $Z^c$ intersects $Z$ in exactly two nodes. Since $\orient$ is strong, that means that the edges corresponding to these two nodes must be one oriented towards $Z$ and the other to $Z^c$. That means that exactly half the edges in $E(Z, Z^c)$ must be oriented towards $Z$. Recall Equation \eqref{eq:another-strong} and the definition of $\tau'$  above it. We have that $\tau_{\orient, V_Z}' = t_Z= \frac{\delta_Z}{2}$, and hence
        \[
        d_{Z} = d_{\orient, V_Z} = g_Z - 1 +\frac{\delta_Z}{2} = \frac{k_Z}{2}.
        \]
    
       Moreover, again by Proposition~\ref{prop:catanese_hh} (iv), we have $\delta_Z=2t_Z$, hence, by using (i), we obtain $N_{\varphi(Z)}=g_Z-1+\frac{\delta_Z}{2}$. Finally, we have to check that $1\le d_Z<k_Z$. If $Z=C$, then $d_C=g-1$ and $k_C=2g-2$, hence clearly $1\le d_C<k_C$. So assume $Z\ne C$, hence $\delta_Z$ is positive, and it is also even since $\delta_Z=2t_Z$. Since $C$ is stable, we see that $2g_Z-2+\delta_Z$ is positive and even, so 
        $d_Z=g_Z-1+\frac{\delta_Z}{2}\geq1$, from which we also get $d_Z<2g_Z-2+\delta_Z=k_Z$. This concludes the proof of (iv).
\end{proof}

\subsection{Stable quasi-Eulerian orientations}

In this section, we introduce stable quasi-Eulerian orientations and their associated multidegrees. The key result is Proposition~\ref{prop:q_orient}, where we establish the existence and properties of such orientations. Later, quasi-Eulerian multidegrees will be crucial to get a nice description of the branch locus of the Gauss map (see Theorem~\ref{thm:branch-gamma}). We will use the notation established in Section \ref{subsec:graphs} for graphs, trails, orientations, and their multidegrees.

An \emph{Eulerian} orientation on $\Gamma$ is an orientation ${\orient}=(\sigma_{\orient},\tau_{\orient})$ such that there is an $\orient$-oriented trail $P$ that begins and ends at the same vertex of $\Gamma$, and such that $E(P)= E(\Gamma)$. An Eulerian orientation on a connected graph is always strong.  A graph is Eulerian if and only if it admits an Eulerian orientation on each connected component. If $\Gamma$ is a cycle not consisting of a single loop, then $\Gamma$ has $2$ Eulerian orientations. 

Let $\Gamma$ be a graph. Let ${\orient}=(\sigma_{\orient},\tau_{\orient})$ be an orientation on $\Gamma$. For $v\in V(\Gamma)$, we set
\begin{equation}\label{eq:Delta}
\Delta_{\orient}(v) := \tau_{\orient,v}-\sigma_{\orient,v} = \tau_{\orient,v}'-\sigma_{\orient,v}'.
\end{equation}

\begin{definition}
 Let $\Gamma$ be a graph. An orientation ${\orient}$ on $\Gamma$ is \emph{quasi-Eulerian} if $|\Delta_{\orient}(v)|\le 1$, for every $v\in V(\Gamma)$. 
\end{definition}

Notice that an orientation $\orient$ on a connected graph $\Gamma$ is Eulerian
if and only if $\Delta_{\orient}(v)=0$ for every $v\in V(\Gamma)$. Thus every
Eulerian orientation is quasi-Eulerian. Conversely, if $\Gamma$ is Eulerian, then
every quasi-Eulerian orientation $\orient$ satisfies $\Delta_{\orient}(v)=0$ for every $v\in V(\Gamma)$.

\begin{definition} \label{def: qe}
    A multidegree $\ud$ on $\Gamma$ is \emph{quasi-Eulerian} (respectively \emph{Eulerian}) if $\ud=\ud_{\orient}$ for some quasi-Eulerian (respectively Eulerian) orientation ${\orient}$ on $\Gamma$.
\end{definition}

 If $\Gamma$ is connected, we let $\Sigma^{\qE}(\Gamma)\subseteq \Sigma(\Gamma)$ (respectively $\Sigma^{\E}(\Gamma) \subseteq \Sigma(\Gamma)$) be the subset of the stable multidegrees on $\Gamma$ that are also quasi-Eulerian (respectively that are also  Eulerian). Notice that $\Sigma^{\E}(\Gamma)\ne\emptyset$  if and only if $\Gamma$ is a connected Eulerian graph (and in this case $\Sigma^{\qE}(\Gamma) = \Sigma^{\E}(\Gamma)$ and $|\Sigma^{\E}(\Gamma)|=1$).

Let $\Gamma'$ be a subgraph of the graph $\Gamma$. A \emph{$\Gamma'$-ear} in $\Gamma$ is a subgraph $\Gamma''$ of $\Gamma$ satisfying one of the following conditions
\begin{itemize}
    \item[(1)]  
    $\Gamma''$ is a cycle in $\Gamma$ with $|V(\Gamma')\cap V(\Gamma'')|=1$ and $E(\Gamma') \cap E(\Gamma'')=\emptyset$.
\item[(2)]
$\Gamma''$ is a chain having its two end vertices in $V(\Gamma')$, with no internal vertex in $V(\Gamma')$ and with $E(\Gamma')\cap E(\Gamma'')=\emptyset$. 
\end{itemize}

In either case, we can think of an ear $\Gamma''$ as the underlying graph of a trail that starts and ends in $\Gamma'$ with no edges belonging to $\Gamma'$.

\begin{definition}
    Let $\Gamma$ be a graph. An \emph{ear-decomposition} of $\Gamma$ is a sequence of subgraphs $(\Gamma_1, \dots,\Gamma_n)$ such that $\Gamma=\cup_{1\le i\le n} \Gamma_i$, where $\Gamma_1$ is an isolated vertex and $\Gamma_j$ is a $\left(\cup_{1\le i\le j-1}\Gamma_i\right)$-ear for every $j=2,\dots,n$. We denote by $v_i$ and $w_i$ the (possibly coinciding) end-vertices of $\Gamma_i$.
\end{definition}

Recall that $\Gamma$ admits an ear decomposition if and only if $\Gamma$ is $2$-edge connected. If that is the case, then $\Gamma_2$ is necessarily a cycle and  $b_1(\Gamma)=n-1$. 

We first record a simple balancing lemma: after prescribing the orientation of one edge, one can  orient the  graph so that the incoming and outgoing valences differ by at most $1$.

\begin{lemma}
\label{lem:balanced-prescribed-edge}
Let $\Gamma$ be a graph, and let $h\in E(\Gamma)$. Fix an orientation on
$h$. Then there exists a quasi-Eulerian orientation $\orient$ on $\Gamma$ extending the 
orientation of $h$.
\end{lemma}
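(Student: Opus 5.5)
The plan is the classical Euler-trail trick: add an auxiliary vertex and make every degree even. Let $O\subseteq V(\Gamma)$ be the set of vertices of odd degree (loops count twice); by the handshake lemma $|O|$ is even. Form a new graph $\Gamma^+$ by adding one new vertex $x$ and one edge from $x$ to each vertex of $O$. Then every vertex of $\Gamma^+$ has even degree: each $v\in O$ gains one edge, and $x$ has degree $|O|$. So each connected component of $\Gamma^+$ is Eulerian. Orient each component by following a closed Eulerian trail; this gives an orientation $\orient^+$ with $\Delta_{\orient^+}(v)=0$ for all $v\in V(\Gamma^+)$.

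Next, restrict $\orient^+$ to $E(\Gamma)$ and call the result $\orient$. For $v\notin O$, no auxiliary edges are incident to $v$, so $\Delta_{\orient}(v)=0$. For $v\in O$, exactly one auxiliary edge was removed at $v$, so $|\Delta_{\orient}(v)|=1$. Loops contribute $0$ to $\Delta$ and cause no problems. Hence $\orient$ is quasi-Eulerian.

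It remains to force $h$ to carry the prescribed orientation. Since $h$ lies on the closed Eulerian trail of its component of $\Gamma^+$, we may traverse that trail in whichever direction orients $h$ as prescribed. Reversing a closed trail changes every $\Delta$ in that component to its negative, so the zero balance is preserved, and we may choose directions component by component. If $h$ is a loop, any orientation works, since both orientations of a loop are the same up to relabelling and neither affects $\Delta$.

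No real obstacle is expected. The only points needing care are that the trail reversal happens only in the component containing $h$, and that loops are counted consistently, which Equation~\eqref{eq:Delta} already handles.
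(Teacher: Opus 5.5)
Your proof is correct and follows essentially the same route as the paper. Both arguments add auxiliary edges so that every degree becomes even and each odd-degree vertex receives exactly one auxiliary edge, orient along closed Eulerian trails traversed so that $h$ gets the prescribed direction, and then delete the auxiliary edges, which changes each $\Delta$ by at most one. The only difference is cosmetic: you join all odd-degree vertices to a single new vertex $x$, whereas the paper pairs the odd-degree vertices within each connected component by temporary edges.
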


\begin{proof}
It is enough to argue on each connected component of $\Gamma$. Let $\Gamma_0$ be
a connected component.

Pair the odd-valent vertices of $\Gamma_0$
and connect the two vertices of each pair with one temporary additional edge. The resulting connected graph
$\Gamma_0^+$ is Eulerian. Choose an Eulerian orientation $\orient^+$ of $\Gamma_0^+$
starting by traversing $h$ in the prescribed direction if $h\in E(\Gamma_0)$. 
Orient the non-temporary edges of $\Gamma_0$ according to $\orient^+$, and then
discard the temporary edges. In $\Gamma_0^+$ every vertex $v$ has $\Delta_{\orient^+}(v)=0$, and
it is incident to at most one temporary edge. Hence
deleting the temporary edges gives rise to an orientation $\orient$ on $\Gamma$ such that  $\Delta_{\orient}(v)$ differs from $\Delta_{\orient^+}(v)$ at each vertex $v$ by $0$, $1$, or
$-1$. Therefore, $|\Delta_{\orient}(v)|\leq 1$ for every $v\in V(\Gamma_0)$.
\end{proof}

The next lemma gives a criterion for strong connectivity for orientations constructed along an ear decomposition.

\begin{lemma}\label{lem:strong-qE}
    Let $\Gamma$ be a $2$-edge connected graph. Let ${\orient}$ be an orientation on $\Gamma$. Assume that $(\Gamma_1,\dots,\Gamma_n)$ is an ear decomposition of $\Gamma$ such that ${\orient}|_{\Gamma_i}$ is a quasi-Eulerian orientation on $\Gamma_i$ for $i=1,\dots,n$. Then ${\orient}$ is strong.
\end{lemma}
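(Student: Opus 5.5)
We argue by induction on $n$, the length of the ear decomposition, and show that for every $j$ the restriction of $\orient$ to $\Gamma^{(j)}:=\bigcup_{i\le j}\Gamma_i$ is a strong orientation of $\Gamma^{(j)}$. For $j=1$ the graph is a single vertex, trivially strongly connected. We use the characterization recalled in Subsection~\ref{subsec:graphs}: an orientation is strong if and only if every two vertices are strongly $\orient$-connected.

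The first key step is to understand the orientation on a single ear. Each ear $\Gamma_i$ is either a cycle or a chain, so every internal vertex has valence $2$ in $\Gamma_i$. Since $\orient|_{\Gamma_i}$ is quasi-Eulerian, $\Delta(v)$ is even for a valence-$2$ vertex and $|\Delta(v)|\le 1$, so $\Delta(v)=0$. Thus at each internal vertex one edge enters and one leaves; the same holds at every vertex of a cycle, including loops. Hence the edges of the ear are oriented coherently: the ear is an $\orient$-oriented trail from one end-vertex to the other. For a chain ear, the quasi-Eulerian condition at the end-vertices is automatic since they have valence $1$. For a cycle ear, it is an oriented closed trail through its attachment vertex.

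The inductive step then goes as follows. Assume $\orient|_{\Gamma^{(j-1)}}$ is strong and let $\Gamma_j$ have end-vertices $v_j,w_j\in V(\Gamma^{(j-1)})$, with the ear oriented from $v_j$ to $w_j$. Let $u$ be an internal vertex of $\Gamma_j$. There is an oriented path from $v_j$ along the ear to $u$, and from $u$ along the ear to $w_j$. By the induction hypothesis, every old vertex $x$ reaches $v_j$, and $w_j$ reaches $x$. So $x$ reaches $u$ via $x\to v_j\to u$, and $u$ reaches $x$ via $u\to w_j\to x$. For two internal vertices $u,u'$ we compose $u\to w_j\to v_j\to u'$. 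The case of a cycle ear with $v_j=w_j$ is identical. Trails require distinct edges, but any oriented walk contains an oriented trail with the same endpoints, obtained by shortcutting repeated vertices, so concatenating walks is harmless.

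The only point needing care is the first step: that quasi-Eulerian on a path or cycle forces coherent orientation. This rests on the parity of $\Delta$ at valence-$2$ vertices, and on counting a loop as contributing $+1$ to both $\tau$ and $\sigma$, so that $\Delta=0$ there. Once this is in place, the concatenation argument is routine, and at $j=n$ it yields that $\orient$ is strong.
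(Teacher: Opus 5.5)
Your proof is correct and follows essentially the same induction along the ear decomposition as the paper: each new vertex of the last ear is reached from one attachment vertex along the ear and reaches the other, and the ear is then closed up through the strongly connected old part. You also make explicit two steps the paper leaves implicit, namely that a quasi-Eulerian orientation forces each ear to be coherently oriented (by parity of $\Delta$ at valence-$2$ vertices), and that concatenated oriented walks can be shortened to oriented trails.
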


\begin{proof}
 We proceed by induction on $n$. If $n=1$ then $\Gamma$ is an isolated vertex and we have nothing to prove. If $n=2$, then $\Gamma$ is a cycle. Any quasi-Eulerian orientation on a cycle has
$\Delta_{\orient}(v)=0$ for every $v\in V(\Gamma)$, hence it is Eulerian and strong.
    
    Let $n\ge3$. Set $\Gamma'=\cup_{1\le i\le n-1} \Gamma_i$ and ${\orient}':={\orient}|_{\Gamma'}$. We let $u_n$ and $z_n$ be the (possibly coinciding) vertices of $\Gamma_n$ such that $u_n,z_n\in V(\Gamma')$. Assume, without loss of generality, that $u_n=\sigma_{\orient}(e)$ for some $e\in E(\Gamma_n)$.  
    Notice that $(\Gamma_1,\dots,\Gamma_{n-1})$ is an ear decomposition of $\Gamma'$. Since ${\orient}'|_{\Gamma_i}={\orient}|_{\Gamma_i}$ is quasi-Eulerian for every $i=1,\dots, n-1$, by induction, the orientation ${\orient}'$  on $\Gamma'$ is strong. In particular, every vertex in $V(\Gamma')$ is strongly $\orient$-connected to $u_n$ and $z_n$.

    Consider $v\in V(\Gamma)\setminus V(\Gamma')$. It is clear that  $u_n$ is ${\orient}|_{\Gamma_n}$-connected to $v$, hence $u_n$ is ${\orient}$-connected to $v$. Similarly, $v$ is ${\orient}$-connected to $z_n$. On the other hand, $z_n$ is ${\orient}$-connected to $u_n$. Hence $v$ is ${\orient}$-connected to $u_n$ and we have proven that $v$ is strongly $\orient$-connected to $u_n$. This shows that every vertex in $V(\Gamma)$ is strongly $\orient$-connected to $u_n$, which shows that the orientation $\orient$ is strong.
\end{proof}

Combining the preceding two lemmas, we obtain the quasi-Eulerian orientation needed for the proof of Proposition~\ref{prop:q_orient}.

\begin{lemma}
\label{lem:comp-orient}
Let $\Gamma$ be a connected graph with no separating edges, and let
$a,b\in V(\Gamma)$. Then there exists a strong orientation $\orient_\Gamma$ of
$\Gamma$ with the following property. Let $\widehat{\Gamma}$ be obtained from
$\Gamma$ by adding one auxiliary edge $f$ connecting $a$ and $b$. If
$\widehat{\orient}_\Gamma$ is the orientation of $\widehat{\Gamma}$ extending
$\orient_\Gamma$ by orienting $f$ from $a$ to $b$, then $\widehat{\orient}_\Gamma$ is
quasi-Eulerian.
\end{lemma}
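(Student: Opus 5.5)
We apply Lemma~\ref{lem:balanced-prescribed-edge} to $\widehat{\Gamma}$, prescribing the auxiliary edge $f$ to be oriented from $a$ to $b$. Lemma~\ref{lem:strong-qE} then shows that the restriction to $\Gamma$ is strong, provided the orientation is built along a suitable ear decomposition. Both requirements hold if we build the orientation ear by ear, so that each ear is oriented as a directed trail; such an orientation is quasi-Eulerian on every ear.

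\emph{Step 1.} Fix an ear decomposition $(\Gamma_1,\dots,\Gamma_n)$ of $\Gamma$. It exists because $\Gamma$ is $2$-edge connected. If $a=b$, then $f$ is a loop and contributes $0$ to $\Delta(a)$. In that case any quasi-Eulerian strong orientation of $\Gamma$ will do, for instance the one produced by Step 2 with no constraint. So assume $a\neq b$.

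\emph{Step 2.} Orient each ear $\Gamma_j$ as a directed trail: a directed cycle for $j=2$, and for $j\ge3$ a directed chain from one end-vertex to the other. Each internal vertex of an ear, and every vertex of a cycle ear, then gets $\Delta=0$ from that ear. By Lemma~\ref{lem:strong-qE}, the resulting $\orient_\Gamma$ is strong whatever directions are chosen. Each chain ear contributes $+1$ to $\Delta$ at its terminal end-vertex and $-1$ at its initial one. So $\Delta_{\orient_\Gamma}(v)=\sum_j \epsilon_j(v)$, where the sum runs over the chain ears having $v$ as an end-vertex. Adding $f$ from $a$ to $b$ is the same as appending one more ``ear'' $f$, with contribution $-1$ at $a$ and $+1$ at $b$.

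\emph{Step 3, the main obstacle.} We must choose the directions of the chain ears, together with the fixed direction of $f$, so that $|\Delta(v)|\le1$ at every vertex. Form the auxiliary multigraph $T$ on $V(\Gamma)$ that has one edge joining the two end-vertices of each chain ear $\Gamma_j$ ($j\ge3$), plus the edge $f$. Choosing directions of ears is exactly choosing an orientation of $T$, and $\Delta$ at $v$ is the in-minus-out degree of $v$ in $T$. By Lemma~\ref{lem:balanced-prescribed-edge} applied to $T$ with $h=f$ prescribed from $a$ to $b$, there is a quasi-Eulerian orientation of $T$ extending the orientation of $f$. Transporting it back gives a $\widehat{\orient}_\Gamma$ with $|\Delta|\le1$ everywhere. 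The subtle point is that internal vertices of an ear may also be end-vertices of later ears. This causes no trouble: every edge of $\Gamma$ lies in exactly one ear, and its contribution to $\Delta$ is already accounted for through $T$, with zero net contribution at internal vertices. We verify this decomposition $\Delta_{\widehat\orient}(v)=\Delta_T(v)$ carefully, and that completes the argument.
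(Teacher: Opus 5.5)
Your proof is correct and is essentially the paper's argument. You contract each ear to a single edge between its end-vertices, add $f$, and apply Lemma~\ref{lem:balanced-prescribed-edge} with $f$ prescribed from $a$ to $b$. You then re-expand each edge into its ear oriented as a directed trail, getting strongness from Lemma~\ref{lem:strong-qE} and $\Delta_{\widehat{\orient}_\Gamma}=\Delta_T$ at every vertex.

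There is one small slip, which is easily repaired. Ears $\Gamma_j$ with $j\ge 3$ need not be chains: a cycle meeting the earlier union in a single vertex is allowed, and such closed ears are needed when $\Gamma$ has cut vertices or loops. Such an ear contributes $0$ to every $\Delta$ and can be oriented cyclically exactly like $\Gamma_2$; the paper simply keeps it as a loop in the auxiliary graph. Similarly, your separate treatment of the case $a=b$ is harmless but unnecessary.
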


\begin{proof}
If $\Gamma$ has $1$ vertex, the statement is immediate. Assume then that $\Gamma$ has at least $2$ vertices. Since $\Gamma$ has no
separating edges, it is $2$-edge connected, so there is an ear decomposition
$(\Gamma_1,\ldots,\Gamma_n)$ of $\Gamma$. Thus
$\Gamma_1$ is the initial isolated vertex, and $\Gamma_j$ is an ear for every
$j\geq 2$.

We define an auxiliary graph $G$ as follows. Its
vertex set is $V(\Gamma)$, and for every $j\geq 2$ it has one edge $h_j$ joining
the two (possibly coinciding) end-vertices  of the ear $\Gamma_j$. The initial isolated vertex $\Gamma_1$ contributes no edge.

Add to $G$ one auxiliary edge $h$ from $a$ to $b$, and denote
the resulting graph by $\widehat{G}$. By
Lemma~\ref{lem:balanced-prescribed-edge}, there exists a quasi-Eulerian orientation
$\widehat{\orient}$ of $\widehat{G}$ such that $h$ is oriented
from $a$ to $b$. 

Replace each oriented edge $h_j$ by the corresponding ear $\Gamma_j$, oriented consistently if its end-vertices are distinct and cyclically otherwise. The resulting orientation $\orient_\Gamma$ is strong by Lemma~\ref{lem:strong-qE}. Replacing each $h_j$ by $\Gamma_j$, and $h$ by $f$, preserves at every vertex the difference between the numbers of incoming and outgoing edges. Thus $\Delta_{\widehat{\orient}_\Gamma}(v)=\Delta_{\widehat{\orient}}(v)$ for every $v\in V(\Gamma)$, so $\widehat{\orient}_\Gamma$ is quasi-Eulerian. \end{proof}

We now apply this auxiliary-edge construction to the connected components of the complement of a fixed $\Cone$-set.

\begin{proposition}
\label{prop:q_orient}
Let $\Gamma$ be a stable $2$-edge connected graph. For every $\Cone$-set
$S\subseteq E(\Gamma)$, there is an orientation $\orient$ on $\Gamma$ with
associated multidegree $\ud$ such that
\begin{enumerate}
\item[(i)] $\orient$ is strong and quasi-Eulerian;
\item[(ii)] for every $e\in S$, if $v=\sigma_{\orient}(e)$, then $d_v\leq k_v-1$;
\item[(iii)] for every $e\in S$, if $w=\tau_{\orient}(e)$, then $d_w\geq 1$;
\item[(iv)] if $\Gamma_1,\ldots,\Gamma_k$ are the connected components of
$\Gamma\setminus S$, then $\orient|_{\Gamma_i}$ is strong for every $i$.
\end{enumerate}
\end{proposition}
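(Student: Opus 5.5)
The plan is to build $\orient$ one component of $\Gamma\setminus S$ at a time, using Lemma~\ref{lem:comp-orient}, and then join the pieces along the cycle $\Gamma/(E(\Gamma)\setminus S)$. Since $S$ is a $\Cone$-set, the contraction $\Gamma/(E(\Gamma)\setminus S)$ is a cycle. So I can label the components of $\Gamma\setminus S$ as $\Gamma_1,\dots,\Gamma_k$ and the edges of $S$ as $e_1,\dots,e_k$, with $e_i$ joining $\Gamma_i$ to $\Gamma_{i+1}$ (indices mod $k$); if $k=1$, then $e_1$ has both ends in $\Gamma_1$. Let $b_i\in V(\Gamma_i)$ be the end of $e_i$ and $a_{i+1}\in V(\Gamma_{i+1})$ the other end. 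By Definition~\ref{d: C1set}, each $\Gamma_i$ has no separating edges.

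Orient every $e_i$ from $b_i$ to $a_{i+1}$. On each $\Gamma_i$, apply Lemma~\ref{lem:comp-orient} with $(a,b)=(b_i,a_i)$. This gives a strong orientation $\orient_{\Gamma_i}$ that becomes quasi-Eulerian once an auxiliary edge oriented $b_i\to a_i$ is added. In $\Gamma$, the $S$-edges at the vertices of $\Gamma_i$ contribute $+1$ to $\Delta(a_i)$ and $-1$ to $\Delta(b_i)$. The auxiliary edge contributes exactly the same amounts, and this also holds when $a_i=b_i$. Hence $\Delta_{\orient}(v)=\Delta_{\widehat\orient_{\Gamma_i}}(v)$ for every $v\in V(\Gamma_i)$, so $\orient$ is quasi-Eulerian. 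Each $\orient|_{\Gamma_i}$ is strong by construction, which is (iv). Strong connectivity of $\orient$ follows because the $\Gamma_i$ are strongly connected and are linked in a directed cycle by the $e_i$; this gives (i).

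For (ii) and (iii), I use $d_v=g_v-1+\tau'_{\orient,v}$ together with $\tau'_{\orient,v}+\sigma'_{\orient,v}=\delta_v$ and $|\tau'_{\orient,v}-\sigma'_{\orient,v}|\le 1$. These give
\[
\tfrac{k_v-1}{2}\le d_v\le \tfrac{k_v+1}{2},
\]
and when $\delta_v$ is even they give $d_v=k_v/2$. So $d_v\ge 1$ and $d_v\le k_v-1$ hold automatically whenever $k_v\ge 2$, using stability ($k_v\ge1$). The only bad vertices are those with $k_v=1$, i.e.\ $g_v=0$ and $\delta_v=3$. For such a vertex, (iii) at a head $w=a_i$ forces $\Delta_{\orient}(w)=+1$, and (ii) at a tail $v=b_i$ forces $\Delta_{\orient}(v)=-1$. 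A single vertex with $k_v=1$ cannot be both $a_i$ and $b_i$ with opposite requirements: if $a_i=b_i=v$, its third edge would be a separating edge of $\Gamma$.

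The main obstacle is therefore to sharpen Lemma~\ref{lem:comp-orient}, and behind it Lemma~\ref{lem:balanced-prescribed-edge}. At odd-valent vertices the sign of $\Delta$ must be prescribed, not merely bounded by $1$ in absolute value. In $\widehat\Gamma_i$, a bad head $a_i$ has degree $3$ and must have $\Delta=+1$; a bad tail $b_i$ has degree $3$ and must have $\Delta=-1$.

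My first attempt is to replace the Euler-circuit argument with Hakimi's theorem on orientations with prescribed in-degrees. Assign the required signs at the bad vertices, and choose the signs $\pm1$ at the remaining odd vertices so that the total is $0$; this is possible because the number of odd vertices is even. Then verify the cut condition using 2-edge connectivity of $\Gamma_i$ together with the auxiliary edge. To keep strong connectivity, I would run this balancing on the auxiliary ear graph $\widehat G$ from the proof of Lemma~\ref{lem:comp-orient}, since replacing ears preserves $\Delta$, and then conclude strongness by Lemma~\ref{lem:strong-qE}.

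A cheaper alternative, if it works, is to observe that in $\widehat G$ the vertex $a_i$ already has the auxiliary edge $b_i\to a_i$ incoming. In that case it may be enough to direct the temporary pairing edge at $a_i$ outward (and inward at $b_i$) by pairing bad heads with bad tails first. If I go this way, I would still need to check that an Euler circuit of $\widehat G^+$ can be chosen to traverse these temporary edges in the prescribed directions.
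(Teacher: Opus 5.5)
Your construction is the same as the paper's. You orient $S$ cyclically, apply Lemma~\ref{lem:comp-orient} on each $\Gamma_i$ with the auxiliary edge running from the tail of $e_i$ to the head of $e_{i-1}$, and transfer $\Delta$. Your reduction of (ii) and (iii), via $2d_v=k_v+\Delta_{\orient}(v)$, to the vertices with $g_v=0$ and $\delta_v=3$ is correct and tidier than the paper's case split. The gap is at the end. You claim these vertices need a sharpened Lemma~\ref{lem:comp-orient} (prescribed signs via Hakimi's theorem, or an Euler circuit using temporary edges in prescribed directions), and you leave that unfinished. So as written, (ii) and (iii) are not proved. No sharpening is needed: strongness of $\orient|_{\Gamma_i}$, which you already have from Lemma~\ref{lem:comp-orient} and use for (iv), fixes the sign at these vertices.

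Here is the argument for a bad head. Let $v=a_i\neq b_i$ with $g_v=0$ and $\delta_v=3$.
\begin{enumerate}
\item[(a)] Since $g_v=0$, $v$ has no loops.
\item[(b)] Its only $S$-edge is the incoming $e_{i-1}$, so $v$ has exactly two edges in $\Gamma_i$, and $\Gamma_i$ has at least two vertices.
\item[(c)] Since $\{v\}$ cannot give an $\orient$-cut of the strongly oriented $\Gamma_i$, one of these two edges enters $v$ and the other leaves it.
\item[(d)] Hence $\tau'_{\orient,v}=2$, so $\Delta_{\orient}(v)=+1$ and $d_v=1$.
\end{enumerate}
Symmetrically, a bad tail $v=b_i\neq a_i$ has $\tau'_{\orient,v}=1$, so $d_v=0=k_v-1$. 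You already observed that a bad vertex cannot satisfy $a_i=b_i$. Together these complete (ii) and (iii) with the orientation you built. This is exactly the paper's argument, which it phrases for $\delta_x=3$ and arbitrary $g_x$.
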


\begin{proof}
Let $\Gamma_1,\ldots,\Gamma_k$ be the connected components of
$\Gamma\setminus S$. Since $S$ is a $\Cone$-set,
$\Gamma/(E(\Gamma)\setminus S)$
is a cycle, and $\Gamma\setminus S$ has no separating edges. In particular, each $\Gamma_i$ has no separating edges.

The vertices of the quotient graph $\Gamma/(E(\Gamma)\setminus S)$ correspond to
the connected components $\Gamma_i$, and its edges are the elements of $S$ (see Figure \ref{fig:C_1-set}). Since
this quotient is a cycle, we may label the components and the edges of $S$
cyclically as
\(
S=\{e_1,\ldots,e_k\},
\)
so that $e_i$ joins $\Gamma_i$ to $\Gamma_{i+1}$, using indices modulo $k$ (so 
$\Gamma_{k+1}=\Gamma_1$ and $e_{0}=e_k$). In the case $k=1$, this means that the quotient cycle
has one vertex and one loop; in this case, the edge $e_1$ may be either a loop in $\Gamma$ or a
non-loop edge whose endpoints lie in the same component of $\Gamma\setminus S$.

Orient each $e_i$ from $\Gamma_i$ to $\Gamma_{i+1}$ (when $k=1$ and $e_1$ is a non-loop edge, choose either orientation for $e_1$), and let $v_i\in V(\Gamma_i)$ and $w_{i+1}\in V(\Gamma_{i+1})$ be the source and target of $e_i$.  

Apply Lemma~\ref{lem:comp-orient} to $\Gamma_i$
with $a=v_i$ and $b=w_i$. We obtain a strong orientation $\orient_i$ of $\Gamma_i$
such that, after adding an auxiliary edge
$f_i$ oriented from $v_i$ to $w_i$,
the extended orientation $\widehat{\orient}_i$ of $\widehat{\Gamma}_i$ (notation as in Lemma~\ref{lem:comp-orient}) is
quasi-Eulerian. Define $\orient$ on $\Gamma$ by setting $\orient|_{\Gamma_i}=\orient_i$, and by
keeping the cyclic orientation of the edges $e_1,\ldots,e_k$.

For every $x\in V(\Gamma_i)$, the contribution to $\Delta_{\orient}(x)$ from the two
$S$-incidences of the quotient at $\Gamma_i$, namely $e_{i-1}$ and $e_i$ counted
cyclically and with multiplicity, is exactly the contribution of the auxiliary edge
$f_i$ oriented from $v_i$ to $w_i$. Hence $\Delta_{\orient}(x)=\Delta_{\widehat{\orient}_i}(x)
\text{ for every } x\in V(\Gamma_i).$

Since each $\widehat{\orient}_i$ is quasi-Eulerian, $\orient$ is quasi-Eulerian. Moreover,
each $\orient_i$ is strong, and the edges $e_1,\ldots,e_k$ orient the quotient
$\Gamma/(E(\Gamma)\setminus S)$ as a directed cycle. It follows that $\orient$ is
strong on $\Gamma$. This proves (i) and (iv). 

It remains to prove (ii) and (iii). Recall that $d_x=g_x-1+\tau'_{\orient,x}$ and
$k_x=2g_x-2+\delta_x$ for every $x\in V(\Gamma)$. Since $\orient$ is quasi-Eulerian and
$\tau'_{\orient,x}+\sigma'_{\orient,x}=\delta_x$, we have
$\lfloor \delta_x/2\rfloor \le \tau'_{\orient,x}\le \lceil \delta_x/2\rceil$.

First suppose that  $e\in S$ is a loop at $v$. Then $S=\{e\}$. Loops contribute neither
to $\delta_v$ nor to $\tau'_{\orient,v}$. If $\delta_v=0$ (the graph with $1$ vertex), stability gives $g_v\ge 2$, hence
\[1\le d_v=g_v-1\le 2g_v-3=k_v-1.\] If $\delta_v>0$, then $\delta_v\ge 2$ because
$\Gamma$ has no separating edges, and $g_v\ge 1$ because $e$ is a loop at $v$. Thus \[1\le g_v-1+\lfloor\delta_v/2\rfloor\le d_v\le
g_v-1+\lceil\delta_v/2\rceil\le 2g_v-3+\delta_v=k_v-1\]
(the first and the last inequality use $g_v\ge 1$ and $\delta_v\ge 2$). Hence  (ii) and (iii) hold when $e$ is a loop.

Now let $e_i\in S$ be an edge that is  not a loop. Write $v_i=\sigma_{\orient}(e_i)\in V(\Gamma_i)$ and
$w_i=\tau_{\orient}(e_{i-1})\in V(\Gamma_i)$. Since $\Gamma_i$ has no separating edges, if
$v_i=w_i$, then either $\delta_{v_i}=2$ or $\delta_{v_i}\ge 4$; while if $v_i\neq w_i$,
then $\delta_{v_i}\ge3$ and $\delta_{w_i}\ge 3$. Also, if $x\in\{v_i,w_i\}$ and $\delta_x=3$,
then the two internal incidences of $x$ inside $\Gamma_i$ cannot be oriented in the same
direction, because $\orient|_{\Gamma_i}$ is strong. Thus exactly one of them enters $x$.

We now prove (ii). Let $v=v_i=\sigma_{\orient}(e_i)$.

If $v_i=w_i$ and $\delta_v=2$, then $\tau'_{\orient,v}=1$ and also $g_v\ge1$ by the stability of $\Gamma$. Hence
\[
d_v=g_v\le 2g_v-1=k_v-1.
\]

If $\delta_v\ge 4$, then, using quasi-Eulerianity,
\[
d_v\le g_v-1+\left\lceil\frac{\delta_v}{2}\right\rceil
\le 2g_v-3+\delta_v
= k_v-1.
\]

It remains to consider the case $v_i\ne w_i$ and $\delta_v=3$. Then $e_i$ is the unique edge of $S$ incident to $v$, and it is outgoing. By the previous paragraph, exactly one internal incidence of $v$ inside $\Gamma_i$ enters $v$. Hence $\tau'_{\orient,v}=1$, so
\[
d_v=g_v\le 2g_v=k_v-1.
\]

We now prove (iii). Fix $i$, and let us prove the inequality for the edge $e_{i-1}$, whose target is
$w_i=\tau_{\orient}(e_{i-1})$. If $v_i=w_i$ and $\delta_{w_i}=2$, then
$\tau'_{\orient,w_i}=1$, so $d_{w_i}=g_{w_i}\ge 1$ by the stability of $\Gamma$. If $\delta_{w_i}\ge 4$, then
$d_{w_i}\ge g_{w_i}-1+\lfloor\delta_{w_i}/2\rfloor\ge 1$. The only remaining case is
$v_i\neq w_i$ and $\delta_{w_i}=3$. Then $e_{i-1}$ is the unique edge in $S$ incident to $w_i$,
and it is incoming; by the previous paragraph, exactly one internal incidence inside $\Gamma_i$ enters $w_i$.
Hence $\tau'_{o,w_i}=2$, so $d_{w_i}=g_{w_i}+1\ge 1$.
This concludes our proof.
\end{proof}

We conclude this section with a numerical characterization of the extremal values of the multidegree associated to a strong orientation.

\begin{lemma}\label{lem:numerical}
    Let $\Gamma$ be a connected stable graph and let $\ud=\ud_{\orient}\in \Sigma(\Gamma)$. Then $0\le d_v\le k_v$ for all $v\in V(\Gamma)$. Moreover, $d_v=0$ (respectively, $d_v=k_v$)
    if and only if $g_v=0$, $t_v=1$, $\Delta_{\orient}(v)=-\delta_v+2$ (respectively, $\Delta_{\orient}(v)=\delta_v-2$). 
    
    If, in addition, $\ud\in \Sigma^{\qE}(\Gamma)$, then $d_v=0$ (respectively, $d_v=k_v$) if and only if $g_v=0$,  $t_v=1$, $\delta_v=3$, $\Delta_{\orient}(v)=-1$ (respectively, $\Delta_{\orient}(v)=1$).
\end{lemma}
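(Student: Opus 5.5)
The plan is to work directly from the formula $d_v=g_v-1+\tau'_{\orient,v}$ together with the strong-orientation inequality \eqref{eq:strong} applied to the single vertex set $V=\{v\}$. For a single vertex, $g_{\{v\}}=g_v$ and $\delta_{\{v\}}=\delta_v$, and $\tau'_{\orient,\{v\}}=\tau'_{\orient,v}$. So \eqref{eq:strong} gives
\[
g_v-1+t_v\le d_v\le g_v-1+\delta_v-t_v .
\]
When $V(\Gamma)=\{v\}$ there is nothing to compare: $d_v=g_v-1=g_\Gamma-1$ and $k_v=2g_v-2$, and stability gives $g_v\ge 2$, so $0<d_v<k_v$. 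We may therefore assume $\{v\}\subsetneq V(\Gamma)$. Then $t_v\ge1$, and the lower bound gives $d_v\ge g_v\ge0$. For the upper bound, $k_v-d_v\ge g_v-1+t_v\ge g_v\ge 0$ after substituting $k_v=2g_v-2+\delta_v$. This proves $0\le d_v\le k_v$.

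Next we characterize the equality cases by chasing these inequalities. We have $d_v=0$ exactly when $g_v=0$, $t_v=1$ and $\tau'_{\orient,v}=t_v=1$. Since $\tau'_{\orient,v}+\sigma'_{\orient,v}=\delta_v$, the condition $\tau'_{\orient,v}=1$ is equivalent to $\Delta_{\orient}(v)=\tau'-\sigma'=2-\delta_v$. Conversely, if $g_v=0$ and $\Delta_{\orient}(v)=2-\delta_v$, then $\tau'_{\orient,v}=1$ and $d_v=0$; the condition $t_v=1$ is then automatic from the bound, and we include it for the stated equivalence. The case $d_v=k_v$ is dual: it holds exactly when $g_v=0$, $t_v=1$ and $\tau'_{\orient,v}=\delta_v-1$, i.e. $\Delta_{\orient}(v)=\delta_v-2$. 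Alternatively, one can apply the first case to the dual orientation $\orient^*$, noting that $d_{\orient^*,v}=k_v-d_{\orient,v}$ when $g_v=0$.

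For the quasi-Eulerian refinement, we add the condition $|\Delta_{\orient}(v)|\le1$. Then $|\delta_v-2|\le 1$, so $\delta_v\in\{1,2,3\}$. Stability with $g_v=0$ requires $k_v=\delta_v-2>0$, so $\delta_v\ge3$, and hence $\delta_v=3$ and $\Delta_{\orient}(v)=\mp1$. Conversely, if $g_v=0$, $t_v=1$, $\delta_v=3$ and $\Delta=\mp1$, the previous characterization applies directly.

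The only delicate point is making sure that loops are handled consistently. Since $\Delta_{\orient}(v)=\tau'-\sigma'$ and $\tau'+\sigma'=\delta_v$, loops cancel throughout, so this causes no trouble. The single-vertex case must also be excluded from the $t_v$ bound, which is done above.
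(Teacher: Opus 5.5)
Your proof is correct and follows essentially the same route as the paper. Both arguments dispose of the one-vertex case, apply \eqref{eq:strong} to $V=\{v\}$, read off the equality cases from $\tau'_{\orient,v}+\sigma'_{\orient,v}=\delta_v$, and use stability ($k_v=\delta_v-2>0$ when $g_v=0$) to force $\delta_v=3$ in the quasi-Eulerian case; the only cosmetic difference is that you use the single chain $d_v\ge g_v-1+t_v\ge g_v\ge 0$ where the paper splits into the cases $g_v\ge 1$ and $g_v=0$.
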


\begin{proof}
Recall Equations~\eqref{eq:d-orient}, \eqref{eq:strong}, \eqref{eq:Delta}. The case $|V(\Gamma)|=1$ is immediate;  we assume $|V(\Gamma)|\geq 2$.
Applying Equation~\eqref{eq:strong} to $V=\{v\}$ gives
\begin{equation} \label{eq: ineq-dv}
g_v-1+t_v\le d_v\le g_v-1+\delta_v-t_v.
\end{equation}

If $g_v\ge 1$, then, by the stability of $\Gamma$, we have $g_v-1+t_v>0$. Since
\[
k_v-(g_v-1+\delta_v-t_v)=g_v-1+t_v,
\]
we get $0<d_v<k_v$.

Assume now that $g_v=0$. Then $w_\Gamma(v)=0$, so
$d_v=\tau_{\orient,v}-1$. We have 
\[
\Delta_{\orient}(v)=\tau_{\orient,v}-\sigma_{\orient,v}
=2\tau_{\orient,v}-\delta_v.
\]
Since $\Gamma$ is connected and stable, $\delta_v\ge 3$ and $t_v\ge 1$; hence \eqref{eq: ineq-dv} gives
\[
0\le d_v\le \delta_v-2=k_v.
\]
Moreover, equality on either side forces $t_v=1$. Therefore
\begin{equation}\label{eq: dv=0}
d_v=0
\iff t_v=1,\ \tau_{\orient,v}=1
\iff t_v=1,\ \Delta_{\orient}(v)=-\delta_v+2,
\end{equation}
and similarly
\begin{equation} \label{eq: dv=kv}
d_v=k_v
\iff t_v=1,\ \tau_{\orient,v}=\delta_v-1
\iff t_v=1,\ \Delta_{\orient}(v)=\delta_v-2.
\end{equation}

Finally, assume that $\ud\in\Sigma^{\qE}(\Gamma)$. Thus, the chosen
orientation $\orient$ satisfies $|\Delta_{\orient}(v)|\le 1$. If $d_v=0$, we have seen that $g_v=0$, $t_v=1$ and $|\Delta_{\orient}(v)|=\delta_v-2$. Hence, the stability of $\Gamma$ implies $\delta_v=3$ so, by \eqref{eq: dv=0} and \eqref{eq: dv=kv} we have $\Delta_{\orient}(v)=-1$ when $d_v=0$, and $\Delta_{\orient}(v)=1$ when $d_v=k_v$.

Conversely, if $g_v=0$, $t_v=1$, $\delta_v=3$, and
$\Delta_{\orient}(v)=-1$, then
$\Delta_{\orient}(v)=-\delta_v+2$, so \eqref{eq: dv=0} gives $d_v=0$. Similarly, if
$g_v=0$, $t_v=1$, $\delta_v=3$, and $\Delta_{\orient}(v)=1$, then
$\Delta_{\orient}(v)=\delta_v-2$, so \eqref{eq: dv=kv} gives $d_v=k_v$.
\end{proof}

\section{Gauss maps and their graphs}\label{sec:gauss-graph}

\subsection{The Gauss map of a stable semi-abelic pair}
\label{sub: gauss}

This section introduces the Gauss map for a  stable semi-abelic pair and studies its structure, with a focus on the case arising from the compactified Jacobian of a nodal curve. In particular, we describe the irreducible components of the Gauss graph and recall the computation of the tangent space to the theta divisor at a general invertible sheaf (Lemma \ref{lem:tangent_L_theta}). These constructions provide the intrinsic Gauss-theoretic framework that will be compared in Sections \ref{sec:branch-ramif} and \ref{sec:Torelli-theorem} with the auxiliary graph introduced in Section \ref{subsec:gaussgraph} in order to compute the branch locus of the projection from the Gauss graph.

\begin{definition} (\cite{alexeev}) \label{def: SSAP}
A \emph{stable semi-abelic pair (SSAP)} is a pair $A = (G\action P, \Theta)$ such that 
\begin{itemize}
    \item[(a)] $G$  is a semiabelian variety over $k$.
    \item[(b)] $P$ is a seminormal, connected, projective variety of pure dimension \footnote{We follow \cite[Definition 1.2.4(ii)]{Cap-Viv} in requiring $P$ to be pure dimensional; \cite[Definition 1.1.5]{alexeev} only requires $P$ and $G$ to have the same dimension.} equal to $\dim G$.
    \item[(c)] $G$ acts on $P$ with finitely many orbits, with connected and reduced stabilizers contained in the toric part of $G$.
    \item[(d)] $\Theta$ is an effective ample Cartier divisor on $P$, not containing any $G$-orbit.
\end{itemize}     
We let $P_0$  be the union of the maximal $G$-orbits and we set $\Theta_0 := \Theta\cap P_0$. 
\end{definition}

Given two SSAPs $A=(G\action P,\Theta)$ and
$A'=(G'\action P',\Theta')$, an isomorphism
$\alpha\colon A\cong A'$ is a pair
$\alpha=(\alpha_G,\alpha_P)$, where
$\alpha_G\colon G\cong G'$ and $\alpha_P\colon P\cong P'$
are isomorphisms such that
\[
\alpha_P(g\cdot p)=\alpha_G(g)\cdot\alpha_P(p)
\quad\text{and}\quad
\alpha_P^*\Theta'=\Theta.
\]
We denote the set of such isomorphisms by
$\Isom_{\SSAP}(A,A')$.

\begin{remark}\label{rem:connected-union-ssap}
If $A=(G\action P,\Theta)$ is a SSAP, and $P'\subseteq P$ is a  connected or empty union of irreducible components, then
$A'=(G\action P',\Theta|_{P'})$ is also a SSAP. 
\end{remark}

 Let $A = (G\action P, \Theta)$ be a SSAP. Assume that $\Theta$ is generically smooth. For each $p\in P$, consider the map (which we also denote by $p$)
\begin{align*}
    p\colon G &\to P\\
    g& \mapsto g\cdot p.
\end{align*}
If $p$ belongs to a maximal (open) stratum, then the map $p$ is an open embedding. 

\begin{definition}\label{def:GaussA}
    Let $A=(G\action P, \Theta)$ be a SSAP  and consider $\Theta^{\sm}_0=\Theta^{\sm} \cap \Theta_0$. 
The \emph{Gauss map} of $A$ is defined as
\begin{align*}
    \G\colon \Theta^{\sm}_0 & \to \mathbb{P}(T_0G)^{\vee}\\
         p&\mapsto \mathbb{P}(T_0p^{-1}\Theta).
\end{align*}
(Since $p\in \Theta^{\sm}_0$, we have that $0$ is a smooth point of $p^{-1}(\Theta)$.) The (closure of the) \emph{graph of the Gauss map} of $A$ is defined as
\[
\Upsilon = \Upsilon_A := \overline{\{(p, \G(p)), p\in \Theta^{\sm}_0\}}\subseteq \Theta\times \mathbb{P}(T_0G)^\vee.
\]
\end{definition}

We will denote the composition of the inclusion $\Upsilon\subseteq\Theta\times \mathbb{P}(T_0G)^{\vee}$ with the projections onto the first and second factor by
\begin{equation}\label{eq:zeta-gamma}
\zeta\colon \Upsilon\to \Theta
\; \text{ and } \; 
\gamma\colon \Upsilon\to \mathbb{P}(T_0G)^{\vee}
\end{equation}
 respectively. Given an irreducible component $\Theta_{\lambda}$ of $\Theta$, consider the restriction 
\begin{align*}
    \G_{\Theta_\lambda}\colon \Theta_{\lambda}\cap \Theta^{\sm}_0 & \to \mathbb{P}(T_0G)^{\vee}
\end{align*}
of $\G$ to $\Theta_{\lambda}$.  We obtain  a decomposition 
\begin{equation}\label{eq:upsilon-dec}
\Upsilon=\bigcup_{\lambda}\Upsilon_{\Theta_{\lambda}},
\end{equation}
where each $\Upsilon_{\Theta_{\lambda}}$ is irreducible and is defined as 
\[
\Upsilon_{\Theta_{\lambda}} = \overline{\{(p, \G_{\Theta_\lambda}(p)) :  p\in \Theta_\lambda\cap\Theta^{\sm}_0\}}\subseteq \Theta_\lambda\times \mathbb{P}(T_0G)^{\vee}.
\]
We denote the restriction of $\gamma$ to $\Upsilon_{\Theta_\lambda}$ by
\begin{equation}\label{eq:gamma-lambda}
    \gamma_{\Theta_\lambda}\colon \Upsilon_{\Theta_\lambda}\to \mathbb{P}(T_0G)^{\vee}.
\end{equation}

For consistency with the notation in the next sections, we will denote by $H$ a point in $\mathbb{P}(T_0G)^{\vee}$ and think of it as a hyperplane in $\mathbb{P}(T_0G)$.

Next we consider SSAPs arising from canonical compactified Jacobians of nodal curves in degree $g-1$, which we now define. 
Let $X$ be a nodal curve and consider a multidegree $\ud\in \mathbb Z^{V(\Gamma_X)}$. When $X$ is connected, recall the definition of the set $\Sigma(X)$ of stable multidegrees in Section \ref{sec:abstract_curves}. More generally, if $X$ is a not necessarily connected nodal curve, we say that a multidegree $\ud\in \mathbb Z^{V(\Gamma_X)}$ is \emph{stable} if its restriction to each connected component of $X$ is stable. 

 Given an invertible sheaf $I$ on a (not necessarily connected) curve $X$, we let $\ud^I$ be the multidegree such that $d^I_v=\deg I|_{X_v}$ for every $v\in V(\Gamma_X)$. We say that $I$ is stable if so is $\ud^I$. We let 
 \[
 P^{g_X-1}_X=\{[I] : I \text{ is a  stable invertible sheaf on } X\}.
 \]

Let $C$ be a connected nodal curve of genus $g$. Let $\Pic$ be the canonical compactified Jacobian in degree $g-1$ (see \cite{captheta}). We have a stratification  
\[
\Pic=\bigsqcup_{S \subseteq C^{\text{sing}}} P_{C_S}^{g_{C_S}-1}
\]
where, given a subset $S\subseteq C^{\sing}$, we let $C_S \to C$ be the partial normalization of $C$ at $S$ (see \cite[Fact~4.1.5]{captheta}). Thus, the elements of $\Pic$ are pairs $(S,[I])$, where $S\subseteq C^{\sing}$ and $I$ is a stable invertible sheaf on $C_S$. 

Moreover, the \emph{theta divisor} is defined as
\[
\Theta(C)= \{(S,[I])\in \Pic : h^0(C_S,I)> 0\}
\]
(see \cite[Lemma--Definition~4.2.1]{captheta} for more details).

When $C$ is a connected nodal curve of genus $g$, the pair 
\begin{equation} \label{eq:AC}
A(C) := (J(C)\action \Pic, \Theta(C)),
\end{equation}
where $J(C)$ is the generalized Jacobian of $C$, is a SSAP (see \cite[Fact 1.2.10]{Cap-Viv}). In particular, $\Theta(C)$  does not contain any stratum of $\Pic$. Moreover, $\Theta(C)$ is generically smooth. In fact, if $(S,[I])$ is a general point of an irreducible component of $\Theta$, we have that $S$ is the set of separating nodes of $C$ and $I$ is an invertible sheaf on $C_S$ with $h^0(C_S,I)=1$ (see \cite[Theorem~3.1.2]{captheta}).

\begin{remark} \label{rem:SSAPstab}
    If $g_C\geq 2$,  there is a canonical isomorphism $A(C) \to A(C^{\rm{st}})$ of SSAPs (see \cite[Remark~1.3.1]{Cap-Viv}).
\end{remark}

For every $\ud\in \Sigma(C)$, we let $\overline{P_C^{\ud}}$ be the closure inside $\Pic$ of the locus 
\[
P_C^{\ud}=\{(\emptyset,[I]) : I \text{ is a stable invertible sheaf on $C$ with } \ud^I=\ud\}.
\]

\begin{remark}\label{rem:bijection-irred}
If $C$ has no separating nodes, there is a bijection between the set of stabel multidegree $\Sigma(C)$ and the set of irreducible components of $\Pic$ mapping each $\ud\in \Sigma(C)$ to $\overline{P_C^{\ud}}$. 
\end{remark}

For every $\ud\in\Sigma(C)$, we denote by $\Theta_{\ud}:=\Theta(C)\cap \overline{P_C^{\ud}}$ the corresponding
irreducible component of $\Theta(C)$. 
 If $C$ has no separating nodes,
these are precisely all the irreducible components of $\Theta(C)$;
see \cite[Theorem~3.1.2 and Lemma--Definition~4.2.1]{captheta}. Moreover, $\Theta_0\cap \Theta_{\ud}$ (recall Definition \ref{def: SSAP}) is the locus of points in $\Theta_{\ud}$ corresponding to invertible sheaves on $C$. 

Next, consider the Gauss map of $A(C)$. For $\ud\in \Sigma(C)$, we set 
\begin{equation}\label{eq:upsilon-d}
\Upsilon_{\ud}:=\Upsilon_{\Theta_{\ud}}\stackrel{\gamma_{\ud}\;\;}{\longrightarrow} \Proj^\vee_C=\mathbb{P}(H^0(C,\omega_C)),
\end{equation}
where $\gamma_{\ud}:=\gamma_{\Theta_{\ud}}$ is the restriction of the projection to the second factor (see Equations \eqref{eq:upsilon-dec} and \eqref{eq:gamma-lambda}). In particular, if $C$ has no separating nodes, we have a decomposition into irreducible components $\Upsilon=\bigcup_{\ud\in\Sigma(C)}\Upsilon_{\ud}$.

Notice that $\gamma_{\ud}$ restricts to the Gauss map of $A(C)$ over $\Theta^{\sm}_0\cap\Theta_{\ud}$ (viewed as a dense open subset of $\Upsilon_{\ud}$). We also set
\begin{equation}\label{eq:zeta-d}
\zeta_{\ud}\colon \Upsilon_{\ud}\longrightarrow \Theta_{\ud},
\end{equation}
where $\zeta_{\ud}:=\zeta|_{\Upsilon_{\ud}}$ (see Equation \ref{eq:zeta-gamma}).

\begin{lemma}
\label{lem:tangent_L_theta}
Let $C$ be a connected nodal curve of genus  $g\geq 2$ with no separating nodes. Let $u=(\emptyset,[I])$ be a general element of an irreducible component of 
$\Theta(C)$. Then
       \[
    T_0 u^{-1}\Theta(C)=\Ima(H^0(C,\omega_C\otimes I ^{-1})\otimes H^0(C, I)\to H^0(C, \omega_C))^\perp \subseteq H^0(C, \omega_C)^{\vee}.
    \]
\end{lemma}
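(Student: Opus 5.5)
The plan is to reduce to the classical computation of the tangent space to the image of an Abel map, via the identification of the maximal stratum with a torsor under $J(C)$. Since $u=(\emptyset,[I])$ is a general point of a component $\Theta_{\ud}$ and $C$ has no separating nodes, $u$ lies in $\Theta_0\cap\Theta^{\sm}$. It corresponds to a stable invertible sheaf $I$ on $C$ with $\ud^I=\ud$ and, by \cite[Theorem~3.1.2]{captheta}, $h^0(C,I)=1$. The orbit map $u\colon J(C)\to \Pic$, $L\mapsto (\emptyset,[I\otimes L])$, is an open embedding onto $P^{\ud}_C$. Hence
\[
u^{-1}\Theta(C)=\{L\in J(C): h^0(C,I\otimes L)>0\}
\]
near $0$, and $T_0J(C)=H^1(C,\mathcal O_C)=H^0(C,\omega_C)^\vee$ by Serre duality.

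First, I will describe $u^{-1}\Theta(C)$ locally as a determinantal locus. I take a family of sheaves $\mathcal I\otimes\mathcal L$ over a neighbourhood of $0$ in $J(C)$ and represent $R\pi_*$ by a two-term complex $K^0\xrightarrow{\phi}K^1$ of vector bundles of equal rank; the ranks agree because $\chi(I)=0$. Then $u^{-1}\Theta$ is the zero locus of $\det\phi$. Since $\dim\ker\phi(0)=1$, the differential of $\det\phi$ at $0$ is, up to a unit, the map
\[
v\longmapsto \langle \phi'(v)s, t\rangle ,
\]
where $s$ spans $\ker\phi(0)=H^0(C,I)$ and $t$ spans $\operatorname{coker}\phi(0)^\vee=H^1(C,I)^\vee$. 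Here $\phi'(v)$ is the first-order deformation of $\phi$ in the direction $v\in H^1(C,\mathcal O_C)$. The standard identification gives $\phi'(v)s=v\cup s\in H^1(C,I)$. Pairing with $t\in H^0(C,\omega_C\otimes I^{-1})\cong H^1(C,I)^\vee$ yields
\[
d(\det\phi)_0(v)=\langle v,\, s\cdot t\rangle ,
\]
where $s\cdot t\in H^0(C,\omega_C)$ is the image under the multiplication map in the statement.

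It follows that, provided $s\cdot t\neq 0$, the divisor $u^{-1}\Theta$ is smooth at $0$ with tangent space the hyperplane $(s\cdot t)^\perp$. Because $h^0(I)=1$ and, by Riemann--Roch on the Gorenstein curve $C$, $h^0(\omega_C\otimes I^{-1})=h^1(I)=1$, the image of the multiplication map is exactly the line spanned by $s\cdot t$. This gives the formula in the statement.

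The main obstacle is verifying $s\cdot t\neq 0$. On a reducible curve a product of sections can vanish identically if $s$ and $t$ are supported on complementary subcurves. I will argue that the zero locus of a nonzero section of $\omega_C$ cannot contain a whole component unless the supports are disjoint. For general $u$, stability of $\ud$ (the inequalities \eqref{eq:dZ-stable}) forces $s$ to be nonvanishing on every component. If $s|_Z\equiv0$ for some subcurve $Z$, then $s$ would be a section of $I|_{Z^c}(-Z\cap Z^c)$, which has degree $d_{Z^c}-\delta_Z<g_{Z^c}-1$. For general $I$ of that multidegree this sheaf has no sections. The same argument applies to $t$ and $\omega_C\otimes I^{-1}$, whose multidegree is also stable. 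Hence both $s$ and $t$ vanish only at finitely many points, so $s\cdot t\neq 0$. Generic smoothness of $\Theta(C)$ then confirms that the general point is a smooth point where this computation applies.
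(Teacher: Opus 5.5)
Your main line is exactly the ``standard deformation argument'' that the paper invokes after noting $h^0(C,I)=1$ and Riemann--Roch; the paper gives no further detail. That line consists of representing $u^{-1}\Theta(C)$ near $0$ as the zero scheme of $\det\phi$, using that $\phi(0)$ has corank one, identifying $d(\det\phi)_0$ with $v\mapsto\langle v,s\cdot t\rangle$ via cup product and Serre duality, and using $h^0(C,I)=h^0(C,\omega_C\otimes I^{-1})=1$.

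The step that does not work as written is the one you call the main obstacle, the proof that $s\cdot t\neq 0$. The point $u$ is general in the divisor $\Theta_{\ud}$, not in $P^{\ud}_C$. Indeed $h^0(C,I)=1$, whereas a general sheaf of multidegree $\ud$ has no sections. Let $B_Z\subseteq P^{\ud}_C$ be the locus where $I|_{Z^c}(-Z\cap Z^c)$ has a nonzero section. Then $B_Z$ is contained in $\Theta_{\ud}$, because such a section extends by zero to a section of $I$. So the fact that a general sheaf of multidegree $\ud$ lies outside $B_Z$ is automatic and gives nothing. To rule out $B_Z\supseteq\Theta_{\ud}$ you would need $\operatorname{codim}B_Z\geq 2$, which you do not prove. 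In addition, the bound $d_{Z^c}-\delta_Z<g_{Z^c}-1$ controls only the total degree. On the reducible, possibly disconnected, curve $Z^c$ that bound alone does not force a general line bundle to have no sections. You would need the stability inequalities on every subcurve of every connected component of $Z^c$.

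Within your own setup this step can simply be dropped. If $u^{-1}\Theta(C)$ is the zero scheme of $\det\phi$, as you assume, then its Zariski tangent space at $0$ is $\ker d(\det\phi)_0=(s\cdot t)^\perp$. This equals the right-hand side of the lemma with no hypothesis on $s\cdot t$: if $s\cdot t=0$, both sides are all of $H^0(C,\omega_C)^\vee$. Smoothness of $\Theta(C)$ at the general point $u$ then shows a posteriori that $s\cdot t\neq 0$.

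If you want a direct proof of non-vanishing instead, use the Abel-map description of $\Theta_{\ud}$: a general point of $\Theta_{\ud}$ is $\mathcal O_C(D)$ with $D$ effective of multidegree $\ud$ and supported on $C^{\sm}$. Then $s$ is the canonical section of $D$ and vanishes identically on no component. Hence $s\cdot t\neq 0$ for every nonzero $t$, and no separate argument about $t$ is needed.
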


\begin{proof}
    Since $u$ is general, by \cite[Proposition 3.2.1]{captheta} we have that $h^0(C, I) = 1$, hence $h^0(C, \omega_C\otimes I^{-1})=1$ by Riemann-Roch. The result follows by standard deformation arguments, such as those in \cite[Chapter~3, Section~3.3]{sernesi}.
      
\end{proof}

\subsection{The Gauss graph of the symmetric product of a nodal curve}
\label{subsec:gaussgraph}
This section studies how certain divisors on a nodal curve relate to its canonical geometry. Starting from the canonical model of a  stable curve $C$ of genus $g$ with no separating nodes, and a multidegree $\ud$, we construct a parameter space $\Omega_{\ud}$, called the Gauss graph; see  Definition~\ref{def: omegad}. This auxiliary model is more explicit than the Gauss graph attached to the theta divisor and is therefore better suited to the ramification and branch-locus computations carried out in Section \ref{sec:branch-ramif}. We describe the structure of $\Omega_{\ud}$, particularly its projection to the space of hyperplanes. The main result of the section shows that, for $\ud\in \Sigma(C)$, the Gauss graph  is integral, Cohen–Macaulay, and has dimension $g-1$ (see Proposition \ref{prop:Omega}).

Throughout the section, $C$ will be a stable curve of genus $g\ge2$ with no separating nodes. We let $\ud=(d_v)\in \mathbb Z^{V(\Gamma_C)}$ be a multidegree of degree $|\ud|=g-1$, with $d_v\ge0$, for $v\in V(\Gamma_C)$. We will write $\varphi=\varphi_C$ for the canonical map of $C$.

As usual, we let $\widehat{C}\subseteq\Proj_C$ be the canonical model of $C$. Let $\nu\colon C^\nu\to C$ and $\widehat{\nu}\colon\widehat{C}^\nu\to \widehat{C}$ be the normalizations of $C$ and  $\widehat{C}$, respectively. There is a natural map $\widehat{\varphi}\colon C^\nu\to \widehat{C}^\nu$ giving rise to a commutative diagram
\begin{equation}\label{diag:normalizations}
\begin{tikzcd}
C^\nu \ar[d, "\widehat{\varphi}"] \ar[r, "\nu"]  & C \ar[d, "\varphi"] \\
\widehat{C}^\nu \ar[r, "\widehat{\nu}"] & \widehat{C}.
\end{tikzcd}
\end{equation}
We let $C_{\h}$ be the subcurve of $C$ that is the union of all honest hyperelliptic subcurves  of $C$ (see Definition~\ref{def: hh}), and $C_0$ be the complementary curve of $C_{\h}$ in $C$. Here and below, inverse images of subcurves are understood in the sense
of Convention~\ref{conv:curve-part}. We set 
\[
C^\nu_0 := \nu^{-1}(C_0), \;\;\; C_{\h}^{\nu} := \nu^{-1}(C_{\h}) , \;\;\; 
    \widehat{C}^\nu_0 := \widehat{\varphi}(C^\nu_0) 
, \;\;\; \widehat{C}^\nu_{\h} := \widehat{\varphi}(C^{\nu}_{\h}).
\]

    There is a natural involution $\iota_{\h}\colon C^\nu_{\h}\to C^\nu_{\h}$ induced by the involutions of the honest hyperelliptic subcurves (see Definition \ref{def:non-hyp}).  The restrictions of $\widehat{\varphi}$ to $C^\nu_0$ and $C^\nu_{\h}$ are, respectively,  an isomorphism $C^\nu_0\to \widehat{C}^\nu_0$ and the quotient $C^\nu_{\h}\to \widehat{C}^\nu_{\h}$ by the involution $\iota_{\h}$ (in particular, $\widehat{C}^\nu_{\h}$ is a disjoint union of rational curves). 
    
    We also write $C_{\h,\irr}\subseteq C_{\h}$ for the union of all irreducible honest hyperelliptic subcurves of $C$, and we let $C_{\h,\pair}$ be the complementary curve of $C_{\h,\irr}$ in $C_{\h}$ (i.e., the union  of all honest hyperelliptic pairs of $C$). We set 
\[
C^\nu_{\h,\irr}:=\nu^{-1}(C_{\h,\irr}), \;\;\; C^\nu_{\h,\pair}:=\nu^{-1}(C_{\h,\pair}),
\]
\[
\widehat{C}^\nu_{\h,\irr}:=\widehat{\varphi}(C^\nu_{\h,\irr}), \;\;\; \widehat{C}^\nu_{\h,\pair}:=\widehat{\varphi}(C^\nu_{\h,\pair}).
\]
Moreover, we set
\begin{equation}\label{eq:decomposition}
\begin{array}{lll}
   V_0:=V(\Gamma_{C_0}), & V_1:=V(\Gamma_{C_{\h,\irr}}), & 
   V_2:=V(\Gamma_{C_{\h,\pair}}),\\
   V^\nu_0:=V(\Gamma_{C_0^\nu}), & 
   V^\nu_1:=V(\Gamma_{C_{\h,\irr}^\nu}),& 
   V^\nu_2:=V(\Gamma_{C^\nu_{\h,\pair}}),\\
   \widehat{V}^\nu_0:=V(\Gamma_{
\widehat{C}_0^\nu}), & 
\widehat{V}^\nu_1:=V(\Gamma_{\widehat{C}_{\h,\irr}^\nu}),& 
\widehat{V}^\nu_2:=V(\Gamma_{\widehat{C}^\nu_{\h,\pair}}).
\end{array}
\end{equation}
Using this, we can write the following decompositions
\begin{equation}\label{eq:V-decomposition}
   V(\Gamma_C)=V_0\sqcup V_1\sqcup V_2,\quad
   V(\Gamma_{C^\nu})=V^\nu_0\sqcup V^\nu_1\sqcup V^\nu_2,\quad
V(\Gamma_{\widehat{C}^\nu})=\widehat{V}^\nu_0\sqcup \widehat{V}^\nu_1\sqcup \widehat{V}^\nu_2.
\end{equation}

Next, the maps $\nu\colon C^\nu\to C$ and $\widehat{\varphi}\colon C^\nu\to \widehat{C}^\nu$ induce a composed function
\begin{equation}\label{eq:fg}
f\colon V(\Gamma_C)\stackrel{g}{\to} V(\Gamma_{C^\nu})\stackrel{g'}{\to}V(\Gamma_{\widehat{C}^\nu}),
\end{equation}
where $g$ is a bijection (it restricts to bijections $V_i\to V_i^\nu$ for $i=0,1,2$) and $g'$ restricts to bijections $V_i^\nu\to \widehat{V}_i^\nu$ for $i=0,1$ and to a $2:1$ surjection $V_2^\nu\to \widehat{V}_2^\nu$. For every $v\in \widehat{V}^\nu_2$, we write $f^{-1}(v)=\{v',v''\}\subseteq V_2$.  

For $v\in V(\Gamma_C)$, recall that $C_v$ is the irreducible component of $C$ corresponding to $v$. Similarly, for $v\in V(\Gamma_{C^\nu})$ (respectively, $v\in V(\Gamma_{\widehat{C}^\nu})$) we denote by $C^\nu_v$ (respectively, $\widehat{C}^\nu_v$)  the component of $C^\nu$ (respectively,  $\widehat{C}^\nu)$ corresponding to $v$. For a multidegree $\ud=(d_v)\in \mathbb Z^{V(\Gamma_C)}$ such that $d_v\ge0$ for all $v$, we define  
\[
S_{\ud}(C^\nu) := \prod_{v\in V(\Gamma_C)}\Symm^{d_v}(C^\nu_{g(v)}).
\]
Similarly, we define
\[
S_{\ud}(\widehat{C}^\nu) 
:=\prod_{v\in V_0\cup V_1}\Symm^{d_v}(\widehat{C}^\nu_{f(v)})\times\prod_{v\in \widehat{V}^\nu_2}\Symm^{d_{v'}+d_{v''}}(\widehat{C}^\nu_v).
\]

The morphism $\widehat{\varphi}\colon C^\nu\to \widehat{C}^\nu$ gives rise to a natural finite morphism
\begin{equation}\label{eq:phi-hat-star}
\widehat{\varphi}_* \colon S_{\ud}(C^\nu) \to S_{\ud}(\widehat{C}^\nu)
\end{equation}
taking a divisor $D\in S_{\ud}(C^\nu)$ to its pushforward $\widehat{\varphi}_*(D)\in S_{\ud}(\widehat{C}^\nu)$. 
For any divisor $D\in S_{\ud}(C^\nu)$, let $D_0$ and $D_{\h}$ denote the restrictions of $D$ to $C^\nu_0$ and $C^\nu_{\h}$, so we have 
\begin{equation} \label{eq: defDh}
\widehat{\varphi}^*\widehat{\varphi}_*(D) = D_0 + D_{\h} + \iota_{\h}^*(D_{\h}).
\end{equation}

Next, there is a tautological inclusion 
\begin{equation}\label{eq:map1}
\mathcal O_{\mathbb P_C^\vee}(-1)
\hookrightarrow
H^0(\mathbb P_C,\mathcal O_{\mathbb P_C}(1))
\otimes \mathcal O_{\mathbb P_C^\vee},
\end{equation}
whose fiber over a point $H\in \mathbb P_C^\vee$ is the line
$\ell_H\subseteq H^0(C,\omega_C)=H^0(\mathbb P_C,\mathcal O_{\mathbb P_C}(1))$
corresponding to $H$. A non-zero section in $\ell_H$ cuts out the hyperplane
of $\mathbb P_C$ corresponding to $H$. 
There is also a universal divisor $\widehat{\mathcal{D}}\subseteq S_{\ud}(\widehat{C}^\nu)\times \widehat{C}^{\nu}$. 
Abusing notation, we write $\mathcal{O}_{\widehat{C}^\nu}(1)$ for the pullback of $\mathcal{O}_{\Proj_C}(1)\otimes \mathcal O_{\widehat{C}}$ via the map 
\[
S_{\ud}(\widehat{C}^\nu)\times \widehat{C}^\nu \to \widehat{C}^\nu\stackrel{\widehat{\nu}}{\to}\widehat{C},
\]
where the first map is the projection onto the second factor. By pushing-forward the natural map $\mathcal{O}_{\widehat{C}^\nu}(1)\to \mathcal{O}_{\widehat{C}^\nu}(1)|_{\widehat{\mc D}}$ via the projection $\pi_{\ud}\colon S_{\ud}(\widehat{C}^\nu)\times \widehat{C}^\nu\to S_{\ud}(\widehat{C}^\nu)$ onto the first factor, we obtain a map 
\begin{equation}\label{eq:map2}
H^0(\widehat{C}^\nu, \mathcal{O}_{\widehat{C}^\nu}(1))\otimes \mathcal{O}_{S_{\ud}(\widehat{C}^\nu)} = (\pi_{\ud})_*(\mathcal{O}_{\widehat{C}^\nu}(1))\to (\pi_{\ud})_*(\mathcal{O}_{\widehat{C}^\nu}(1)|_{\widehat{\mathcal{D}}}).
\end{equation}

We denote by
\[
\widehat{\pi}_1\colon S_{\ud}(\widehat{C}^\nu)\times \Proj^\vee_C\to S_{\ud}(\widehat{C}^\nu)
\quad\text{and}\quad 
\widehat{\pi}_2\colon S_{\ud}(\widehat{C}^\nu)\times \Proj^\vee_C\to \Proj^\vee_C
\]
the projections onto the first and second factors. Over $S_{\ud}(\widehat{C}^\nu)\times \Proj^\vee_C$, by pulling back via $\widehat \pi_2$ the map in Equation~\eqref{eq:map1}, composing with the restriction \[H^0(\mathbb P_C,\mathcal O_{\mathbb P_C}(1))
\to
H^0(\widehat C^\nu,\mathcal O_{\widehat C^\nu}(1)),\] and then composing with the pullback of the map in  Equation~\eqref{eq:map2} via $\widehat{\pi}_1$, we obtain the composite
\begin{equation}\label{eq:sigma}
\sigma_C\colon \widehat\pi_2^*\mathcal O_{\mathbb P_C^\vee}(-1)
\to
H^0(\widehat C^\nu,\mathcal O_{\widehat C^\nu}(1))
\otimes
\mathcal O_{S_{\underline d}(\widehat C^\nu)\times \mathbb P_C^\vee}
\to
\widehat\pi_1^*(\pi_{\ud})_*
\bigl(\mathcal O_{\widehat C^\nu}(1)|_{\widehat {\mathcal{D}}}\bigr).
\end{equation}
Equivalently, after tensoring by
$\widehat\pi_2^*\mathcal O_{\mathbb P_C^\vee}(1)$, we may regard
$\sigma_C$ as a section of a vector bundle of rank $|\underline d|=g-1$. Notice that $\widehat\pi_1^*(\pi_d)_*
\bigl(\mathcal O_{\widehat C^\nu}(1)|_{\widehat {\mathcal{D}}}\bigr)$ is indeed a vector bundle, because $\pi_{\ud}|_{\widehat{\mathcal D}}$ is a finite morphism of degree $g-1$, hence it is flat because $\widehat{\mathcal D}$ and $S_{\ud}(\widehat{C}^{\nu})$ are smooth. We write $Z(\sigma_C)$ for the zero locus of $\sigma_C$.

\begin{remark}\label{rem:obs-zero-locus}
Let $S$ be a scheme, let $W$ be a finite-dimensional vector space, and let
$e\colon W\otimes\mathcal O_S\to \mathcal E$ be a morphism to a vector bundle $\mathcal E$ on $S$.
Let $Z\subseteq S\times \mathbb P(W)$ be the zero locus of the composition
\[
\pi_2^*\mathcal O_{\mathbb P(W)}(-1)
\longrightarrow
W\otimes\mathcal O_{S\times \mathbb P(W)}
\longrightarrow
\pi_1^*\mathcal E,
\]
where $\pi_1$ and $\pi_2$ are the projections from $S\times \mathbb P(W)$ onto the first and second factor, respectively. 
For $D\in S$, the scheme-theoretic fiber $Z_D$ is naturally identified with
\[
\mathbb P(\ker(e_D))\subseteq \mathbb P(W).
\]

Moreover, if $T\subseteq S$ is an open subset such that
$\mathcal K:=\ker(e|_T)$ is a vector subbundle of $W \otimes \mathcal{O}_T$, then
\[
Z_T=\mathbb P_T(\mathcal K),
\]
where $Z_T$ denotes the inverse image of $T$ in $Z$.
\end{remark}

\begin{definition} \label{def: omegad}
Let $C$ be a connected stable curve without separating nodes of genus $g\ge2$. Let $\ud\in \mathbb Z^{V(\Gamma_C)}$ be a multidegree with  degree $|\ud|=g-1$ with $d_v\geq 0$ for all $v$. We define $\widehat{\Omega}_{\ud}\subseteq S_{\ud}(\widehat{C}^\nu)\times \Proj^\vee_C$ as the zero locus of $\sigma_C$, i.e., we set
\[
\widehat{\Omega}_{\ud}:= Z(\sigma_C) \subseteq S_{\ud}(\widehat{C}^\nu)\times \Proj^\vee_C.
\]
Consider the finite map $(\widehat{\varphi}_*,\id)\colon S_{\ud}(C^\nu)\times\Proj_C^\vee\to S_{\ud}(\widehat{C}^\nu)\times \Proj_C^\vee$ (see Equation~\ref{eq:phi-hat-star}). We also define
\[
\Omega_{\ud}:=(\widehat{\varphi}_*, \id)^{-1}(\widehat{\Omega}_{\ud})\subseteq S_{\ud}(C^\nu)\times\Proj_C^\vee.
\]
We call $\Omega_{\ud}$ the \emph{Gauss graph of the symmetric product of $C$}.
\end{definition}
 We will denote by 
\begin{equation} \label{eqdef:kappa}
\kappa_{\ud}\colon \Omega_{\ud}\to S_{\ud}(C^\nu)
\quad \text{ and } \quad
\quad \rho_{\ud}\colon \Omega_{\ud}\to \Proj^\vee_C
\end{equation}
the restrictions to $\Omega_{\ud}$ of the projections $S_{\ud}(C^\nu)\times \Proj^\vee_C\to S_{\ud}(C^\nu)$ and $S_{\ud}(C^\nu)\times \Proj^\vee_C\to \Proj^\vee_C$ onto the first and second factor, respectively.

The scheme $\Omega_{\ud}$ will serve in Section \ref{sec:branch-ramif} as the explicit model on which we compute ramification and branch loci, before comparing them in Section \ref{sec:Torelli-theorem} with the corresponding loci arising from the Gauss graph of the theta divisor.

Since $C$ has no separating nodes, we have  $\SC=\varphi(C^{\sing})\subseteq \widehat{C}$ (see Definition~\ref{def:canonical_model}). We define the open subsets of $\mathbb P_C^\vee$ 
 \begin{align}
\U_0 &:= \{H \in \Proj^\vee_C : H\cap \SC=\emptyset\}=\{H\in \Proj^\vee_C : \varphi^*(H)\subseteq C^{\sm}\}\label{eq:U0}\\
\U_1 &:= \{H\in \Proj_C^\vee : \dim(H\cap\widehat{C})=0\}\label{eq:U1}.
\end{align}
Notice that $\U_0\subseteq\U_1$. Moreover,  $\Proj^\vee_C\setminus \U_0=\cup_{p\in\SC} \Lambda^\vee_p$, so every irreducible component of $\Proj^\vee_C\setminus \U_0$ has codimension $1$ in $\Proj^\vee_C$.

\begin{remark}
\label{rem:Omega_d} 
Let $\ud\in \Sigma(C)$. The following properties are easy consequences of the definitions. 
\begin{itemize}
    \item[(i)] $\Omega_{\ud}$ is non-empty.
    \item[(ii)] We have (recall the notation of Section \ref{sec:schemes}, Diagram \ref{diag:normalizations} and \eqref{eq: defDh} for the definitions of $D_0$ and of $D_h$)
    \[
    \Omega_{\ud, \U_1} =\{(D, H) : \ D_0+D_{\h}+\iota_{\h}^*(D_{\h})\leq \nu^*\varphi^*(H)\}\subseteq S_{\ud}(C^\nu)\times \U_1.
    \]
    \item[(iii)] Consider $(D,H)\in \Omega_{\ud, \U_1}$. Since $2\deg(D|_{C_h^\nu}) = \deg(\nu^*\varphi^*(H)|_{C_{h}^\nu})$ (by Lemma~\ref{lem:dimdeg}(iv)), we must have  $D_h+\iota_{\h}^*(D_h) = \nu^*\varphi^*(H)|_{C_{h}^\nu}$. So, if $p\in C_h^\nu$ is a point, then  
    \[
    \mu_p(D) + \mu_{\iota_{\h}(p)}(D) = \mu_{p, H},
    \]
    where $\mu_{p, H} = \mu_p(\nu^*\varphi^*(H))$.   
    In particular, if  $0<\mu_{p,H}\cdot p\le D_{\h}$,
    then we cannot have $\iota^*_{\h}(p)\le D_{\h}$, for otherwise
    \[
    \mu_p(D) + \mu_{\iota_{\h}(p)}(D) \geq \mu_{p, H} + 1.
    \]
     
     \item[(iv)]   
     Given $H\in \Proj^\vee_C$, let $W_H\subseteq \widehat{C}$ be the maximal subcurve contained in $H$. Let  $Y^\nu_H$ be the complementary subcurve of $\nu^{-1}(\varphi^{-1}(W_H))$ in $C^\nu$. Then 
    \[
    \rho_{\ud}^{-1}(H)=\{(D,H) :\ (D_0+D_{\h}+\iota_{\h}^*(D_{\h}))|_{Y^\nu_H}\le \nu^*\varphi^*(H)|_{Y^\nu_H}\}.
    \]
    (Recall Equation \eqref{eqdef:kappa}). In particular, $\dim(\rho_{\ud}^{-1}(H))=d_{\varphi^{-1}(W_H)}$ and $\rho_{\ud}$ is dominant.
\end{itemize}
\end{remark}

\begin{lemma}
\label{lem:Irred_comp}
Let $f\colon X\to Y$ be a morphism of schemes with $Y$ irreducible. For every integer $i\ge0$, define 
\[
Y_i=\{y\in Y : \dim f^{-1}(y) = i\}.
\]
Assume that the following properties hold
\begin{itemize}
    \item[(i)] for every irreducible component $X'$ of $X$ we have $\dim X'\ge\dim Y$;
    \item[(ii)] the inequality $\codim_Y Y_i \geq  i +  1$ holds for every $i>0$.
\end{itemize}
Then every irreducible component of $X$ dominates $Y$.
\end{lemma}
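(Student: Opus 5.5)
The plan is a dimension count over the strata $Y_i$. Suppose, for contradiction, that some irreducible component $X'$ of $X$ does not dominate $Y$. Then the closure $\overline{f(X')}$ is a proper closed subset of $Y$, so $\dim \overline{f(X')} \le \dim Y - 1$. Since $X$ is of finite type, only finitely many $Y_i$ are nonempty and they are constructible. Hence $X'$ is covered by the finitely many locally closed pieces $X'\cap f^{-1}(Y_i)$, and one of them, say for the index $i_0$, is dense in $X'$. We get
\[
\dim X' = \dim\bigl(X'\cap f^{-1}(Y_{i_0})\bigr).
\]

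Next we bound this dimension using the theorem on the dimension of fibers, applied to the restriction of $f$ to $X'\cap f^{-1}(Y_{i_0})$ mapping onto its image $T\subseteq Y_{i_0}\cap \overline{f(X')}$. Every fiber of this restriction sits inside a fiber of $f$ over a point of $Y_{i_0}$, so it has dimension at most $i_0$. Therefore
\[
\dim X' \le \dim T + i_0.
\]

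We then split into two cases according to $i_0$.
\begin{itemize}
\item If $i_0=0$, we use $T\subseteq \overline{f(X')}$ to get $\dim X'\le \dim Y-1$.
\item If $i_0>0$, hypothesis (ii) gives $\dim T\le \dim Y_{i_0}\le \dim Y - i_0 - 1$, and hence $\dim X'\le \dim Y-1$.
\end{itemize}
In both cases this contradicts hypothesis (i), which says $\dim X'\ge \dim Y$. So every component dominates $Y$.

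The only point needing care is the choice of $i_0$. Working with a dense open piece of $X'$ lying over a single stratum, rather than with the generic point of $f(X')$ alone, makes the fiber-dimension bound uniform. Upper semicontinuity is not even needed, only constructibility of the $Y_i$ (Chevalley), which lets the pieces $X'\cap f^{-1}(Y_i)$ be finitely many and constructible so that one of them is dense. The codimension in (ii) is read as $\dim Y-\dim Y_i$ for irreducible $Y$, with the convention that it is infinite when $Y_i$ is empty.
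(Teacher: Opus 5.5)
Your proof is correct and is essentially the paper's dimension count. The paper takes a general point $y\in f(X')$, sets $i_0=\dim f^{-1}(y)$, gets $i_0>0$ from non-dominance and (i), and then uses that $Y_{i_0}\cap f(X')$ is dense in $f(X')$ to obtain $\dim Y_{i_0}\ge \dim Y-i_0$, contradicting (ii); you run the same count from the other side by choosing the stratum whose preimage is dense in $X'$ (this is the same $i_0$), and your appeal to constructibility of the $Y_i$ is exactly what justifies the paper's density step.
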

\begin{proof}
    Let $X'$ be an irreducible component of $X$ that does not dominate $Y$. In particular $\dim f(X') < \dim Y$. By the theorem of the dimension of the fibers, we  have $\dim f(X') + \dim f^{-1}(y) \geq \dim(X') \geq \dim(Y)$ for a general point $y \in f(X')$. Set $i_0 = \dim f^{-1}(y)$. Notice that  $i_0>0$, since $\dim f(X') < \dim Y$ and $\dim(X')\ge \dim (Y)$. We have that $Y_{i_0}\cap f(X')$ is open in $f(X')$. In particular $\dim(Y_{i_0})\geq \dim f(X') \geq \dim(Y) - i_0$, hence $\codim_YY_{i_0}\leq i_0$, a contradiction.
\end{proof}

 The following proposition describes the finite locus of the map $\rho_{\ud}\colon \Omega_{\ud}\to \Proj_C^\vee$ (see Equation \eqref{eqdef:kappa}
 and establishes the basic geometric properties of $\Omega_{\ud}$.

\begin{proposition}\label{prop:Omega}
Let $C$ be a connected stable curve of genus $g\ge2$ with no separating nodes.
Let $\ud\in\Sigma(C)$.
The following properties hold.
\begin{itemize}

\item[(i)]
   The locus where the dominant map $\rho_{\ud}\colon \Omega_{\ud}\to \Proj_C^\vee$ is finite equals
   \[
   \U_{\ud} := \{H\in \Proj^\vee_C : \dim(H\cap W)=0, \forall\; W\in \Irr(\widehat{C})
   \text{ with } d_{\varphi^{-1}(W)}>0\}.
   \]
   Notice that $\U_{\ud}$ is open and it contains $\U_1$ (and hence $\U_0$).
\item[(ii)]
   $\Omega_{\ud}$ is generically smooth, Cohen-Macaulay, integral, and of dimension $g-1$.
\end{itemize}
\end{proposition}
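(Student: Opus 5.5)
The plan is to prove (i) directly from the fiber description in Remark~\ref{rem:Omega_d}(iv), and then obtain (ii) from a dimension count for a zero locus combined with Lemma~\ref{lem:Irred_comp}.

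\textbf{Part (i).} Fix $H\in\Proj^\vee_C$ and let $W_H$ be the maximal subcurve of $\widehat C$ contained in $H$. By Remark~\ref{rem:Omega_d}(iv),
\[
\dim\rho_{\ud}^{-1}(H)=d_{\varphi^{-1}(W_H)}.
\]
Since every $d_v\ge 0$, this number vanishes exactly when $d_{\varphi^{-1}(W)}=0$ for every $W\in\Irr(\widehat C)$ with $W\subseteq H$. Equivalently, $\rho_{\ud}^{-1}(H)$ is finite if and only if $H\in\U_{\ud}$. Properness of $\rho_{\ud}$ gives finiteness over the open set $\U_{\ud}$. Openness of $\U_{\ud}$ holds because the condition $W\subseteq H$ is closed in $H$. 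The inclusion $\U_1\subseteq\U_{\ud}$ is immediate from the definitions.

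\textbf{Part (ii): dimension bound.} First, $\widehat\Omega_{\ud}$ is the zero locus of a section of a rank-$(g-1)$ bundle on the smooth variety $S_{\ud}(\widehat C^\nu)\times\Proj^\vee_C$, which has dimension $2g-2$. Hence every irreducible component of $\widehat\Omega_{\ud}$ has dimension at least $g-1$. Since $(\widehat\varphi_*,\id)$ is finite and flat (a product of maps between symmetric products of smooth curves), $\Omega_{\ud}$ is locally cut out by $g-1$ equations in the smooth $(2g-2)$-dimensional space $S_{\ud}(C^\nu)\times\Proj^\vee_C$. So each component of $\Omega_{\ud}$ also has dimension at least $g-1$.

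\textbf{Part (ii): every component dominates.} Next I apply Lemma~\ref{lem:Irred_comp} to $\rho_{\ud}$ with $Y=\Proj^\vee_C$. This requires showing that the locus $Y_i$ of hyperplanes whose fiber has dimension $i>0$ has codimension at least $i+1$. If $H\in Y_i$, then $H$ contains a subcurve $W=W_H$ with $d_{\varphi^{-1}(W)}=i$, so $H\supseteq\Lambda_W$. The hyperplanes containing $\Lambda_W$ form a linear space of codimension $N_W+1$. By Lemma~\ref{lem:dimdeg}(iii) applied to $Z=\varphi^{-1}(W)$ we have $N_W\ge d_Z=i$, so the codimension is at least $i+1$. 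Since there are only finitely many subcurves, this gives the bound on $Y_i$. Hence every component of $\Omega_{\ud}$ dominates $\Proj^\vee_C$.

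Because $\rho_{\ud}$ is generically finite, every component therefore has dimension exactly $g-1$. Then $\Omega_{\ud}$ is a local complete intersection of the expected codimension in a smooth variety, hence Cohen--Macaulay.

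\textbf{Part (ii): irreducibility and generic smoothness.} It remains to show that $\Omega_{\ud}$ is generically reduced and irreducible; reducedness then follows, since a Cohen--Macaulay scheme that is generically reduced has no embedded points. Since all components dominate, it is enough to check this over a general $H\in\U_0$. There $\varphi^*H$ consists of $2g-2$ distinct smooth points, and by Remark~\ref{rem:Omega_d}(ii),(iii) the fiber consists of the choices of $D$ with $D_0+D_{\h}+\iota_{\h}^*D_{\h}\le\nu^*\varphi^*H$ of multidegree $\ud$. Computing the tangent space of the zero locus at such a point, using Theorem~\ref{thm:general_position} and Lemma~\ref{lem:dimdeg}(iii), should show that the differential of $\sigma_C$ is surjective there. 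This would make the fiber reduced, so $\Omega_{\ud}$ is smooth, of dimension $g-1$, and étale over $\U_0$ near those points.

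For irreducibility, I would use monodromy. Over $\U_0$, the fiber is a set of sub-selections of the hyperplane section with prescribed degrees on the components. Irreducibility amounts to transitivity of the monodromy on this set, which I would derive from the uniform position principle on each nonlinear component of $\widehat C$ (full symmetric group on the points of $H\cap W$), together with the structure of the hyperelliptic parts.

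\textbf{Main obstacle.} The monodromy/irreducibility step is where I expect the real difficulty. In the hyperelliptic pieces, and on linear components, the monodromy is not the full symmetric group, and $\ud$ interacts with how points split between $Z_1$ and $Z_2$ of a pair. There one must check case by case that the admissible selections still form a single orbit.
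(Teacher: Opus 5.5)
Your arguments for (i), for the bound $\dim\ge g-1$ on every component, for dominance of every component via Lemma~\ref{lem:Irred_comp} and Lemma~\ref{lem:dimdeg}(iii), and for the Cohen--Macaulay property agree with the paper's. The gap is irreducibility. You reduce it to transitivity of monodromy on the admissible selections over a general $H$, and you leave that step unproved. It is not a routine check. Uniform position gives the full symmetric group on $H\cap W$ for each nonlinear component $W$ separately. However, the joint monodromy on the hyperplane sections of all components is a priori only a subdirect product. So transitivity does not follow on the product of the choices of $d_v$ out of $k_v$ points, together with the preimage choices over honest hyperelliptic components and pairs.

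The missing idea is that $\kappa_{\ud}$ is birational, which makes monodromy unnecessary. Let $\V_{\ud}\subseteq S_{\ud}(C^\nu)$ be the open set of divisors $D$ for which evaluation $H^0(C,\omega_C)\to H^0(\widehat\varphi_*D,\mathcal O(1))$ has rank $g-1$, i.e.\ for which $\spann(\widehat\varphi_*D)$ is a hyperplane. Over $\V_{\ud}$ the kernel of the evaluation map is a line bundle. Hence Remark~\ref{rem:obs-zero-locus} gives $\kappa_{\ud}^{-1}(\V_{\ud})\cong\V_{\ud}$, which is smooth and irreducible because $S_{\ud}(C^\nu)$ is a product of symmetric powers of smooth irreducible curves.

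The computation you already planned for generic smoothness uses Remark~\ref{rem:Omega_d}(iii), Theorem~\ref{thm:general_position} and Lemma~\ref{lem:dimdeg}(iii). It shows exactly that, for $H$ in a dense open $G\subseteq\U_1$, every point $(D,H)$ of the fiber satisfies $\spann(\widehat\varphi_*D)=H$. Thus $\rho_{\ud}^{-1}(G)\subseteq\kappa_{\ud}^{-1}(\V_{\ud})$. Since every component of $\Omega_{\ud}$ dominates $\Proj^\vee_C$, each one meets the irreducible open set $\kappa_{\ud}^{-1}(\V_{\ud})$. An irreducible open set meets only one irreducible component, so $\Omega_{\ud}$ is irreducible. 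Generic smoothness then comes for free, and Cohen--Macaulay together with generically reduced gives integrality. This is how the paper argues.
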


\begin{proof}
First, we claim that each irreducible component of $\Omega_{\ud}$ has dimension at least $g-1$. Indeed, by the discussion after Equation \eqref{eq:sigma}, $\widehat\Omega_{\ud}$ is the zero locus of a
section of a rank $g-1$ vector bundle on
$S_{\ud}(\widehat C^\nu)\times\mathbb P_C^\vee$. Pulling this section back via
$(\widehat\varphi_*,\mathrm{id})$, we see that $\Omega_{\ud}$ is the zero locus of a
section of a rank $g-1$ vector bundle on
$S_{\ud}(C^\nu)\times\mathbb P_C^\vee$. This proves the claim. 

Let us prove (i). By Remark
\ref{rem:Omega_d} (iv), $\rho_{\ud}^{-1}(H)$ is finite if and only if $H$ contains
no irreducible component $W$ of $\widehat C$ with $d_{\varphi^{-1}(W)}>0$, i.e., if
and only if $H\in \U_{\ud}$. Since $\rho_{\ud}$ is proper, this ends the proof of (i).

Let us prove (ii). Set $S_{\ud}:=S_{\ud}(C^\nu)$, and let $\kappa_{\ud}\colon \Omega_{\ud}\to S_{\ud}$ be the map defined in Equation \eqref{eqdef:kappa}. Let $\mathcal E_{\ud}$ be the pullback to $S_{\ud}$ of
$\widehat{\varphi}^*((\pi_{\ud})_*
( \mathcal O_{\widehat C^\nu}(1)|_{ \widehat{\mathcal D}}))$, and consider the
evaluation map
\begin{equation} \label{eq: evaluation}
e_{\ud}\colon H^0(C,\omega_C)\otimes \mathcal O_{S_{\ud}}\longrightarrow \mathcal E_{\ud}.
\end{equation}
For $D\in S_{\ud}$, taking the fiber over $D$ gives the evaluation map
\[
e_{\ud,D}\colon H^0(C,\omega_C)\longrightarrow
H^0(\widehat\varphi_*D,\mathcal O_{\widehat\varphi_*D}(1)).
\]

We define
\[
\V_{\ud}:=\{D\in S_{\ud}:\operatorname{rk}(e_{\ud,D})=g-1\}
=\{D\in S_{\ud}:\dim(\spann(\widehat{\varphi}_*(D)))=g-2\}.
\]
This is an open subset of $S_{\ud}$. Since $S_{\ud}$ is a product of symmetric powers of smooth
irreducible curves, $S_{\ud}$ is smooth and irreducible; hence so is $\V_{\ud}$, if nonempty.

Over $\V_{\ud}$, the map $e_{\ud}$ is surjective. Set
$\mathcal L:=\ker(e_{\ud}|_{\V_{\ud}})$. Then $\mathcal L$ is a line bundle, and Remark
\ref{rem:obs-zero-locus} gives
\begin{equation}\label{eq:kappaV}
\kappa_{\ud}^{-1}(\V_{\ud})=\mathbb P_{\V_{\ud}}(\mathcal L)\cong \V_{\ud}.
\end{equation}
Under this isomorphism, a divisor $D\in\V_{\ud}$ corresponds to the point
$(D,\spann(\widehat{\varphi}_*(D)))\in\Omega_{\ud}$. In particular,
$\kappa_{\ud}^{-1}(\V_{\ud})$ is smooth and irreducible.

We now prove that $\Omega_{\ud,\U_1}$ is irreducible. Let $G\subseteq \U_1$ be
a dense open subset such that, for every $H\in G$, the divisor $H\cap\widehat C$
is reduced and Theorem \ref{thm:general_position} applies. The fiber $\rho_{\ud}^{-1}(H)$ is nonempty for $H\in G$: choose the points
componentwise, using $0\leq d_v\leq k_v$ (see Lemma~\ref{lem:numerical}) on the non-hyperelliptic components
and Lemma~\ref{lem:dimdeg} (ii),(iv) on the honest hyperelliptic components
and pairs.

Let $(D,H)\in \rho_{\ud}^{-1}(G)$. By Remark \ref{rem:Omega_d} (iii),
Theorem \ref{thm:general_position}, and Lemma \ref{lem:dimdeg} (iii), we have
$\spann(\widehat\varphi_*D)=H$. Hence $D\in\V_{\ud}$, and therefore $\rho_{\ud}^{-1}(G)\subseteq \kappa_{\ud}^{-1}(\V_{\ud})$. Since $\rho_{\ud}^{-1}(G)$ is a nonempty open subset of the irreducible scheme
$\kappa_{\ud}^{-1}(\V_{\ud})$, it is irreducible. By (i) and the claim at the beginning of the proof, every irreducible component of $\Omega_{\ud,\U_1}$ has dimension
$g-1$ and dominates $\U_1$. Hence $\rho_{\ud}^{-1}(G)$ is dense in
$\Omega_{\ud,\U_1}$. Since $\rho_{\ud}^{-1}(G)$ is irreducible, so is
$\Omega_{\ud,\U_1}$.

Now we prove that $\Omega_{\ud}$ is irreducible. For $i\ge0$, set
\[\mathbb P^\vee_{C,i}:=\{H\in \Proj^\vee_C:\dim(\rho_{\ud}^{-1}(H))=i\}.\]
By Remark \ref{rem:Omega_d} (iv), if $i>0$, then
$\mathbb P^\vee_{C,i}$ is contained in the union of the linear spaces $\Lambda_W^\vee$ such that
$d_{\varphi^{-1}(W)}=i$. Hence, using Lemma \ref{lem:dimdeg} (iii),
\[
\dim \mathbb P^\vee_{C,i}\leq
\max_W(\dim(\Proj^\vee_C)-N_W-1)<g-1-i.
\]
Thus $\codim_{\Proj_C^\vee}\mathbb P^\vee_{C,i}\geq i+1$ for every $i>0$.
By the claim at the beginning of the proof and Lemma \ref{lem:Irred_comp}, every irreducible component of
$\Omega_{\ud}$ dominates $\Proj_C^\vee$. Since $\Omega_{\ud,\U_1}$ is irreducible,
there is only one such component. Hence $\Omega_{\ud}$ is irreducible.

The equality $\dim\Omega_{\ud}=g-1$ follows from (i) and
$\dim(\Proj^\vee_C)=g-1$. Moreover, $\Omega_{\ud}$ is Cohen-Macaulay by
\cite[Proposition 7.1]{fulton}: it has the expected dimension, it is the zero locus
of a section of a rank $g-1$ vector bundle on the Cohen-Macaulay scheme
$S_{\ud}\times \Proj^\vee_C$.

Finally, $\kappa_{\ud}^{-1}(\V_{\ud})\cong \V_{\ud}$ (see Equation \eqref{eq:kappaV}) is a nonempty smooth open subset of
$\Omega_{\ud}$. Since $\Omega_{\ud}$ is irreducible, this open subset is dense.
Thus $\Omega_{\ud}$ is generically smooth, hence generically reduced. Since a
Cohen-Macaulay scheme has no embedded associated points, generically reduced
implies reduced. Therefore $\Omega_{\ud}$ is reduced. Being irreducible and
reduced, $\Omega_{\ud}$ is integral.
\end{proof}

\section{Ramification and branch loci}\label{sec:branch-ramif}

In this section, we study the geometry of the generically finite and dominant morphism $\rho_{\ud}\colon \Omega_{\ud}\to \Proj^\vee_C$, associated with the Gauss graph defined in Equation~\eqref{eqdef:kappa}. Our goal is to analyze its ramification and branch loci, since these will later be compared in Section~\ref{sec:Torelli-theorem} with the corresponding loci arising from the intrinsic Gauss graph of the theta divisor. The main results are Proposition \ref{prop:rho-branch}, which characterizes the branch locus of $\rho_{\ud}$ in terms of (the dual of) certain subcurves of $\widehat{C}$ and (the dual of) the points in $\mathcal B_{\widehat{C}}$, and Proposition \ref{prop:rho-ramif}, which identifies enough components of the ramification of $\rho_{\ud}$ to eventually be able to describe the branch locus of the Gauss graph. 
  
Throughout this section, $C$ will be a connected stable curve of genus $g\ge2$ with no separating nodes. As usual, we let $\nu\colon C^\nu \to C$ be the normalization of $C$ and we will write $\varphi$ and $\widehat{C}$ for the canonical map and canonical model of $C$ respectively. 

Recall  the definition of the set $\mathcal B_{\widehat{C}}$ in  Definition~\ref{def:canonical_model}.  Recall also the definitions of the open subsets $\U_0 \subseteq \U_1 \subseteq \U_{\ud} \subseteq \mathbb{P}_C^{\vee}$, for a multidegree $\ud$ with $|\ud|=g-1$ in Equations \eqref{eq:U0}, \eqref{eq:U1} and Proposition \ref{prop:Omega}. For $W \in \Irr(\widehat{C})$, define
\[
\Irr_W(C)= \{Z \in \Irr(C): \overline{\varphi(Z)}=W \}.
\] 

\begin{proposition}\label{prop:rho-branch}
Let $C$ be a connected stable curve of genus $g\ge2$ with no separating nodes. Let $\ud\in \Sigma(C)$. Consider
\[
\mathcal B_{\ud}=\{W\in \Irr_{\ge2}(\widehat{C}) : 0<d_Z<k_Z, \text{ for every } Z\in \Irr_W(C)\}.
\]
The branch locus $\Br(\rho_{\ud})$ of the map $\rho_{\ud}\colon \Omega_{\ud}\to \Proj_C^\vee$ is 
    \[
    \Br(\rho_{\ud}) = \left(\bigcup_{W\in \mathcal B_{\ud}} W^\vee\right) 
    \cup\left(\bigcup_{p\in \mathcal B_{\widehat{C}}}\Lambda_p^\vee\right).
    \]
    Moreover, $\Br(\rho_{\ud}) = \Br^1(\rho_{\ud})$ and $\Br(\rho_{\ud})$ is the closure of $\Br(\rho_{\ud,\U_0})$ in $\Proj^\vee_C$. 
\end{proposition}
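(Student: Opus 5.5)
We first reduce to the finite locus and then compute the branch locus fiberwise over $\U_0$. By Proposition~\ref{prop:Omega}, $\Omega_{\ud}$ is integral, Cohen--Macaulay, of dimension $g-1$, and $\rho_{\ud}$ is finite over the open set $\U_{\ud}\supseteq \U_1$. By Proposition~\ref{prop:branch_fiber_number}, over $\U_{\ud}$ the map is flat and a hyperplane $H$ lies in the branch locus exactly when $|\rho_{\ud}^{-1}(H)|$ is smaller than the generic degree. The complement $\Proj_C^\vee\setminus\U_{\ud}$ is contained in the union of the spaces $\Lambda_W^\vee$ with $d_{\varphi^{-1}(W)}>0$, and these have codimension $N_W+1\ge 2$ (using Lemma~\ref{lem:dimdeg}(iii)). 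So only codimension-one phenomena over $\U_{\ud}$ matter for $\Br^1$. To obtain $\Br=\Br^1$, we would apply purity (Theorem~\ref{thm:purity}) to the normalization of $\Omega_{\ud}$ combined with Proposition~\ref{prop:stein-factor-branch}, or argue directly that every point of $Z_{\rho_{\ud}}$ lies in the closure of a divisorial part. The complement $\Proj_C^\vee\setminus\U_0$ is the union of the hyperplanes $\Lambda_p^\vee$ for $p\in\SC$, so the closure statement reduces to showing that no $\Lambda^\vee_p$ with $p\in\SC$ is a branch component.

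\textbf{Counting the fiber over $\U_0$.} For $H\in\U_0$, Remark~\ref{rem:Omega_d}(ii),(iii) identifies $\rho_{\ud}^{-1}(H)$ with choices of subdivisors of $\nu^*\varphi^*H$. Writing $H\cdot\widehat C=\sum_W H\cdot W$, the fiber is a product over components $W$. For a non-hyperelliptic $Z\in\Irr_W(C)$, the factor is the set of subsets of size $d_Z$ of the points of $H\cap Z$. For honest hyperelliptic $Z$, the factor is the set of choices of one point from each $\iota$-orbit, with multiplicities split, and here $d_Z=k_Z/2$ by Lemma~\ref{lem:dimdeg}(iv). The generic count is a product of binomial coefficients $\binom{k_Z}{d_Z}$ times $2^{k_Z/2}$ for each hyperelliptic factor. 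The fiber drops in cardinality exactly when some $\varphi^*H$ becomes nonreduced. There are two ways this happens.

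In the first case, $H$ is tangent to a nonlinear component $W$, that is, $H\in W^\vee$ generically by Proposition~\ref{prop:dual}. The binomial count $\binom{k_Z}{d_Z}$ drops iff $0<d_Z<k_Z$; when $d_Z\in\{0,k_Z\}$ there is a unique choice and no drop. For a hyperelliptic $W$, tangency away from branch points does not occur generically, since $W$ is a rational normal curve image and the map is $\nu\circ\mu$. There we still get a drop iff $0<d_Z<k_Z$, which is automatic by Lemma~\ref{lem:dimdeg}(iv), and $\deg W\ge 2$ is needed. Also, linear $W$ have $W^\vee$ of codimension at least $2$.

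In the second case, $H$ passes through a branch point $p\in\mathcal B_{\widehat C}$ of a hyperelliptic component. Then $\varphi^*H$ contains $2r$ with $r$ a ramification point, the orbit $\{r,\iota r\}$ collapses, and the factor $2$ is lost. So $\Lambda_p^\vee\subseteq\Br$.

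Conversely, for $H\in\Lambda_p^\vee$ with $p\in\SC$ general, the fiber only involves smooth points, and the count is unchanged. Hence these hyperplanes do not contribute, which gives the closure statement.

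\textbf{Main obstacle.} We expect the hardest step to be making the counting rigorous for general $H$ in each candidate divisor. Specifically, one must show that the scheme-theoretic fiber is genuinely nonreduced (equivalently, has fewer points) rather than having points collide in a way compensated elsewhere. For this we would use Theorem~\ref{thm:general_position} to guarantee that the points of $\spann(\widehat\varphi_*D)$ are independent for general $H$ in the divisor, so that distinct subdivisors give distinct fiber points. We would also need to handle multiplicities at honest hyperelliptic pairs, where $\widehat{C}^\nu_{\h,\pair}$ carries the combined degree $d_{v'}+d_{v''}$ and there is no ramification, so only $W^\vee$ with $0<d_Z<k_Z$ for both $Z\in\Irr_W(C)$ appear. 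This is exactly why the condition in $\mathcal B_{\ud}$ quantifies over all of $\Irr_W(C)$.
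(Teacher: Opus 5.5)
Your fiber count over $\U_0$ matches the paper's (its cases (b)--(d)). The gap is in passing from $\U_0$ to the whole finite locus $\U_{\ud}$. $\Br(\rho_{\ud})$ is the closure of the set of $H\in\U_{\ud}$ with singular fiber. Knowing this set on $\U_0$ and at \emph{general} points of each $\Lambda_p^\vee$, $p\in\SC$, does not exclude branch components of codimension at least two inside $\bigcup_{p\in\SC}\Lambda_p^\vee$. So the main formula itself, and not only $\Br=\Br^1$ and the closure statement, rests on purity.

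The route you sketch for purity fails. Theorem~\ref{thm:purity} needs a normal source, and $\Omega_{\ud}$ is only known to be Cohen--Macaulay. Replacing $\Omega_{\ud}$ by its normalization $n$ can shrink the branch locus: two points of the normalization glued in $\Omega_{\ud}$ give a point of $\Br(\rho_{\ud})$ that need not lie in $\Br(\rho_{\ud}\circ n)$. Proposition~\ref{prop:stein-factor-branch} concerns the Stein factorization, not the normalization. A valid fix: $\rho_{\ud}$ is finite and flat over $\U_{\ud}$ (Proposition~\ref{prop:branch_fiber_number}), so there its branch locus is the zero locus of the discriminant of the trace form. That is a Cartier divisor, hence pure of codimension one.

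The paper avoids purity altogether. Remark~\ref{rem:Omega_d}(iv) describes every fiber on $C^\nu$, so the same count runs for \emph{every} $H\in\U_{\ud}$; components contained in $H$ have $d_v=0$ and contribute $1$. This gives the formula on all of $\U_{\ud}$. Then $\Br=\Br^1$ and the closure statement are read off, since each $W^\vee$ and each $\Lambda_p^\vee$ is an irreducible hypersurface meeting $\U_0$.

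Several supporting claims also need correcting. For general $H\in\Lambda_p^\vee$ with $p\in\SC$, the fiber does not involve only smooth points. Divisors in it may contain branches over the nodes in $\varphi^{-1}(p)$; Proposition~\ref{prop:singularities-recovering} uses exactly this. The count is unchanged for a different reason: $\nu^*\varphi^*H$ is reduced on $C_0^\nu$ and is a sum of distinct free $\iota_{\h}$-orbits on $C_{\h}^\nu$, because branches over such nodes are conjugate by Proposition~\ref{prop:catanese_hh}(iv).

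For a nonlinear honest hyperelliptic $W$, the general point of $W^\vee$ \emph{is} a hyperplane tangent to $W$ at a non-branch point. Then $\nu^*\varphi^*H$ contains $2(x+\iota_{\h}x)$ and the local factor drops from $4$ to $3$; this is what puts $W^\vee$ into the branch locus. Your sentence denying such tangency is therefore wrong, although your conclusion is right. Your generic degree also omits the factor $\binom{d_{v'}+d_{v''}}{d_{v'}}$ for honest hyperelliptic pairs.

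Finally, the ``main obstacle'' you describe does not arise. Points of $\rho_{\ud}^{-1}(H)$ are pairs $(D,H)$, so distinct subdivisors are automatically distinct points, and Proposition~\ref{prop:branch_fiber_number} reduces everything to counting them. Theorem~\ref{thm:general_position} is not needed here, and the degree is the count at a general $H$ by generic smoothness in characteristic zero.
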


\begin{proof}
Recall that the morphism $\rho_{\ud}$ is finite and dominant on $\U_{\ud}\subseteq \Proj^\vee_C$   (see Proposition~\ref{prop:Omega}~(i)). By Proposition \ref{prop:Omega} (ii), we can compute $\Br(\rho_{\ud,\mathcal U_{\ud}})$ using Proposition \ref{prop:branch_fiber_number}.  Also, recall that  $C_v$ is the irreducible component of $C$ corresponding to $v\in V(\Gamma_C)$. Finally, recall Equations~\eqref{eq:decomposition} and \eqref{eq:fg} and the notation  
$\{v',v''\}=f^{-1}(v)$ for every $v\in \widehat{V}^\nu_2$. 

First, we claim that the degree of $\rho_{\ud, \U_{\ud}}$ is given by
\begin{equation}\label{eq:degree}
\deg(\rho_{\ud, \U_{\ud}})=\prod_{v\in V_0} \binom{k_v}{d_v} \prod_{v\in V_1} 2^{d_v} \prod_{v\in \widehat{V}^\nu_2}
\binom{d_{v'}+d_{v''}}{d_{v'}}.
\end{equation}
Indeed, the degree of $\rho_{\ud, \U_{\ud}}$ is the number of elements in $\rho_{\ud}^{-1}(H)$ for a general $H\in \U_{\ud}$. By Remark~\ref{rem:Omega_d} (ii), this is the number of divisors $D\in S_{\ud}(C^\nu)$ such that $D_0+D_h+\iota_{\h}^*(D_{\h})\le \nu^*\varphi^*(H)$. Let us consider this relation over the preimage of each component of $\widehat{C}$. 

If $v\in V_0$, then $\varphi|_{C_v}\colon C_v\to \varphi(C_v)$ is birational and $\nu^*\varphi^*(H)|_{C_v}$ is a reduced effective divisor of degree $k_v$. We then have to choose a degree $d_v$ divisor $D|_{C_v}\leq \nu^*\varphi^*(H)|_{C_v}$, which gives the contribution of $\binom{k_v}{d_v}$ to Formula \eqref{eq:degree}.

If $v\in V_1$, then $\varphi|_{C_v}\colon C_v\to \varphi(C_v)$ is a $2:1$ map. By Remark~\ref{rem:Omega_d} (iii), we cannot choose points in $\nu^*\varphi^*(H)|_{C_v}$ in the same preimage over a point in $H\cap \varphi(C_v)$.  By Lemma~\ref{lem:dimdeg}~(ii) and (iv) we have that $d_v = \deg(\varphi(C_v))$, so $H$ cuts $\varphi(C_v)$ in $d_v$ distinct points. For each of these $d_v$ points we have to choose exactly one preimage in $C_v$, which gives the contribution of $2^{d_v}$ to Formula~\eqref{eq:degree}. 

Finally, for $v\in \widehat{V}^\nu_2$, set $W_v:=\varphi(C_{v'}\cup C_{v''})(=\varphi(C_{v'})=\varphi(C_{v''}))$. Then $\varphi|_{C_{v'}\cup C_{v''}} \colon C_{v'}\cup C_{v''}\to W_v$ is a $2:1$ map (it restricts to isomorphisms $C_{v'}\to W_v$ and $C_{v''}\to W_v$), where $C_{v'},C_{v''},W_v$ are smooth rational curves. Again by Remark~\ref{rem:Omega_d} (iii), we cannot choose points in $\nu^*\varphi^*(H)|_{C_{v'}\cup C_{v''}}$ in the same preimage over a point in $H\cap W_v$. By Lemma \ref{lem:dimdeg} (ii) and (iv), we have $d_{v'}+d_{v''}  = \deg(W_v)$, hence $H$ cuts $W_v$ in $d_{v'}+d_{v''}$ distinct points. Inside the set of preimages of these points in $C_{v'}\cup C_{v''}$ via $\varphi\circ\nu$, we have to choose $d_{v'}$ points in $C_{v'}$ and $d_{v''}$ points in $C_{v''}$, and any two of these points have different images in $W_v$. After choosing the $d_{v'}$ points in $C_{v'}$, we have only one choice for the $d_{v''}$ points in $C_{v''}$, so we get the contribution of $\binom{d_{v'}+d_{v''}}{d_{v'}}$ to Formula~\eqref{eq:degree}. This concludes the proof of the claim.

Next, recall that $W_H\subseteq \widehat{C}$ is defined as the maximal subcurve contained in $H$, for every $H\in \Proj^\vee_C$ (see Remark \ref{rem:Omega_d} (iv)). We now compute the number of preimages of some fixed $H\in \U_{\ud}$ via $\rho_{\ud}$, by taking the product of the following terms.
\begin{itemize}
    \item[(a)] If $v\in V(\Gamma_C)$ and $\varphi(C_v)\subseteq W_H$, then $d_v=0$ (recall that $H\in \U_{\ud}$), and there is no choice of divisors on $C_v$ to be made. In particular the term corresponding to $v$ is equal to $1 = \binom{k_v}{d_v} = \binom{k_v}{0}$.
    
    \item[(b)] If $v\in V_0$ and $\varphi(C_v)\not\subseteq W_H$, write $\nu^*\varphi^*(H)|_{C_v} = \sum_{i=1}^k a_ip_i$ with $p_i\neq p_j$ for all $i \neq j$. Then the term is the number of ways to  write
    \[
    \sum_{i=1}^k x_i = d_v, \text{ for } 0\leq x_i\leq a_i.
    \]
    This term is smaller than $\binom{k_v}{d_v}$ if and only if at least one $a_i\geq 2$ and $0 < d_v < k_v$. 
    
    \item[(c)]  If $v \in V_1$ and $\varphi(C_v)\not\subseteq W_H$, write 
    \[
    \nu^*\varphi^*(H)|_{C_v} = \sum_{i=1}^k a_i(p_i +q_i) + 2\sum_{j=1}^{k'} b_j r_j,
    \]
    where $p_i,q_i$ are pairwise distinct conjugate pairs and $r_j$ are distinct ramification points of the $2:1$ map $C_v\to \varphi(C_v)$. The term is equal to $\prod_{i=1}^{k}(a_i + 1)$, which is strictly smaller than $2^{d_v}$ if and only if $k'\neq 0$ or some $a_i\geq 2$. Notice a special case: if $\varphi(C_v)$ is a line, we never have $a_i \geq 2$.
    
    \item[(d)] If $v\in\widehat{V}^\nu_2$ and $\varphi(C_{v'}\cup C_{v''})\not\subseteq W_H$, write $\nu^*\varphi^*(H)|_{C_{v'}} = \sum a_ip_i'$. In particular $\nu^*\varphi^*(H)|_{C_{v''}} = \sum a_ip_i''$, where each $p_i''$ is the image of $p_i'$ via the involution $\iota \colon C_{v'}\to C_{v''}$. Then, we have to choose a divisor $D_{v'}\leq \sum a_ip_i'$ on $C_{v'}$ and this forces the choice of the divisor $D_{v''}=\iota_*(\sum a_ip'_i - D_{v'})$ on $C_{v''}$. This term equals the number of ways to write
    \[
    \sum_{i=1}^k x_i = d_{v'}, \text { for } 0\leq x_i\leq a_i.
    \]
    This term is strictly smaller than $\binom{\sum a_i}{d_{v'}} = \binom{d_{v'}+d_{v''}}{d_{v'}}$ if and only if there is at least one $a_i\geq 2$, and also $d_{v'}>0$ and $d_{v''}>0$. Notice that $d_{v'}+d_{v''}=k_{v'}=k_{v''}$, so we also have $d_{v'}<k_{v'}$ and $d_{v''}<k_{v''}$.
\end{itemize}

Summing up, $H \in\Br(\rho_{\ud, \U_{\ud}})$ if and only if either $H$ is tangent to some $W\in \Irr(\widehat{C})$ (and $W\not\subseteq H$) with $0 < d_Z < k_Z$, for every $Z\in \Irr(\varphi^{-1}(W))$, or $H$ contains a point $p\in \mathcal B_{\widehat{C}}$. Since the components of the canonical model of a nodal curve are nodal (as it follows from \cite[Remark~3.8]{catanese}), the first condition is equivalent to $H\in \mathcal \bigcup_{W\in \mathcal B_{\ud}}W^\vee$ by Proposition \ref{prop:dual}. The second condition is equivalent to $H\in \Lambda^\vee_p$, for $p\in \mathcal B_{\widehat{C}}$. This yields the formula for $ \Br(\rho_{\ud})$ in the statement. In particular, we see that $\Br(\rho_{\ud})= \Br^1(\rho_{\ud})$. Finally, every irreducible component of $\Br(\rho_{\ud})$ intersects $\U_0$, hence $\Br(\rho_{\ud})$ is the closure of $\Br(\rho_{\ud, \U_0})$. This concludes the proof.
\end{proof}

Recall the description of the restriction of $\Omega_{\ud}$ over $\U_1$ from Remark~\ref{rem:Omega_d} (ii). For $v\in V(\Gamma_C)$, we let $\Ram_{\ud,v}\subseteq \Omega_{\ud}$ be the closure in $\Omega_{\ud}$ of  
\begin{align}\label{eq:Ramv}
\Ram^0_{\ud,v} &= 
\left\{(D,H)\in \Omega_{\ud, \U_1}; 
\begin{array}{l} \text{there is} \; p\in (C^{\sm}\cap C_v)\setminus \varphi^{-1}(\mathcal B_{\widehat{C}}) \\ \text{such that } 1\le \mu_p(D) < \mu_p(\varphi^*(H))
\end{array}\right\}.
\end{align}
Given a point $p\in C^{\sm}$, we also define $\Ram_{\ud, p}\subseteq \Omega_{\ud}$ as the closure in $\Omega_{\ud}$ of
\begin{equation}\label{eq:Ramp}
\Ram^0_{\ud, p} := \{(D,H) \in \Omega_{\ud, \U_1}: 1\le \mu_p(D)\le \mu_p(\varphi^*(H))\}.
\end{equation}

The next proposition identifies explicit loci contained in $\Ram(\rho_{\ud})$. For the applications in Section \ref{sec:Torelli-theorem}, only the inclusion established here is needed, so we do not pursue the converse. Recall the definition of $\kappa_{\ud}$ in Equation \eqref{eqdef:kappa}.

\begin{proposition}\label{prop:rho-ramif}
Let $C$ be a connected stable curve of genus $g\ge2$ with no separating nodes. Let $\ud\in \Sigma(C)$. The following properties hold.
\begin{itemize}
    \item[(i)] 
      The ramification locus $\Ram(\rho_{\ud}) \subseteq\Omega_{\ud}$ of the map $\rho_{\ud}\colon \Omega_{\ud}\to \Proj_C^\vee$ satisfies 
    \[
     \left(\underset{0<d_v<k_v}{\bigcup_{v\in V(\Gamma_C)}}\Ram_{\ud,v}\right)\cup\left(\underset{\varphi(p)\in\mathcal B_{\widehat{C}}}{\bigcup_{p\in C}} \Ram_{\ud, p}\right)\subseteq \Ram(\rho_{\ud}).
    \]
    \item[(ii)] For every $v\in V(\Gamma_C)$ with $0<d_v<k_v$ let   $W_v=\varphi(C_v)$. If $\deg(W_v) >1$ then 
     $\codim_{S_{\ud}(C^\nu)}(\kappa_{\ud}(\Ram_{\ud,v}))=1$ and $\rho_{\ud}(\Ram_{\ud, v}) = W_v^{\vee}$.
    \item[(iii)] For every $p\in C$ with $\varphi(p)\in\mathcal B_{\widehat{C}}$, we have $\codim_{S_{\ud}(C^\nu)}(\kappa_{\ud}(\Ram_{\ud, p}))=1$ and $\rho_{\ud}(\Ram_{\ud, p}) = \Lambda_{\varphi(p)}^{\vee}$.
\end{itemize}
\end{proposition}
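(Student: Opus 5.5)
Over $\U_1$ we work with the explicit description of $\Omega_{\ud}$ from Remark~\ref{rem:Omega_d}(ii). Since $\rho_{\ud}$ is finite over $\U_{\ud}\supseteq\U_1$ and $\Omega_{\ud}$ is Cohen--Macaulay and integral (Proposition~\ref{prop:Omega}), Proposition~\ref{prop:branch_fiber_number} applies there. Thus a point $(D,H)$ over $H\in\U_1$ lies in $\Ram(\rho_{\ud})$ exactly when the fiber $\rho_{\ud}^{-1}(H)$ is non-reduced at $(D,H)$. Equivalently, by flatness, $(D,H)$ fails to be an isolated point of multiplicity one: under a small deformation of $H$, the point $(D,H)$ splits into several nearby preimages.

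\textbf{Part (i).} Since $\Ram(\rho_{\ud})$ is closed, it suffices to show $\Ram^0_{\ud,v}\subseteq\Ram(\rho_{\ud})$ and $\Ram^0_{\ud,p}\subseteq\Ram(\rho_{\ud})$.

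For $(D,H)\in\Ram^0_{\ud,v}$, take $p$ with $1\le\mu_p(D)<\mu_p(\varphi^*H)$. Deform $H$ to a nearby $H_t$ whose intersection with $C_v$ near $p$ consists of $m=\mu_p(\varphi^*H)$ distinct points. The divisors $D_t\le\varphi^*H_t$ that specialize to $D$ can be formed by choosing any $\mu_p(D)$ of these $m$ points. Since $1\le\mu_p(D)<m$, there are $\binom{m}{\mu_p(D)}\ge2$ such choices. So at least two points of the general nearby fiber collide at $(D,H)$, and $(D,H)$ is a ramification point. This is the same local count as case (b) in the proof of Proposition~\ref{prop:rho-branch}. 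If $C_v$ is hyperelliptic, we use the analogous counts of cases (c) and (d), respecting the constraint of Remark~\ref{rem:Omega_d}(iii). The condition $p\notin\varphi^{-1}(\mathcal B_{\widehat C})$ ensures that $p$ is not a ramification point of $\varphi$, so this local picture is valid.

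For $p$ with $\varphi(p)\in\mathcal B_{\widehat C}$: $p$ is a ramification point of the $2:1$ map on an honest hyperelliptic component (Corollary~\ref{cor:honest-ramif}), so $\mu_p(\varphi^*H)$ is even, say $2b$. When $H$ is deformed, the conjugate pairs near $p$ separate. If $\mu_p(D)\ge1$, then $D$ is a limit of several distinct choices of one point from each nearby conjugate pair, again giving ramification. This is case (c) with $k'\ne0$.

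\textbf{Parts (ii) and (iii).} The computation goes through $\kappa_{\ud}$ and the generic isomorphism $\kappa_{\ud}^{-1}(\V_{\ud})\cong\V_{\ud}$ of Equation~\eqref{eq:kappaV}, which sends $D$ to $(D,\spann(\widehat\varphi_*D))$.

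For (ii), the general point of $\Ram_{\ud,v}$ should be $D$ with a double point $2p$ on $C_v$ together with general other points, with $H$ the span of these points, which is tangent to $W_v$ at $\varphi(p)$. The set of such $D$ is the diagonal-type divisor in $\Symm^{d_v}(C^\nu_v)\times\cdots$, which is nonempty precisely because $d_v\ge1$. Its codimension is $1$ in $S_{\ud}(C^\nu)$. Note that the locus defined by the conditions $\mu_p(D)=1$ and $\mu_p(\varphi^*H)=2$ is open and dense in $\Ram^0_{\ud,v}$ up to dimension counts. One must check that a general $D$ with a double point on $C_v$ lies in $\V_{\ud}$; here we use Theorem~\ref{thm:general_position} and Lemma~\ref{lem:dimdeg}(iii), together with the condition $d_v<k_v$, which leaves room for the tangent hyperplane. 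Given this, $\rho_{\ud}(\Ram_{\ud,v})$ is the closure of the set of hyperplanes tangent to $W_v$ at a point, which is $W_v^\vee$ by Proposition~\ref{prop:dual}. When $\deg W_v>1$, this is a hypersurface. The description "$\mu_p(D)=1$, general otherwise" gives codimension one, since we fix one point of the divisor to be at the (one-dimensional) location $p$ and impose tangency.

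For (iii), the general point of $\Ram^0_{\ud,p}$ is $D$ containing the fixed point $p$ plus general points; this locus has codimension one. Its image consists of the hyperplanes through $\varphi(p)$, namely $\Lambda^\vee_{\varphi(p)}$.

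\textbf{Main obstacle.} The hard step is making the local splitting argument rigorous at non-reduced fibers. This requires identifying a deformation of $H$ inside $\U_0$ along which the multiplicity-$m$ contact resolves, and checking that the nearby preimages of $H_t$ are genuinely distinct points of $\Omega_{\ud}$ converging to $(D,H)$. One must also verify that $\V_{\ud}$ meets the loci $\Ram_{\ud,v}$ and $\Ram_{\ud,p}$, which is needed to get the codimension statements.
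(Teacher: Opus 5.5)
Your part (i) is essentially the paper's argument: you specialize a general hyperplane to $H$ along a DVR and exhibit at least two distinct points of the generic fibre converging to $(D,H)$. Your part (iii) also matches the paper in substance. There you only need to add why $p+D'$, with $D'$ general of degree $g-2$, lies in $\kappa_{\ud}(\Ram_{\ud,p})$: the point $\varphi(p)$ and the points $\varphi\nu(D')$ lie on a common hyperplane.

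Part (ii), however, has a genuine gap. You describe the general point of $\Ram_{\ud,v}$ as a divisor containing $2p$ on $C_v$, with $H=\spann(\widehat\varphi_*D)$ tangent to $W_v$ at $\varphi(p)$, and you identify $\kappa_{\ud}(\Ram_{\ud,v})$ with the diagonal-type divisor. For a general such pair $\mu_p(D)=2=\mu_p(\varphi^*H)$, so it is not in $\Ram^0_{\ud,v}$. In fact the local count of Example~\ref{ex:acgh} shows that such pairs are \emph{unramified}; this is exactly the classical mistake corrected in Remark~\ref{rem:correction-ACGH}. The diagonal locus does map onto $W_v^\vee$, but it is a different divisor of $S_{\ud}(C^\nu)$. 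It is also empty when $d_v=1$, a case the statement covers.

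You do note in passing that the relevant shape is $\mu_p(D)=1<2=\mu_p(\varphi^*H)$. But the codimension-one claim then rests on the heuristic ``fix $p$ and impose tangency''. A one-condition count gives codimension $\le 1$ only on a locus already known to be nonempty, whose points genuinely lie in $\Omega_{\ud,\U_1}$ with $\mu_p(D)<\mu_p(\varphi^*H)$. Producing such points is the real content of (ii), and you do not do it. Theorem~\ref{thm:general_position} cannot supply it, since it concerns general hyperplanes, not tangent ones. Checking that $\V_{\ud}$ meets the locus is not what is needed either.

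The missing idea is to reverse the count. Subtract $1$ from $\ud$ at $v$ and at a suitable $v'$ to get $\ud'$ with $|\ud'|=g-3$. Take $v'=v$ if $v\in V_1$, or if $v\in V_0$ with $d_v\ge 2$. Take the partner vertex if $v\in V_2$. Otherwise take any $v'$ with $d_{v'}>0$.

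For general $p\in C_v$ and general $D''\in S_{\ud'}(C^\nu)$, the $g-3$ points $\varphi\nu(D'')$ and the tangent line to $\widehat C$ at $\varphi\nu(p)$ span at most a $\mathbb P^{g-2}$. So they lie in some hyperplane $H$. Since $k_v>d_v$ and $k_{v'}\ge d_{v'}$, there is still room for a point $q\in C_{v'}$ with $(p+q+D'',H)\in\Omega_{\ud}$; in the hyperelliptic cases necessarily $q=\iota_{\h}(p)$. Then $\mu_p(p+q+D'')=1<\mu_p(\nu^*\varphi^*H)$, so this pair lies in $\Ram^0_{\ud,v}$. If $W_v$ were a line, every such $H$ would contain $W_v$; this is where $\deg W_v>1$ enters.

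These triples dominate $C_v\times S_{\ud'}(C^\nu)$, which has dimension $g-2$, and $(p,q,D'')\mapsto p+q+D''$ is quasi-finite. Hence $\dim\kappa_{\ud}(\Ram_{\ud,v})\ge g-2$. The reverse inequality needs no $\V_{\ud}$: $\rho_{\ud}$ is generically \'etale, so $\Ram(\rho_{\ud})$ is a proper closed subset of the integral $(g-1)$-dimensional $\Omega_{\ud}$.
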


\begin{proof}
We prove (i). Since $\Ram(\rho_{\ud})$ is closed, it suffices to prove the inclusion 
    \[
     \left(\underset{0<d_v<k_v}{\bigcup_{v\in V(\Gamma_C)}}\Ram_{\ud,v}^0\right)\cup\left(\underset{\varphi(p)\in\mathcal B_{\widehat{C}}}{\bigcup_{p\in C}} \Ram^0_{\ud, p}\right)\subseteq \Ram(\rho_{\ud}).
    \]
Since $\mathcal U_1\subseteq\mathcal U_{\underline d}$,
Proposition~\ref{prop:Omega}(i) shows that the restriction
\(
\rho_{\underline d,\mathcal U_1}\colon
\Omega_{\underline d,\mathcal U_1}\to \mathcal U_1
\)
is finite and dominant, with integral Cohen--Macaulay source and smooth target. So we can use  Proposition~\ref{prop:branch_fiber_number}. We shall use that, after
base change along a morphism $\Spec(R)\to\mathcal U_1$ from the spectrum of a DVR, two
distinct points of the generic fiber cannot specialize to the same
unramified point.

We start by proving the inclusion $\Ram^0_{\ud, v} \subseteq \Ram(\rho_{\ud})$ for all $v \in V(\Gamma_C)$. Recall the decomposition $V(\Gamma_C) = V_0 \sqcup V_1 \sqcup V_2$ in Equation \eqref{eq:V-decomposition}. In what follows, given a hyperplane $H$ in $\Proj_C^\vee$, we will consider a family $\mathcal H\to \Spec(R)$ of hyperplanes, where $R$ is a DVR, with generic point $\eta$, whose fiber over $\eta$ is equal to a general hyperplane $H_\eta$ and whose fiber over the special point is equal to $H$.

First, fix $v\in V_0$ with $0<d_v<k_v$ and let $(D, H) \in \Ram_{\ud, v}^0$. Then there is a point $p_0 \in C^{\sm}\cap C_v$, and an integer $a_0\geq 1$ such that 
\[
\nu^*\varphi^*(H) = (a_0 + 1)p_0 + E,
\]
for some effective divisor $E$ on $C^\nu$, and such that $D = a_0p_0 + D'$, for some $D'$ satisfying $\mu_{p_0}(D') = 0$ (Note that we do not impose that $\mu_{p_0}(E)=0$). Then 
\[
\nu^*\varphi^*(H_\eta) = \sum_{i=1}^{a_0+1}p_{i,\eta} + E_\eta,
\]
where $p_{i,\eta}$ specializes to $p_0$, for every $i=1, \ldots, a_0+1$, and where $E_\eta$ specializes to $E$. For $j=1, \ldots, a_0+1$, we define
\[
D_{j,\eta} := \left(\sum_{i=1}^{a_0+1}p_{i,\eta}\right) - p_{j,\eta} +  D'_\eta,
\]
where $D'_{\eta}$ is any divisor specializing to $D'$ and satisfying (recall Remark \ref{rem:Omega_d} (ii)) 
\[
(D'_{\eta})_0 + (D'_{\eta})_{\h}+ \iota_{\h}^*((D'_{\eta})_{\h}) \leq E_\eta.
\] 
Therefore $(D_{j, \eta}, H_\eta)\in \Omega_{\ud}$ and $(D_{j,\eta}, H_\eta)$ specializes to $(D, H)$ for each $j=1,\ldots, a_0+1$. Since $a_0\geq 1$, at least two points  $(D_{j,\eta}, H_\eta)$ specialize to $(D,H)$, so $(D, H)\in \Ram(\rho_{\ud})$. 

Next fix $v\in V_1\sqcup  V_2$ with $0<d_v<k_v$, let
$(D,H)\in\Ram^0_{\ud,v}$, and choose $p_0$ as in Equation~\eqref{eq:Ramv}.  Set $q_0 := \iota_{\h}(p_0)\neq p_0$ (because $p_0\notin \varphi^{-1}(\mathcal{B}_{C})$). Then, by Remark~\ref{rem:Omega_d} (iii), there exists an integer $a_0\geq 1$ such that
\[
\nu^*\varphi^*(H) = (a_0+ 1)(p_0+q_0) + E,
\]
and such that $D = a_0p_0 + q_0 + D'$ with $\mu_{p_0}(D') = 0$. Then 
\[
\nu^*\varphi^*(H_\eta) = \sum_{i=1}^{a_0 + 1}(p_{i,\eta} + q_{i,\eta}) + E_\eta,
\]
where $\iota_{\h}(p_{i, \eta}) = q_{i, \eta}$,  $p_{i,\eta}$ specializes to $p_0$, and $q_{i, \eta}$ specializes to $q_0$, for every $i=1, \ldots, a_0+1$, and where $E_\eta$ specializes to $E$.  For $j=1, \ldots, a_0+1$, we define
\[
D_{j,\eta} := \left(\sum_{i=1}^{a_0+1}p_{i,\eta}\right) - p_{j,\eta} +  q_{j, \eta} + D'_\eta,
\]
where $D'_\eta$ is any divisor specializing to $D'$ and satisfying (recall Remark \ref{rem:Omega_d} (ii))
\[
(D'_{\eta})_0 + (D'_{\eta})_{\h} +\iota_{\h}^*((D'_{\eta})_{\h})\leq E_\eta.
\]
In particular, $(D_{j, \eta}, H_\eta)\in \Omega_{\ud}$ and $(D_{j,\eta}, H_\eta)$ specializes to $(D, H)$ for every $j=1,\ldots, a_0+1$. Since $a_0\geq 1$, at least two points  $(D_{j,\eta}, H_\eta)$ specialize to $(D,H)$, so $(D, H)\in \Ram(\rho_{\ud})$. 

We now show the inclusion $\Ram^0_{\ud, p}\subseteq \Ram(\rho_{\ud})$ whenever $\varphi(p)\in \mathcal{B}_{\widehat{C}}$. In this case, given $(D,H)\in \Ram^0_{\ud, p}$, we can write 
\[
\nu^*\varphi^*(H) = 2p + E,
\]
for some effective divisor $E$ on $C^\nu$,
and $D = p + D'$ for some $D'$. Since $(D,H)\in\Omega_{\ud}$, we have
\(
D'_0+D'_{\h}+\iota_{\h}^*(D'_{\h})\leq E
\). The generic hyperplane $H_{\eta}$ now satisfies
\[
\nu^*\varphi^*(H_{\eta}) = p_{\eta} + q_{\eta} + E_{\eta},
\]
where $q_\eta=\iota_{\h}(p_\eta)$ and $p_\eta,q_\eta$ specialize to $p$.
Choose a divisor $D'_\eta$ specializing to $D'$ and satisfying
\[
(D'_{\eta})_0+ (D'_{\eta})_{\h}+\iota_{\h}^*((D'_{\eta})_{\h})\leq E_\eta.
\]
Set $D_{1,\eta} := p_{\eta}+D'_{\eta}$ and $D_{2, \eta} := q_{\eta}+D'_{\eta}$. Both pairs $(D_{1,\eta}, H_{\eta})$, $(D_{2, \eta}, H_{\eta})$ belong to $\Omega_{\ud}$ and specialize to $(D, H)$, so $(D, H)\in \Ram(\rho_d)$. This ends the proof of (i).

Before proving (ii) and (iii), notice that $\rho_{\ud}$ is generically \'etale (since it is dominant, generically finite and  $\operatorname{char}(k)=0$). Hence $\Ram(\rho_{\ud})$ has positive codimension. By (i) and the
birationality of $\kappa_{\ud}$ (see Equation \eqref{eq:kappaV}), the same is true of the two loci inside $S_{\ud}(C^\nu)$ in (ii) and (iii). It remains to prove that their codimensions are at most $1$.

We prove (ii). If $g=2$ we have nothing to prove since $\deg W_v=1$ for every $v\in V(\Gamma_C)$. Assume that $g\ge 3$. The fact that $\rho_{\ud}(\Ram_{\ud, v}) = W_v^{\vee}$, where $W_v=\varphi(C_v)$ is clear. Fix $v\in V(\Gamma_C)$ with $0 < d_v < k_v$ and $\deg(W_v)>1$. Choose $v'$ as follows. If $v\in V_1$, set
$v'=v$. If $v\in V_2$, let $v'$ be the other vertex of the corresponding honest hyperelliptic pair. If $v\in V_0$ and $d_v\geq2$,
set $v'=v$; otherwise choose $v'\neq v$ with $d_{v'}>0$.

Let $\ud'$ be the multidegree obtained from $\ud$ by subtracting $1$ at the entries corresponding to $v$ and $v'$ (so $|\ud'|=g-3$). For a subcurve $Z\subseteq C$, set \[\dot{Z}:=(Z\cap C^{\sm})\setminus \varphi^{-1}(\mathcal B_{\widehat{C}}).\] Define  $F_1\subseteq \dot{C}_v\times S_{\ud'}(C^\nu)\times \Proj^\vee_C$ and $F_2\subseteq \dot{C}_v\times \dot{C}_{v'}\times S_{\ud'}(C^\nu)\times\Proj^\vee_C$ as 
\begin{align*}
    F_1 & := \left\{(p, D', H) : \begin{array}{l} \exists q\in \dot{C}_{v'} \text{ such that }(p+q+D',H)\in \Omega_{\ud, \U_1},\\ 
    \text{with }1\leq \mu_p(p+q+D') < \mu_p(\nu^*\varphi^*(H))
    \end{array}\right\}; \\ \\
    F_2 & := \left\{(p, q, D', H) : \begin{array}{l} (p+q+D', H)\in \Omega_{\ud, \U_1}, \text{ with}\\ 1\leq \mu_p(p+q+D') < \mu_p(\nu^*\varphi^*(H))
    \end{array}\right\}.
\end{align*}
We have the following commutative diagram
\[
\begin{tikzcd}
F_1 \ar[d, "\zeta_{1, S}"] & F_2 \ar[l, "{\zeta_{2,1}}"'] \ar[d, "{\zeta_{2, S}}"] \ar[r, "{\zeta_{2, R}}"] & \Ram_{\ud,v}\ar[d, "\kappa_{\ud}"] \\
\dot{C}_v\times S_{\ud'}(C^\nu)  & \ar[l, "\eta_S"'] \dot{C_v}\times \dot{C}_{v'}\times S_{\ud'}(C^\nu) \ar[r, "\pi_S"] & S_{\ud}(C^\nu)
\end{tikzcd}
\]
where
\begin{enumerate}
    \item the map $\zeta_{2, 1}$ is the  forgetful map $\zeta_{2, 1}(p, q, D', H) = (p, D', H)$;
    \item the map $\zeta_{2, R}$ is defined by $\zeta_{2, R}(p, q, D', H) = (p+q+D', H)$;
    
\item the map $\zeta_{1, S}$ is the forgetful map $\zeta_{1, S}(p, D', H) = (p, D')$;

\item the map $\zeta_{2,S}$ is the forgetful map $\zeta_{2, S}(p, q, D', H) = (p, q, D')$;

\item the map $\eta_S$ is the projection onto the first and third factor;
\item $\pi_S$ is defined by $\pi_S(p,q,D')=p+q+D'$.
\end{enumerate}
Notice that $\zeta_{2, 1}$ 
is a dominant map.

We claim that $\zeta_{1, S}$ is dominant. By the definition of $F_1$, it is enough to prove that, for a general pair $(p,D')\in \dot{C}_v\times  S_{\ud'}(C^\nu)$, there is $H\in \Proj^\vee_C$ such that $2p + D'_0 + D'_{\h}+\iota_{\h}^*(D'_{\h})\leq \nu^*\varphi^*(H)$ and, if $C_v$ is contained in an honest hyperelliptic subcurve of $C$, then $2p +\iota_{\h}^*(2p) + D'_0 + D'_{\h} + \iota_{\h}^*(D'_{\h})\leq \nu^*\varphi^*(H)$. This is equivalent to saying that there is $H\in \Proj^\vee_C$ tangent to $\widehat{C}$ at $\varphi(\nu(p))$ such that $\varphi(\nu(D'))\subseteq H$.   Since $\deg(D') = g-3$, there is a hyperplane $H\in \Proj^\vee_C$ containing $\varphi(\nu(D'))$ and the tangent line to $\widehat{C}$ at $\varphi(\nu(p))$. Finally, since $\deg(\nu^*\varphi^*(H)|_{C_v^{\nu}})=k_v> d_v$ and $\deg(\nu^*\varphi^*(H)|_{C^{\nu}_{v'}})=k_{v'}\geq d_{v'}$, there exists $q\in\dot C_{v'}$ such that
$p+q+D'\leq \nu^*\varphi^*(H)$.
Thus $(p,q,D',H)\in F_1$, proving the claim.

By the claim we get that $\zeta_{1,S}\circ \zeta_{2,1}$ is dominant, which means that $\eta_S\circ \zeta_{2, S}$ is dominant. In particular, $\codim_{C_v\times C_{v'}\times S_{\ud'}(C^\nu)}(\zeta_{2,S}(F_2)) \leq 1$. Since the map $\pi_S$ is dominant and quasi-finite, we have $\codim_{S_{\ud}(C^\nu)}(\pi_S(\zeta_{2, S}(F_2))) \leq 1$.
Because the above diagram is commutative, we deduce 
\[
(\pi_S\circ \zeta_{2,S})(F_2) = (\kappa_{\ud}\circ\zeta_{2,R})(F_2)\subseteq \kappa_{\ud}(\Ram_{\ud,v}),
\]
which concludes the proof of (ii).

We prove (iii). The fact that $\rho_{\ud}(\Ram_{\ud, p}) = \Lambda_{\varphi(p)}^{\vee}$ is again clear. Fix $p\in C$ such that $\varphi(p)\in \mathcal B_{\widehat{C}}$. Let $v\in V(\Gamma_C)$ be such that $p\in C_v$.  Let $\ud'$ be the multidegree obtained from $\ud$ by subtracting $1$ at the entry corresponding to $v$ (so $|\ud'|=g-2$). Define $F\subseteq S_{\ud'}(C^\nu)\times \Proj^\vee_C$ as
\[
F := \{(D', H);\ (p + D', H)\in \Omega_{\ud, \U_1}\}.
\]
Since $\deg(D')= g-2$, for a general $D'$, there always exists a hyperplane containing $\varphi(p)$ and  $\varphi(\nu(D'))$. Hence the following map is dominant
\[
\zeta_S\colon F\to S_{\ud'}(C^\nu)\times \Proj^\vee_C\to S_{\ud'}(C^\nu),
\]
where the first map is the natural inclusion and the second is the projection onto the first factor. Also define the map $\zeta_R\colon F\to \Ram_{\ud,p}$ by $\zeta_R(D',H)=(p+D', H)$. We have the following commutative diagram
\[
\begin{tikzcd}
F \ar[d, "\zeta_S"] \ar[r, "\zeta_R"]  & \Ram_{\ud,p} \ar[d, "\kappa_{\ud}"] \\
 S_{\ud'}(C^\nu) \ar[r, "s_p"] & S_{\ud}(C^\nu)
\end{tikzcd}
\]
where $s_p\colon S_{\ud'}(C^\nu)\to S_{\ud}(C^\nu)$ is defined by $s_p(D')=p + D'$. Because the above diagram is commutative, we have 
\[
(s_p\circ \zeta_S)(F) = (\kappa_{\ud}\circ \zeta_R)(F) \subseteq \kappa_{\ud}(\Ram_{\ud,p}).
\]
Since $\zeta_S$ is dominant, we obtain
\[
\codim_{S_{\ud}(C^\nu)}(s_p\circ \zeta_S(F)) = \codim_{S_{\ud}(C^\nu)}(s_p(S_{\ud'}(C^\nu))) = 1,
\]
which concludes the proof of (iii).
\end{proof}

 We now illustrate the ramification locus of $\rho_{\ud}$ in a simple example.
 
\begin{example}\label{ex:acgh}
    Consider a smooth non-hyperelliptic curve $C$ of genus $g\geq 3$. Then the canonical map $\varphi \colon C\to\widehat{C} \subseteq \mathbb{P}_C$ is an isomorphism.
    
    Let $H \subseteq \mathbb{P}_C$ be a general tangent hyperplane to $\widehat{C}$, and write $\varphi^*(H) = 2p + q_1 + \dots +q_{2g-4}$. Let $H_t$ be a one-parameter family of hyperplanes limiting to $H$ for $t \to 0$, with $\varphi^*(H_t) = p'_t+p''_t + q_{t,1} + \dots + q_{t,2g-4}$. The number of effective divisors $D_t$ of degree $g-1$ such that $D_t\leq \varphi^*(H_t)$ is $\binom{2g-2}{g-1}$. On the other hand, the number of effective divisors $D$ of degree $g-1$ such that $D\leq\varphi^*(H)$ is $\binom{2g-4}{g-3} + \binom{2g-4}{g-2} + \binom{2g-4}{g-1}$. More precisely, these effective divisors are 
    \[
    2p + \sum_{i\in S_1}q_{i}, \;\;\;\;\;  p+\sum_{i\in S_2}q_{i}, \;\;\;\;\;  \sum_{i\in S_3}q_{i}
    \]
    where $S_1, S_2, S_3\subseteq \{1, \ldots, 2g-4\}$ are any subsets with $|S_1| = g-3$, $|S_2| = g-2$, and $|S_3| = g-1$. It is clear that
   \[
   \lim_{t\to 0}\Bigg(p'_t+\sum_{i\in S_2}q_{t,i}\Bigg) = p +  \sum_{i\in S_2}q_i = \lim_{t\to 0} \Bigg(p''_t+\sum_{i\in S_2}q_{t, i}\Bigg).
   \]
   This means that $(p + \sum_{i\in S_2}q_i, H)$ is a ramification point  for every $S_2\subseteq\{1, \ldots, 2g-4\}$ such that $|S_2|=g-2$, while the points $(2p + \sum_{i\in S_1}q_i, H)$ and $(\sum_{i\in S_3}q_i, H)$ are unramified. 
\end{example}

\section{A Torelli theorem via canonical triples}\label{sec:Torelli-theorem}

\subsection{Recovering components and distinguished points}

In this section, we let $C$  be a connected stable curve of genus $g\ge2$ with no separating nodes. We denote by $\nu\colon C^{\nu} \to C$ the normalization, and by $\varphi\colon C\to \mathbb P_C$ the canonical map. 

Fix  $\ud\in \Sigma(C)$, and recall the definitions  of $\Upsilon_{\ud}$ from Section~\ref{sub: gauss} and of $\Omega_{\ud}$ and $\U_0 \subseteq \mathbb{P}_C^{\vee}$ from Section~\ref{subsec:gaussgraph}. In this section we compare the auxiliary Gauss graph $\Omega_{\ud}$ and the intrinsic Gauss component $\Upsilon_{\ud}$. This transfers the branch computation of Section~\ref{sec:branch-ramif} to $\Upsilon_{\ud}$ and it also allows us to recover the set $\mathcal{S}_{\widehat{C}}$.

\begin{notation}\label{not:partial-normalization}
Let $C$ be a nodal curve. For $S\subseteq C^{\sing}$, let
$C_S\to C$ be the partial normalization at $S$, and let
$\mu_S\colon C^\nu\to C_S$ be the induced map. A divisor $D$ on
$C^\nu$ supported on $\nu^{-1}(C^{\sm})$ will also be regarded,
via $\mu_S$, as a divisor on $C_S$; we write
$\mathcal O_{C_S}(D)$ accordingly.
\end{notation}

Consider $(D,H) \in \Omega_{\ud,\U_0}$. By  Remark~\ref{rem:Omega_d} we have 
\[
\nu^*\varphi^*(H)\subseteq \nu^{-1}(C^{\sm})
\; \text{ and } \; 
D_0+D_{\h}+\iota^*_{\h}(D_{\h})\leq \nu^*\varphi^*(H).
\]
In particular, $D$ is supported on $\nu^{-1}(C^{\sm})$, so $D$  is an effective Cartier divisor on $C$ supported on $C^{\sm}$ and $\mathcal{O}_{C}(D)\in \Theta_{\ud}$. This allows us to define a $\U_0$-morphism
\begin{equation}\label{eq:betaU0_def}
\begin{split}
   \beta_{\ud,\U_0}\colon \Omega_{\ud,\U_0} & \to \Theta_{\ud}\times \U_0 \\
      (D, H) &\mapsto ([\mathcal{O}_{C}(D)], H).
\end{split}
\end{equation}

We can extend this morphism on a larger open subset as follows. 

\begin{definition}\label{def:T}
   For each $p\in C^{\sing}$, set $\{p',p''\} := \nu^{-1}(p)$. We let $\mathbb T_{\ud}\subseteq S_{\ud}(C^\nu)$ be the open subset of divisors $D$ satisfying the following properties
\begin{enumerate}
\item for each $p\in C^{\sing}$, we have $\mu_{p'}(D) + \mu_{p''}(D)\leq 1$.

\item if $S_D\subseteq C^{\sing}$ denotes the set of $p\in C^{\sing}$ such that
$\mu_{p'}(D)+\mu_{p''}(D)=1$, and we set
$D^{\rm sm}:=D\cap \nu^{-1}(C^{\rm sm})$, then
$\mathcal O_{C_{S_D}}(D^{\rm sm})$ is stable on $C_{S_D}$.
\end{enumerate}
\end{definition}

By the definition of $\mathbb T_{\ud}$, there is a morphism $\alpha_{\ud}\colon \mathbb T_{\ud}\to \Pic$ taking $D$  to $(S_D, [\mathcal{O}_{C_{S_D}}(D^{\sm})])$. We define the open subset $\Omega_{\mathbb T_{\ud}}$ of $\Omega_{\ud}$ as
    \[
    \Omega_{\mathbb T_{\ud}} = \{(D, H)\in \Omega_{\ud} :  D\in \mathbb T_{\ud}\}\subseteq \Omega_{\ud}.
    \]
We consider the natural morphism to the product
\begin{equation}\label{eq:beta_def}
\begin{aligned}
\beta_{\mathbb T_{\ud}}\colon \Omega_{\mathbb T_{\ud}} & \to \Pic\times \Proj_C^\vee \\
(D, H) & \mapsto (\alpha_{\ud}(D), H).
\end{aligned}.
\end{equation}

Recall $\gamma_{\ud}\colon \Upsilon_{\ud}\to \Proj^\vee_C$ and  $\rho_{\ud}\colon \Omega_{\ud}\to \Proj^\vee_C$ from Equations \eqref{eq:upsilon-d} and~\eqref{eqdef:kappa}.
    
\begin{lemma}\label{lem:factorize}
Let $C$ be a connected stable curve of genus $g\ge2$ with no separating nodes. Let $\ud\in \Sigma(C)$. The following properties hold.
    \begin{enumerate}
        \item[(i)] The morphism $\beta_{\ud,\U_0}$ factors through $\Upsilon_{\ud,\U_0}$, yielding  a finite birational $\U_0$-morphism $\beta_{\ud,\U_0}\colon \Omega_{\ud,\U_0}\to \Upsilon_{\ud,\U_0}$.
        \item[(ii)] We have an inclusion $\Omega_{\ud, \U_0}\subseteq \Omega_{\mathbb T_{\ud}}$ and $\beta_{\mathbb T_{\ud}}|_{\Omega_{\ud, \U_0}} = \beta_{\ud, \U_0}$.
        \item[(iii)] The morphism $\beta _{\mathbb T_{\ud}}$ factors through $\Upsilon_{\ud}$.
    \end{enumerate}
\end{lemma}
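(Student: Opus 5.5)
The plan is to prove (i) first on a dense open subset where everything is explicit, then extend by closure and properness, and finally deduce (ii) and (iii) formally.

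\emph{Step 1: the Gauss map at a general point.} Take $(D,H)$ general in $\Omega_{\ud,\U_0}$. By the proof of Proposition~\ref{prop:Omega}, $D\in\V_{\ud}$ and $H=\spann(\widehat\varphi_*D)$. Set $I=\mathcal O_C(D)$. For general $D$ we have $h^0(C,I)=1$, so by Riemann--Roch $h^0(\omega_C\otimes I^{-1})=1$. The hyperplane sections containing $\varphi(D)$ are exactly the sections of $\omega_C(-D)$, and these form the single line $\ell_H$. Hence the image of the multiplication map in Lemma~\ref{lem:tangent_L_theta} is $\ell_H$. Its annihilator is therefore the hyperplane $H$, so $\mathcal G([I])=H$.

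Since $[I]$ is a general point of $\Theta_{\ud}$, it lies in $\Theta_0^{\sm}$. Thus $\beta_{\ud,\U_0}$ sends a dense open subset of $\Omega_{\ud,\U_0}$ into the graph of $\mathcal G$. Because $\Omega_{\ud}$ is integral (Proposition~\ref{prop:Omega}), the whole of $\Omega_{\ud,\U_0}$ maps into the closure $\Upsilon_{\ud}$, and the image lies over $\U_0$.

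\emph{Step 2: finiteness and birationality.} Finiteness holds because both $\Omega_{\ud,\U_0}$ and $\Upsilon_{\ud,\U_0}$ are proper over $\U_0$, and $\Omega_{\ud,\U_0}\to\U_0$ is finite since $\U_0\subseteq\U_{\ud}$. So $\beta_{\ud,\U_0}$ is proper and quasi-finite, hence finite.

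For birationality, first note that $\dim\Upsilon_{\ud}=g-1=\dim\Omega_{\ud}$ and $\Upsilon_{\ud}$ is irreducible, so the image is dense. It remains to check that $\beta$ is generically injective. If $\mathcal O_C(D)\cong\mathcal O_C(D')$ with $h^0=1$, then $D=D'$ as divisors on $C^{\sm}$, which forces $D=D'$ in $S_{\ud}(C^\nu)$. Since $\operatorname{char}k=0$, a dominant, generically injective morphism of integral varieties is birational.

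The main obstacle is the claim that a general point of $\Omega_{\ud}$ has $h^0(\mathcal O_C(D))=1$. I would argue it through dominance of $\kappa_{\ud}$ onto $S_{\ud}(C^\nu)$, using $\kappa_{\ud}^{-1}(\V_{\ud})\cong\V_{\ud}$, together with the fact that a general divisor in $S_{\ud}(C^\nu)$ maps to a general point of $\Theta_{\ud}$. By \cite[Proposition 3.2.1]{captheta}, such a point has $h^0=1$.

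\emph{Steps 3 and 4: parts (ii) and (iii).} Part (ii) is immediate: for $H\in\U_0$ the divisor $D$ is supported on $\nu^{-1}(C^{\sm})$, so $S_D=\emptyset$. Moreover $\mathcal O_C(D)$ is stable because it lies in $\Theta_{\ud}\subseteq\overline{P^{\ud}_C}$ with $\ud\in\Sigma(C)$. Hence $\alpha_{\ud}(D)=[\mathcal O_C(D)]$, which agrees with the definition of $\beta_{\ud,\U_0}$.

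For (iii), observe that $\Omega_{\mathbb T_{\ud}}$ is open in the integral scheme $\Omega_{\ud}$, so its open subset $\Omega_{\ud,\U_0}$ is dense in it. The morphism $\beta_{\mathbb T_{\ud}}$ is continuous, and $\Upsilon_{\ud}$ is closed in $\Pic\times\Proj^\vee_C$. By (i) and (ii), $\beta_{\mathbb T_{\ud}}$ maps this dense subset into $\Upsilon_{\ud}$, so it maps all of $\Omega_{\mathbb T_{\ud}}$ into $\Upsilon_{\ud}$. Since $\Omega_{\mathbb T_{\ud}}$ is reduced, $\beta_{\mathbb T_{\ud}}$ therefore factors scheme-theoretically through $\Upsilon_{\ud}$.
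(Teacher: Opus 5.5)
Your proposal is correct and follows essentially the same route as the paper. On the dense locus where $h^0(C,\mathcal O_C(D))=1$ you identify $H$ with the tangent hyperplane via Lemma~\ref{lem:tangent_L_theta}. You get finiteness from properness plus quasi-finiteness over $\U_0$, birationality from the uniqueness of the section, and (ii)--(iii) from $S_D=\emptyset$ together with density in the irreducible $\Omega_{\ud}$. Your extra step is useful: you derive the density of the $h^0=1$ locus in $\Omega_{\ud,\U_0}$ from the dominance of $\kappa_{\ud}$ (via $\kappa_{\ud}^{-1}(\V_{\ud})\cong\V_{\ud}$) and of the Abel map onto $\Theta_{\ud}$, which the paper only asserts.
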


Since $\Omega_{\ud}$ is irreducible by Proposition~\ref{prop:Omega}(ii), we obtain a birational map $\beta_{\ud} \colon \Omega_{\ud} \dashrightarrow \Upsilon_{\ud}$.

\begin{proof}
Let $V\subseteq \Omega_{\ud,\U_0}$ be the open subset consisting of pairs $(D,H)$
such that $h^0(C,\mathcal O_C(D))=1$. The general element of
$\Theta_{\ud}$ has exactly one section by \cite[Proposition 3.2.1]{captheta};
hence $V$ is dense in $\Omega_{\ud,\U_0}$.

Take $(D,H)\in V$ and set $I:=\mathcal O_C(D)$. Let
$\ell_H\subseteq H^0(C,\omega_C)$ be the line  corresponding to
$H\in\mathbb P_C^\vee$. Since $(D,H)\in\Omega_{d,\U_0}$, the sections in
$\ell_H$ vanish along $\nu(D)$; hence
\[
\ell_H\subseteq
H^0(C,\omega_C\otimes I^{-1})
=
H^0(C,\omega_C(-D)).
\]
Since $h^0(C,I)=1$, Riemann--Roch gives
$h^0(C,\omega_C\otimes I^{-1})=1$. Therefore
\[
H=
\mathbb P\bigl(H^0(C,\omega_C\otimes I^{-1})\bigr)
=
\mathbb P\bigl(H^0(C,\omega_C(-D))\bigr)
\in \mathbb P_C^\vee.
\]
Equivalently, the hyperplane of
$\mathbb P_C=\mathbb P(H^0(C,\omega_C)^\vee)$ corresponding to $H$ is
\[
\mathbb P\bigl(H^0(C,\omega_C(-D))^\perp\bigr)\subseteq \mathbb P_C.
\]
By Lemma~\ref{lem:tangent_L_theta}, this hyperplane is $\mathbb P(T_0u^{-1}\Theta(C))$, where $u=[I]$, so $([I],H)\in\Upsilon_{d,\U_0}$. Hence, since $V$ is dense in $\Omega_{\ud,\U_0}$, the morphism $\beta_{\ud,\U_0}$ factors through $\Upsilon_{\ud,\U_0}$. 

 As noted at the beginning of the proof, the general element of $\Theta_{\ud}$ has only one section, hence the morphism $\beta_{\ud,\U_0}$ is birational. We prove that $\beta_{\ud,\U_0}$ is finite by showing it is proper and quasi-finite. Since $\gamma_{\ud,\U_0}$ and $\rho_{\ud, \U_0}$ are proper, $\beta_{\ud,\U_0}$ is also proper. Moreover, $\beta_{\ud, \U_0}$ is quasi-finite, because it is a $\U_0$-morphism, so every fiber of $\beta_{\ud, \U_0}$ is contained in a fiber of $\rho_{\ud, \U_0}$, and the latter is finite by  Proposition~\ref{prop:Omega}(i). This ends the proof of (i).

Notice that $\Omega_{\ud,\U_0}\subseteq \Omega_{\mathbb{T}_{\ud}}$. Indeed, if
$(D,H)\in\Omega_{\ud,\U_0}$, then $\supp(D)\subseteq\nu^{-1}(C^{\rm sm})$, so
$S_D=\emptyset$ and $\mathcal O_C(D)$ has multidegree $\ud\in\Sigma(C)$. Hence $D\in \mathbb{T}_{\ud}$. Moreover, by Equations \eqref{eq:betaU0_def} and \eqref{eq:beta_def}, we have
$\beta_{\mathbb{T}_{\ud}}(D,H)=\beta_{d,U_0}(D,H)$. This proves (ii).

To prove (iii), it is enough to notice that $\Omega_{\ud}$ is irreducible, so the image of $\beta_{\mathbb T_{\ud}}$ lies in the closure of the image of $\beta_{\ud, \U_0}$, which is contained in $\Upsilon_{\ud}$. 
\end{proof}

Next, given $\ud\in \Sigma(C)$, we consider the normalization $\varphi\colon\widehat{\Upsilon}_{\ud}\to\Upsilon_{\ud}$ of $\Upsilon_{\ud}$. We let 
\[
\widehat{\gamma}_{\ud}\colon\widehat{\Upsilon}_{\ud}\stackrel{\varphi}{\lto}\Upsilon_{\ud}\stackrel{\gamma_{\ud}}{\lto} \Proj^\vee_C
\]
be the composition. Then $\widehat{\gamma}_{\ud}$ is generically finite and dominant, since so is $\gamma_{\ud}$. 

\begin{proposition}\label{prop:equal}
    Let $C$ be a connected stable curve of genus $g\ge2$ with no separating nodes. Let $\ud\in \Sigma(C)$. Then $\Br(\widehat{\gamma}_{\ud})=\Br(\rho_{\ud})$ (see Equation \eqref{eqdef:kappa}). 
    \end{proposition}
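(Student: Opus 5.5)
The plan is to compare both maps with the normalization $\widehat\Omega^{\nu}_{\ud}\to\Omega_{\ud}$, using the birational map $\beta_{\ud}\colon\Omega_{\ud}\dashrightarrow\Upsilon_{\ud}$ of Lemma~\ref{lem:factorize}. Since $\beta_{\ud,\U_0}$ is finite and birational (Lemma~\ref{lem:factorize}(i)), it induces an isomorphism of normalizations over $\U_0$, that is, $\widehat\Omega^{\nu}_{\ud,\U_0}\cong\widehat\Upsilon_{\ud,\U_0}$ compatibly with $\rho_{\ud}$ and $\widehat\gamma_{\ud}$. Both $\widehat\Upsilon_{\ud}$ and $\widehat\Omega^{\nu}_{\ud}$ are normal and $\Proj_C^\vee$ is smooth, so Theorem~\ref{thm:purity} shows that both branch loci are pure of codimension one. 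Hence it suffices to check the following two things:
\begin{itemize}
\item[(a)] $\Br(\rho_{\ud})$ and the branch locus of the normalization $\widehat\Omega^\nu_{\ud}\to\Proj_C^\vee$ agree generically over $\U_0$;
\item[(b)] no component of $\Proj^\vee_C\setminus\U_0=\bigcup_{p\in\SC}\Lambda_p^\vee$ lies in $\Br(\widehat\gamma_{\ud})$.
\end{itemize}
Granting these, Proposition~\ref{prop:rho-branch}, which says that $\Br(\rho_{\ud})$ is the closure of $\Br(\rho_{\ud,\U_0})$, gives the equality.

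For (a), normalization can only increase the number of points in a fibre. Hence if $H\in\U_0$ is not in $\Br(\rho_{\ud})$, then the fibre of $\rho_{\ud}$ over $H$ has $\deg\rho_{\ud}$ points (Proposition~\ref{prop:branch_fiber_number}). Flatness then shows that $\Omega_{\ud}$ is étale, hence smooth, over a neighbourhood of $H$, so it coincides with its normalization there. Conversely, take a general $H$ in a component of $\Br(\rho_{\ud})$. By Proposition~\ref{prop:rho-branch} this component is either $W^\vee$ with $W\in\mathcal B_{\ud}$, or $\Lambda_p^\vee$ with $p\in\mathcal B_{\widehat C}$. Proposition~\ref{prop:rho-ramif}(ii),(iii) produces a ramification component $\Ram_{\ud,v}$ or $\Ram_{\ud,p}$ over it whose image under $\kappa_{\ud}$ has codimension one in the smooth variety $S_{\ud}(C^\nu)$. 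Because $\kappa_{\ud}$ is birational (Equation~\eqref{eq:kappaV}) and quasi-finite near a general point of this component, Zariski's main theorem shows that $\kappa_{\ud}$ is a local isomorphism there. So $\Omega_{\ud}$ is normal at the general point of the ramification component, and the ramification, and with it the branching, persists after normalization.

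For (b), fix $p\in\SC$ and a general $H\in\Lambda_p^\vee$. Since $\SC\cap\mathcal B_{\widehat C}=\emptyset$ and $\Lambda_p^\vee$ is not a dual curve $W^\vee$, Proposition~\ref{prop:rho-branch} gives $H\notin\Br(\rho_{\ud})$. Thus $\rho_{\ud}^{-1}(H)$ consists of $\deg\rho_{\ud}$ distinct points, at which $\Omega_{\ud}$ is smooth. I would then prove that every such point $(D,H)$ lies in $\Omega_{\mathbb T_{\ud}}$. For general $H$ through $p$, the divisor $\nu^*\varphi^*(H)$ contains each branch over each node of the $\Cone$-set $\varphi^{-1}(p)$ with multiplicity one. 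The degree constraints, via Equation~\eqref{eq:dZ-stable} applied to the components of the partial normalization, should force $D$ to contain at most one branch over each such node, with a stable residual sheaf. Given this, $\beta_{\mathbb T_{\ud}}$ (Lemma~\ref{lem:factorize}(iii)) is a morphism near these points. Since $\Omega_{\ud}$ is normal there, it lifts to $\widehat\Upsilon_{\ud}$ and is a local birational map onto $\widehat\Upsilon_{\ud}$. Moreover $\deg\widehat\gamma_{\ud}=\deg\rho_{\ud}$, because $\beta_{\ud}$ is birational.

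The main obstacle is to show that these $\deg\rho_{\ud}$ points of $\Omega_{\ud}$ have pairwise distinct images in $\widehat\Upsilon_{\ud}$ over $H$. Once this holds, the fibre of $\widehat\gamma_{\ud}$ over $H$ is already of full cardinality, so $H\notin\Br(\widehat\gamma_{\ud})$. To prove it I would show that $\alpha_{\ud}$ is injective on the fibre: two distinct admissible $D$ give either different sets $S_D$ of normalized nodes, or different divisors $D^{\sm}$ on $C_{S_D}$ that are both unique sections, since $h^0=1$ for general $H$. If needed, I would also use that $\widehat\Upsilon_{\ud}\to\Proj^\vee_C$ has at most $\deg\widehat\gamma_{\ud}$ points in any fibre over which it is finite, which would upgrade generic injectivity of the induced map to a bijection of fibres.
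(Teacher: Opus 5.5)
Your reduction to (a) and (b), and your argument for (a), are sound. They match the paper's proof of $\Br(\rho_{\ud})\subseteq\Br(\widehat\gamma_{\ud})$. One step in (a) needs a justification you omit: the quasi-finiteness of $\kappa_{\ud}$ at a general point of $R_v$ or $R_p$. It holds because $S_{\ud}(C^\nu)\setminus\V_{\ud}$ has codimension at least two, its fibres being positive-dimensional projective spaces, so it cannot contain the codimension-one set $\kappa_{\ud}(R)$.

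\textbf{The gap in (b).} Both claims you rely on there are false. Take the banana curve $C=C_1\cup C_2$, with $C_1,C_2$ smooth of genus one glued at $n_1=(a_1\sim b_1)$ and $n_2=(a_2\sim b_2)$. Then:
\begin{itemize}
\item $g=3$ and $\Sigma(C)=\{(1,1)\}$;
\item each $C_i$ is honest hyperelliptic, with $\iota(a_1)=a_2$ and $\iota(b_1)=b_2$;
\item $\widehat C$ consists of two lines through $p=\varphi(n_1)=\varphi(n_2)$.
\end{itemize}
For general $H\ni p$ we have $\nu^*\varphi^*(H)=a_1+a_2+b_1+b_2$. The fibre $\rho_{\ud}^{-1}(H)$ consists of the four points $(D,H)$ with $D\in\{a_1+b_1,\,a_2+b_2,\,a_1+b_2,\,a_2+b_1\}$, which is the full degree $2\cdot 2$.
\begin{itemize}
\item The first two divisors contain both branches of a node, so they are not in $\mathbb T_{\ud}$.
\item The last two lie in $\mathbb T_{\ud}$ with the same $S_D=\{n_1,n_2\}$ and the same $D^{\sm}=0$. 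Hence $\alpha_{\ud}$, and so $\beta_{\mathbb T_{\ud}}$, identifies them.
\end{itemize}
The same happens with genus-two, non-hyperelliptic components. There $D$ may also contain $a_1+a_2$, which leaves an unstable residual multidegree. So the degree constraints do not place the fibre inside $\Omega_{\mathbb T_{\ud}}$. Moreover, the distinctness you need in $\widehat\Upsilon_{\ud}$ cannot come from injectivity of $\alpha_{\ud}$, since the images already coincide in $\Upsilon_{\ud}$.

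\textbf{The missing idea.} You need neither $\mathbb T_{\ud}$ nor any explicit injectivity. The argument runs as follows.
\begin{itemize}
\item $\Omega_{\ud}$ is smooth along $\rho_{\ud}^{-1}(\Lambda_p^\vee\setminus\Br(\rho_{\ud}))$, since it is étale over $\U_1$ there. The rational map $\beta_{\ud}$ into the projective $\Upsilon_{\ud}$ is therefore defined outside a closed subset of codimension at least two.
\item The image of that subset under the finite map $\rho_{\ud}$ has codimension at least two in $\Proj_C^\vee$. Discard it, together with $\widehat\gamma_{\ud}(\widehat\Upsilon_{\ud}^{\sing})$.
\item On what remains, $\beta_{\ud}$ is a proper, quasi-finite $\Proj_C^\vee$-morphism, hence a finite birational morphism. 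So $\widehat\Upsilon_{\ud}$ is also the normalization of $\Omega_{\ud}$ there, giving a finite birational surjection $\Psi\colon\widehat\Upsilon_{\ud}\to\Omega_{\ud}$.
\item For $H\notin\Br(\rho_{\ud})$, surjectivity of $\Psi$ gives $|\widehat\gamma_{\ud}^{-1}(H)|\ge|\rho_{\ud}^{-1}(H)|=\deg\rho_{\ud}$. Flatness gives the reverse inequality.
\end{itemize}
Equivalently, $\Psi$ is an isomorphism near the smooth fibre, so distinctness is automatic. Since both branch loci are pure of codimension one, discarding a codimension-two set is harmless. This is how the paper proves $\Br(\widehat\gamma_{\ud})\subseteq\Br(\rho_{\ud})$.
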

    
\begin{proof}
The proof of the inclusion $\Br(\widehat{\gamma}_{\ud})\subseteq\Br(\rho_{\ud})$ relies on the following

{\bf Claim:} There exists an open subset $\U^\dagger \subseteq \mathbb{P}^{\vee}_C$ 
 such that:
\begin{enumerate}
   \item[(1)] The inequality $\codim_{\Proj^\vee_C}(\Proj^\vee_C\setminus \U^\dagger)\geq 2$ holds.
   \item[(2)] The rational map $\beta_{\ud}$ in Lemma \ref{lem:factorize} restricts to a well-defined $\U^\dagger$-morphism $\Omega_{\ud, \U^\dagger}\to \Upsilon_{\ud, \U^\dagger}$ that, by abusing notation, we still call  $\beta_{\ud}$.
   \item[(3)] The morphism $\beta_{\ud}\colon\Omega_{\ud, \U^\dagger}\to \Upsilon_{\ud, \U^\dagger}$ is finite birational. 
   \item[(4)] The scheme $\widehat{\Upsilon}_{\ud, \U^\dagger}$ is smooth.
\end{enumerate}

Let us first prove the claim. By Lemma \ref{lem:factorize} (i) we have a rational map $\beta_{\ud}\colon\Omega_{\ud}\dasharrow\Upsilon_{\ud}$ such that $\beta_{\ud}$ restricts to the morphism $\beta_{\ud,\U_0}$ defined in Equation \eqref{eq:betaU0_def} over $\Omega_{\ud, \U_0}$. Let $\mathcal{D}_{\ud}\subseteq\Omega_{\ud}$ be the open subset where $\beta_{\ud}$ is a morphism, so $\Omega_{\ud, \U_0}\subseteq \mathcal{D}_{\ud}$. 

Recall the definition of the open subset $\U_1\subset \Proj_C^\vee$ from Section~\ref{sec:gauss-graph}. Also recall that we have an  inclusion $\U_0 \subseteq \U_1$, and that the morphism  $\rho_{\ud,\U_1}\colon \Omega_{\ud, \U_1}\to \U_1$ is finite and dominant (by Proposition~\ref{prop:Omega}(i)). Then define 
\begin{align*}
\mathcal Y &:= \left(\bigcup_{W\in \Irr_{\geq2}(\widehat{C})} W^\vee\right)\cup \left(\bigcup_{p\in \mathcal{B}_{\widehat{C}}} \Lambda^\vee_{p}\right),\\
\U' &:= \U_0\cup (\U_1\setminus \mathcal Y),\\
\mathcal F&:= \Omega_{\ud,\U'}\setminus \Omega_{\ud,\U_0}.
\end{align*}

First, $\U'$ is an open subset of $\Proj^\vee_C$ with $\U'\subseteq\U_1$. 
Also, $\Proj_C^{\vee}\setminus \mathcal U_0 = \bigcup_{p\in \mathcal{S}_{\widehat{C}}}\Lambda_p^\vee$ and $(\U_1\setminus \mathcal{Y})\cap \Lambda_p^{\vee} \neq \emptyset$ for every $p\in \mathcal{S}_{\widehat{C}}$, therefore $\codim_{\Proj^\vee_C}(\Proj^\vee_C\setminus\U') \geq 2$.

Second, $\mathcal{F}$ is pure of dimension $g-2$. Indeed, this follows from the fact that  $\rho_{\ud, \U'}\colon \Omega_{\ud, \U'}\to \U'$ is finite and dominant, that $\mathcal F=\rho_{\ud, \U'}^{-1}(\U'\setminus \U_0)$, and that 
\[
\U'\setminus \U_0 = \bigcup_{p \in \mathcal{S}_{\widehat{C}}}(\Lambda_p^\vee \cap (\U_1\setminus \mathcal{Y}))
\]
is pure of dimension $g-2$. Moreover, since $\U'\subseteq \U_1$, we have that $\mathcal F\subseteq \Omega_{\ud, \U_1}$.

Denote by $\Omega_{\ud}^{\sm}$ the smooth locus of $\Omega_{\ud}$. By Proposition~\ref{prop:branch_fiber_number},
$\rho_{\ud,\U^1}$ is flat. By Proposition~\ref{prop:rho-branch},
it is unramified along $\F$, hence étale along $\F$. Since $\U^1$
is smooth, we have $\F\subseteq\Omega_{\ud}^{\rm sm}$.
Because the indeterminacy locus of a rational map on a smooth source has codimension at least two, we have $\codim_{\Omega_{\ud}^{\sm}}(\Omega_{\ud}^{\sm}\setminus \mathcal{D}_{\ud}) \geq 2$. Thus every irreducible component of $\mathcal F$ has non-empty intersection with $\mathcal{D}_{\ud}$, so $\codim_{\Omega_{\ud, \U'}}(\Omega_{\ud, \U'}\setminus \mathcal{D}_{\ud})\geq 2$. Since $\rho_{\ud,\U'}\colon\Omega_{\ud, \U'}\to \U'$ is finite and dominant, we deduce that  $\codim_{\U'}(\rho_{\ud}(\Omega_{\ud, \U'}\setminus \mathcal{D}_{\ud})) \geq 2$. 

We now define the open set $\U'' := \U'\setminus \rho_{\ud}(\Omega_{\ud, \U'}\setminus \mathcal{D}_{\ud})$. Because  $\Omega_{\ud, \U''}$ is a subset of $\mathcal{D}_{\ud}$, the rational map $\beta_{\ud}$ is a well-defined morphism $\Omega_{\ud, \U''}\to \Upsilon_{\ud, \U''}$. 
It is clear that $\Omega_{\ud, \U''}\to \Upsilon_{\ud, \U''}$ is finite

and by Lemma \ref{lem:factorize} (i) it is also birational. Finally, set $\U^{\dagger} := \U''\setminus \widehat{\gamma}_{\ud}(\widehat{\Upsilon}^{\sing}_{\ud})$. Since $\widehat{\Upsilon}_{\ud}$ is normal, we have that $\dim \widehat{\Upsilon}^{\sing}_{\ud} \leq g-3$, hence $\dim (\widehat{\gamma}_{\ud}(\widehat{\Upsilon}^{\sing}_{\ud})) \leq g-3$ and therefore $\codim_{\Proj_C^{\vee}}(\Proj_C^{\vee}\setminus \U^{\dagger})\geq 2$. The latter statement proves that Item (1) holds, and since we are removing the image of the singular locus, it is clear that Item (4) holds. Items (2) and (3) follow from the fact that $\U^{\dagger}\subseteq \U''$ and they hold for $\U''$. This concludes the proof of the Claim.

We now prove the inclusion $\Br(\widehat{\gamma}_{\ud})\subseteq \Br(\rho_{\ud})$. By Theorem~\ref{thm:purity} and Proposition~\ref{prop:rho-branch}, both $\Br(\widehat{\gamma}_{\ud})$ and $\Br(\rho_{\ud})$ are pure of codimension~$1$. Hence, by Item~(1) of the Claim, it is enough to prove the inclusion after restricting to $\U^\dagger$, since $\mathbb{P}_C^\vee\setminus \U^\dagger$ has codimension at least $2$. By Items~(2) and (3) of the Claim and the universal property of the normalization, there is a finite birational morphism $\Psi\colon \widehat{\Upsilon}_{\ud,\U^\dagger}\to \Omega_{\ud,\U^\dagger}$; hence $\widehat{\gamma}_{\ud,U^\dagger}
=\rho_{\ud,U^\dagger}\circ\Psi$ is finite and dominant, and
$\deg(\widehat{\gamma}_{\ud})=\deg(\rho_{\ud})$. Now let $H\in \U^\dagger\setminus \Br(\rho_{\ud})$. Since $\Omega_{\ud,\U^\dagger}$ is Cohen--Macaulay by Proposition~\ref{prop:Omega}~(ii), and since $\mathbb{P}_C^\vee$ is smooth, Proposition~\ref{prop:branch_fiber_number} applied to $\rho_{\ud,\U^\dagger}$ gives $|\rho_{\ud}^{-1}(H)|=\deg(\rho_{\ud})$. Moreover, $\Psi$ is surjective on fibers, so $|\widehat{\gamma}_{\ud}^{-1}(H)|\ge |\rho_{\ud}^{-1}(H)|$. On the other hand, by Item~(4) of the Claim, $\widehat{\Upsilon}_{\ud,\U^\dagger}$ is smooth, hence Cohen--Macaulay, and therefore Proposition~\ref{prop:branch_fiber_number} also applies to $\widehat{\gamma}_{\ud,\U^\dagger}$, yielding $|\widehat{\gamma}_{\ud}^{-1}(H)|\le \deg(\widehat{\gamma}_{\ud})$. Since $\deg(\widehat{\gamma}_{\ud})=\deg(\rho_{\ud})$, we conclude that $|\widehat{\gamma}_{\ud}^{-1}(H)|=\deg(\widehat{\gamma}_{\ud})$, and thus $H\notin \Br(\widehat{\gamma}_{\ud})$. This proves the inclusion of branch loci over $\U^\dagger$, and hence globally.

We now prove the opposite inclusion
$\Br(\widehat{\gamma}_{\ud})\supseteq \Br(\rho_{\ud})$.
By the last assertion of Proposition~\ref{prop:rho-branch},
$\Br(\rho_{\ud})$ is the closure of $\Br(\rho_{\ud,\U_0})$ in
$\Proj_C^\vee$. Since $\Br(\widehat{\gamma}_{\ud})$ contains the
closure of $\Br(\widehat{\gamma}_{\ud,\U_0})$, it is enough to prove the inclusion $\Br(\rho_{\ud,\U_0})
\subseteq
\Br(\widehat{\gamma}_{\ud,\U_0})$.

Recall the evaluation map $e_{\ud}$ from Equation~\eqref{eq: evaluation}, and the open subset
\[
\V_{\ud}:=\{D\in S_{\ud}(C^\nu):  \operatorname{rk}(e_{\ud,D})=g-1\} \subseteq  S_{\ud}(C^\nu) \] introduced in the proof of Proposition~\ref{prop:Omega}, and set
\[S_{\ud,0}:=\kappa_{\ud}(\Omega_{\ud,U_0}),\qquad
\V_{\ud,0}:=\V_{\ud}\cap S_{\ud,0}.
\]
By the proof of Proposition~\ref{prop:Omega}(ii),
$\kappa_{\ud}^{-1}(\V_{\ud,0})\cong\V_{\ud,0}$ is a nonempty smooth
open subset of $\Omega_{\ud,U_0}$.

Set $S_{\ud,0}^1:=S_{\ud,0}\setminus\V_{\ud,0}$.
Its inverse image in $\Omega_{\ud,U_0}$ is a proper closed subset,
hence has dimension at most $g-2$. Since every fiber over
$S_{\ud,0}^1$ has positive dimension, $\dim S_{\ud,0}^1\leq g-3$.
As $\dim S_{\ud,0}=g-1$, we obtain
\begin{equation}\label{eq:cod2}
\codim_{S_{\ud,0}}S_{\ud,0}^1\geq2.
\end{equation}

By
Lemma~\ref{lem:factorize}(i) and the universal property of the
normalization, there is a finite birational morphism
\[
\Psi_0\colon
\widehat{\Upsilon}_{\ud,\U_0}
\longrightarrow
\Omega_{\ud,\U_0}.
\]
Since $\kappa_{\ud}^{-1}(\V_{\ud,0})$ is smooth, and hence normal,
$\Psi_0$ restricts to an isomorphism
\begin{equation}\label{eq:phi-restriction}
\Psi_0^{-1}\bigl(\kappa_{\ud}^{-1}(\V_{\ud, 0})\bigr)
\stackrel{\cong}{\longrightarrow}
\kappa_{\ud}^{-1}(\V_{\ud,0}).
\end{equation}

Now let $v\in V(\Gamma_C)$ be such that $0<d_v<k_v$ and $W=\varphi(C_v)$ has degree at least $2$, and let $p\in C$ be such that $\varphi(p)\in\mathcal B_{\widehat{C}}$. By Proposition~\ref{prop:rho-ramif}(ii) and (iii), we may choose irreducible components $R_v\subseteq\Ram_{\ud,v}$ and $R_p\subseteq\Ram_{\ud,p}$ whose images under $\kappa_{\ud}$ have codimension $1$ in $S_{\ud}(C^\nu)$. Since $\rho_{\ud,\U_1}$ is finite, these components dominate $W^\vee$ and $\Lambda_{\varphi(p)}^\vee$, respectively. Hence their intersections with $\Omega_{\ud,\U_0}$ are dense, and Equation~\eqref{eq:cod2} implies that $R_v\cap\kappa_{\ud}^{-1}(\V_{\ud, 0})$ and $R_p\cap\kappa_{\ud}^{-1}(\V_{\ud, 0})$ are nonempty, and thus dense in $R_v$ and $R_p$.

By Proposition~\ref{prop:rho-ramif}(i) and Equation~\eqref{eq:phi-restriction}, their inverse images under $\Psi_0$ are contained in the ramification locus of $\widehat{\gamma}_{\ud,\U_0}$. Their images are dense in $W^\vee\cap\U_0$ and $\Lambda_{\varphi(p)}^\vee\cap\U_0$, respectively; since the branch locus is closed in $\U_0$, we obtain the inclusions $W^\vee\cap\U_0\subseteq\Br(\widehat{\gamma}_{\ud,\U_0})$ and $\Lambda_{\varphi(p)}^\vee\cap\U_0\subseteq\Br(\widehat{\gamma}_{\ud,\U_0})$. As $v$ and $p$ vary, Proposition~\ref{prop:rho-branch} gives $\Br(\rho_{\ud,\U_0})\subseteq\Br(\widehat{\gamma}_{\ud,\U_0})$, concluding the proof.
\end{proof}

By combining the previous proposition with earlier results, we obtain:

\begin{theorem}\label{thm:branch-gamma}
     Let $C$ be a connected nodal curve of genus $g\ge2$ with no separating nodes. Let $\ud\in\Sigma(C)$. The branch locus $\Br(\widehat{\gamma}_{\ud})$ of the map $\widehat{\gamma}_{\ud}\colon \widehat{\Upsilon}_{\ud}\to \Proj_C^\vee$ is 
    \[
    \Br(\widehat{\gamma}_{\ud}) = \left(\bigcup_{W\in \mathcal B_{\ud}} W^\vee\right) 
    \cup\left(\bigcup_{p\in \mathcal B_{\widehat{C}}}\Lambda_p^\vee\right),
    \]
    where
    \begin{equation} \label{eq:Bd}
\mathcal B_{\ud}=\{W\in \Irr_{\ge2}(\widehat{C}) : 0<d_Z<k_Z, \text{ for every } Z\in \Irr_W(C)\}.
\end{equation}
Moreover, if $\ud\in\Sigma^{\qE}(C)$, then 
\[    \Br(\widehat{\gamma}_{\ud})= \Br(\widehat{\gamma})=\left(\bigcup_{W\in \Irr_{\ge2}(\widehat{C})} 
    W^\vee \right)\cup\left(\bigcup_{p\in\mathcal B_{\widehat C}}\Lambda_p^\vee\right).
    \]    
\end{theorem}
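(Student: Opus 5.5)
The first formula follows by combining two earlier results. Proposition~\ref{prop:equal} gives $\Br(\widehat{\gamma}_{\ud})=\Br(\rho_{\ud})$. Proposition~\ref{prop:rho-branch} computes $\Br(\rho_{\ud})$, with the same set $\mathcal B_{\ud}$ as in \eqref{eq:Bd}.

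One point needs care. The theorem is stated for a nodal curve, while those propositions assume $C$ stable. We therefore first replace $C$ by its stabilization $C^{\st}$. This is legitimate for three reasons. First, $C$ has no separating nodes and $g\ge 2$, so Remark~\ref{rem:SSAPstab} gives a canonical isomorphism $A(C)\cong A(C^{\st})$. This isomorphism identifies the Gauss graphs, and hence their normalizations and the maps $\widehat\gamma_{\ud}$. Second, Remark~\ref{rem:canonical_model_sep_stab_isomorphic} identifies $\Can(C)$ with $\Can(C^{\st})$, compatibly with $\Proj_C\cong\Proj_{C^{\st}}$. Third, under this reduction stable multidegrees of $C$ correspond to stable multidegrees of $C^{\st}$, and the components $\Theta_{\ud}$ correspond. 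The condition defining $\mathcal B_{\ud}$ only involves components $Z$ mapping onto $W\in\Irr_{\ge2}(\widehat C)$. Such components are never contracted exceptional rational components, so $d_Z$ and $k_Z$ are unchanged. I expect this bookkeeping to be routine.

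For the second part, fix $\ud\in\Sigma^{\qE}(C)$. The key claim is that $\mathcal B_{\ud}=\Irr_{\ge2}(\widehat C)$, that is, $0<d_Z<k_Z$ for every $Z\in\Irr_W(C)$ whenever $\deg W\ge 2$. By Lemma~\ref{lem:numerical} (quasi-Eulerian case), $d_Z=0$ or $d_Z=k_Z$ forces $g_Z=0$, $t_Z=1$ and $\delta_Z=3$, so $k_Z=1$. Then $\varphi(Z)$ has degree at most $k_Z=1$ by Lemma~\ref{lem:dimdeg}(ii), so $\varphi(Z)$ is a line, contradicting $\deg W\ge 2$. Hence $\Br(\widehat\gamma_{\ud})$ equals the displayed union over $\Irr_{\ge2}(\widehat C)$.

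It remains to show $\Br(\widehat\gamma)=\Br(\widehat\gamma_{\ud})$. The normalization $\widehat\Upsilon$ is the disjoint union of the $\widehat\Upsilon_{\ud'}$ for $\ud'\in\Sigma(C)$, by Remark~\ref{rem:bijection-irred} and the decomposition \eqref{eq:upsilon-dec}. With the convention of Section~\ref{sec:schemes}, this gives $\Br(\widehat\gamma)=\bigcup_{\ud'}\Br(\widehat\gamma_{\ud'})$. By the first part, each $\mathcal B_{\ud'}\subseteq\Irr_{\ge2}(\widehat C)$, and the point contributions $\Lambda_p^\vee$ are the same for every $\ud'$. So each $\Br(\widehat\gamma_{\ud'})$ is contained in $\Br(\widehat\gamma_{\ud})$. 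Since $\Sigma^{\qE}(C)$ is nonempty by Proposition~\ref{prop:q_orient}, the equality holds.

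The main obstacle I expect is the careful verification of the reduction to the stable case, in particular that the stabilization isomorphism preserves the component indexing and the branch data. The remaining steps are direct.
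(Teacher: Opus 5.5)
Your proposal is correct and follows the paper's proof. Like the paper, you reduce to the stable case, combine Propositions~\ref{prop:rho-branch} and~\ref{prop:equal}, and use Lemma~\ref{lem:numerical} to show that for quasi-Eulerian $\ud$ the extremal values $d_Z\in\{0,k_Z\}$ occur only on components with $k_Z=1$, which map to lines; you cite Lemma~\ref{lem:dimdeg}(ii) for this last point where the paper cites Lemma~\ref{lem:linear-classification}. You also make explicit the step $\Br(\widehat\gamma)=\bigcup_{\ud'}\Br(\widehat\gamma_{\ud'})\subseteq\Br(\widehat\gamma_{\ud})$, which the paper leaves implicit.
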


\begin{proof}
     We can assume that $C$ is stable. Indeed, $A(C)$ is naturally isomorphic to $A(C^{\st})$, the set $\Sigma(C)$ is naturally identified with $\Sigma(C^{\st})$, we have a natural isomorphism $\widehat{C}\cong \widehat{C}^{\st}$, and $\mathcal{B}_{\widehat{C}}=\mathcal{B}_{\widehat{C}^{\st}}$. Moreover, under these identifications, $\Irr_W(C)$ is naturally
identified with $\Irr_W(C^{\rm st})$ for every
$W\in\Irr(\widehat C)$. The first part follows immediately from Propositions \ref{prop:rho-branch} and \ref{prop:equal}.
     
     Next, by Lemma \ref{lem:linear-classification}, for $Z\in \Irr(C)$ with $g_Z=0$ and $\delta_Z=3$ we find that $\varphi(Z)$ is a linear component of $\widehat{C}$. Hence, according to Lemma \ref{lem:numerical}, 
     for every $\ud\in\Sigma^{\qE}(C)$ and every component $W\in \Irr_{\ge2}(\widehat{C})$ we have $0<d_Z<k_Z$ for each $Z\in \Irr_W(C)$. Therefore, the second part  is derived again from Propositions \ref{prop:rho-branch} and \ref{prop:equal}.
\end{proof}

\begin{remark}\label{rem:degree-Gauss}
    Keep the same notation as in the statement of Theorem \ref{thm:branch-gamma}. Since there is a birational map from $\Omega_{\ud}$ to $\Theta_{\ud}$ commuting with $\rho_{\ud}\colon \Omega_{\ud}\to \Proj_C^\vee$ and the Gauss map $\mathcal G_{\Theta_{\ud}}\colon\Theta_{\ud}\dashrightarrow \Proj_C^\vee$, Equation \eqref{eq:degree} shows that, for every $\ud\in \Sigma(C)$, the Gauss map $\mathcal G_{\Theta_{\ud}}$ is  generically finite  of degree 
    \[
    \deg(\mathcal G_{\Theta_{\ud}})=\prod_{v\in V_0} \binom{k_v}{d_v} \prod_{v\in V_1} 2^{d_v} \prod_{v\in \widehat{V}^\nu_2}
\binom{d_{v'}+d_{v''}}{d_{v'}}.
    \]

\end{remark}

\begin{corollary} The restriction of $\mathcal{G}$ to every irreducible component of $\Theta(C)$
is birational if and only if $V_1=\emptyset$ and every irreducible
component of $\widehat C$ is linear.
\end{corollary}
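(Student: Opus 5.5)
The plan is to reduce to the stable case and then read birationality off the degree formula of Remark~\ref{rem:degree-Gauss}. As in the proof of Theorem~\ref{thm:branch-gamma}, we may replace $C$ by its stabilization: $A(C)\cong A(C^{\st})$, $\Sigma(C)=\Sigma(C^{\st})$ and $\widehat C\cong\widehat{C^{\st}}$, and these identifications preserve the decomposition $V_0\sqcup V_1\sqcup V_2$ and the degrees of the components of $\widehat C$. The irreducible components of $\Theta(C)$ are the $\Theta_{\ud}$ with $\ud\in\Sigma(C)$. By Remark~\ref{rem:degree-Gauss}, the restriction $\mathcal G_{\Theta_{\ud}}$ is birational onto $\Proj_C^\vee$ exactly when
\[
\prod_{v\in V_0}\binom{k_v}{d_v}\prod_{v\in V_1}2^{d_v}\prod_{v\in\widehat V^\nu_2}\binom{d_{v'}+d_{v''}}{d_{v'}}=1 .
\]
Since every factor is a positive integer, this holds if and only if every factor equals $1$.

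For the ``if'' direction, assume $V_1=\emptyset$ and every component of $\widehat C$ is linear. For $v\in V_0$, the map $\varphi|_{C_v}$ is birational, so Lemma~\ref{lem:dimdeg}(ii) gives $k_v=\deg\varphi(C_v)=1$. Then $\binom{1}{d_v}=1$, because $0\le d_v\le k_v$ by Lemma~\ref{lem:numerical}. For a pair $v\in\widehat V^\nu_2$, Lemma~\ref{lem:dimdeg}(ii),(iv) gives $d_{v'}+d_{v''}=\deg W_v=1$. Hence one of $d_{v'},d_{v''}$ is $0$ and the binomial coefficient is $1$. So every $\mathcal G_{\Theta_{\ud}}$ has degree $1$.

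For the ``only if'' direction, the factors must be $1$ for every $\ud\in\Sigma(C)$. If $V_1\ne\emptyset$, pick $v\in V_1$. Lemma~\ref{lem:dimdeg}(iv) gives $d_v\ge1$, so $2^{d_v}\ge2$, a contradiction. Next let $W$ be a component of degree $\ge2$. If $W=\varphi(C_{v'})=\varphi(C_{v''})$ comes from a pair, then $d_{v'}+d_{v''}=\deg W\ge2$. Take a quasi-Eulerian $\ud\in\Sigma^{\qE}(C)$, which exists by Proposition~\ref{prop:q_orient}. By the argument in Theorem~\ref{thm:branch-gamma} (Lemmas~\ref{lem:linear-classification} and~\ref{lem:numerical}), we get $0<d_{v'},d_{v''}$, so the binomial coefficient is at least $2$. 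Otherwise $W=\varphi(C_v)$ with $v\in V_0$ and $k_v=\deg W\ge2$. For the same quasi-Eulerian $\ud$, we have $0<d_v<k_v$, so $\binom{k_v}{d_v}\ge2$. In both cases this contradicts birationality of $\mathcal G_{\Theta_{\ud}}$.

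The one point to check carefully is the use of Lemma~\ref{lem:numerical}. It gives $0<d_v<k_v$ for a quasi-Eulerian $\ud$ whenever $C_v$ does not satisfy $g_v=0$, $t_v=1$, $\delta_v=3$. Such $C_v$ map to lines by Lemma~\ref{lem:linear-classification}(i), or are halves of a pair mapping to a line by Lemma~\ref{lem:linear-classification}(ii). So the exception never occurs for components over a nonlinear $W$, and the argument above goes through.
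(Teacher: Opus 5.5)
Your proof is correct and follows essentially the same route as the paper. It reads off birationality from the degree formula of Remark~\ref{rem:degree-Gauss} and uses Lemmas~\ref{lem:linear-classification} and~\ref{lem:dimdeg} for the \emph{if} direction; for the converse it uses a quasi-Eulerian $\ud$ (Proposition~\ref{prop:q_orient}) together with Lemmas~\ref{lem:dimdeg}(iv), \ref{lem:linear-classification} and~\ref{lem:numerical}, and your remark that the $V_1$ contradiction works for any $\ud\in\Sigma(C)$ is a harmless refinement.
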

\begin{proof}
    For a fixed $\ud\in\Sigma(C)$, the degree formula shows that
$\mathcal{G}_{\Theta_{\ud}}$ is birational if and only if
\[
V_1=\emptyset,\qquad
\ud_v\in\{0,k_v\}\ \text{for every }v\in V_0,\qquad
\ud_{v'}\ud_{v''}=0\ \text{for every }v\in\widehat V_2^\nu.
\]
If every component of $\widehat C$ is linear and $V_1=\emptyset$,
then Lemmas~\ref{lem:linear-classification} and~\ref{lem:dimdeg} show
that all these conditions hold for every $\ud\in\Sigma(C)$.

Conversely, assume that every componentwise Gauss map is birational. Choose $\ud\in\Sigma^{\qE}(C)$, whose existence
follows from Proposition~\ref{prop:q_orient}. If $v\in V_1$, then Lemma~\ref{lem:dimdeg}(iv) gives
$\ud_v=k_v/2\geq1$, so the factor $2^{\ud_v}$ is greater than
one, a contradiction. Thus $V_1=\emptyset$. 

If $W\in\Irr(\widehat C)$ is nonlinear, then Lemmas
\ref{lem:linear-classification} and \ref{lem:numerical} give
\[
0<\ud_Z<k_Z
\qquad
\text{for every }Z\in\Irr_W(C).
\]
The factor of the degree formula corresponding to $W$ is therefore
greater than one, again a contradiction. Hence every component of
$\widehat C$ is linear.
\end{proof}

\begin{remark}\label{rem:contra-example-Andreotti}
As mentioned in the introduction, Andreotti's computation of the branch locus of the Gauss map relies on \cite[Lemma 3]{andreotti}, which is not correct already in dimension $d=2$. Here we give a counterexample to the lemma. 

Let $Z$ be a projective integral scheme of dimension $d$ and $h\colon Z\dashrightarrow\mathbb P^d$ be a  dominant rational map and set $n:=[k(Z):k(\mathbb P^d)]$. Let  $U \subseteq Z$ be the open subset where $h$ is a morphism.  Following Andreotti, define
\[
\begin{aligned}
\Phi_h &:= \overline{\{(z,h(z)) : z \in U\}} \subseteq Z \times \mathbb P^d,
& h^{-1}(q) &:= \pi_Z\bigl(\Phi_h \cap (Z \times \{q\})\bigr), \\
A_h &:= \{q \in \mathbb P^d : |h^{-1}(q)| = n\},
& D_h &:= \overline{\mathbb P^d \setminus A_h}, \\
\operatorname{Br}_{\mathrm A}(h)
&:= \bigcup_{\substack{T \in \operatorname{Irr}(D_h) \\ \dim T = d-1}} T.
\end{aligned}
\]
($\pi_Z\colon \Phi_h\to Z$ is the restriction of the projection onto the first factor.)
Thus $\operatorname{Br}_{\mathrm A}(h)$ is defined set-theoretically, without multiplicities. 

Now, \cite[Lemma~3]{andreotti} implies that, if $Z$ and $Z'$ are $d$-dimensional projective irreducible smooth varieties, $\varphi\colon Z\dashrightarrow Z'$ is birational, and $h\colon Z\dashrightarrow\mathbb P^d$ and $h'\colon Z'\dashrightarrow\mathbb P^d$ are separable dominant rational maps satisfying $h'\circ\varphi=h$, then  $\operatorname{Br}_{\mathrm A}(h)=\operatorname{Br}_{\mathrm A}(h')$.

Let $f\colon X\to\mathbb P^2$ be the double cover branched along a smooth conic $C$, and let $L$ be tangent to $C$. Then $X\cong\mathbb P^1\times\mathbb P^1$ and $f^{-1}(L)=E_1\cup E_2$, where $E_1^2=E_2^2=0$ and $E_1\cdot E_2=1$. Let $\pi \colon Y \to X$ be the blow up of two points of $E_1\setminus E_2$ and of  one point of $E_2\setminus E_1$. The strict transforms $B_1,B_2$ of $E_1,E_2$ satisfy $B_1^2=-2$, $B_2^2=-1$, and $B_1\cdot B_2=1$. Contracting $B_2$ and then the image of $B_1$, whose self-intersection becomes $-2+1=-1$, gives a birational morphism $c\colon Y\to X'$ to a smooth projective surface, contracting $B_1\cup B_2$ to some point $r$.

Set $g:=f\circ\pi$ and $f':=g\circ c^{-1}\colon X'\dashrightarrow\mathbb P^2$. By properness and density, $\Phi_{f'}=(c,g)(Y)$. For general $q\in L$, the two points of $g^{-1}(q)$ lie respectively on $B_1$ and $B_2$, and both are mapped by $c$ to $r$. Hence $(f')^{-1}(q)=\{r\}$ in Andreotti's sense. Since $f'$ has degree $2$, the line $L$ is a component of $\operatorname{Br}_{\mathrm A}(f')$, whereas $\operatorname{Br}_{\mathrm A}(f)=C$. This contradicts \cite[Lemma~3]{andreotti}.
\end{remark}
\begin{remark}\label{rem:correction-ACGH}
 In \cite[p.~247]{acgh}, the authors claim that, for a smooth non-hyperelliptic curve, the ramification of the map $\rho_{\ud}\colon \Omega_{\ud}\to \Proj_C^\vee$ (referred to as $\alpha'$ in \cite[p.~247]{acgh}) occurs at the pairs $(D, H)$ where the divisor $D$ is not reduced. Example~\ref{ex:acgh} shows that this claim is incorrect. Using the notation in the example, pairs of the form $(p+\sum_{i\in S_2} q_i, H)$ are in the ramification locus while the divisor is generically reduced; on the other hand, pairs of the form $(2p + \sum_{i \in S_1} q_i,H)$ are not in the ramification locus even though the divisor is nonreduced. 

Thus, the argument given in \cite[p.~245]{acgh} for the proof of Torelli's theorem for smooth curves via Andreotti's argument is incomplete. It is worth noting that the final description given in \cite{acgh} of the branch locus of $\widehat{\gamma}_{\ud}$ is correct (as it is in \cite{andreotti}), as we establish in Theorem~\ref{thm:branch-gamma} in the more general case of stable curves with no separating nodes.

A crucial step in our proof of Proposition~\ref{prop:equal} involves identifying specific components of the ramification locus (the loci $\Ram_{\ud,v}$ and $\Ram_{\ud,p}$) that dominate the branch locus of $\widehat{\gamma}_{\ud}$, and whose images in the symmetric product have codimension~$1$. This step, which seems essential for this argument, is missing from \cite{acgh}. Specializing Proposition~\ref{prop:rho-ramif} to the case of a smooth curve provides a correction to the proof of the classical Torelli's theorem via Andreotti's method given in \cite{acgh}.
\end{remark}
   
Next, let \(A=(G\curvearrowright P,\Theta)\) be a smoothable principally polarized
stable semiabelic pair. Then \(A\) carries a canonical involution
\(
\iota_A\colon P\longrightarrow P
\)
characterized by
\[
\iota_A(g\cdot p)=g^{-1}\cdot\iota_A(p),
\qquad
\iota_A^*\Theta=\Theta.
\]
On the locus of PPAVs, this is the canonical reflection centred at the theta divisor (see
\cite[Corollary~3.0.7, Remark~3.0.11 and Theorem~5.7.1]{alexeev}
and \cite[Section~4]{hulek} for more details). Here we only need that every SSAP \(A(C)\) considered below is principally polarized and
smoothable.  A $G$-orbit of $P$ that is fixed by this involution will be called a \emph{fixed orbit}. If $A= A(C)=(J(C)\action \overline{P_C^{g-1}},\Theta(C))$ for some stable curve $C$, the involution $\iota_A$ takes a pair $(S,[I])$ to $(S, [\omega_{C_S}\otimes I^\vee])$. 

\begin{definition}\label{def:Eulerian-locus}
    The \emph{Eulerian locus} of  $\overline{P_C^{g-1}}$ is the closed subset $\overline{P^E_C}$ of $\overline{P_C^{g-1}}$ which is the union of the closures of all fixed orbits of maximal dimension. We let $\Theta_E(C)$ be the restriction of $\Theta(C)$ to $\overline{P^E_C}$. 
\end{definition}

\begin{remark}\label{rem:S-definition}
Notice that the Eulerian locus $\overline{P^E_C}$ of $\overline{P_C^{g-1}}$ is an irreducible component of $\overline{P_C^{g-1}}$ if and only if the dual graph of $C$ is Eulerian. In this case $\overline{P^E_C}=\overline{P^{\ud}_C}$, where $\ud$ is the unique stable Eulerian multidegree in $\Sigma(C)$. If instead the dual graph of $C$ is not Eulerian, then $\overline{P^E_C}$ is empty. 

By Remark~\ref{rem:connected-union-ssap} the triple $A^E(C) := (J(C)\action \overline{P_C^E},\Theta_E(C))$ is again a SSAP, and it is a non-empty SSAP if and only if the dual graph of $C$ is Eulerian. 
\end{remark}

\begin{definition}
\label{def:theta_0}
   Let $A=(G\action P, \Theta)$ be a SSAP.  Recall Equation \eqref{eq:zeta-gamma}. For $H\in \mathbb{P}(T_0G)^{\vee}$, we set $\Theta_H := \zeta(\gamma^{-1}(H))\subseteq \Theta$. 
We also define
\[
\SA := \{q\in \mathbb{P}(T_0G) : \Theta_H\not\subseteq \Theta_0 \text{ for general $H\in\Lambda^\vee_q$}\} \subseteq  \mathbb{P}(T_0G).
\]
(Recall the definition of $\Theta_0 \subseteq \Theta$ from Definition~\ref{def: SSAP}).
\end{definition}

When $A=A(C)=(J(C)\action \Pic,\Theta(C))$ for a stable curve $C$, then 
\[
\SA = \mathcal{S}_{A(C)} = \{q\in \mathbb{P}_C : \Theta_H\not\subseteq \Theta_0 \text{ for  general $H\in\Lambda^\vee_q$}\} \subseteq  \mathbb{P}_C.
\]

For the next result, recall Definition~\ref{def:canonical_model}.

\begin{proposition}\label{prop:singularities-recovering}
Let $C$ be a connected nodal curve of genus $g\ge2$ with no separating nodes. The following properties hold.
\begin{enumerate}
    \item[(i)] $\SC=\mathcal{S}_{A(C)}$, where $A(C)=(J(C)\action \Pic,\Theta(C))$.
    \item[(ii)] Assume further that the dual graph of $C$ is Eulerian. Then $\mathcal S_{\widehat{C}} = \mathcal S_{A^E(C)}$.
\end{enumerate}
\end{proposition}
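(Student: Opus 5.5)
We may replace $C$ by its stabilization, since $A(C)\cong A(C^{\st})$ and $\widehat C\cong\widehat{C^{\st}}$ compatibly (Remarks~\ref{rem:canonical_model_sep_stab_isomorphic} and~\ref{rem:SSAPstab}). Write $\Upsilon=\bigcup_{\ud\in\Sigma(C)}\Upsilon_{\ud}$, so that $\Theta_H=\bigcup_{\ud}\zeta_{\ud}(\gamma_{\ud}^{-1}(H))$. The plan is to prove both inclusions in (i) by comparing $\Upsilon_{\ud}$ with $\Omega_{\ud}$ over the big open set $\U_1$. For (ii) we run the same argument with only the Eulerian multidegree $\ud\in\Sigma^{\E}(C)$, whose Gauss graph component is the whole Gauss graph of $A^E(C)$ (Remark~\ref{rem:S-definition}).

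\emph{Inclusion $\mathcal S_{A(C)}\subseteq\SC$.} Let $q\notin\SC$. Then $\Lambda_q^\vee\not\subseteq\bigcup_{p\in\SC}\Lambda_p^\vee=\Proj^\vee_C\setminus\U_0$, so a general $H\in\Lambda_q^\vee$ lies in $\U_0$. By Lemma~\ref{lem:factorize}(i), $\beta_{\ud,\U_0}\colon\Omega_{\ud,\U_0}\to\Upsilon_{\ud,\U_0}$ is finite and birational, in particular surjective. Hence every point of $\gamma_{\ud}^{-1}(H)$ has the form $([\mathcal O_C(D)],H)$ with $D$ supported on $\nu^{-1}(C^{\sm})$. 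These are line bundles on $C$ itself, so they lie in $\Theta_0$. Thus $\Theta_H\subseteq\Theta_0$ and $q\notin\mathcal S_{A(C)}$.

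\emph{Inclusion $\SC\subseteq\mathcal S_{A(C)}$.} Let $q=\varphi(N)\in\SC$, where $N$ is the set of nodes over $q$, forming a $\Cone$-set. Choose $\ud$ as in Proposition~\ref{prop:q_orient} for this $\Cone$-set $S=N$. Take a general $H\in\Lambda_q^\vee$; then $H\in\U_1$ (for (ii), $H\in\U_{\ud}$ suffices), and $\nu^*\varphi^*(H)$ contains both preimages $e',e''$ of each node $e\in N$, each with multiplicity one, while being otherwise general. For a fixed node $e\in N$ with source vertex $v$ and target vertex $w$, Proposition~\ref{prop:q_orient}(ii),(iii) give $d_v\le k_v-1$ and $d_w\ge1$. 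We then build $D\le\nu^*\varphi^*(H)$ of multidegree $\ud$ containing exactly one branch point over $e$ (on $C_w$) and avoiding the other one (on $C_v$), and containing no other node branches. Then $D\in\mathbb T_{\ud}$ with $S_D=\{e\}$. The stability of $\mathcal O_{C_{\{e\}}}(D^{\sm})$ should follow from Proposition~\ref{prop:q_orient}(iv) together with the inequalities \eqref{eq:dZ-stable} for the partial normalization.

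Lemma~\ref{lem:factorize}(iii) then gives $\beta_{\mathbb T_{\ud}}(D,H)\in\Upsilon_{\ud}$ lying over $H$, whose $\Theta$-component $(\{e\},[\mathcal O_{C_{\{e\}}}(D^{\sm})])$ is in a nonmaximal orbit. Hence $\Theta_H\not\subseteq\Theta_0$ for general $H\in\Lambda_q^\vee$.

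\emph{Main obstacle.} The delicate step is producing such $D$ for a \emph{general} $H\in\Lambda_q^\vee$ compatibly with the hyperelliptic constraints of Remark~\ref{rem:Omega_d}(iii), and verifying stability on $C_{\{e\}}$. I would handle stability by converting the strong orientation into one on $\Gamma_{C_{\{e\}}}$: delete the edge $e$ and adjust the degree at $w$ by one. Strongness on the complement of $S$ (property (iv)), together with the cyclic orientation of the remaining edges of $N$, should yield the strict inequalities needed for the partial normalization. For (ii), the quasi-Eulerian orientation of Proposition~\ref{prop:q_orient} is Eulerian when $\Gamma_C$ is Eulerian, so the same $\ud$ is the Eulerian multidegree and the argument restricts to $A^E(C)$.
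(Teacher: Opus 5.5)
Your first inclusion $\mathcal S_{A(C)}\subseteq\SC$, the reduction to the stable case, and the treatment of (ii) via the unique Eulerian multidegree all match the paper. The gap is in the reverse inclusion. There you let $D$ contain a branch over a \emph{single} node $e$ of the $\Cone$-set $N=\varphi^{-1}(q)$ and no branch over the other nodes of $N$, aiming at $S_D=\{e\}$. This works only when $|N|=1$.

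If $|N|\ge2$ (e.g.\ the banana curve of Example~\ref{exa:banana-example}), then for every $e'\in N\setminus\{e\}$ the pair $\{e,e'\}$ is separating (Remark~\ref{rem:propC1set}(2)). Hence $e'$ is a bridge of $\Gamma_{C_{\{e\}}}=\Gamma_C\setminus\{e\}$. A graph with a bridge has no strong orientation, so $\Sigma(C_{\{e\}})=\emptyset$: if $Z$ is one side of the bridge, then $\delta_Z=1$ and $g_Z-1<d_Z<g_Z$ has no integer solution. Consequently no invertible sheaf on $C_{\{e\}}$ is stable. The stratum of $\Pic$ indexed by $\{e\}$ is empty, $D\notin\mathbb T_{\ud}$, and $\beta_{\mathbb T_{\ud}}(D,H)$ is undefined.

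Your proposed stability check fails exactly here. After deleting $e$, the remaining edges of $N$ form a directed path of bridges in the quotient, not a directed cycle. There is also an independent obstruction. If an irreducible honest hyperelliptic component $C_v$ meets $N$, its two branches over $N$ are $\iota_{\h}$-conjugate points of the reduced divisor $\nu^*\varphi^*(H)$. Remark~\ref{rem:Omega_d}(iii) then forces $D$ to contain one of them, which your requirement forbids unless $p_e^{\mathrm{in}}$ is one of these two points.

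The missing idea is to normalize the whole $\Cone$-set at once. Put into $D$ the incoming branch $p_{e'}^{\mathrm{in}}$ for \emph{every} $e'\in N$ and none of the outgoing ones. Then $S_D=N$ and $C_{S_D}=C_q$, whose connected components have as dual graphs the components $\Gamma_i$ of $\Gamma_C\setminus N$.

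This is why Proposition~\ref{prop:q_orient}(ii),(iii) are stated for all edges of $S$. Write $P_v$, $Q_v$ for the incoming and outgoing $N$-branches on $C_v^\nu$; each has degree at most one because $N$ is cyclically oriented. Then (ii),(iii) give $0\le d_v-\deg P_v\le k_v-\deg P_v-\deg Q_v$ at every vertex simultaneously. On honest hyperelliptic components and pairs, the incoming and outgoing branches are $\iota_{\h}$-conjugate, and you pick the incoming one.

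The degree of $\mathcal O_{C_q}(D^{\sm})$ on $C_v$, for $v\in V(\Gamma_i)$, is then $d_v-\deg P_v=d_{\orient|_{\Gamma_i},v}$. This multidegree is stable by Proposition~\ref{prop:q_orient}(iv). With this change, the rest of your argument via Lemma~\ref{lem:factorize}(iii) and the nonmaximal orbit goes through as in the paper.
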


\begin{proof} 
We can reduce to the case where $C$ is stable. Indeed, by Remark~\ref{rem:canonical_model_sep_stab_isomorphic} we have $\mathcal S_{\widehat{C}} = \mathcal S_{\widehat{C}^{\st}}$ , by Remark~\ref{rem:SSAPstab} we have $A(C) = A(C^{\st})$, and the dual graph of $C$ is Eulerian if and only if the dual graph of $C^{\st}$ is Eulerian.

We prove (i). Let $q\notin \mathcal{S}_{\widehat C}$, and let $H\in\Lambda_q^\vee$ be general. Then
$H\cap\mathcal{S}_{\widehat C}=\emptyset$, hence $H\in \mathcal U_0$. For every
$\ud\in\Sigma(C)$, Lemma~\ref{lem:factorize}(i) gives a finite birational morphism
$\beta_{\ud,\mathcal U_0}\colon \Omega_{\ud,\mathcal U_0}\to \Upsilon_{\ud,\mathcal U_0}$.
In particular, this morphism is surjective. Given $(D',H')\in\Omega_{\ud,\mathcal U_0}$, we have $\supp(D')\subseteq C^{\mathrm{sm}}$, and therefore the image of $\beta_{\ud,U_0}(D',H')$ via $\zeta\colon \Upsilon_{A(C)}\to \Theta(C)$ corresponds to an invertible sheaf on $C$. Thus $\zeta(\gamma^{-1}(H))\subseteq \Theta_0$, and hence $\Theta_H\subseteq\Theta_0$. We deduce that $q\notin\mathcal{S}_{A(C)}$, proving the inclusion $\mathcal{S}_{A(C)}\subseteq\mathcal{S}_{\widehat C}$.

To prove the reverse inclusion, fix $q\in\mathcal{S}_{\widehat C}$. By
Corollary~\ref{cor:C_1-set}, the nodes in $\varphi^{-1}(q)$ correspond to a
$\Cone$-set $E_q\subseteq E(\Gamma)$,
where $\Gamma=\Gamma_C$. Following Definition~\ref{def:Cq}, we denote by $C_q$ the partial normalization of $C$ at the set $\varphi^{-1}(q)\subseteq C^{\sing}$.

Choose an orientation $\orient$ as in Proposition~\ref{prop:q_orient}, with
$S=E_q$, and set
\[
\ud=\ud_{\orient}\in\Sigma^{\qE}(C).
\]
 In particular, the edges of $E_q$ induce a
directed cycle on $\Gamma/(E(\Gamma)\setminus E_q)$. Consequently, each connected component of $\Gamma\setminus E_q$ has exactly one incoming and one outgoing $E_q$-incidence, with a loop contributing one of each.

Let $H\in\Lambda_q^\vee$ be general. We may assume that
\[
H\in \mathcal U_1,\qquad
H\cap\mathcal{S}_{\widehat C}=\{q\},\qquad
H\cap\mathcal{B}_{\widehat C}=\emptyset,
\]
and that the divisor
$A:=\nu^*\varphi^*(H)$ on $C^\nu$
is reduced. In particular, $A$ contains the two branches over each node
corresponding to $E_q$, and it contains no branch over any other node.

For every $e\in E_q$, denote the two points over the corresponding node by
$p_e^{\mathrm{out}}$ and $p_e^{\mathrm{in}}$, where
$p_e^{\mathrm{out}}$ lies over the source vertex $\sigma_{\orient}(e)$ and
$p_e^{\mathrm{in}}$ lies over the target vertex $\tau_{\orient}(e)$. If $e$ is a loop,
the orientation does not distinguish its two branches; choose either branch as
$p_e^{\mathrm{out}}$ and call the other one $p_e^{\mathrm{in}}$.

For every $v\in V(\Gamma)$, set
\[
P_v:=\sum_{p_e^{\mathrm{in}}\in C_v^\nu}p_e^{\mathrm{in}},\qquad
Q_v:=\sum_{p_e^{\mathrm{out}}\in C_v^\nu}p_e^{\mathrm{out}},\qquad
A_v:=A|_{C_v^\nu}.
\]
The cyclic orientation of $E_q$ implies that
\[
\deg P_v\leq 1\; \text{ and } \; \deg Q_v\leq 1.
\]

Next, we will construct an effective divisor
\begin{equation}\label{eq:D-definition}
D=\sum_{v\in V(\Gamma)}D_v\in S_{\ud}(C^\nu)
\end{equation}
with the following properties:
\begin{enumerate}
\item[(a)] $\deg D_v=d_v$ for every $v\in V(\Gamma)$;
\item[(b)] $D_0+D_h+\iota_h^*(D_h)\leq A$;
\item[(c)] for every $e\in E_q$, the divisor $D$ contains
$p_e^{\mathrm{in}}$ but not $p_e^{\mathrm{out}}$, and $D$ contains no point
lying over a node not corresponding to an edge of $E_q$;
\item[(d)] if we set $D^{\sm}:=D\cap \nu^{-1}(C^{\sm})$, then $\mathcal{O}_{C_q}(D^{\sm})$ is stable on every connected
component of $C_q$.
\end{enumerate}

We first choose the divisors $D_v$ componentwise so as to obtain
Properties (a), (b), (c). Recall the decomposition $V(\Gamma_C)=V_0 \sqcup V_1 \sqcup V_2$ in Equation \eqref{eq:decomposition}.

Assume first that $v\in V_0$. By Lemma~\ref{lem:dimdeg}(ii), we have
$\deg A_v=k_v$. We set
\[
D_v=P_v+E_v,
\]
for some divisor $E_v$ such that
\[
0 \leq E_v\leq A_v-P_v-Q_v
\;
\text{ and }
\;
\deg E_v=d_v-\deg P_v.
\]
In fact, such an effective divisor $E_v$ exists. Indeed, if $P_v\neq 0$, then
Proposition~\ref{prop:q_orient}(iii) gives $d_v\geq 1$, whereas if
$P_v=0$, Lemma~\ref{lem:numerical} gives $d_v\geq 0$. Similarly, if
$Q_v\neq 0$, then Proposition~\ref{prop:q_orient}(ii) gives
$d_v\leq k_v-1$, whereas if $Q_v=0$, Lemma~\ref{lem:numerical} gives
$d_v\leq k_v$. Therefore
\[
0\leq d_v-\deg P_v
\leq k_v-\deg P_v-\deg Q_v
=\deg(A_v-P_v-Q_v)
\]
showing the existence of such an effective divisor $E_v$. 

Thus $D_v$ has degree $d_v$, contains the incoming point when one is present, and contains no outgoing point.

Assume now that $v\in V_1$. The divisor $A_v$ is invariant under $\iota_h$ and, by the generality of $H$, is a disjoint union $\iota_h$-orbits (each consisting of two points). Moreover, Lemma~\ref{lem:dimdeg}(iv) gives $d_v=\frac{k_v}{2}$. If $C_v^\nu$ contains points lying over the nodes corresponding to $E_q$, then
$P_v+Q_v$ is one of these $\iota_h$-orbits. Indeed, its two points are
conjugate by Proposition~\ref{prop:catanese_hh}(iv) combined with the
description of the involution for a node internal to an honest hyperelliptic
subcurve. We choose one point from each two-point orbit of $A_v$, choosing the
point in $P_v$ from the distinguished orbit $P_v+Q_v$. This gives a divisor
$D_v$ satisfying
\[
\deg D_v=d_v,\qquad
P_v\leq D_v,\qquad
\operatorname{Supp}(D_v)\cap\operatorname{Supp}(Q_v)=\emptyset,
\qquad
D_v+\iota_h^*D_v=A_v.
\]

Finally, let $v',v''\in V_2$ correspond to an honest hyperelliptic pair $Z$. The
involution $\iota_{\h}$ identifies $C_{v'}^\nu$ with $C_{v''}^\nu$, and we have
\[
A_{v''}=\iota_h^*A_{v'}
\;
\text{ and }
\;
d_{v'}+d_{v''}=k_{v'}
\]
by Lemma~\ref{lem:dimdeg}(iv). If $Z$ contains points lying over the nodes
corresponding to $E_q$, relabel $v'$ and $v''$ so that 
$p^{\mathrm{in}}$ lies on $C_{v'}^\nu$ and 
$p^{\mathrm{out}}$ lies on $C_{v''}^\nu$. These two points are exchanged by
$\iota_{\h}$ by Proposition~\ref{prop:catanese_hh}(iv) and the description of the involution of an honest hyperelliptic pair (see Definition~\ref{def:non-hyp}). We choose
\[
0 \leq D_{v'}\leq A_{v'},\qquad \deg D_{v'}=d_{v'},
\]
requiring that $p^{\mathrm{in}}\leq D_{v'}$ when the incoming point is present.
This is possible because Proposition~\ref{prop:q_orient}(iii) gives
$d_{v'}\geq 1$ in that case, while Lemma~\ref{lem:numerical} gives
$d_{v'}\leq k_{v'}=\deg A_{v'}$. We define
\[
D_{v''}:=\iota_h^*(A_{v'}-D_{v'}).
\]
Then
\[
\deg D_{v''}
=k_{v'}-d_{v'}
=d_{v''}
\;
\text{ and }
\;
D_{v'}+\iota_h^*D_{v''}=A_{v'}.
\]
Since $p^{\mathrm{in}}\leq D_{v'}$ and
$\iota_h(p^{\mathrm{out}})=p^{\mathrm{in}}$, the divisor $D_{v''}$ does not
contain $p^{\mathrm{out}}$.

Form the divisor $D$ as in Equation \eqref{eq:D-definition}. The preceding construction gives
\[
D_0+D_h+\iota_h^*(D_h)\leq A=\nu^*\varphi^*(H).
\]
Furthermore, for every $e\in E_q$, the divisor $D$ contains exactly the point
$p_e^{\mathrm{in}}$ lying over the corresponding node. Since $A$ contains no
branch over any other node, $D$ contains no point lying over a node outside
$E_q$. Consequently,
$S_D=E_q$ (recall Definition \ref{def:T} for the definition of $S_D$), and Properties (a) (b) (c) hold.

It remains to prove Property (d). Let $\Gamma_1,\ldots,\Gamma_k$ be the connected components of $\Gamma\setminus E_q$, which are the dual
graphs of the connected components of $C_q$. For $v\in V(\Gamma_i)$, set $r_v:=\deg P_v$. Since the points of $D$ lying over the normalized nodes are precisely the
incoming points, the degree of $\mathcal{O}_{C_q}(D^{\mathrm{sm}})$ on $C_v$ is
\[
\deg_{C_v}\mathcal{O}_{C_q}(D^{\mathrm{sm}})
=d_v-r_v.
\]
On the other hand,  $d_v=w_\Gamma(v)-1+\tau_{o,v}$, 
and restricting $\orient$ to $\Gamma_i$ removes precisely the $r_v$ incoming
$E_q$-incidences at $v$. Hence
\[
d_v-r_v
=w_\Gamma(v)-1+\tau_{\orient|_{\Gamma_i},v}
=d_{\orient|_{\Gamma_i},v}.
\]
By Proposition~\ref{prop:q_orient}(iv), the orientation
$\orient|_{\Gamma_i}$ is strong. Therefore the restriction of the multidegree of
$\mathcal{O}_{C_q}(D^{\mathrm{sm}})$ to each connected component of $C_q$ is
stable. This proves Property (d). Thus, all four properties hold. Notice that it also follows that $D\in \mathbb{T}_{\ud}$ (recall 
Definition~\ref{def:T}).

We will now use $D$ and its properties to produce a point $x$ of $\Theta_H$ lying outside
$\Theta_0$. Indeed,  Properties (a) and (b), together with
Remark~\ref{rem:Omega_d}(ii), give $(D,H)\in\Omega_{\ud}$,  while Properties
(c) and (d) give $S_D=E_q$ and
$D\in \mathbb{T}_{\ud}$, hence
$(D,H)\in\Omega_{\mathbb{T}_{\ud}}$. Set
\[
x:=\alpha_{\ud}(D)
=
\bigl(E_q,[\mathcal{O}_{C_q}(D^{\mathrm{sm}})]\bigr).
\]
By Lemma~\ref{lem:factorize}(iii), the morphism $\beta_{\mathbb{T}_{\ud}}$ factors through
$\Upsilon_{\ud}$, so Equation~\eqref{eq:beta_def} gives
\[
\beta_{\mathbb{T}_{\ud}}(D,H)=(x,H)\in\Upsilon_{\ud}.
\]
Thus $(x,H)\in\gamma^{-1}(H)$ and hence $x=\zeta(x,H)\in\Theta_H$.
On the other hand, $S_D=E_q\neq\emptyset$, so $x$ belongs to the boundary
stratum of $\overline{P_C^{g-1}}$ indexed by $E_q$. In particular,
$x\in\Theta_H\setminus\Theta_0$,
and hence $\Theta_H\not\subseteq\Theta_0$. Since this holds for a general
$H\in\Lambda_q^\vee$, Definition~\ref{def:theta_0} gives
$q\in\mathcal{S}_{A(C)}$. This concludes the proof of the other inclusion $\mathcal{S}_{\widehat C}\subseteq\mathcal{S}_{A(C)}$, and hence of~(i).

We now prove (ii). The inclusion
\[
\mathcal{S}_{A^E(C)}\subseteq\mathcal{S}_{A(C)}
=\mathcal{S}_{\widehat C}
\]
is immediate. Conversely, assume that $\Gamma$ is Eulerian. Then every
quasi-Eulerian orientation is Eulerian, and
\[
\Sigma^{\qE}(C)=\Sigma^{\mathrm{Eul}}(C)=\{\ud_E\},
\]
where $\ud_E$ is the unique stable Eulerian multidegree; see
Remark~\ref{rem:S-definition}. Thus, although the orientation used in the proof of (i) above may
depend on $q$, its associated multidegree is always $\ud_E$. The construction
therefore gives, for every $q\in\mathcal{S}_{\widehat C}$ and general
$H\in\Lambda_q^\vee$, a point of 
$\zeta_{\ud_E}(\gamma^{-1}_{\ud_E}(H))\setminus \Theta_0$.
Hence $q\in\mathcal{S}_{A^E(C)}$, and thus
$\mathcal{S}_{\widehat C}=\mathcal{S}_{A^E(C)}$.
\end{proof}

\subsection{Proof of the main results}

In this section we prove the main results of the paper. We first need some preliminary results on curves with separating nodes.

Let $C$ be a connected
nodal curve of genus $g\ge 1$. Let $\widetilde C$ be the normalization of $C$ at the set of all separating nodes of $C$. As in Notation \ref{not:dagger}, write the decomposition into connected components
\[
\widetilde C=\coprod_{1\le i\le r} C_i \sqcup \coprod_{1\le j\le s} R_j,
\]
where $g_{C_i}>0$ and $g_{R_j}=0$.

\begin{remark}\label{rem:product-separating}
There is a canonical isomorphism of SSAPs
\[
A(C)\cong \prod_{i=1}^r A(C_i).
\]
Under this isomorphism, the theta divisor is the sum of the pullbacks
of the theta divisors of the factors.
\end{remark}


In the notation of Remark~\ref{rem:product-separating}, write
\[
A(C)=(G\action P,\Theta),\qquad
A(C_i)=(G_i\action P_i,\Theta_i).
\]
Thus, after identifying $A(C)$ with $\prod_{1\le i\le r} A(C_i)$, we have
\[
G=\prod_{1\le i\le r} G_i,\qquad
P=\prod_{1\le i\le r} P_i,\qquad
\Theta=\sum_{1\le i\le r} \pi_i^*\Theta_i,
\]
where $\pi_i \colon  P=\prod_{1\le j\le r} P_j \to P_i$ is the projection onto the $i$-th
factor. Set
\[
V_i:=T_0G_i,\qquad
V:=T_0G=\bigoplus_{1\le i\le r} V_i,\qquad
\Proj^\vee_{C_i}:=\Proj(V_i^\vee)\subseteq \Proj(V^\vee).
\]
Recall the definition of the Gauss map  
\[
\mathcal{G}\colon \Theta^{\sm}_0\to \Proj(T_0G)^\vee=\Proj_C^\vee
\]
of $A(C)$ and the closure of its graph $\Upsilon=\Upsilon_{A(C)}$ in Definition \ref{def:GaussA}. Also, recall that the map $\gamma\colon \Upsilon\to \Proj_C^\vee$ is the restriction to $\Upsilon$ of the projection onto the second factor (see Equation~\eqref{eq:zeta-gamma}). Analogously, let $\mathcal G_i$ and
\(
\gamma_i\colon\Upsilon_{A(C_i)}\to\mathbb P_{C_i}^{\vee}
\)
denote the Gauss map and the projection associated with $A(C_i)$.

\begin{lemma}\label{lem:gauss-blocks-separating}
The image of the Gauss map of $A(C)$ is
\[
 \gamma(\Upsilon)=\coprod_{1\le i\le r} \Proj_{C_i}^{\vee}.
\]
 If $Y_i\subseteq\Upsilon$ is the union of the irreducible components mapping
to $\Proj^\vee_{C_i}$, then
\[
Y_i=\Upsilon_{A(C_i)}\times\prod_{j\ne i}P_j
\qquad
\text{ and }
\qquad
\gamma|_{Y_i}=\gamma_i\circ\pi_i,
\]
where
\(\pi_i\colon Y_i\to\Upsilon_{A(C_i)}\)  is the projection onto the first factor.   Hence $\gamma|_{Y_i}$ and $\gamma_i$ have the
same finite factor in their Stein factorizations over $\Proj^\vee_{C_i}$.

Moreover, each $\Proj^\vee_{C_i}\subset \Proj_C^\vee$, together with  
$A(C)$, intrinsically determines  $A(C_i)$.
\end{lemma}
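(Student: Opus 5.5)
The plan is to work directly with the product decomposition of Remark~\ref{rem:product-separating}, first on the open dense part and then passing to closures. The smooth locus of $\Theta_0=\Theta\cap P_0$ is easy to describe. A point $p=(p_1,\dots,p_r)\in P_0$ lies on $\Theta$ exactly when $p_i\in\Theta_i$ for some $i$. Since $\Theta=\sum_i\pi_i^*\Theta_i$ is a sum of pullbacks from different factors, $\Theta$ is smooth at $p$ only if exactly one index $i$ has $p_i\in\Theta_i$, and moreover $p_i\in\Theta_i^{\sm}$. At such a point the local equation of $\Theta$ is the pullback of a local equation of $\Theta_i$, so
\[
T_0\,p^{-1}\Theta=T_0\,p_i^{-1}\Theta_i\oplus\bigoplus_{j\ne i}V_j .
\]
Under the identification $\Proj(V)^\vee\supseteq\Proj(V_i)^\vee$, the corresponding hyperplane is exactly the point $\mathcal G_i(p_i)\in\Proj^\vee_{C_i}$. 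Hence the open dense part of the graph over the locus $\{p_i\in\Theta_i^{\sm},\ p_j\notin\Theta_j\ \forall j\ne i\}$ is the graph of $\mathcal G_i$ times an open dense subset of $\prod_{j\ne i}P_j$.

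Taking closures gives $Y_i=\Upsilon_{A(C_i)}\times\prod_{j\ne i}P_j$ and $\gamma|_{Y_i}=\gamma_i\circ\pi_i$. Since each $\gamma_i$ is dominant onto $\Proj^\vee_{C_i}$, as shown componentwise by Remark~\ref{rem:Omega_d}(iv) and Lemma~\ref{lem:factorize} (or directly from dominance of the Gauss map of a SSAP whose theta divisor is ample), the image of $\gamma$ is $\bigcup_i\Proj^\vee_{C_i}$. These subspaces are pairwise disjoint because they come from a direct sum decomposition, so the union is the disjoint union asserted. The statement about Stein factorizations follows at once: $\pi_i\colon Y_i\to\Upsilon_{A(C_i)}$ is a trivial fibration with connected, proper, reduced fibers $\prod_{j\ne i}P_j$, and each $P_j$ is connected by Definition~\ref{def: SSAP}(b). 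Therefore $\mathcal O_{\Upsilon_{A(C_i)}}\to\pi_{i*}\mathcal O_{Y_i}$ is an isomorphism, and the finite part of the Stein factorization of $\gamma_i\circ\pi_i$ coincides with that of $\gamma_i$.

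For the intrinsic recovery, I would start from $A(C)$ and a given component $\Lambda:=\Proj^\vee_{C_i}$ of $\gamma(\Upsilon)$. The subspace $\Lambda$ corresponds to a quotient $V\to V_i$, equivalently to the subspace $V_i^{\perp\perp}$ complementary to $\bigoplus_{j\ne i}V_j$. The complement itself is recovered as the intersection of all hyperplanes in $\Lambda$, namely $\bigcap_{H\in\Lambda}H=\Proj(\bigoplus_{j\ne i}V_j)$. Let $G'_i\subseteq G$ be the connected algebraic subgroup with Lie algebra $\bigoplus_{j\ne i}V_j$. This subgroup exists and equals $\prod_{j\ne i}G_j$, because in characteristic $0$ connected algebraic subgroups are determined by their Lie algebras. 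I would then set $G_i:=G/G'_i$ and $P_i:=P/G'_i$. The action of $G'_i$ on $P=\prod P_j$ is by the product action on the other factors, so the quotient is realized concretely by the projection $\pi_i$. To recover $\Theta_i$, I would use $Y_i$ itself: $\Theta_i\times\prod_{j\ne i}P_j=\zeta(Y_i)$, the union of the components of $\Theta$ whose Gauss image lies in $\Lambda$. This set is $G'_i$-invariant and descends to $\Theta_i\subseteq P_i$.

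The main obstacle I expect is making the quotient $P/G'_i$ intrinsic rather than relying on the chosen product decomposition. I would handle it by noting that $\zeta(Y_i)$ is a $G'_i$-stable divisor, and that $P\to P_i$ can be characterized as the morphism defined by the linear system of this divisor, or alternatively as the categorical quotient. The product structure shows that this quotient exists and equals $\pi_i$, and naturality follows because everything is constructed from $(A(C),\Lambda)$ alone.
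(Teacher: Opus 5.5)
Your proof is correct. For the computation of $\mathcal G$ on $\Theta_0^{\sm}$, the passage to closures, and the Stein-factorization claim, you follow the paper's argument. You also supply the detail the paper leaves implicit, namely $\pi_{i*}\mathcal O_{Y_i}=\mathcal O_{\Upsilon_{A(C_i)}}$, which holds because $\prod_{j\ne i}P_j$ is connected, reduced and proper.

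The genuine difference is in the intrinsic recovery of $A(C_i)$.
\begin{itemize}
\item The paper sets $K_i$ to be the identity component of the stabilizer of $\Theta^{(i)}$ in $G$, and notes that $\prod_{j\ne i}G_j\subseteq K_i$. It then uses the density of the Gauss images of points of $\Theta^{(i)}$ in $\mathbb P^\vee_{C_i}$ to get $T_0K_i\subseteq\bigcap_{H\in\mathbb P^\vee_{C_i}}H=\bigoplus_{j\ne i}V_j$, which forces $K_i=\prod_{j\ne i}G_j$.
\item You read off $W=\bigoplus_{j\ne i}V_j$ directly from $\Lambda$. You then take the unique connected algebraic subgroup with Lie algebra $W$, using that in characteristic zero connected subgroups are determined by their Lie algebras; existence comes from the product decomposition.
\end{itemize}
Your route is shorter, and the group part depends only on $(G,\Lambda)$. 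The paper's route avoids the Lie correspondence and gives a geometric characterization of $K_i$ as the translations preserving the $i$-th block of $\Theta$, at the cost of the tangent-space and density argument. Both finish the same way: take the categorical quotient by this subgroup, identify it with $\pi_i$, and descend $\zeta(Y_i)=\Theta^{(i)}=\pi_i^{-1}(\Theta_i)$ to $\Theta_i$.

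Two small corrections.
\begin{itemize}
\item Your parenthetical alternative, deducing dominance of $\gamma_i$ from ampleness of the theta divisor, is not a valid general principle. The lemma itself gives a counterexample: for $r\ge 2$, $\Theta(C)$ is ample but $\gamma$ is not dominant onto $\mathbb P_C^\vee$. Rely instead on the componentwise argument you cite. For non-stable $C_i$ of genus at least $2$ this needs Remark~\ref{rem:SSAPstab} first, and the case $g_{C_i}=1$ is trivial.
\item If you characterize $P\to P_i$ through a linear system, you must use $|m\Theta^{(i)}|$ for $m\gg 0$. The needed identification is $H^0(P,\pi_i^*\mathcal O_{P_i}(m\Theta_i))=H^0(P_i,\mathcal O_{P_i}(m\Theta_i))$, which follows from K\"unneth. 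The categorical-quotient characterization, which the paper uses, avoids this.
\end{itemize}
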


\begin{proof}
Let $p=(p_1,\ldots,p_r)\in\Theta_0^{\rm sm}$. Since
$\Theta=\sum_{1\le i\le r}\pi_i^*\Theta_i$, there is a unique $i$ such that
$p_i\in\Theta_i$, and locally at $p$ we have
$\Theta=\pi_i^{-1}(\Theta_i)$. In particular
\[
\mathcal{G}(p) = T_0p_i^{-1}\Theta_i\oplus\bigoplus_{j\ne i}V_j \subseteq V,
\]
so $\mathcal{G}(p)$ lies in $\Proj^\vee_{C_i}$ and is identified with $\mathcal{G}_i(p_i)\in \Proj_{C_i}^\vee$. The maps $\gamma_i$ are dominant:  this is immediate when
$g_{C_i}\le2$; when $g_{C_i}\ge3$ this follows from Remark \ref{rem:SSAPstab},  Proposition~\ref{prop:Omega}(i) and Lemma~\ref{lem:factorize}(i). Taking closures gives
\[
Y_i=\Upsilon_{A(C_i)}\times\prod_{j\ne i}P_j
\qquad
\text{ and } 
\qquad
\gamma|_{Y_i}=\gamma_i\circ\pi_i,
\]
so $\gamma(\Upsilon)=\coprod_{1\le i\le r}\Proj^\vee_{C_i}$. In particular, since
$\pi_i$ has connected fibers, the Stein finite factor of $\gamma|_{Y_i}$ and $\gamma_i$ is unchanged.

Let $\Theta^{(i)}$ be the sum of the components $\Theta_\lambda$ of $\Theta$
such that $\Upsilon_{\Theta_\lambda}\subseteq Y_i$. Then
$\Theta^{(i)}=\pi_i^*\Theta_i$. Let
$K_i\subseteq G$ be the identity component of $\operatorname{Stab}_G(\Theta^{(i)})$. Then $\prod_{j\neq i}G_j\subseteq K_i$. For every
$p\in\Theta^{(i)}\cap\Theta_0^{\sm}$, the inclusion
$K_i\cdot p\subseteq\Theta^{(i)}$ gives
$T_0K_i\subseteq T_0p^{-1}\Theta^{(i)}$; the latter is the
hyperplane represented by $\mathcal G(p)$. As $p$ varies, the
classes of these hyperplanes are dense in $\mathbb P_{C_i}^{\vee}$.
Hence
\[
T_0K_i\subseteq\bigcap_{H\in\mathbb P_{C_i}^{\vee}}H
=\bigoplus_{j\neq i}V_j.
\]

Since $K_i=\prod_{j\neq i}G_j$, the quotient group $G/K_i$ is
isomorphic to $G_i$. Moreover, the projection
\[
\pi_i\colon P=\prod_{j=1}^r P_j\longrightarrow P_i
\]
is the categorical quotient of the variety $P$ by $K_i$ (the
categorical quotient $P_j/G_j$ is a point for every $j\neq i$).
Finally, $\Theta^{(i)}=\pi_i^*\Theta_i$, so $\Theta_i$ is the unique
divisor on $P_i$ whose pullback is $\Theta^{(i)}$. Thus $K_i$,
$\Theta^{(i)}$, and the induced action of $G/K_i$ on $P_i$ recover $A(C_i)$, proving the final assertion.
\end{proof}

We can now state and prove the main result of the paper.

\begin{theorem}\label{thm:main1}
Let $C$ and $C'$ be connected nodal curves. There is a natural function
\begin{align*}
\rho_{C,C'}\colon\Isom_{\SSAP}(A(C),A(C')) &\to \Isom_{\CT}(\Can(C), \Can(C'))\\
\alpha = (\alpha_G, \alpha_P) &\mapsto \Proj(d_e\alpha_G)
\end{align*}
\end{theorem}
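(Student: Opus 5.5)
The plan is to show that the differential at the identity, $d_e\alpha_G\colon T_0J(C)\to T_0J(C')$, induces a projectivity $\zeta:=\Proj(d_e\alpha_G)\colon\Proj_C\to\Proj_{C'}$. Here we use $T_0J(C)=H^0(C,\omega_C)^\vee$, so that $\Proj(T_0J(C))=\Proj_C$. We must then check that $\zeta$ carries $\widehat C$, $\mathcal S_{\widehat C}$ and $\mathcal B_{\widehat C}$ onto the corresponding data for $C'$. Naturality (compatibility with composition and identities) will be automatic from the chain rule, so the whole content is that $\zeta$ lies in $\Isom_{\CT}$. The guiding principle is that every object used in the reconstruction was defined intrinsically from the SSAP. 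Since $\alpha_P(g\cdot p)=\alpha_G(g)\cdot\alpha_P(p)$ and $\alpha_P^*\Theta'=\Theta$, we get $\alpha_P(\Theta_0^{\sm})=\Theta_0'^{\sm}$. Moreover, for $p\in\Theta_0^{\sm}$ the orbit maps satisfy $\alpha_P\circ p=\alpha_P(p)\circ\alpha_G$, which gives $\mathcal G'(\alpha_P(p))=\zeta^{\vee}(\mathcal G(p))$, where $\zeta^\vee$ is the dual projectivity. Hence $(\alpha_P,\zeta^\vee)$ maps $\Upsilon_{A(C)}$ isomorphically onto $\Upsilon_{A(C')}$, compatibly with $\zeta,\gamma$. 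It therefore induces isomorphisms of normalizations $\widehat\Upsilon\cong\widehat\Upsilon'$ over $\zeta^\vee$, and so $\zeta^\vee(\Br(\widehat\gamma))=\Br(\widehat\gamma')$. It also preserves $\Theta_H$ and nonmaximal orbits, giving $\zeta(\mathcal S_{A(C)})=\mathcal S_{A(C')}$.

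Next I reduce to curves without separating nodes. By Lemma~\ref{lem:gauss-blocks-separating}, $\gamma(\Upsilon)=\coprod_i\Proj^\vee_{C_i}$, and $\zeta^\vee$ maps this image to that of $C'$. It therefore permutes the blocks, matching $\Proj^\vee_{C_i}$ with some $\Proj^\vee_{C'_{\sigma(i)}}$. Because each block together with $A(C)$ intrinsically determines $A(C_i)$, the isomorphism $\alpha$ descends to isomorphisms $A(C_i)\cong A(C'_{\sigma(i)})$ whose differentials are the restrictions of $d_e\alpha_G$ to the summands $V_i$. By Remark~\ref{rem:1-scheme-property}(iii),(iv), $\Can(C)$ is the union of the canonical triples of the $C_i$ placed in the complementary subspaces $\Proj_{C_i}$. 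It therefore suffices to treat each factor. Factors of genus $1$ are handled directly: $\Proj_{C_i}$ is a point, and one checks that it lies in $\mathcal S$ or $\mathcal B$ according to whether $C_i$ is singular. This is detected by the SSAP, because $J(C_i)$ is then $\mathbb G_m$ or an elliptic curve. For genus $\ge2$ we pass to stabilizations via Remarks~\ref{rem:canonical_model_sep_stab_isomorphic} and~\ref{rem:SSAPstab}.

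For $C$ stable of genus $\ge2$ without separating nodes, I argue as follows. By Theorem~\ref{thm:branch-gamma} (the quasi-Eulerian case, with existence from Proposition~\ref{prop:q_orient}), $\Br(\widehat\gamma)$ is the union of the hypersurfaces $W^\vee$ for $W\in\Irr_{\ge2}(\widehat C)$ and the hyperplanes $\Lambda_p^\vee$ for $p\in\mathcal B_{\widehat C}$. Since $\zeta^\vee$ preserves the branch locus, it sends hyperplane components to hyperplane components. Nonlinear components are exactly the nonhyperplane components: a dual $W^\vee$ of a nonlinear curve is never a hyperplane. By Proposition~\ref{prop:Kleiman_dual} (biduality), it follows that $\zeta(X_{\widehat C})=X_{\widehat C'}$ and $\zeta(\mathcal B_{\widehat C})=\mathcal B_{\widehat C'}$. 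By Proposition~\ref{prop:singularities-recovering}(i), $\zeta(\mathcal S_{\widehat C})=\mathcal S_{\widehat C'}$. Proposition~\ref{prop:curves-reconstruct-linear} then upgrades this to an isomorphism of canonical triples.

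I expect the main obstacle to be the functoriality step. One must verify carefully that $\alpha$ identifies the Gauss graphs (including closures and normalizations) and that branch loci, defined via scheme-theoretic fibers, are transported. In the separating case one must also check that the block permutation and the descended isomorphisms $A(C_i)\cong A(C'_{\sigma(i)})$ really have differential equal to the restriction of $d_e\alpha_G$. I would handle the latter via the identification $K_i=\prod_{j\ne i}G_j$ and the fact that $\alpha_G$ maps $K_i$ to $K'_{\sigma(i)}$.
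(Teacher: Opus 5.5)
Your proposal is correct and follows the paper's proof essentially step by step. The shared steps are: transporting the Gauss graph by $\alpha_P$ and the dual projectivity; splitting into the blocks $\Proj^\vee_{C_i}$ via Lemma~\ref{lem:gauss-blocks-separating}; using the torus rank for genus-one factors; and, for factors of genus at least two, combining Theorem~\ref{thm:branch-gamma} and biduality with Proposition~\ref{prop:singularities-recovering}(i) and Proposition~\ref{prop:curves-reconstruct-linear}.

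One small correction in the genus-one case: the point $\Proj_{C_i}$ is never in $\mathcal B_{\widehat{C_i}}$. The branch locus is defined through zero-dimensional fibres, so it is empty for a map contracting $C_i$. The point lies in $\mathcal S_{\widehat{C_i}}$ exactly when $C_i$ is singular, so this slip does not affect your argument.
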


\begin{proof}
By dimension considerations, we can assume that $C$ and $C'$ have the same genus $g$, otherwise $\Isom_{\SSAP}(A(C),A(C'))=\emptyset$ and the result is trivial. If $g=0$, then $\Isom_{\SSAP}(A(C),A(C'))=\{\id\}$ and $\Can(C)=\Can(C')=\emptyset$ so in this case $\Isom_{\CT}(\Can(C), \Can(C'))=\{\id\}$ and we are done.

Fix an SSAP isomorphism 
\(\alpha=(\alpha_G,\alpha_P)\colon A(C)\xrightarrow{\sim}A(C')\).
Using \(T_eJ(C)\cong H^0(C,\omega_C)^\vee\), we define the isomorphism
\[
\xi_\alpha:=\mathbb P(d_e\alpha_G)\colon
\mathbb P_C\xrightarrow{\cong}\mathbb P_{C'}.
\]
Let us show that \(\xi_\alpha\) is an isomorphism in $\Isom_{\CT}(\Can(C), \Can(C'))$.

Let \(\xi_\alpha^\vee=\mathbb P(((d_e\alpha_G)^{-1})^\vee)\) be the
dual projectivity. The equivariance of \(\alpha_P\) and preservation of the
theta divisor give
\(\mathcal G_{A(C')}\circ\alpha_P
=\xi_\alpha^\vee\circ\mathcal G_{A(C)}\).
So \(\alpha_P\times\xi_\alpha^\vee\) identifies $\Upsilon_{A(C)}$ and  $\Upsilon_{A(C')}$ and
commutes with their projections. Let \(C^\dagger=\coprod_{1\le i\le r} C_i\) and \(C'^\dagger=\coprod_{1\le j\le r'} C'_j\) be the decompositions into connected components of the curves obtained by normalizing the separating nodes and discarding the
genus-zero connected components. We use the canonical identifications of canonical triples
\[
\Can(C) = \Can(C^{\dagger}) = \bigcup_{1\le i\le r} \Can(C_i), \quad
\Can(C') = \Can(C'^{\dagger}) = \bigcup_{1\le j\le r'} \Can(C_j')
\]
given by Remarks~\ref{rem:1-scheme-property} and \ref{rem:canonical_model_sep_stab_isomorphic}. By
Remark~\ref{rem:product-separating} and
Lemma~\ref{lem:gauss-blocks-separating}, the irreducible components of the image of the Gauss maps of $A(C)$ and $A(C')$ are \(\mathbb P_{C_i}^\vee\) and
\(\mathbb P_{C'_j}^\vee\), respectively. Hence \(r=r'\), and there is a unique
permutation \(\sigma\) such that
\[
\xi_\alpha^\vee(\mathbb P_{C_i}^\vee)
=\mathbb P_{C'_{\sigma(i)}}^\vee.
\]
By
Lemma~\ref{lem:gauss-blocks-separating} we have isomorphisms
\(\alpha_i\colon A(C_i)\xrightarrow{\cong}A(C'_{\sigma(i)})\), whose
projectivized differentials are the restrictions
\(\xi_{\alpha,i}:=\xi_\alpha|_{\mathbb P_{C_i}}\).

Suppose that \(g_{C_i}\geq2\). Let
$\widehat\gamma_i\colon\widehat\Upsilon_{A(C_i)}
\to\mathbb P_{C_i}^{\vee}$
be the morphism induced by $\gamma_i$ on the normalization of the
Gauss graph, and set $R_i:=\Br(\widehat\gamma_i)$.
The isomorphism $\alpha_i$ identifies these morphisms, and therefore
$\xi_{\alpha,i}^{\vee}(R_i)=R'_{\sigma(i)}$. By
Theorem~\ref{thm:branch-gamma}, the linear components of \(R_i\) recover
\(\mathcal B_{\widehat C_i}\), while the nonlinear components of \(R_i\) recover the nonlinear part of the curve
\(X_{\widehat C_i}\) (see Equation~\ref{eq:X-def}) by biduality (Proposition~\ref{prop:Kleiman_dual}). Hence \(\xi_{\alpha,i}\) carries
these two sets to their counterparts for \(\widehat C'_{\sigma(i)}\).  Finally, Proposition~\ref{prop:singularities-recovering}(i)
identifies $\mathcal{S}_{\widehat C_i}$ with the intrinsically defined set
$\mathcal{S}_{A(C_i)}$. Hence
\[
\xi_{\alpha,i}(\mathcal{S}_{\widehat C_i})
=\mathcal{S}_{\widehat C'_{\sigma(i)}}.
\]
Proposition~\ref{prop:curves-reconstruct-linear} now shows that
\(\xi_{\alpha,i}\) is an isomorphism  in $\Isom_{\CT}(\Can(C_i),\Can(C'_{\sigma(i)}))$.

If \(g_{C_i}=1\), then $\Proj_{C_i}$ is a point. By definition, this point belongs to \(\mathcal S_{\widehat{C_i}}\) if and only if \(C_i\) is singular, and this fact 
is detected by the torus rank of \(J(C_i)\). Since \(\alpha_i\)
preserves torus rank, \(\xi_{\alpha,i}\) is again an isomorphism in $\Isom_{\CT}(\Can(C_i),\Can(C'_{\sigma(i)}))$.

The upshot is that \(\xi_\alpha\) is 
an isomorphism in
\(\Isom_{\CT}(\Can(C),\Can(C'))\). We define
\(\rho_{C,C'}(\alpha):=\xi_\alpha\). Compatibility of differentials
and projectivizations with composition shows that \(\rho\) is natural.
\end{proof}

\begin{remark}\label{rem:CV-gap}
At the beginning of the proof of the necessary direction of
\cite[Theorem~2.1.7]{Cap-Viv}, the reduction to curves without separating
nodes is deduced from \cite[Corollary~1.3.3(ii)]{Cap-Viv} and
\cite[Remark~1.3.1]{Cap-Viv}. These results give product decompositions of
the associated SSAPs, but an isomorphism between the products does not by
itself identify their factors. Such an identification is required because the $\Cone$-equivalence relation
of \cite[Definition~2.1.5]{Cap-Viv} is defined componentwise for
disconnected curves. The cited statements alone do not justify the required matching of factors;  Lemma~\ref{lem:gauss-blocks-separating}
fills this gap by recovering, in the case when $A=A(C)$ is the compactified Jacobian of a curve with separating nodes, each factor from the ambient SSAP and the
corresponding component of its Gauss image.
\end{remark}

For the next result, recall Definition \ref{def:Eulerian-locus}.

\begin{theorem}\label{thm:main-Euler}
Let $C$ and $C'$ be connected nodal curves. Assume that the dual graph of $C$ is Eulerian. Then there is a natural function
\[
\rho^E_{C,C'}\colon\Isom_{\SSAP}(A^E(C),A^E(C')) \to \Isom_{\CT}(\Can(C), \Can(C')).
\]
\end{theorem}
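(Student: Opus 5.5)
The plan is to run the proof of Theorem~\ref{thm:main1} again, replacing the full SSAP $A(C)$ by its Eulerian part $A^E(C)$. The map will again be $\alpha=(\alpha_G,\alpha_P)\mapsto\xi_\alpha:=\mathbb P(d_e\alpha_G)$, using $T_eJ(C)\cong H^0(C,\omega_C)^\vee$. This is meaningful because $A^E(C)$ still carries the full group $J(C)$.

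First I reduce to the case where $A^E(C')$ is nonempty and the genera agree. If $A^E(C')$ is empty while $A^E(C)$ is not, the Isom set is empty and there is nothing to prove. Next I pass to stable models and to curves without separating nodes. A graph is Eulerian if and only if each of its $2$-edge-connected blocks is Eulerian; a separating edge always lies in a cut of odd size, so I expect a dual graph with separating edges never to be Eulerian. Concretely, if $e$ separates $\Gamma$ into $V,V^c$, then $\sum_{v\in V}\deg v=2|E(V,V)|+1$ is odd, so some vertex has odd degree. Hence an Eulerian $C$ has no separating nodes, and genus-one or genus-zero degenerate cases are trivial as in Theorem~\ref{thm:main1}. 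For $A^E(C')\neq\emptyset$ the same holds for $C'$, so $C'$ has Eulerian dual graph too. If $g\ge 2$, Remarks~\ref{rem:SSAPstab} and~\ref{rem:canonical_model_sep_stab_isomorphic} let me assume that both curves are stable.

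With these reductions, $\overline{P^E_C}=\overline{P^{\ud_E}_C}$ is irreducible and $\Theta_E(C)=\Theta_{\ud_E}$, where $\ud_E\in\Sigma^{\qE}(C)$ (Remark~\ref{rem:S-definition}). The Gauss map of $A^E(C)$ is therefore the restriction $\mathcal G_{\Theta_{\ud_E}}$, and its graph is $\Upsilon_{\ud_E}$. Equivariance of $\alpha_P$ together with $\alpha_P^*\Theta_E(C')=\Theta_E(C)$ gives $\mathcal G_{A^E(C')}\circ\alpha_P=\xi_\alpha^\vee\circ\mathcal G_{A^E(C)}$. So $\alpha_P\times\xi_\alpha^\vee$ identifies the two Gauss graphs and their normalizations, compatibly with $\widehat\gamma_{\ud_E}$, and $\xi_\alpha^\vee$ carries $\Br(\widehat\gamma_{\ud_E})$ to $\Br(\widehat\gamma'_{\ud'_E})$.

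Since $\ud_E$ is quasi-Eulerian, the second part of Theorem~\ref{thm:branch-gamma} identifies this branch locus with the union of $W^\vee$ over $W\in\Irr_{\ge2}(\widehat C)$ together with the hyperplanes $\Lambda_p^\vee$ for $p\in\mathcal B_{\widehat C}$. The hyperplane components recover $\mathcal B_{\widehat C}$. The nonlinear components recover $X_{\widehat C}$ by Proposition~\ref{prop:Kleiman_dual}. Proposition~\ref{prop:singularities-recovering}(ii) identifies $\mathcal S_{\widehat C}$ with the intrinsic set $\mathcal S_{A^E(C)}$ of Definition~\ref{def:theta_0}, which $\alpha$ evidently preserves because it maps $\Theta_0$ to $\Theta'_0$ (nonmaximal orbits go to nonmaximal orbits). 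Proposition~\ref{prop:curves-reconstruct-linear} then shows that $\xi_\alpha\in\Isom_{\CT}(\Can(C),\Can(C'))$. Naturality follows, as before, from functoriality of differentials.

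The main obstacle will be the bookkeeping in the reduction step: checking that the Eulerian hypothesis, which is imposed only on $C$, transfers to $C'$ and to the stabilizations, and checking the small-genus cases. If the parity argument fails, I would instead use the fact that the existence of $\alpha$ forces $A^E(C')\neq\emptyset$ to be irreducible, so Remark~\ref{rem:S-definition} applies to $C'$.
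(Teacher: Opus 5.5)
Your reconstruction argument is the paper's. You work on the single component $\Upsilon_{\ud_E}$, apply the quasi-Eulerian case of Theorem~\ref{thm:branch-gamma}, use Proposition~\ref{prop:singularities-recovering}(ii) for $\mathcal S_{\widehat C}$, and finish with Proposition~\ref{prop:curves-reconstruct-linear}. Your parity argument is also a correct and cleaner way to see that $C$ itself has no separating nodes.

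The gap is in the reduction for $C'$. You deduce that $\Gamma_{C'}$ is Eulerian from $A^E(C')\neq\emptyset$ via Remark~\ref{rem:S-definition}, before knowing that $C'$ has no separating nodes. The remark fails without that hypothesis.
\begin{itemize}
\item If $C'=E_1\cup E_2$ consists of two elliptic curves glued at one point, then $\overline{P^{1}_{C'}}\cong\operatorname{Pic}^0(E_1)\times\operatorname{Pic}^0(E_2)$ is a single orbit fixed by the involution. So $\overline{P^E_{C'}}$ is nonempty and even irreducible, although $\Gamma_{C'}$ is not Eulerian.
\item Likewise, adding a rational tail to an Eulerian stable curve $C$ produces a $C'$ with $A^E(C')\cong A^E(C)$ but non-Eulerian dual graph.
\end{itemize}
So neither your main argument nor your fallback (irreducibility of $\overline{P^E_{C'}}$) rules out that $C'^{\st}$ has separating nodes.

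What is missing is the factor-matching step that the paper imports with ``as in the proof of Theorem~\ref{thm:main1}''. On one side, $\Theta_E(C)=\Theta_{\ud_E}$ is irreducible, and the Gauss map of $A^E(C)$ dominates $\mathbb P_C^\vee$. On the other side, by Remark~\ref{rem:product-separating}, $\overline{P^E_{C'}}=\prod_i\overline{P^E_{C'_i}}$ and $\Theta_E(C')=\sum_i\pi_i^*\bigl(\Theta(C'_i)|_{\overline{P^E_{C'_i}}}\bigr)$. This divisor has at least one irreducible component for each positive-genus connected component $C'_i$ of $C'^\dagger$, and its Gauss image lies in $\coprod_i\mathbb P^\vee_{C'_i}$. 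Hence the existence of $\alpha$ forces $C'^\dagger$ to be connected. For $g\ge2$ the stabilization $C'^{\st}$ then has no separating nodes and can replace $C'$. Only at this point does Remark~\ref{rem:S-definition} give an Eulerian dual graph and the unique multidegree $\ud'_E$, after which the rest of your proof goes through unchanged.
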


\begin{proof}
As in the proof of Theorem~\ref{thm:main1}, we may assume that
$C$ and $C'$ have the same genus $g \geq 2$ and that they have no separating nodes.  Since $\Gamma_C$ is Eulerian,  $\overline{P^E_C}$ is not empty. If $\overline{P^E_{C'}}$ is empty, then $\Isom_{\SSAP}(A^E(C),A^E(C'))$ is empty and the statement is trivial. 

Assume therefore $\overline{P^E_{C'}}$ is not empty. By  Remark~\ref{rem:S-definition} the dual graph of $C'$ is Eulerian. 
 Let $\ud$ and $\ud'$ be the unique Eulerian multidegrees in $\Sigma(C)$ and $\Sigma(C')$, respectively.  
The proof now follows the same argument as Theorem \ref{thm:main1}, considering $\Upsilon_{\ud}$ and $\Upsilon_{\ud'}$ instead of $\Upsilon_{A(C)}$ and $\Upsilon_{A(C')}$, and invoking Part~(ii) instead of Part~(i) of Proposition~\ref{prop:singularities-recovering} to identify $\mathcal{S}_{\widehat{C}}$ with $\mathcal{S}_{\widehat{C}'}$.
\end{proof}

\subsection{Comparison results and automorphisms} 

In this final section, all curves are connected nodal curves of genus at
least two without separating nodes; stability will be imposed when needed.
First, in Theorem~\ref{thm:T-equivalence}, we relate the isomorphisms of normalizations occurring in the Torelli
theorem of \cite{Cap-Viv} to isomorphisms of canonical triples. Then in Theorem~\ref{thm:main-aut} we 
compare the two resulting constructions and apply this comparison to
automorphism groups.

Recall that $\nu_C\colon C^\nu\to C$ and
$\varphi_C\colon C\to\mathbb P_C$ denote the normalization and the
canonical map, respectively.

\begin{definition}\label{def:Torelli-eq}
Let $C$ and $C'$ be connected stable curves without separating nodes. For every
$\Cone$-set $S\subseteq E(\Gamma_C)$, let
\begin{equation}\label{def:DS}
D_S:=\bigcup_{e\in S}\nu_C^{-1}(p_e)\subseteq C^\nu,
\end{equation}
where $p_e$ is the node corresponding to $e$, and define $D_{S'}$ similarly
for $C'$.

A \emph{$\TT$-isomorphism} from $C$ to $C'$ is an isomorphism
$f\colon C^\nu\xrightarrow{\sim}C'^\nu$ which carries the collection of
divisors $D_S$ bijectively onto the corresponding collection for $C'$. We
denote the set of $\TT$-isomorphisms by $\operatorname{Isom}_{\TT}(C,C')$.

We say that
$C$ and $C'$ are \emph{Torelli equivalent} if
$\operatorname{Isom}_{\TT}(C,C')\neq\emptyset$.
\end{definition}
The composite and inverse of $\TT$-isomorphisms are again $\TT$-isomorphisms.
Thus connected stable curves without separating nodes, with $\TT$-isomorphisms as morphisms, form a groupoid.

To pass from a $T$-isomorphism to a projective isomorphism of canonical
models, we use the residue description of canonical differentials. We
first introduce the corresponding residue vectors and record two
elementary properties.

\begin{definition}
    Let $C$ be a nodal curve with normalization $\nu_C\colon C^\nu\to C$. Given a subset of edges $S\subseteq E(\Gamma_C)$, let $N_S\subseteq C^{\sing}$ be the subset corresponding to $S$ and set $H(C,S):=\nu_C^{-1}(N_S)$. The elements of $H(C,S)$ are called \emph{branches}. If $S=E(\Gamma_C)$, we simply write $H(C)$.  
\end{definition}

There is a natural involution without fixed points $\iota_C\colon H(C)\to H(C)$ such that $\nu_C(\iota_C(q)) = \nu_C(q)$. Moreover, there is an incidence map $\sigma_C \colon H(C)\to V(\Gamma_C)$ and a natural map $H(C)\to E(\Gamma_C)$ (which is the quotient by $\iota_C$). So, given $e\in E(\Gamma_C)$, we can uniquely write $e=\{h,\iota_C(h)\}$ for some $h\in H(C)$. We define the 'set of possible residue vectors' as
\[
R(C):=
\left\{(r_h)\in k^{H(C)}:
r_h+r_{\iota_C(h)}=0 \text{ and }
\sum_{\sigma_C(h) = v}r_h=0\ \text{for every }v\in V(\Gamma_C)
\right\}.
\]

\begin{lemma}\label{lem:RC-prop}
With the above notation, the following properties hold.
    \begin{enumerate}
        \item[(i)] 
    For every $(r_h)\in R(C)$ and every $V\subseteq V(\Gamma_C)$, we have
    \[
\sum_{\substack{\sigma_C(h)\in V\\ \sigma_C(\iota_C(h))\notin V}} r_h = 0.
    \]
    \item[(ii)] Let $S$ be a $\Cone$-set of $\Gamma_C$ and let $h, h'\in H(C,S)=D_S$ be branches (see Equation \eqref{def:DS}). Then there exists $\epsilon_{h,h'}\in \{\pm1\}$ such that $r_h = \epsilon_{h,h'} r_{h'}$ for every $r \in R(C)$.
    \end{enumerate}
\end{lemma}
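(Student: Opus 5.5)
For (i) I will sum the vertex relations over $V$ and then remove the internal contributions. The definition of $R(C)$ gives $\sum_{\sigma_C(h)=v}r_h=0$ for each $v\in V$. Adding these over $v\in V$ gives $\sum_{\sigma_C(h)\in V} r_h=0$. I then split this sum into branches $h$ with $\sigma_C(\iota_C(h))\in V$ and branches with $\sigma_C(\iota_C(h))\notin V$. The internal branches come in pairs $\{h,\iota_C(h)\}$, since $\iota_C$ is a fixed-point-free involution preserving this subset; a loop at a vertex of $V$ contributes its two branches, which form one such pair. Each pair contributes $r_h+r_{\iota_C(h)}=0$, so the internal part vanishes and the stated sum over outgoing branches is zero. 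This step is routine.

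For (ii) I will use the cyclic structure of a $\Cone$-set. Write $S=\{e_1,\dots,e_k\}$ and label the connected components $\Gamma_1,\dots,\Gamma_k$ of $\Gamma_C\setminus S$ cyclically, as in the proof of Proposition~\ref{prop:q_orient}, so that $e_i$ joins $\Gamma_i$ to $\Gamma_{i+1}$ (indices modulo $k$). For each $i$, let $a_i$ be the branch of $e_i$ lying over $\Gamma_i$ and $b_{i+1}=\iota_C(a_i)$ the branch lying over $\Gamma_{i+1}$; this gives $r_{b_{i+1}}=-r_{a_i}$. Apply (i) to $V=V(\Gamma_i)$. The edges of $\Gamma_C$ leaving $V(\Gamma_i)$ are exactly $e_{i-1}$ and $e_i$, because $\Gamma/(E(\Gamma)\setminus S)$ is a cycle whose vertices are the $\Gamma_j$. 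Hence (i) gives $r_{b_i}+r_{a_i}=0$, that is, $r_{a_i}=-r_{b_i}=r_{a_{i-1}}$. By induction all $r_{a_i}$ are equal and all $r_{b_i}$ equal their negative, so any two branches in $D_S$ satisfy $r_h=\pm r_{h'}$, with a sign depending only on $h,h'$ and not on $r$.

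Two degenerate cases need separate handling. If $k=1$ and $e_1$ is a loop, the two branches satisfy $r_h=-r_{\iota_C(h)}$ directly. If $k=1$ and $e_1$ is a non-loop edge inside a single component, the same antisymmetry relation suffices, since $D_S$ consists of only these two branches. For $k=2$ the same computation goes through with $e_{i-1}$ and $e_i$ distinct edges joining the same pair of components.

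The main point requiring care is (ii): justifying that $V(\Gamma_i)$ meets exactly the two edges $e_{i-1},e_i$ of the cut, counted with multiplicity. I will take this directly from Definition~\ref{d: C1set}, because contracting $E(\Gamma)\setminus S$ yields a cycle in which each vertex has valence $2$.
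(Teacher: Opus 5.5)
Your proposal is correct and follows the paper's proof essentially step for step. Part (i) sums the vertex relations over $V$ and cancels the internal branch pairs using $r_h+r_{\iota_C(h)}=0$, and part (ii) applies (i) to each component of $\Gamma_C\setminus S$, whose only boundary edges are $e_{i-1},e_i$, to show the residues agree all the way around the quotient cycle; your separate $k=1$ case is the paper's $|S|=1$ case.
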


\begin{proof}
    Part (i) follows from 
    \begin{align*}
    \sum_{\substack{\sigma_C(h)\in V\\ \sigma_C(\iota_C(h))\notin V}}r_h =& \sum_{\sigma_C(h)\in V} r_h -  \sum_{\sigma_C(h), \sigma_C(\iota_C(h))\in V} r_h\\
    =&\sum_{v\in V}\sum_{\sigma_C(h) = v} r_h - \sum_{\substack{e\in E(V, V)\\ e=\{h, \iota_C(h)\}}} (r_h+r_{\iota_C(h)}) = 0-0=0.
    \end{align*}

    For (ii), if $|S|=1$, the assertion follows immediately from
$r_h+r_{\iota_C(h)}=0$. Assume that $|S|=k\geq2$. Let
$V_1,\ldots,V_k$ be the vertex sets of the connected components of
$\Gamma_C\setminus S$. Since the associated quotient graph is a cycle,
we may label them and write $S=\{e_1,\ldots,e_k\}$ so that, with indices
taken modulo $k$, the edge $e_i$ joins $V_i$ to $V_{i+1}$. Then
\[
E(V_i,V_i^c)=\{e_{i-1},e_i\}.
\]
Let $h_i$ be the branch of $e_i$ incident to $V_i$. Applying (i) to
$V_i$ gives
\[
r_{h_i}+r_{\iota_C(h_{i-1})}=0,
\]
and hence $r_{h_i}=r_{h_{i-1}}$. Thus all the $r_{h_i}$ are equal,
whereas $r_{\iota_C(h_i)}=-r_{h_i}$, proving the assertion.
\end{proof}

\begin{lemma}
\label{lem:beta_unique}
    Let $C$ and $C'$ be connected stable curves without separating nodes. Given $f\in \Isom_T(C, C')$, there exists a unique linear isomorphism $\beta_f\colon \Proj_C\to \Proj_{C'}$ such that $\beta_f\circ\varphi_C\circ \nu_C = \varphi_{C'}\circ \nu_{C'}\circ f$. 
\end{lemma}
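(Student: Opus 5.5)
Uniqueness is immediate: since $C$ has no separating nodes, $\varphi_C$ is defined everywhere, and the image of $\varphi_C\circ\nu_C$ spans $\mathbb P_C$, because no nonzero section of $\omega_C$ vanishes identically on $C$. A linear map is therefore determined by its values on this image. The work is in existence. I would construct $\beta_f$ dually, as a linear isomorphism
\[
f^\sharp\colon H^0(C',\omega_{C'})\to H^0(C,\omega_C)
\]
compatible with pullback of forms along $f$, and then set $\beta_f:=\mathbb P((f^\sharp)^\vee)$.

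\textbf{Step 1 (residue description).} I would use the standard residue description of the dualizing sheaf. Pullback along $\nu_C$ identifies $H^0(C,\omega_C)$ with the space of meromorphic forms $\eta$ on $C^\nu$ that satisfy three conditions: $\eta$ has at most simple poles, supported on $H(C)$; the residues satisfy $\operatorname{res}_h\eta+\operatorname{res}_{\iota_C(h)}\eta=0$; and, by the residue theorem on each component, $(\operatorname{res}_h\eta)_h\in R(C)$ automatically. Concretely, $H^0(C,\omega_C)$ is the preimage of $R(C)$ under the residue map on $H^0(C^\nu,\omega_{C^\nu}(H(C)))$.

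\textbf{Step 2 (transport and correction).} The pullback $f^*$ sends $H^0(C'^\nu,\omega_{C'^\nu}(H(C')))$ isomorphically onto $H^0(C^\nu,\omega_{C^\nu}(H(C)))$. This holds because a $\TT$-isomorphism carries $\bigcup_S D_S=H(C)$ onto $H(C')$. However, $f$ need not respect the involution $\iota$, so a form $f^*\eta'$ with $\eta'\in H^0(C',\omega_{C'})$ need not satisfy the gluing condition on $C$. This is the main obstacle.

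I would overcome it with Lemma~\ref{lem:RC-prop}(ii). On each $\Cone$-set $S$, every residue vector of $C$ (resp. $C'$) is determined by a single scalar $c_S$, up to fixed signs $\epsilon$. So $R(C)$ is the span of the vectors indexed by $\Cone$-sets, restricted by the sum-zero conditions at the vertices. The same holds for $R(C')$, and $f$ matches $D_S$ with some $D_{S'}$. The key check is that, branch by branch in $D_S$, the signs $\epsilon$ for $C$ and for $C'$ differ by a global sign $\delta_S\in\{\pm1\}$ depending only on $S$. To prove this, I would use that each component of $\Gamma\setminus S$ meets $S$ in exactly two branches with opposite residues, together with the fact that $f$ preserves the components of the normalization. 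If this holds, I define $f^\sharp(\eta')$ as the unique form on $C^\nu$ that differs from $f^*\eta'$ by a holomorphic form and whose residues on $D_S$ are multiplied by $\delta_S$.

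I expect this existence step to be subtle. A cleaner route would be to avoid choosing a correction altogether, by showing that the set of residue vectors realized on $D_S$ is the same subspace for $C$ and for $C'$ up to these signs. Then $f^\sharp$ can be taken to be the composite of $f^*$ with the sign-changing automorphism $\operatorname{diag}(\delta_S)$, which is well defined on $R(C)$.

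\textbf{Step 3 (compatibility with the canonical maps).} For $p\in C^\nu$ a general point, the hyperplane of forms vanishing at $f(p)$ corresponds under $f^\sharp$ to the hyperplane of forms vanishing at $p$. The correction changes only polar parts at the branches, so it does not affect vanishing at general points. Hence $\beta_f\circ\varphi_C\circ\nu_C=\varphi_{C'}\circ\nu_{C'}\circ f$ on a dense open set, and therefore everywhere by continuity.
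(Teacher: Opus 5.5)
There is a genuine gap in Step~2. Your key check is false: the discrepancy between the gluing conditions of $C$ and $C'$ is a sign attached to each component of $C^\nu$, not to each $\Cone$-set. Take the banana curves of Example~\ref{exa:banana-example}. The curve $C$ glues $p_1\sim p_2$ and $q_1\sim q_2$, while $C'$ glues $p_1\sim q_2$ and $q_1\sim p_2$. Then $f=\mathrm{id}_{C_1\sqcup C_2}$ is a $\TT$-isomorphism, and there is a single $\Cone$-set with $D_S=\{p_1,q_1,p_2,q_2\}$. In this order, the residue vectors of $C$ are $(a,-a,-a,a)$ and those of $C'$ are $(b,-b,b,-b)$. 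Their ratio is $+1$ on the branches of $C_1$ and $-1$ on those of $C_2$. So multiplying all residues on $D_S$ by one sign $\delta_S$ never turns the second pattern into the first when $b\neq0$.

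Your correction mechanism is also not well defined. A form that differs from $f^*\eta'$ by a holomorphic form has the same residues as $f^*\eta'$. Any form obtained by changing residues at individual branches differs from $f^*\eta'$ by a nonzero meromorphic form, which is nonzero at a general point. So the claim in Step~3, that vanishing at general points is unaffected, fails. For the same reason, $\mathrm{diag}(\delta_S)$ on $R(C)$ has no lift to $H^0(C,\omega_C)$ that preserves zero loci.

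The missing idea is to put the sign on the vertices. Set
$(L_f\xi)|_{C_v^\nu}:=s_v\,(f|_{C_v^\nu})^*\bigl((\nu_{C'}^*\xi)|_{f(C_v^\nu)}\bigr)$ with $s_v\in\{\pm1\}$, and $\beta_f:=\mathbb P(L_f^\vee)$. Scaling a whole component preserves its zeros, so compatibility with the canonical maps is automatic. Let $\epsilon_e$ be the sign with $r_{\tau(\iota_C(h))}=\epsilon_e r_{\tau(h)}$ for all $r\in R(C')$; it exists by Lemma~\ref{lem:RC-prop}(ii), since $\tau(h)$ and $\tau(\iota_C(h))$ lie in one $D_{S'}$. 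For an edge $e$ with $h$ at $u$ and $\iota_C(h)$ at $v$, the gluing condition becomes $s_u+\epsilon_e s_v=0$. So you must prove the cocycle condition $\prod_{e\in E(Q)}(-\epsilon_e)=1$ for every cycle $Q\subseteq\Gamma_C$.

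This is where the $\TT$-hypothesis is really used. A $\Cone$-set meeting $Q$ is contained in $E(Q)$, so $f$ carries $H(C,E(Q))$ onto $H(C',E(Q'))$ for a $2$-regular subgraph $Q'$. Orient $Q'$ and take the residue vector equal to $\pm1$ along it; it is nonzero on these branches. Propagating it around $Q$ gives $q_i=-\epsilon_{e_{i-1}}q_{i-1}$ with all $q_i\neq0$, which yields the product formula. Finally, $L_f$ is injective between spaces of equal dimension $g$, hence an isomorphism.

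A smaller point on uniqueness: a spanning set does not determine a projectivity. For example, $\mathrm{diag}(1,1,2,2)$ fixes two skew lines in $\mathbb P^3$ pointwise. You also need that $\widehat C$ is connected, so that it lies in a single projectivized eigenspace of $(\beta')^{-1}\beta$.
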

\begin{proof}
If $\beta$ and $\beta'$ both satisfy the required equality, then
$(\beta')^{-1}\circ\beta$ fixes $\widehat C$ pointwise. Since
$\widehat C$ is connected and spans $\mathbb P_C$, it follows that
$\beta=\beta'$.

We denote by $\Gamma,\Gamma'$ the dual graphs of $C,C'$ respectively.
The isomorphism $f$ induces bijections $\eta\colon V(\Gamma)\to V(\Gamma')$ and $\tau\colon H(C)\to H(C')$ such that:
\begin{itemize}
    \item[(1)] Given a $\Cone$-set $S$ of $\Gamma$, we have $\tau(D_S) = D_{S'}$ for some $\Cone$-set $S'$ of $\Gamma'$ (see Equation~\eqref{def:DS}).
    \item[(2)] $\sigma_{C'}\circ  \tau = \eta\circ \sigma_C$.
\end{itemize}

If $e=\{h,\iota_C(h)\}\in E(\Gamma)$ for some $h\in H(C)$, then $\tau(h)$ and $\tau(\iota_C(h))$ belong to the same $D_{S'}$ where $S'$ is a $\Cone$-set of $\Gamma'$. By Lemma~\ref{lem:RC-prop}, there exists $\epsilon_e\in\{\pm1\}$ such that, for every $r\in R(C')$, we have $r_{\tau(\iota_C(h))}=\epsilon_e r_{\tau(h)}$.

We claim that
\[
\prod_{e\in E(Q)}(-\epsilon_e)=1
\]
for every cycle $Q\subseteq\Gamma$. Indeed, if a $\Cone$-set $S$ meets $Q$, then the image of $Q$ in the
quotient cycle
\(
\Gamma/(E(\Gamma)\setminus S)
\)
is a nonempty Eulerian subgraph, and hence is the whole cycle. Therefore
$S\subseteq E(Q)$. Hence, by Item (1) above,  there exists $S'\subseteq E(\Gamma')$ such that $\tau(H(C, E(Q))) = H(C', S')$. Moreover, Item (2) implies that $S'=E(Q')$ where $Q'$ is a subgraph of $\Gamma'$ such that each vertex has degree $2$; in particular, $Q'$ is a disjoint union of cycles.

Construct an element $r\in R(C')$ nonzero on $H(C', E(Q'))$ as follows. Orient each connected component of this $2$-regular subgraph, put $1$ on the branch associated with the source of every oriented edge and $-1$ on the branch associated with the target, and extend by zero elsewhere. This defines an element of $R(C')$ that is nonzero on $H(C', E(Q'))$.

Write $Q=(v_1,e_1,\ldots,v_m,e_m,v_1)$ 
cyclically, and write $e_i = \{h_i, \iota_C(h_i)\}$ with $\sigma_C(h_i) = v_i$. Set $q_i:=r_{\tau(h_i)}$. Since $\sigma_{C'}(\tau(h_i)) = \sigma_{C'}(\tau(\iota_C(h_{i-1})))$ (by Item (2)), we have that $r_{\tau(h_i)} = - r_{\tau(\iota_C(h_{i-1}))} = - \epsilon_{e_{i-1}}r_{\tau(h_{i-1})}$. Hence, $q_i=-\epsilon_{e_{i-1}}q_{i-1}$. This means that $\prod q_i = \prod (-\epsilon_{e_{i-1}}) \prod q_{i-1}$. Since $q_i\neq 0$ for every $i$, this proves the claim.

Fix a vertex $v_0\in V(\Gamma_C)$. Given any vertex $v\in V(\Gamma)$ define
\[
s_v:=\prod_{e\in P}(-\epsilon_e),
\]
where $P$ is any trail from $v_0$ to $v$. The claim makes this
independent of $P$. Moreover, for every edge $e$ incident to vertices $u$ and $v$, we have $s_u+\epsilon_e s_v=0$.  Using the usual residue description of sections of the dualizing
sheaf \cite[pp.~76--77]{catanese}, we define
\begin{equation}
\label{eq:Lf}
L_f\colon H^0(C',\omega_{C'})\to H^0(C,\omega_C)
\end{equation}
by
\[
(L_f\xi)|_{C_v^\nu}
:=
s_v(f|_{C_v^\nu})^*
\bigl((\nu_{C'}^*\xi)|_{f(C_v^\nu)}\bigr).
\]
For $\xi\in H^0(C',\omega_{C'})$, let
$r=(r_{h'})\in R(C')$ be the residue vector of $\nu_{C'}^*\xi$.
At the node corresponding to an edge
$e=\{h,\iota_C(h)\}$ incident to vertices $u$ and $v$, the two
residues of $L_f\xi$ have sum
\[
s_u r_{\tau(h)}
+s_v r_{\tau(\iota_C(h))}
=
(s_u+\epsilon_e s_v)r_{\tau(h)}
=
0.
\]
Thus $L_f$ is well defined. Moreover, $L_f$ is injective and hence an isomorphism,
since $C$ and $C'$ have the same  genus. Set
\[
\beta:=\mathbb P(L_f^\vee)\colon
\mathbb P_C\xrightarrow{\cong}\mathbb P_{C'}.
\]
After identifying the connected components of $C^\nu$ and
$(C')^\nu$ via $f$, the map $L_f$ acts on each component by
multiplication by a sign. Hence this sign disappears after
projectivization, and
\(
\beta\circ\varphi_C\circ\nu_C
=
\varphi_{C'}\circ\nu_{C'}\circ f,
\)
as required.
\end{proof}

Given a connected stable curve $C$ with no
separating nodes, set 
\[
N_C:=\{h\in\operatorname{Isom}_{\TT}(C,C):\varphi_C\circ\nu_C\circ h=\varphi_C\circ\nu_C\}.
\]
Note that we have
 \(
N_C=\prod_Z\langle\iota_Z\rangle,
\)
where $Z$ ranges over the honest hyperelliptic subcurves of $C$ and
$\iota_Z$ is their involution, extended by the identity on the remaining
components of $C^\nu$.

\begin{theorem}\label{thm:T-equivalence}
Let $C$ and $C'$ be connected stable curves without
separating nodes. There is a  natural surjection
\begin{align*}
\pi_{C,C'}\colon \operatorname{Isom}_{\TT}(C,C')&\longrightarrow
\operatorname{Isom}_{\CT}(\Can(C),\Can(C'))\\
 f&\longmapsto \beta_f.
\end{align*}
Moreover, every fiber of $\pi_{C,C'}$ is a right $N_C$-torsor  and a left $N_{C'}$-torsor. 
In particular, $\pi_{C,C}$ is injective if and only if
$C$ has no honest hyperelliptic subcurves.
\end{theorem}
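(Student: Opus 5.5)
The plan is to establish four facts in turn: that $\beta_f$ is an isomorphism of canonical triples, that $f\mapsto\beta_f$ is natural, that every isomorphism of canonical triples is some $\beta_f$, and that the fibres are torsors.

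\textbf{Well-definedness.} By Lemma~\ref{lem:beta_unique}, $\beta_f$ satisfies $\beta_f\circ\varphi_C\circ\nu_C=\varphi_{C'}\circ\nu_{C'}\circ f$. Taking images gives $\beta_f(\widehat C)=\widehat C'$.

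For $\mathcal S_{\widehat C}$, recall that it is the image of the nodes. Its points are exactly the images $\varphi_C\nu_C(D_S)$, as $S$ runs over the $\Cone$-sets (Corollary~\ref{cor:C_1-set}). Since $f$ carries the divisors $D_S$ to the $D_{S'}$, we get $\beta_f(\mathcal S_{\widehat C})=\mathcal S_{\widehat C'}$.

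For $\mathcal B_{\widehat C}$, these are the branch points of $\varphi_C$ away from $\mathcal S$. Because $\nu$ is an isomorphism over the smooth locus, they are branch points of $\varphi_C\circ\nu_C$ at smooth points. Branch points of $\varphi\circ\nu$ are intrinsic to the composite map $C^\nu\to\mathbb P$, so they are preserved.

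\textbf{Naturality.} This follows from the uniqueness in Lemma~\ref{lem:beta_unique}: we get $\beta_{g\circ f}=\beta_g\circ\beta_f$.

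\textbf{Surjectivity.} This is the main step. Let $\zeta\in\Isom_{\CT}(\Can(C),\Can(C'))$. Then $\zeta$ maps $\widehat C$ onto $\widehat C'$, so it induces an isomorphism $\widehat C^\nu\to\widehat C'^\nu$ of normalizations, matching irreducible components. The task is to lift this to $C^\nu\to C'^\nu$ through $\widehat\varphi$, component by component.

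\emph{Non-hyperelliptic components.} On components of $\widehat C^\nu_0$, the map $\widehat\varphi$ is an isomorphism, so the lift is immediate. One must also check that hyperelliptic-type components are sent to hyperelliptic-type ones. This should follow from the degree formula of Lemma~\ref{lem:dimdeg}(ii) together with the marked data $\mathcal S$ and $\mathcal B$. A degree-$k_Z/2$ component carries branch points in $\mathcal B$, or, in the pair case, is rational with all $\mathcal S$-points distinguishing it; Lemma~\ref{lem:linear-classification} handles the linear cases.

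\emph{Irreducible honest hyperelliptic components.} The double cover $Z\to\varphi(Z)\cong\mathbb P^1$ is determined up to isomorphism by its branch divisor. That branch divisor is the points of $\mathcal B$ on $W$, together with the images of internal nodes, which lie in $\mathcal S$ (see Remark~\ref{rem:branchminusS}). Since $\zeta$ preserves both sets, $\zeta|_W$ lifts to exactly two isomorphisms, differing by $\iota_Z$.

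\emph{Honest hyperelliptic pairs.} The two copies of $\mathbb P^1$ can be lifted in two ways, again differing by $\iota_Z$.

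\emph{Node preimages.} We need the resulting $f$ to send each $D_S$ to some $D_{S'}$. The points of $D_S$ are exactly the preimages in $C^\nu$ of a point $q\in\mathcal S_{\widehat C}$. This holds provided that, for a hyperelliptic component, both preimages of a point of $\mathcal S$ are branches of nodes; that follows from Proposition~\ref{prop:catanese_hh}(iv), which says $Z\cap Z'$ is a full fibre. Then $\zeta(q)\in\mathcal S_{\widehat C'}$ forces $f(D_S)=D_{S'}$. Finally, uniqueness in Lemma~\ref{lem:beta_unique} gives $\beta_f=\zeta$.

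\textbf{Fibres.} If $\beta_f=\beta_{f'}$, then $h=f^{-1}f'$ satisfies $\varphi_C\nu_C h=\varphi_C\nu_C$, so $h\in N_C$. Conversely, for every $h\in N_C$ we have $\beta_{fh}=\beta_f\beta_h=\beta_f$, since $\beta_h=\id$. So each fibre is a right $N_C$-torsor, with free action coming from the group law. The left $N_{C'}$ statement is symmetric. The last assertion follows from $N_C=\prod_Z\langle\iota_Z\rangle$: this group is trivial exactly when $C$ has no honest hyperelliptic subcurve.

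I expect the main obstacle to be the lifting step for hyperelliptic components. One has to check carefully that the marked sets contain the full branch divisor, including at nodes internal to a hyperelliptic subcurve, and that $\zeta$ cannot exchange a hyperelliptic component with a birational one of the same image degree.
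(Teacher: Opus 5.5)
Your overall architecture matches the paper's. Well-definedness comes via Lemma~\ref{lem:beta_unique}, naturality from uniqueness, and the torsor statement is the same. For surjectivity, both you and the paper reconstruct the cover $C^\nu\to F^\nu$ over each component $W$ of the common canonical model $F$ from its degree $e_W\in\{1,2\}$ and its branch data.

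\textbf{The gap.} The step you flag as the main obstacle is a genuine gap: nothing in your argument shows that $\zeta$ preserves $e_W$.

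For irreducible honest hyperelliptic components your test works. Indeed $\mathcal B\cap W\neq\emptyset$ exactly for these, by Corollary~\ref{cor:honest-ramif} and the fact that distinct components of $F$ meet only in $\mathcal S_F$.

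For honest hyperelliptic pairs, however, no local criterion can work. Compare two cases: $W$ the image of a pair $Z_1\cup Z_2$ with $\delta_{Z_1}=\delta$, and $W$ the birational image of a smooth rational component $Z$ with $t_Z=1$ and $\delta_Z=\delta$. In both cases $W$ is a rational normal curve of degree $\delta-2$ spanning a $\mathbb P^{\delta-2}$, and it carries exactly $\delta$ points of $\mathcal S$ and none of $\mathcal B$. For $\delta=3$ these are cases (i) and (ii) of Lemma~\ref{lem:linear-classification}, which that lemma does not tell apart. They are separated only by the global invariant $\lambda$ of Lemma~\ref{lem:linear-invariant}. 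So ``rational with all $\mathcal S$-points distinguishing it'' is not a criterion, and Lemma~\ref{lem:linear-classification} does not settle the linear case either.

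\textbf{The missing idea.} What is needed is a global argument, and it is how the paper proceeds.
\begin{enumerate}
\item For $q\in\mathcal S_F$, the triple determines $\widehat C_q$ and its connected components, by Lemmas~\ref{lem:recover_YCq}(ii) and~\ref{lem:reconstruct-type-three}.
\item Projection from $q$ gives $\rho_q\colon F^\nu\to\widehat C_q$. Declare two points of $\nu_F^{-1}(q)$ equivalent when their images lie in the same connected component of $\widehat C_q$.
\item The $\Cone$-set of $q$ has a cycle as quotient graph. Hence exactly two of the points of $C^\nu$ lying over $q$ map to each connected component of $C_q$.
\item These two points have the same image in $F^\nu$ precisely when they lie over a component of degree $2$, by Proposition~\ref{prop:catanese_hh}(iv).
\item Therefore $e_W=2/|[x]_{\sim_q}|$, for any $x\in W^\nu$ over $q$, is determined by the triple and so preserved by $\zeta$.
\item The smooth case, where $W\cap\mathcal S_F=\emptyset$, is handled by whether $\mathcal B_F\neq\emptyset$.
\end{enumerate}
With $e_W$ in hand, your componentwise lifting goes through.

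\textbf{A smaller error.} The branch divisor of the normalized double cover over $W^\nu$ is just $\nu_F^{-1}(\mathcal B_F\cap W)$. The two branches of an internal node of an irreducible honest hyperelliptic curve are exchanged by the involution. So the image of that node is not a branch point of $Z^\nu\to W^\nu$. Remark~\ref{rem:branchminusS} removes such points from $\mathcal B$ precisely because they lie in $\Br$ of the nodal map only.

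Adding internal-node images, as you do, gives the wrong cover. It would also require $\zeta$ to single out internal-node images inside $\mathcal S\cap W$, which you have not shown.
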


\begin{proof}
We first construct $\pi_{C,C'}$. Let
$f\in\operatorname{Isom}_{\TT}(C,C')$ and let $\beta_f$ be the unique linear isomorphism $\beta_f\colon \Proj_C\to \Proj_{C'}$ given by the existence statement in Lemma \ref{lem:beta_unique}.
Since $f$ carries every $D_S$ (recall Equation \eqref{def:DS}) onto the corresponding divisor for $C'$, Lemma \ref{lem:beta_unique} shows that $\beta_f$ identifies
$\mathcal{S}_{\widehat C}$ with $\mathcal S_{\widehat C'}$. Away from these sets, it
identifies the branch loci, and hence also $\mathcal{B}_{\widehat C}$ with
$\mathcal{B}_{\widehat C'}$. Therefore $\beta_f$ is an isomorphism of canonical
triples. By the uniqueness in Lemma \ref{lem:beta_unique},
$\beta_{g\circ f}=\beta_g\circ\beta_f$ for composable
$T$-isomorphisms $f$ and $g$, and $\beta_{\mathrm{id}}=\mathrm{id}$. We may
therefore define $\pi_{C,C'}(f):=\beta_f$, and these maps are natural.

We prove surjectivity. Let
$\beta\in \Isom_{\CT}(\Can(C),\Can(C'))$, and use $\beta$ to
identify the two canonical triples with some canonical triple $(F,\mathcal{S}_F,\mathcal{B}_F)$. For
$q\in \mathcal{S}_F$, Lemmas~\ref{lem:recover_YCq}(ii) and
\ref{lem:reconstruct-type-three} recover $\widehat C_q$, and in particular its connected components. Projection from $q$ induces a morphism $\rho_q\colon F^\nu\to\widehat C_q$, where $\nu_F\colon F^\nu\to F$ is the normalization of $F$. For $x,y\in\nu_F^{-1}(q)$, write $x\sim_q y$ if $\rho_q(x)$ and $\rho_q(y)$ belong to the same connected component of $\widehat C_q$.

The quotient graph associated with the $\Cone$-set represented by $q$ is
a cycle (recall Definition~\ref{d: C1set}, Remark~\ref{rem:propC1set}, and see Figure~\ref{fig:C_1-set}). Hence, for each connected component $Z$ of the partial normalization $C_q$, exactly two points
$q_Z^+,q_Z^-\in C^\nu$ satisfy the conditions
\begin{itemize}
    \item[(1)] $q_Z^+, q_Z^-$ map to $Z$ via $C^\nu\to C_q$;
    \item[(2)] $\varphi_C\circ\nu_C(q_Z^+) = \varphi_C\circ\nu_C(q_Z^-) = q$.
\end{itemize}

We have an induced morphism $\overline\varphi_C\colon C^\nu\to F^\nu$. 
Let $W\in \Irr(F)$. The degree of $\overline{\varphi}_C$ over $W^\nu\subseteq F^\nu$ is the degree of the restriction \begin{equation} \label{eq:defmap}\overline{\varphi}_C|_{\overline{\varphi}_C^{-1}(W^{\nu})}\colon \overline{\varphi}_C^{-1}(W^{\nu})\to W^\nu.\end{equation} Using Proposition~\ref{prop:catanese_hh}, we have that  $\overline{\varphi}_C(q_Z^+) = \overline{\varphi}_C(q_Z^-)$ if and only if there exists $W\in \Irr(F)$ such that $\overline{\varphi}_C$ has degree two over $W^{\nu}$ and $\overline{\varphi}_C(q_Z^+)\in W^{\nu}$.
 It follows that, if $W\in\operatorname{Irr}(F)$ meets $\mathcal S_F$ and if $x\in W^\nu\cap\nu_F^{-1}(q)$ with $q\in \mathcal S_F$, then $e_W:=2/|[x]_{\sim_q}|$ is the degree of the map in Equation~\eqref{eq:defmap} and so is independent of $q$ and $x$. If $W\cap \mathcal S_F=\emptyset$, then $F=W$ and $C$ is smooth; in this case $e_W=2$ precisely when $\mathcal B_F\neq\emptyset$.

When $e_W=2$, let $Z_W\subseteq C$ be the union of the irreducible
components whose normalizations map onto $W^\nu$ under
$\overline\varphi_C$. Then $Z_W$ is an honest hyperelliptic subcurve,
so Proposition~\ref{prop:catanese_hh} gives $W^\nu\cong \mathbb P^1$. By Definition~\ref{def:canonical_model} and
Remark~\ref{rem:branchminusS}, the branch divisor of the induced double cover is $(\nu_F|_{W^\nu})^{-1}(\mathcal B_F\cap W)$. Thus the canonical triple determines the cover over $W^\nu$: it is $W^\nu$ when $e_W=1$, the double cover with this branch divisor when the divisor is nonempty, and the trivial cover $W^\nu\sqcup W^\nu\to W^\nu$ otherwise. Taking the disjoint union over the components of $F$ reconstructs
$\varphi_C\circ\nu_C\colon C^\nu\to F$ up to isomorphism. It also reconstructs the sets $D_S$ (recall Definition \ref{def:Torelli-eq}), since every $D_S$ is the fiber over the corresponding point of $\mathcal S_F$.

Applying the same reconstruction to $C'$ gives an isomorphism
$f\colon C^\nu\xrightarrow{\sim}(C')^\nu$ such that
\[
\beta\circ\varphi_C\circ\nu_C
=
\varphi_{C'}\circ\nu_{C'}\circ f.
\]
Let $q\in \mathcal{S}_F$ correspond to the $\Cone$-set $S$ of $\Gamma_C$, and let
$S'$ be the $\Cone$-set of $\Gamma_{C'}$ corresponding to the point
$\beta(q)\in \mathcal S_{\widehat C'}$. Then
\[
f(D_S)
=
f\bigl((\varphi_C\circ\nu_C)^{-1}(q)\bigr)
=
(\varphi_{C'}\circ\nu_{C'})^{-1}(\beta(q))
=
D_{S'}.
\]
Thus $f\in\Isom_{\TT}(C,C')$. By Lemma~\ref{lem:beta_unique}, we have
$\beta=\beta_f$, and hence $\pi_{C,C'}$ is surjective.

Suppose now that $\pi_{C,C'}(f_1)=\pi_{C,C'}(f_2)$ and put $h=f_1^{-1}\circ f_2$. Naturality gives $\beta_h=\mathrm{id}$, and
Lemma \ref{lem:beta_unique} gives
$\varphi_C\circ\nu_C\circ h=\varphi_C\circ\nu_C$. Hence $h\in N_C$ and $f_2=f_1\circ h$.
Conversely, if $h\in N_C$, the same lemma
gives $\beta_h=\mathrm{id}$. Thus every fiber is a right
$N_C$-torsor  (it is clear that the action of $N_{C}$ is free). Similarly, every fiber is a left $N_{C'}$-torsor. This concludes the proof.
\end{proof}

\begin{corollary}\label{cor:CV}
Let $C$ and $C'$ be connected stable curves with no separating nodes. Assume that there is an isomorphism of SSAPs $A(C)\cong A(C')$.
Then $C$ and $C'$ are Torelli equivalent. 
\end{corollary}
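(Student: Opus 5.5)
The corollary follows by composing two results proved above. Fix an isomorphism of stable semi-abelic pairs $\alpha\in\Isom_{\SSAP}(A(C),A(C'))$; by hypothesis this set is nonempty. Theorem~\ref{thm:main1} provides the natural function $\rho_{C,C'}$, so $\rho_{C,C'}(\alpha)=\xi_\alpha=\mathbb P(d_e\alpha_G)$ is an element of $\Isom_{\CT}(\Can(C),\Can(C'))$. In particular this set is nonempty, i.e.\ $\Can(C)\cong\Can(C')$.

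Since $C$ and $C'$ are connected stable curves without separating nodes, Theorem~\ref{thm:T-equivalence} applies. It gives a surjection
\[
\pi_{C,C'}\colon \Isom_{\TT}(C,C')\longrightarrow \Isom_{\CT}(\Can(C),\Can(C')).
\]
The target contains $\xi_\alpha$, so by surjectivity there is some $f\in\Isom_{\TT}(C,C')$ with $\beta_f=\xi_\alpha$. Hence $\Isom_{\TT}(C,C')\neq\emptyset$, and $C$ and $C'$ are Torelli equivalent by Definition~\ref{def:Torelli-eq}.

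Only the hypotheses need checking. Theorem~\ref{thm:main1} is stated for arbitrary connected nodal curves, so it covers our case. Theorem~\ref{thm:T-equivalence} requires exactly stability and the absence of separating nodes, which we assume; the genus is at least $2$ by stability. The surjectivity of $\pi_{C,C'}$, which reconstructs the normalization map and the divisors $D_S$ from the canonical triple, is already proved in that theorem, so it is not an obstacle here.
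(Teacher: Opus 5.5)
Your proof is correct and is exactly the paper's argument. The paper also applies the function $\rho_{C,C'}$ of Theorem~\ref{thm:main1} to conclude $\Isom_{\CT}(\Can(C),\Can(C'))\neq\emptyset$, and then uses the surjectivity of $\pi_{C,C'}$ in Theorem~\ref{thm:T-equivalence} to produce an element of $\Isom_{\TT}(C,C')$.
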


\begin{proof}
 Just combine Theorems \ref{thm:main1} and \ref{thm:T-equivalence}.
\end{proof}

We now compare $\pi_{C,C'}$ with the isomorphism of SSAPs associated in
\cite{Cap-Viv} with a $T$-isomorphism. Let $C$ and $C'$ be connected stable curves without separating nodes, and let $f\in\Isom_{\TT}(C,C')$. By \cite[Definition~3.1.3(c), Proposition~3.2.1, and
Section~4.4]{Cap-Viv} we have an isomorphism
\[
\alpha_f=(\alpha_{G,f},\alpha_{P,f})
\in\Isom_{\SSAP}(A(C),A(C'))
\]
such that $(d_e\alpha_{G,f})^\vee=\pm L_f$ (recall Equation \eqref{eq:Lf}). 

\begin{lemma}
\label{lem:CV-compatibility}
Let $C$ and $C'$ be connected stable curves without
separating nodes. If 
$f\in\Isom_{\TT}(C,C')$, then $\rho_{C,C'}(\alpha_f)=\pi_{C,C'}(f)$.
\end{lemma}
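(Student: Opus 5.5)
Both sides are projectivities $\mathbb P_C\to\mathbb P_{C'}$, so the plan is to compare them directly through the linear maps that induce them. By Theorem~\ref{thm:main1}, $\rho_{C,C'}(\alpha_f)=\mathbb P(d_e\alpha_{G,f})$, where we identify $T_eJ(C)\cong H^0(C,\omega_C)^\vee$ and $T_eJ(C')\cong H^0(C',\omega_{C'})^\vee$. By Theorem~\ref{thm:T-equivalence} and the proof of Lemma~\ref{lem:beta_unique}, $\pi_{C,C'}(f)=\beta_f=\mathbb P(L_f^\vee)$, with $L_f\colon H^0(C',\omega_{C'})\to H^0(C,\omega_C)$ as in Equation~\eqref{eq:Lf}.

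The key input is the stated property of the Caporaso--Viviani isomorphism, namely $(d_e\alpha_{G,f})^\vee=\pm L_f$. Dualizing gives $d_e\alpha_{G,f}=\pm L_f^\vee$ as linear maps $H^0(C,\omega_C)^\vee\to H^0(C',\omega_{C'})^\vee$. A global sign disappears after projectivization, so
\[
\mathbb P(d_e\alpha_{G,f})=\mathbb P(\pm L_f^\vee)=\mathbb P(L_f^\vee)=\beta_f .
\]
This is the desired equality.

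The only point requiring care is that the identification $T_eJ(C)\cong H^0(C,\omega_C)^\vee$ used in Theorem~\ref{thm:main1} must agree with the one implicit in the formula $(d_e\alpha_{G,f})^\vee=\pm L_f$. The relevant identification is the standard one, $T_eJ(C)=H^1(C,\mathcal O_C)\cong H^0(C,\omega_C)^\vee$ via Serre duality, so the two should agree. Any discrepancy would be a universal nonzero scalar, such as a sign convention in the duality pairing, which is again killed by projectivization.

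As a consistency check, which also serves as an alternative argument should the sign or convention bookkeeping turn out to be problematic, one can use the uniqueness part of Lemma~\ref{lem:beta_unique}. It then suffices to show that $\mathbb P(d_e\alpha_{G,f})\circ\varphi_C\circ\nu_C=\varphi_{C'}\circ\nu_{C'}\circ f$. On each component $C^\nu_v$ this holds because $\pm L_f$ acts by pullback along $f$ up to the sign $s_v$, and that sign is invisible projectively. I expect matching these identification conventions to be the only real obstacle; the rest is formal.
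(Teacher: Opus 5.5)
Your proposal is correct and matches the paper's proof: you take the stated Caporaso--Viviani property $(d_e\alpha_{G,f})^\vee=\pm L_f$, dualize it, and observe that the sign disappears under projectivization, so $\mathbb P(d_e\alpha_{G,f})=\mathbb P(L_f^\vee)=\beta_f$. The remarks on identification conventions and the alternative route through the uniqueness in Lemma~\ref{lem:beta_unique} are harmless, though not needed.
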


\begin{proof}
By construction, $(d_e\alpha_{G,f})^\vee=\pm L_f$. Passing to the projectivization, we have
\[
\rho_{C,C'}(\alpha_f)
 =\mathbb P(d_e\alpha_{G,f})
 =\mathbb P(L_f^\vee)
 =\beta_f
 =\pi_{C,C'}(f),
\]
where the first equality follows from Theorem~\ref{thm:main1} and the last from Theorem \ref{thm:T-equivalence}. 
\end{proof}
We next turn to automorphisms. We aim to prove the following rigidity lemma, showing that an
automorphism of $A(C)$ is determined by its action on the generalized
Jacobian $J(C)$.

\begin{lemma}\label{lem:forgetful-aut}
Let \(C\) be a connected nodal curve of genus \(g\ge2\) without separating nodes. The
forgetful homomorphism
\begin{align*}
u_C\colon \Aut_{\SSAP}(A(C))
& \longrightarrow \Aut_{\mathrm{Group}}(J(C))\\
 (\alpha_G,\alpha_P) &\longmapsto \alpha_G
\end{align*}
(where the target is automorphisms as an algebraic group) is injective.
\end{lemma}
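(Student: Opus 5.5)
We reduce injectivity to rigidity of $P$-automorphisms commuting with the trivial group automorphism. If $\alpha=(\alpha_G,\alpha_P)$ lies in the kernel of $u_C$, then $\alpha_G=\id$. Hence $\alpha_P$ is a $J(C)$-equivariant automorphism of $\Pic$ preserving $\Theta(C)$, and we must show that $\alpha_P=\id$. The first step is to analyze $\alpha_P$ on a single maximal orbit. Fix an irreducible component $\overline{P^{\ud}_C}$; by Remark~\ref{rem:bijection-irred} these are indexed by $\Sigma(C)$, and their open orbits $P^{\ud}_C$ are $J(C)$-torsors. Since $\alpha_P$ is equivariant for the trivial automorphism of $G$, it permutes the maximal orbits. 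On any orbit that it maps to itself, it is translation by some element of $G$: choosing a point $x$, we have $\alpha_P(x)=t\cdot x$, so $\alpha_P(g\cdot x)=g\cdot t\cdot x=t\cdot(g\cdot x)$, using commutativity of $G$. If it sends $P^{\ud}_C$ to $P^{\ud'}_C$, it is instead an equivariant torsor isomorphism.

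The second step is to show that $\alpha_P$ fixes each component. Here we use the Gauss map. Because $\alpha_G=\id$, the identity $\mathcal G\circ\alpha_P=\mathcal G$ holds, as in the proof of Theorem~\ref{thm:main1}. Hence $\alpha_P\times\id$ preserves $\Upsilon$ and maps $\Upsilon_{\ud}$ onto $\Upsilon_{\ud'}$ compatibly with $\gamma$, so $\Br(\widehat\gamma_{\ud})=\Br(\widehat\gamma_{\ud'})$ and $\deg\mathcal G_{\Theta_{\ud}}=\deg\mathcal G_{\Theta_{\ud'}}$ (Remark~\ref{rem:degree-Gauss}). These invariants alone need not separate multidegrees. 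A cleaner route is to argue combinatorially: an equivariant automorphism preserves the stratification and incidence of orbits, so it induces an automorphism of the poset of orbits commuting with the stabilizer tori. Those tori identify each orbit's stratum $S\subseteq C^{\sing}$. I would try to show that the multidegree $\ud$ can be read off from which boundary strata lie in $\overline{P^{\ud}_C}$, via the description in \cite{captheta}, so that the permutation of components is trivial. I expect this step to be the main obstacle. As a fallback, I would use $\Theta$: $\Theta\cap\overline{P^{\ud}_C}=\Theta_{\ud}$, and the translate structure forces the equality $\alpha_P^*\Theta=\Theta$ to match degrees componentwise.

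The third step is to show that the translations are trivial. Once $\alpha_P$ acts on $P^{\ud}_C$ by translation $t_{\ud}$, the condition $t_{\ud}^*\Theta_{\ud}=\Theta_{\ud}$ holds. We must show that the identity component of the stabilizer of $\Theta_{\ud}$ is trivial, and that the finite stabilizer is also trivial. For the connected part, the argument of Lemma~\ref{lem:gauss-blocks-separating} gives $T_0K\subseteq\bigcap_{H}H=0$, because $\gamma_{\ud}$ is dominant onto $\Proj_C^\vee$ (Proposition~\ref{prop:Omega}). For the finite part, I would use ampleness and principal polarization: translations preserving an ample principal-type divisor on the compactification form the kernel of the polarization map, which is trivial in the principally polarized smoothable setting of \cite{alexeev}. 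Finally, the translations on different components must agree on the boundary orbits where the components meet, since $\Gamma_C$ is $2$-edge connected and $\Pic$ is connected. This forces a common $t$ in a trivial group, so $\alpha_P$ is the identity on a dense open subset, and hence everywhere, as $\Pic$ is reduced.
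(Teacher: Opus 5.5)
\textbf{There is a genuine gap in Step~2.} You flag it yourself as the main obstacle, and none of the routes you suggest can close it. Every invariant you propose is unchanged under the canonical involution $\iota_{A(C)}$: the Gauss branch loci and degrees, the incidence pattern of boundary orbits together with their stabilizer tori, and numerical data of $\Theta_{\ud}$. Indeed $\iota_{A(C)}$ has $\alpha_G=[-1]$, so $\mathbb P(d_e\alpha_G)=\id$. It induces an automorphism of the orbit poset preserving all stabilizers. It carries $\overline{P^{\ud}_C}$ to the component of multidegree $(k_v-d_v)_v$.

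Now take $\Gamma_C$ to be two vertices joined by three edges. Then $\Sigma(C)$ has exactly two elements, and both components contain the same boundary orbits: the three codimension-one orbits and $O_C$. Moreover $\iota_{A(C)}$ swaps the two components. So no invariant that is insensitive to the sign of the $J(C)$-action can show that an automorphism with $\alpha_G=\id$ fixes each component.

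The missing idea is an invariant that sees the torus action in its actual coordinates. The paper obtains it in Lemma~\ref{lem:theta-gluing-polynomial}. For general $I\in P^{\ud}_C$, the locus of $z\in\widetilde T=(\mathbb G_m)^E$ with $I\otimes M_z\in\Theta(C)$ is the zero set of the square-free polynomial $F_I(z)=\det A_I(z)$, whose monomial support is exactly $S_{\ud}\subseteq\{0,1\}^E$. Since $\alpha_G=\id$ and $\alpha_P$ preserves $\Theta$, the polynomials $F_I$ and $F_{\alpha_P(I)}$ have the same zero set, so they differ by a factor $cz^q$. Every coordinate projection of both supports is $\{0,1\}$, which forces $q=0$. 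Finally, $S_{\ud}$ determines $\ud$.

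\textbf{Step~3 also has a gap, in the finite part.} First, the pair $(\overline{P^{\ud}_C},\Theta_{\ud})$ is a single component and need not be principally polarized. So triviality of a polarization kernel cannot simply be invoked for it. Second, your globalization fails. Two translations that agree on a boundary orbit agree only modulo that orbit's stabilizer; on the closed orbit $O_C$, that means only modulo the whole torus $T$. So no common translation is forced.

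The paper argues differently, in two steps.
\begin{enumerate}
\item It shows that $\alpha_P$ fixes $O_C$ pointwise, because $\mathcal O(\Theta)|_{O_C}$ is a principal polarization of the $J(C^\nu)$-torsor $O_C\subseteq\overline{P^{\ud}_C}$. This puts the translation $b_{\ud}$ in $T$.
\item It lifts $b_{\ud}$ to $\lambda\in\widetilde T$. The relation $F_I(\lambda z)=c'F_I(z)$ gives $\lambda^{\epsilon-\epsilon'}=1$ for all $\epsilon,\epsilon'\in S_{\ud}$. These differences generate the lattice of integral cycles, i.e.\ the character lattice of $T$ (via \cite{LoeblMatamala2001}), so $b_{\ud}=0$.
\end{enumerate}
It is generation of the lattice, not merely full rank, that rules out a nontrivial finite stabilizer. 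Your Gauss-map argument for the connected part of the stabilizer is correct, but it becomes unnecessary once this is done.
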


We start with some notation and an auxiliary lemma. Set $\Gamma:=\Gamma_C$, $E:=E(\Gamma)$ and
$T:=\ker(J(C)\to J(C^\nu))$. Label the branches over each $e \in E$ by
$p_{e,0},p_{e,1}$. For $z=(z_e)\in\widetilde T:=(\mathbb G_m)^E$, let $M_z$ be the line bundle on $C$
obtained by gluing the trivial bundles on all $C_v^\nu$ with
relations $s(p_{e,0})=z_e s(p_{e,1})$. Thus $\nu_C^*M_z\cong\mathcal O_{C^\nu}$, and $z\mapsto[M_z]$
defines a surjective map $\widetilde T\to T$.
Write $z^m:=\prod_{e\in E}z_e^{m_e}$ for $m=(m_e)\in\mathbb Z^E$.
For $\ud\in\Sigma(C)$, set $n_v:=d_v-w_\Gamma(v)+1$ (in particular $\sum_v n_v=|E|$) and
\[
 S_{\ud}:=\left\{\epsilon\in\{0,1\}^E:
 \#\{e:p_{e,\epsilon_e}\in C_v^\nu\}=n_v\text{ for every }v\right\}.
\]

We claim that $S_{\ud}\neq \emptyset$ for every $\ud\in \Sigma(C)$. Indeed, for a strong orientation $\orient$ inducing $\ud$, define $\epsilon^{\orient} = (\epsilon^{\orient}_e)$ as follows. If $e$ is not a loop, then we require that $p_{e, \epsilon^{\orient}_e}\in C^{\nu}_{\tau_{\orient}(e)}$.  If $e$ is a loop, we choose any $\epsilon^{\orient}_e\in \{0,1\} $. By Equation \eqref{eq:d-orient}, we have that  $\epsilon^{\orient}$ is an element of $S_{\ud}$.

For every $e\in E$, both values $0$ and $1$ occur as the
$e$-th coordinate of elements of $S_{\ud}$: reverse a directed cycle through the corresponding edge,
or switch branches if it is a loop.

\begin{lemma}\label{lem:theta-gluing-polynomial}
Assume  $C$ is connected nodal without separating nodes and $g(C) \geq 2$,
and use the notation above. For $\ud\in\Sigma(C)$ and general
$I\in P_C^{\ud}$, there exists a unique (up to multiplication by an element of $k^*$) square-free polynomial
\begin{equation}\label{eq:forgetful-det}
 F_I(z)=\sum_{\epsilon\in S_{\ud}}c_\epsilon z^\epsilon,
 \qquad c_\epsilon\in k^*,
\end{equation}
such that, for every $z\in\widetilde T$,
\[
 F_I(z)=0
 \Longleftrightarrow H^0(C,I\otimes M_z)\ne0
 \Longleftrightarrow I\otimes M_z\in\Theta(C).
\]
If $\ud'\in\Sigma(C)$ and general $I'\in P_C^{\ud'}$ satisfy
\[
 F_I(z)=0\Longleftrightarrow F_{I'}(z)=0
 \qquad\text{for every }z\in\widetilde T,
\]
then $\ud'=\ud$ and $F_{I'}$ is proportional to $F_I$.
Moreover, for every $\lambda\in\widetilde T$, the polynomial
$F_{I\otimes M_\lambda}(z)$ is proportional to $F_I(\lambda z)$.
\end{lemma}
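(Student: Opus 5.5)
We compute $h^0(C,I\otimes M_z)$ by pulling back to the normalization and writing the gluing conditions as a square linear system. Since $I\in P_C^{\ud}$ is general and $\ud\in\Sigma(C)$, we can write $I=\mathcal O_C(D)$ for a general effective divisor $D$ of multidegree $\ud$ supported on $C^{\sm}$. Sections of $I\otimes M_z$ are tuples $(s_v)$ with $s_v\in H^0(C_v^\nu,\mathcal O(D_v))$, subject to the relations $s(p_{e,0})=z_e\,s(p_{e,1})$ for every $e\in E$. For general $D_v$ of degree $d_v$ we have $h^0(C_v^\nu,\mathcal O(D_v))=d_v-w_\Gamma(v)+1=n_v$ when $d_v\ge w_\Gamma(v)-1$. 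This lower bound follows from stability, namely the inequality $d_{\orient,v}\ge g_v-1$, once loops are counted as edges of $E$. The total dimension is $\sum n_v=|E|$, so $H^0(C,I\otimes M_z)$ is the kernel of a $|E|\times|E|$ matrix $M(z)$. Row $e$ of $M(z)$ is $\mathrm{ev}_{p_{e,0}}-z_e\,\mathrm{ev}_{p_{e,1}}$. We set $F_I(z):=\det M(z)$, which is determined up to $k^*$ once bases are chosen.

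Next we expand $\det M(z)$ multilinearly in the rows. For each row we pick either the term $\mathrm{ev}_{p_{e,0}}$ or the term $-z_e\,\mathrm{ev}_{p_{e,1}}$, and we record the choice by $\epsilon_e\in\{0,1\}$; the expansion is therefore square-free in the $z_e$. The $\epsilon$-term is, up to sign, $z^\epsilon$ times the determinant of the evaluation map $\bigoplus_v H^0(\mathcal O(D_v))\to k^E$ at the points $p_{e,\epsilon_e}$. This matrix is block diagonal by vertex. It is nonzero only if exactly $n_v$ of the chosen points lie on $C_v^\nu$, which is precisely the condition $\epsilon\in S_{\ud}$. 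In that case each block is the evaluation of an $n_v$-dimensional linear system at $n_v$ points. For general $D_v$ (equivalently, general $I$) the $n_v$ points impose independent conditions, so $c_\epsilon\ne0$. This uses that the points are distinct and that a general line bundle of degree $\ge g_v-1$ with $h^0=n_v$ has no base points at the given points; loops need a separate check, since both branches lie on the same component. This yields \eqref{eq:forgetful-det}, and the equivalences $F_I(z)=0\iff H^0\ne0\iff I\otimes M_z\in\Theta(C)$ follow from the definition of $\Theta$. The twist formula is immediate: $M_\lambda\otimes M_z=M_{\lambda z}$ and $I\otimes M_\lambda$ is again general, so $F_{I\otimes M_\lambda}(z)$ is proportional to $F_I(\lambda z)$ by the uniqueness statement.

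For uniqueness and the comparison between two multidegrees, we use that $F_I$ is square-free. More precisely, $F_I$ is multilinear, and we aim to show it is reduced as a Laurent polynomial; then its zero set in the torus $\widetilde T$ determines it up to a monomial unit times a scalar. The monomial ambiguity is removed by the normalization $\epsilon\in\{0,1\}^E$ together with the fact, noted before the statement, that both values $0$ and $1$ occur in each coordinate. Indeed, if $F_{I'}=z^m u F_I$, then both exponent sets lie in $\{0,1\}^E$ and both have full range in each coordinate, which forces $m=0$. To prove reducedness, we note that a multilinear polynomial with a repeated factor $G^2$ would have $G$ of degree $0$ in every variable, so $G$ is constant; hence a multilinear polynomial is automatically square-free. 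Consequently, equal zero loci give $F_{I'}=c\,z^mF_I$, hence $F_{I'}$ is proportional to $F_I$. The multidegree is recovered from the Newton polytope: the support $S_{\ud}$ determines each $n_v$ as the number of chosen branches on $C_v^\nu$, and hence determines $\ud$.

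The main obstacle is the genericity claim $c_\epsilon\ne0$ for all $\epsilon\in S_{\ud}$ simultaneously, with a single general $I$. Each condition is open, so it suffices to exhibit, for each fixed $\epsilon$, one $D$ that makes $c_\epsilon$ nonzero. We would attempt this vertex by vertex, using Riemann--Roch on $C_v^\nu$ with $D_v$ general of degree $d_v\ge w_\Gamma(v)-1$.
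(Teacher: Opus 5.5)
Your outline matches the paper's. You realize $H^0(C,I\otimes M_z)$ as the kernel of a square gluing matrix on $\bigoplus_v H^0(C^\nu_v,L_v)$. You expand the determinant multilinearly so that only $\epsilon\in S_{\ud}$ can contribute. You then get uniqueness and $\ud'=\ud$ from the Nullstellensatz on $\widetilde T$ together with the fact that every coordinate projection of $S_{\ud}$ is $\{0,1\}$.

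\textbf{Gap: $c_\epsilon\neq0$ is not proved.} This is the one nontrivial input, needed for every $\epsilon\in S_{\ud}$, and you call it the main obstacle and only say how you would attempt it. The comparison step forcing $m=0$ depends on it, since it needs the support of $F_I$ to be all of $S_{\ud}$. The justification you sketch does not suffice. Absence of base points at each chosen branch shows only that each point separately imposes a condition, not that $n_v$ points impose independent conditions.

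The missing argument is short. The kernel of evaluation at distinct points $q_1,\dots,q_{n_v}\in C^\nu_v$ is $H^0\bigl(C^\nu_v,L_v(-q_1-\dots-q_{n_v})\bigr)$. Since $L_v$ is general in $\mathrm{Pic}^{d_v}(C^\nu_v)$ and the $q_j$ range over the finite set of branches, $L_v(-q_1-\dots-q_{n_v})$ is a general line bundle of degree $w_\Gamma(v)-1$ on a curve of genus $w_\Gamma(v)$. Hence it has no sections. As $h^0(L_v)=n_v$, the evaluation is an isomorphism. This treats all $\epsilon\in S_{\ud}$ at once. Loops need no separate check, because the two branches of a loop are distinct points of $C^\nu_v$.

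\textbf{Setup error: trivial gluing.} Writing $I=\mathcal O_C(D)$ with $D$ effective and supported on $C^{\sm}$ is not possible for general $I$. Such a sheaf has the nonzero section $1_D$, so it lies in $\Theta(C)$, whereas a general $I\in P^{\ud}_C$ has $h^0(C,I)=0$. In effect you have set all gluing constants equal to $1$, and your determinant vanishes at $z=(1,\dots,1)$.

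Work instead with $L_v:=(\nu_C^*I)|_{C^\nu_v}$ and record the gluing of $I$ by constants $\alpha_e\in k^*$. Row $e$ then becomes $\mathrm{ev}_{p_{e,0}}-\alpha_e z_e\,\mathrm{ev}_{p_{e,1}}$, which only rescales the coefficient of $z^\epsilon$ by $\pm\prod_e\alpha_e^{\epsilon_e}$. Impose the genericity condition only on $\nu_C^*I$, which is unchanged by tensoring with $M_\lambda$. This is what justifies your claim, in the twist formula, that $I\otimes M_\lambda$ is again general for every $\lambda\in\widetilde T$ simultaneously.
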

\begin{proof}
For general $(\emptyset, I)\in P_C^{\ud}$, set $L_v:=(\nu_C^*I)|_{C_v^\nu}$.
For any $n_v$ distinct branches $q_1,\ldots,q_{n_v}$ on $C_v^\nu$,
the evaluation map
\[
H^0(C_v^\nu,L_v)\longrightarrow
 \bigoplus_{j=1}^{n_v}L_v|_{q_j},
 \qquad s\longmapsto (s(q_j))_{j=1}^{n_v},
\]
is an isomorphism. Indeed, since $I$ is general, so is $L_v$. The kernel of the evaluation map is $H^0(C_v^\nu,L_v(-q_1-\cdots-q_{n_v}))$ and the line bundle $L_v(-q_1-\cdots-q_{n_v})$ has degree
$w_\Gamma(v)-1$, this kernel vanishes. Evaluation is therefore injective into an $n_v$-dimensional space, and Riemann--Roch gives $h^0(C_v^\nu,L_v)=n_v$.

Choose bases of $H^0(C_v^\nu,L_v)$ and trivializations of the
fibers $(\nu_C^*I)|_{p_{e,j}}$ for $j=0,1$. Let $\alpha_e\in k^*$ represent,
in these trivializations, the identification
\[
 (\nu_C^*I)|_{p_{e,1}}\xrightarrow{\cong}
 (\nu_C^*I)|_{p_{e,0}}
\]
induced by $I$. Thus a section $s$ of $\nu_C^*I$ descends to $I$
exactly when $s(p_{e,0})=\alpha_e s(p_{e,1})$ for every $e$. The normalization sequence identifies
$H^0(C,I\otimes M_z)$ with the kernel of the  map
\[
 A_I(z):\bigoplus_v H^0(C_v^\nu,L_v)\longrightarrow k^E,
 \qquad s=(s_v)_v\longmapsto
 \bigl(s(p_{e,0})-\alpha_ez_e s(p_{e,1})\bigr)_e.
\]

Fix the basis $B$ of $\bigoplus H^0(C_v^\nu, L_v) = H^0(C^\nu, \nu_C^*I)$ given by the union of basis $B_v$ of each $H^0(C_v^\nu, L_v)$. Index the columns of $A_I(z)$ by $B$ and the rows by $E$. Then, the $(e, s)$ entry of $A_I(z)$  equals $s(p_{e,0}) - \alpha_ez_es(p_{e, 1})$. 

For each $\epsilon$, define the matrix $N_{\epsilon}$ whose entry at $(e, s)$ is $s(p_{e, \epsilon_e})$.  Then define $F_I(z):=\det A_I(z)\in k[z_e:e\in E]$. By multilinearity of the determinant, the coefficient
of $z^\epsilon$ in $F_I$ is
\[
 (-1)^{\sum_e\epsilon_e}
 \left(\prod_{e\in E}\alpha_e^{\epsilon_e}\right)
 \det N_\epsilon.
\]


Set
\[
 R_v:=\{e\in E:p_{e,\epsilon_e}\in C_v^\nu\},\quad r_v = |R_v|.
\]
Note that $\bigsqcup_{v\in V(\Gamma_C)} R_v = E$. 
Group the rows of $N_\epsilon$ by $R_v$, and the columns by the summands
$H^0(C_v^\nu,L_v)$. The $r_v$ rows belonging to $R_v$ have
nonzero entries only in the $n_v$ columns belonging to $B_v$.
Since $\sum_v r_v=|E|=\sum_v n_v$, if  $r_v\ne n_v$ for some $v$,
then there exists $v$ such that $r_v>n_v$. Those rows are linearly dependent, so
$\det N_\epsilon=0$.

If instead $r_v=n_v$ for every $v$, equivalently
$\epsilon\in S_{\ud}$, the reordered matrix is block diagonal.
Its block at $v$ represents the evaluation map
\begin{equation} \label{eq: eval-cond}
H^0(C_v^\nu,L_v)\longrightarrow
 \bigoplus_{\substack{e\in E\\p_{e,\epsilon_e}\in C_v^\nu}}
 L_v|_{p_{e,\epsilon_e}},
\end{equation}
which is an isomorphism by our choice of $I$.
Thus $\det N_\epsilon$ is, up to sign, the product of
these nonzero evaluation determinants. Since every
$\alpha_e\ne0$, this proves that precisely the monomials
indexed by $S_{\ud}$ occur in \eqref{eq:forgetful-det},
each with nonzero coefficient.

Moreover,
     $F_I$ is square-free: its degree in each 
variable $z_e$ is at most one, since $S_{\ud}\subseteq\{0,1\}^E$.

We have that
$F_I(z)=0$ exactly when $\ker A_I(z)\ne0$. The kernel
identification above and the definition of $\Theta(C)$ therefore give
\[
 F_I(z)=0
 \Longleftrightarrow H^0(C,I\otimes M_z)\ne0
 \Longleftrightarrow I\otimes M_z\in\Theta(C).
\]

Let $\ud'\in\Sigma(C)$ and let
\begin{equation}
\label{eq:GS_d'}
 G(z)=\sum_{\epsilon\in S_{\ud'}}a_\epsilon z^\epsilon
 \in k[z_e:e\in E],
 \qquad \textrm{with } a_\epsilon\in k^*
 \quad\text{for every }\epsilon\in S_{\ud'}.
\end{equation}
Assume that
\[
 G(z)=0\Longleftrightarrow F_I(z)=0
 \qquad\text{for every }z\in\widetilde T.
\]
Since $S_{\ud'}\subseteq\{0,1\}^E$, the polynomial $G$
has degree at most one in each variable and is therefore
square-free. The Nullstellensatz gives
\[
 G(z)=cz^qF_I(z),\qquad \textrm{for some } c\in k^*,\quad q\in\mathbb Z^E.
\]
Comparing supports yields $S_{\ud'}=q+S_{\ud}$.
Since every coordinate projection of both sets is $\{0,1\}$,
we obtain $\{0,1\}=\{q_e,q_e+1\}$ for every $e$, hence $q=0$.
Thus $G=cF_I$ and $S_{\ud'}=S_{\ud}$.

Choose $\epsilon$ in this common nonempty set. Its defining
conditions give
\[
 d_v=w_\Gamma(v)-1+
 \#\{e\in E:p_{e,\epsilon_e}\in C_v^\nu\}=d'_v
 \qquad \textrm{for all }v\in V(\Gamma).
\]
Hence $\ud'=\ud$. This proves both uniqueness up to a nonzero
scalar and the comparison assertion.

Finally, let $\lambda\in\widetilde T$.  Since
$\nu_C^*(I\otimes M_\lambda)\cong\nu_C^*I$,
the resulting evaluation map \eqref{eq: eval-cond} is still an isomorphism when $I$ is replaced with $I\otimes M_\lambda$,
hence the above construction also defines a polynomial
$F_{I\otimes M_\lambda}$. Define then $G(z) = F_{I\otimes M_{\lambda}}(\lambda^{-1}z)$.

Then $G(z) = 0$ if and only if $H^0(C, I\otimes M_{\lambda}\otimes M_{\lambda^{-1}z})\neq 0$. Since $M_{\lambda}\otimes M_{\lambda^{-1}z} = M_z$, we have that $G(z)=0$ if and only if $F_I(z)=0$. Moreover, $G$ satisfies Equation \eqref{eq:GS_d'} for $\ud'=\ud$. The result then follows  from the uniqueness in the previous statement.
\end{proof}

\begin{proof} (Of Lemma \ref{lem:forgetful-aut}). Let $(\mathrm{id}_{J(C)},\psi)\in\ker u_C$.  The unique closed orbit $O_C$ is a
$J(C^\nu)$-torsor, and
$L_C:=\mathcal O_{\overline P_C^{g-1}}(\Theta(C))|_{O_C}$
induces its principal polarization
\cite[Fact~1.2.8(iv) and Remark~1.2.11]{Cap-Viv}.
Since the map $\psi$ is equivariant with respect to the $J(C)$-action on $\Pic$, we have that, for every $M\in J(C)$ and every pair $(S, I)\in \Pic$, the following holds: 
\[
\psi(S, I) = (S', I')\Longrightarrow \psi(S, \nu_S^*M\otimes I) = (S', \nu_{S'}^*M\otimes I'),
\]
where $\nu_S\colon C_S\to C$ and $\nu_{S'}\colon C_{S'}\to C$ are the normalization.

Moreover, since $O_C$ is the unique closed orbit, it must be fixed by $\psi$. In particular, we have 
\[
\psi|_{O_C}(C^{\sing}, I) = (C^{\sing}, t_a(I)),
\]
where $t_a(I):= a\otimes I$, for some $a\in J(C^\nu)$. Since $\Theta(C)$ is also preserved by $\psi$, we have $t_a^*L_C\cong L_C$, hence $a=0$. Thus $\psi$ fixes $O_C$ pointwise. If $C$ is smooth, we are done.

Equivariance with respect to $J(C)$ implies that $\psi$ permutes the maximal orbits. We now show that $\psi$ preserves each maximal orbit. Choose $I\in P_C^{\ud_I}$ general and write $\psi(\emptyset, I) = (\emptyset, I')$ with
$I'\in P_C^{\ud_{I'}}$.
By Lemma~\ref{lem:theta-gluing-polynomial}, both $I$ and $I'$
have associated square-free polynomials $F_I$ and $F_{I'}$
with expansions as in \eqref{eq:forgetful-det}.
Since $\psi$ is equivariant and preserves $\Theta(C)$, that lemma gives
\[
 F_I(z)=0\Longleftrightarrow F_{I'}(z)=0,\;
 \forall z\in\widetilde T.
\]
Hence, Lemma~\ref{lem:theta-gluing-polynomial} gives
$\ud_{I'}=\ud_I$ and $F_{I'}=cF_I$ for some $c\in k^*$.
Hence $\psi$ preserves each maximal orbit $P_C^{\ud}$.

Set $\ud:=\ud_I=\ud_{I'}$. Since $P_C^{\ud}$ is a $J(C)$-torsor and $\psi$ is $J(C)$-equivariant, the restriction $\psi_{|P_C^{\ud}}$ is a translation by some $b_{\ud}\in J(C)$. To conclude, we will show that  $b_{\ud}=0$.

The maps $\psi$ and the translation $t_{b_{\ud}}$ agree on $\overline P_C^{\ud}$,
which contains $O_C$. Since $\psi|_{O_C}=\mathrm{id}$, we have
$b_{\ud}\in T$. Choose a lift $\lambda\in\widetilde T$ of $b_{\ud}$. Recall that $I\in P_C^{\ud_I}$ is general and $\psi(\emptyset, I) = (\emptyset, I')$, hence $I'=I\otimes M_\lambda$. Therefore,
Lemma~\ref{lem:theta-gluing-polynomial} implies that $F_{I'}(z)$
is proportional to $F_I(\lambda z)$.
Together with $F_{I'}=cF_I$, this gives
\[
 F_I(\lambda z)=c'F_I(z)
 \qquad \textrm{for some } c'\in k^*.
\]
Recall the expansion of $F_I(z)$ in Equation \eqref{eq:forgetful-det}. Comparing coefficients and using $c_\epsilon\ne0$ for every $\epsilon\in S_{\ud}$, we obtain $\lambda^\epsilon=c'$ for every
$\epsilon\in S_{\ud}$. Hence
\begin{equation} \label{eq:lambda}
 \lambda^{\epsilon-\epsilon'}=1
 \qquad \textrm{for all }\epsilon,\epsilon'\in S_{\ud}.
\end{equation}

Orient each edge from its $0$-branch to its $1$-branch.
An integral cycle is a vector $m\in\mathbb Z^E$ satisfying
\[
 \sum_{\substack{e\in E\\p_{e,1}\in C_v^\nu}}m_e
 =
 \sum_{\substack{e\in E\\p_{e,0}\in C_v^\nu}}m_e
 \qquad \textrm{for all }v\in V(\Gamma).
\]
These vectors correspond bijectively to the characters of $T$ via
\[
 \chi_m\colon T\longrightarrow\mathbb G_m,
 \qquad M_z\longmapsto z^m.
\]
\emph{Claim.} The differences $\epsilon-\epsilon'$, with
$\epsilon,\epsilon'\in S_{\ud}$, generate the lattice of integral cycles.

\emph{Proof of the claim.}
For every $\mu\in S_{\ud}$ and every vertex $v$, we have
\[
 \sum_{\substack{e\in E\\p_{e,0}\in C_v^\nu}}(1-\mu_e)
 +
 \sum_{\substack{e\in E\\p_{e,1}\in C_v^\nu}}\mu_e
 =n_v.
\]
Subtracting these equalities for $\mu=\epsilon$ and
$\mu=\epsilon'$ shows that $\epsilon-\epsilon'$ is an
integral cycle.

Conversely, fix a strong orientation $\orient$ inducing $\ud$. Also, to simplify the argument, we will assume without loss of generality that $p_{e,1}\in C^\nu_{\tau_{\orient}(e)}$ for every $e\in E(\Gamma)$.
Set $\epsilon^0 = (1)_e\in S_{\ud}$. 

For every directed cycle $\gamma$ of $\Gamma$ directed by $\orient$, define $\epsilon^\gamma\in S_{\ud}$ by
\[
\epsilon^\gamma_e = \begin{cases}
    1 - \epsilon_{e}^0 & \text{ if } e\in \gamma;\\
    \epsilon_e^0 &\text{  if }e\notin \gamma.
\end{cases}
\]
The difference $\epsilon^\gamma-\epsilon^0$ is, up to sign,
the characteristic vector $\mathbf{1}_{\gamma}=(\mathbf{1}_{\gamma, e})\in \mathbb{Z}^E$ of $\gamma$, where
\[
\mathbf{1}_{\gamma,e}=\begin{cases}
1&\text{ if }e\in \gamma;\\
0&\text{ otherwise.}
\end{cases}
\]
By \cite[Theorem 2.2]{LoeblMatamala2001}, these vectors
generate the integral cycle lattice, thus proving the claim.

\smallskip
By the claim, every integral cycle can be written as
\[
 m=\sum_i a_i(\epsilon_i-\epsilon_i'),
 \qquad a_i\in\mathbb Z,\quad
 \epsilon_i,\epsilon_i'\in S_{\ud}.
\]
The relations established in \eqref{eq:lambda} then give
\[
 \chi_m(b_{\ud})
 =\lambda^m
 =\prod_i
   \bigl(\lambda^{\epsilon_i-\epsilon_i'}\bigr)^{a_i}
 =1.
\]
Thus every character of $T$ is trivial on $b_{\ud}$.
Since characters separate points of a torus, $b_{\ud}=0$.

We conclude that $\psi$ is the identity on the dense union of maximal
orbits, and hence on the reduced variety $\overline P_C^{g-1}$.
\end{proof}

We are now ready for our final result, relating automorphism groups of the SSAP and of the canonical triple.

\begin{theorem}\label{thm:main-aut}
 Let $C$ be a connected nodal curve of genus $g \geq 2$ with no separating nodes. Then there is a short exact sequence of groups
\[
\{\id\}\to \langle\iota_{A(C)}\rangle \to \Aut_{\SSAP}(A(C)) \stackrel{\rho_{C,C}}{\to} \Aut_{\CT}(\Can(C)) \to \{\id\},
\]
where
\(\langle\iota_{A(C)}\rangle\cong\mathbb Z/2\mathbb Z\) and $\iota_{A(C)}$ is the involution of $A(C)$. In particular, if the dual graph of $C$ is also $3$-edge-connected, we have
\[
\Aut_{\SSAP}(A(C))\cong
\begin{cases}
    \begin{array}{ll}
      \Aut(C)\times \mathbb Z/2\mathbb Z,   &  \text{ if $C$ is not hyperelliptic;}  \\
       \Aut(C),   &  \text{ if $C$ is hyperelliptic.} 
    \end{array}
\end{cases}
\]

\end{theorem}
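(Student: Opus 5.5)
We first reduce to the case that $C$ is stable, using Remark~\ref{rem:SSAPstab} and Remark~\ref{rem:canonical_model_sep_stab_isomorphic}; these identify $A(C)$ with $A(C^{\st})$ and $\Can(C)$ with $\Can(C^{\st})$ compatibly with $\rho_{C,C}$. The first goal is exactness of the sequence, which has three parts.

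\emph{Surjectivity.} Given $\beta\in\Aut_{\CT}(\Can(C))$, Theorem~\ref{thm:T-equivalence} provides a lift $f\in\Isom_{\TT}(C,C)$ with $\pi_{C,C}(f)=\beta$. The Caporaso--Viviani construction then gives $\alpha_f\in\Aut_{\SSAP}(A(C))$, and Lemma~\ref{lem:CV-compatibility} shows $\rho_{C,C}(\alpha_f)=\pi_{C,C}(f)=\beta$.

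\emph{The kernel contains $\iota_{A(C)}$.} The involution acts on $J(C)$ by $g\mapsto g^{-1}$, so $d_e$ is $-\id$ and its projectivization is the identity. Hence $\iota_{A(C)}\in\ker\rho_{C,C}$. Since $g\ge2$, $\iota_{A(C)}$ is nontrivial (for instance $-\id\neq\id$ on $J(C)$), so it generates a subgroup isomorphic to $\mathbb Z/2\mathbb Z$.

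\emph{The kernel is at most $\langle\iota_{A(C)}\rangle$.} This is the main obstacle. Let $\alpha\in\ker\rho_{C,C}$. Then $d_e\alpha_G=c\cdot\id$ for some scalar $c$. An automorphism of the algebraic group $J(C)$ is determined by its differential (characteristic $0$, $J(C)$ connected), so it suffices to show $c=\pm1$; then $\alpha_G=\pm\id$.

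To get $c=\pm1$, note that $\alpha_G$ preserves the character lattice of the torus part $T$ and the lattice $H_1$ of the abelian part. A homothety preserving a lattice must have eigenvalue $\pm1$. Concretely: if $T\ne0$, $\alpha_G|_T$ is a scalar automorphism of a torus, so it is $\pm1$. If $T=0$, then $\alpha_G$ is a scalar automorphism of an abelian variety; it acts on the Tate module by the integer $\pm1$, because the eigenvalues of $\alpha_G$ are algebraic integers whose product with their inverses is $1$, and they all equal $c$. In both cases $c\in\{\pm1\}$.

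After composing with $\iota_{A(C)}$ if necessary, we get $\alpha_G=\id$. Lemma~\ref{lem:forgetful-aut} then gives $\alpha=\id$. So $\ker\rho_{C,C}=\langle\iota_{A(C)}\rangle$ and the sequence is exact.

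For the "in particular" statement, assume $\Gamma_C$ is $3$-edge-connected. Then there are no separating pairs, so every $\Cone$-set is a single edge. A $\TT$-isomorphism is then exactly an isomorphism of normalizations preserving the pairs of branches over each node, which is the same as an automorphism of $C$. Hence $\Isom_{\TT}(C,C)=\Aut(C)$.

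By Theorem~\ref{thm:T-equivalence}, $\Aut_{\CT}(\Can(C))\cong \Aut(C)/N_C$. With no separating pairs, an honest hyperelliptic subcurve $Z$ must have $Z^c$ meeting $Z$ in reduced fibres of size $2$. That would be a separating pair unless $Z=C$ (by Proposition~\ref{prop:catanese_hh}(iv)). So $N_C$ is trivial if $C$ is not hyperelliptic, and $N_C=\langle\iota_C\rangle$ if it is.

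It remains to split the sequence. The map $f\mapsto\alpha_f$ of Lemma~\ref{lem:CV-compatibility}, suitably normalized with the correct sign, should be a homomorphism $\Aut(C)\to\Aut_{\SSAP}(A(C))$. In the non-hyperelliptic case it gives a section, and since $\iota_{A(C)}$ is central this yields $\Aut(C)\times\mathbb Z/2\mathbb Z$. In the hyperelliptic case the hyperelliptic involution $\iota_C$ maps to $\alpha_{\iota_C}$, which acts on $J(C)$ by $-\id$ and equals $\iota_{A(C)}$. So $\Aut(C)$ maps isomorphically onto $\Aut_{\SSAP}(A(C))$.
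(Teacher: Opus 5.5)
Your outline matches the paper's proof in its structure. You reduce to the stable case, get surjectivity from Theorem~\ref{thm:T-equivalence} and Lemma~\ref{lem:CV-compatibility}, control the kernel by a scalar differential plus the rigidity Lemma~\ref{lem:forgetful-aut}, and compute $N_C$ in the $3$-edge-connected case. For the splitting, use the honest pullback action $\sigma\mapsto(\sigma^{-1})^*$ of $\Aut(C)$ on $A(C)$, which is visibly a homomorphism, rather than the sign-ambiguous $\alpha_f$. There is, however, a genuine gap in the step showing $c=\pm1$ when $T=0$, i.e.\ when $C$ is smooth. For singular $C$ your torus argument is the paper's and is fine.

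The claim that a scalar automorphism of an abelian variety must be $\pm1$ is false. Over $\mathbb{C}$, write $A=T_0A/\Lambda$. The eigenvalues of $\alpha_G$ on $\Lambda\otimes\mathbb{C}\cong T_0A\oplus\overline{T_0A}$ (equivalently on $V_\ell A$) are $c$ and $\bar c$, each with multiplicity $g$; they are not all equal to $c$. The characteristic polynomial $(x-c)^g(x-\bar c)^g$ can be integral for a non-real root of unity $c$. Your lattice argument works for the torus only because $X_*(T)$ has rank $\dim T$, whereas $\Lambda$ has rank $2\dim A$. Preserving the polarization only forces $c$ to be a root of unity.

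A concrete counterexample: let $E$ be $y^2=x^3-x$ with its order-$4$ automorphism $[i]$. Then $[i]\times[i]$ on $E\times E$ preserves the product principal polarization and has differential $i\cdot\id$. This is $A(C)$ for $C=E\cup_{\mathrm{pt}}E$, which has a separating node, so it does not contradict the theorem. It does show that some input specific to Jacobians of smooth curves is indispensable.

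The paper supplies this input by treating smooth $C$ separately with the automorphism form of the classical Torelli theorem. Every automorphism of $(J(C),\Theta)$ is $\pm\sigma^*$ with $\sigma\in\Aut(C)$. Moreover, $\sigma^*$ acts on $H^0(C,\omega_C)$ by a scalar only if $\sigma$ is the identity or the hyperelliptic involution, so the differential is $\pm\id$. Without that theorem, or a substitute such as a monodromy argument on the fibres of the Gauss map, your kernel computation is incomplete for smooth curves.
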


For completeness, we also recall the proof in the smooth case.
\begin{proof}
By Remarks~\ref{rem:canonical_model_sep_stab_isomorphic}
and~\ref{rem:SSAPstab}, and by the naturality of $\rho$, replacing
$C$ by $C^{\rm st}$ does not change the sequence. Thus we may assume
that $C$ is stable. If $C$ is smooth, then $u_C$ identifies
$\Aut_{\SSAP}(A(C))$ with the automorphism group of the principally
polarized Jacobian. By the automorphism form of the classical Torelli theorem \footnote{This is the only part of the paper where we invoke the classical Torelli theorem for smooth curves.}
(see for example \cite[Theorem~12.1 and Section~13]{MilneJac}), its quotient by
$\langle[-1]\rangle$ is $\Aut(C)$ when $C$ is nonhyperelliptic, and
$\Aut(C)/\langle\iota_C\rangle$ when $C$ is hyperelliptic. These are
precisely the corresponding groups
$\Aut_{\CT}(\Can(C))$. 

Assume that $C$ is singular, and let $u_C$ be the homomorphism of
Lemma~\ref{lem:forgetful-aut}. If $\psi = (\psi_G, \psi_P) \in\ker\rho_{C,C}$, then the
construction of $\rho_{C,C}$ in Theorem~\ref{thm:main1} gives
\(
\mathbb P(d_e\psi_G)=\id.
\)
Hence
\(
d_e\psi_G=\lambda\id
\)
for some $\lambda\in k^\times$. Let $T\subseteq J(C)$ be the maximal torus and set
$r:=\dim T$. Since $C$ is singular and has no separating nodes,
\(
r=b_1(\Gamma_C)>0.
\)
Since $T$ is the unique maximal torus in $J(C)$, the automorphism $\psi_G$ restricts to an automorphism of $T$. Its action on the cocharacter lattice
\[
X_*(T):=\operatorname{Hom}(\mathbb G_m,T)\cong\mathbb Z^r
\]
is represented by a matrix $M\in\operatorname{GL}_r(\mathbb Z)$. Under the natural
identification
\(
X_*(T)\otimes_{\mathbb Z}k\cong T_eT,
\)
the differential of this restriction is $M\otimes1$. Therefore
\(
M\otimes1=\lambda\id_r.
\)
Since $M\in\operatorname{GL}_r(\mathbb Z)$ and $\operatorname{char}(k)=0$, it
follows that $\lambda=\pm1$.

Homomorphisms of semiabelian varieties in characteristic zero are
determined by their differentials. Indeed, if $f,g\colon G\to H$ have the same differential, then
$h:=f-g$ has zero differential. In characteristic zero, the
surjective homomorphism $G\to h(G)$ has smooth kernel and hence
surjective differential
\cite[Proposition~1.63 and Corollary~8.39]{MilneAG}.
Thus $\dim h(G)=0$, and $h=0$ by connectedness.

 Consequently, $\psi_G$ is
either $\id$ or $[-1]$. Since $u_C$ is injective and
$u_C(\iota_{A(C)})=[-1]$, every element of $\ker\rho_{C,C}$ is either
$\id$ or $\iota_{A(C)}$. Conversely,
\[
\rho_{C,C}(\iota_{A(C)})=\mathbb P(-\id)=\id.
\]
Thus
\(
\ker\rho_{C,C}=\langle\iota_{A(C)}\rangle .
\)

To conclude, we prove that for every isomorphism $\sigma\colon\Can(C)\xrightarrow{\sim}\Can(C)$ 
there exists an isomorphism $\widetilde{\sigma}\colon A(C)\xrightarrow{\sim}A(C)$ 
such that $\rho_{C,C}(\widetilde{\sigma})=\sigma$.  Indeed, by the surjectivity of
$\pi_{C,C}$ in Theorem~\ref{thm:T-equivalence}, there exists
$f\in\Isom_{\TT}(C,C)$ such that $\pi_{C,C}(f)=\sigma$. By Lemma~\ref{lem:CV-compatibility}, we have
\[
\rho_{C,C}(\alpha_f)
=\pi_{C,C}(f)
=\sigma.
\]
Thus, $\widetilde{\sigma}:=\alpha_f$ is a lift of $\sigma$.

For the final assertion, assume that $\Gamma_C$ is $3$-edge-connected.
Then every $\Cone$-set is a singleton, so
$\operatorname{Isom}_{\TT}(C,C)=\operatorname{Aut}(C)$. Moreover,
Proposition~\ref{prop:catanese_hh}(iv) shows that a proper honest
hyperelliptic subcurve would determine a separating pair, which is impossible since $\Gamma_C$ is $3$-edge-connected. Hence we have the following possibilities 
\[
N_C=
\begin{cases}
\{\id\},&\text{if $C$ is not hyperelliptic},\\
\langle\iota_C\rangle,&\text{if $C$ is hyperelliptic}.
\end{cases}
\]
Theorem~\ref{thm:T-equivalence} therefore identifies
$\operatorname{Aut}_{\CT}(\operatorname{Can}(C))$ with
$\operatorname{Aut}(C)$ in the first case and with
$\operatorname{Aut}(C)/\langle\iota_C\rangle$ in the second. The asserted formulas now follow from the natural action of
$\operatorname{Aut}(C)$ on $A(C)$ by inverse pullback.
\end{proof}

\begin{remark}
The $3$-edge-connectedness hypothesis in the final assertion of
Theorem~\ref{thm:main-aut} cannot be omitted. Indeed, let $C$ be
obtained from $n\ge3$ copies $(X_i;p_i,q_i)$ of a smooth
two-pointed curve $(X;p,q)$ with
$\operatorname{Aut}(X)=\{\id\}$, by identifying $q_i$ with
$p_{i+1}$ cyclically. Then
$\operatorname{Aut}(C)\cong\mathbb Z/n\mathbb Z$, whereas
Theorem~\ref{thm:T-equivalence} gives
$\operatorname{Aut}_{\CT}(\operatorname{Can}(C))\cong \Symm_n$, 
consisting of all permutations of the irreducible components
of the canonical model.
\end{remark}

\bibliographystyle{alpha}
\bibliography{biblio}
\end{document}